\newtheorem{theorem}{Theorem}[section]
\newtheorem{lemma}[theorem]{Lemma}
\newtheorem{proposition}[theorem]{Proposition}
\newtheorem{remark}[theorem]{Remark}
\newtheorem{definition}[theorem]{Definition}
\newcommand{\divx}{\mathop{\mathrm{div}}}
\newcommand{\esssup}{\mathop{\mathrm{ess~sup}}}
\newcommand{\essinf}{\mathop{\mathrm{ess~inf}}}
\numberwithin{equation}{section}
\def\Yint#1{\mathchoice
    {\YYint\displaystyle\textstyle{#1}}%
    {\YYint\textstyle\scriptstyle{#1}}%
    {\YYint\scriptstyle\scriptscriptstyle{#1}}%
    {\YYint\scriptscriptstyle\scriptscriptstyle{#1}}%
      \!\iint}
\def\YYint#1#2#3{{\setbox0=\hbox{$#1{#2#3}{\iint}$}
    \vcenter{\hbox{$#2#3$}}\kern-.51\wd0}}
\def\longdash{{-}\mkern-3.5mu{-}} 
\def\tiltlongdash{\rotatebox[origin=c]{15}{$\longdash$}}
\def\fiint{\Yint\tiltlongdash}
\title{Continuity of  the  spatial gradient of  weak solutions  to  very singular parabolic equations  involving the one-Laplacian}
\author{Shuntaro Tsubouchi  \footnote{Graduate School of Mathematical Sciences, The University of Tokyo, Japan. \textit{Email}: \texttt{tsubos@g.ecc.u-tokyo.ac.jp} Funding; The author was supported by the Japan Society for the Promotion of Science through JSPS KAKENHI Grant Number 22KJ0861 during the preparation of the paper.}}
\date{}
\begin{document}
\maketitle
\begin{abstract}
We consider weak solutions to very singular parabolic equations involving a one-Laplace-type operator, which is singular and degenerate, and a $p$-Laplace-type operator with $\frac{2n}{n+2}<p<\infty$, where $n\ge 2$ denotes the space dimension. 
This type of equation is used to describe the motion of a Bingham flow. 
It has been a long-standing open problem of whether the spatial gradients of weak solutions are continuous in space and time.
This paper aims to give an affirmative answer for a wide class of such equations.
This equation becomes no longer uniformly parabolic near the facet, the place where the spatial gradient vanishes.
To achieve our goal, we show local a priori H\"{o}lder continuity of gradients suitably truncated near facets. 
For this purpose, we consider a parabolic approximate problem and appeal to standard methods, including De Giorgi's truncation and comparisons with Dirichlet heat flows.
Our method is a parabolic adjustment of our method developed to prove the corresponding statements for stationary problems.
\end{abstract}
\bigbreak
\textbf{Mathematics Subject Classification (2020)} 35K92, 35B65, 35A35
\bigbreak
\textbf{Keywords} Continuity of  the  gradient, De Giorgi's truncation, Comparison arguments

{\footnotesize \tableofcontents}

\section{Introduction}\label{Sect:Introduction}
This paper is concerned with regularity for weak solutions to
\begin{equation}\label{Eq (Section 1): (1,p)-Laplace parabolic}
  \partial_{t}u-\Delta_{1}u-\Delta_{p}u=f\quad \textrm{in}\quad \Omega_{T}\coloneqq \Omega\times (0,\,T).
\end{equation}
Here $T\in(0,\,\infty)$ and $p\in(1,\,\infty)$ are fixed constants, and $\Omega\subset {\mathbb R}^{n}$ is an $n$-dimensional bounded Lipschitz domain with $n\ge 2$.
The real-valued functions $f=f(x,\,t)$ and $u=u(x,\,t)$ are respectively given and unknown for $(x,\,t)=(x_{1},\,\dots\,,\,x_{n},\,t)\in\Omega_{T}$.
The divergence operators $\Delta_{1}$ and $\Delta_{p}$ are respectively the one-Laplacian and the $p$-Laplacian, defined as
\(\Delta_{s}u\coloneqq \divx\left(\lvert \nabla u\rvert^{s-2}\nabla u\right)\) for \(s\in\lbrack 1,\,\infty)\).
Here $\nabla u\coloneqq (\partial_{x_{j}}u,\,\dots\,,\,\partial_{x_{n}}u)$ and $\partial_{t}u$ respectively denote a spatial and time derivative of a solution $u=u(x,\,t)$, and  $\divx X\coloneqq \partial_{x_{1}}X_{1}+\cdots+\partial_{x_{n}}X_{n}$  is divergence of a vector field $X=(X_{1},\,\dots\,,\,X_{n})$.

The parabolic $(1,\,p)$-Laplace equation (\ref{Eq (Section 1): (1,p)-Laplace parabolic}) appears in describing the motion of Bingham flows with $p=2$ (see Section \ref{Subsect: Model}).
Mathematical analysis for this parabolic equation at least goes back to the classical textbook \cite{MR0521262} in 1976.
In particular, the unique existence of weak solutions for initial value problems is well-known, which is based on variational inequalities \cite[Chapter VI]{MR0521262}.
However, the continuity of a spatial gradient has been a long-standing open problem even when $f=0$.
For stationary problems, this regularity problem was answered affirmatively by the author \cite{T-scalar}, \cite{T-system}.
By adjusting the methods therein to time-evolutional problems (see Sections \ref{Subsect: Strategy}--\ref{Subsect: Literature} for the details), we would like to prove that a spatial gradient is continuous even for the parabolic equation (\ref{Eq (Section 1): (1,p)-Laplace parabolic}).

Throughout this paper, we let $f$ be in a Lebesgue space $L^{q}(\Omega_{T})$, and mainly assume
\begin{equation}\label{Eq (Section 1): p-q condition}
 p_{\mathrm c}  \coloneqq \frac{2n}{n+2}  <p<\infty,\quad \textrm{and}\quad n+2<q\le\infty.
\end{equation}
This assumption is typical when one considers the continuity of a spatial gradient of a $p$-Poisson flow (see e.g., \cite{BDLS}, \cite[Chapters VIII--IX]{MR1230384}, \cite[Chapter 2]{MR1881297}).
 In this paper, $f\in L^{p^{\prime}}(0,\,T;\,W^{-1,\,p^{\prime}}(\Omega))$ is also assumed with $p^{\prime}\coloneqq p/(p-1)$ denoting the H\"{o}lder conjugate exponent of $p$, since we treat (\ref{Eq (Section 1): (1,p)-Laplace parabolic}) in the $L^{p^{\prime}}(0,\,T;\,W^{-1,\,p^{\prime}}(\Omega))$ sense. We would like to prove $\nabla u\in C^{0}(\Omega;\,{\mathbb R}^{n})$, under the assumption (\ref{Eq (Section 1): p-q condition}). 

More generally, we consider an anisotropic very singular equation
\begin{equation}\label{Eq (Section 1): General Parabolic Eq}
  \partial_{t}u+{\mathcal L}u=f\quad \textrm{in}\quad \Omega_{T}
\end{equation}
with 
\({\mathcal L}u\coloneqq -\divx \left(\nabla E_{1}(\nabla u)+\nabla E_{p}(\nabla u) \right)\).
The detailed conditions of $E_{1}$ and $E_{p}$, which are at least $C^{2}$ outside the origin, are explained later in Section \ref{Subsect:General Setting}.
Our settings allow us to choose 
$E_{s}(z)\coloneqq \lvert z\rvert^{s}/s\,(z\in{\mathbb R}^{n})$ for $s\in\{\,1,\,p\,\}$.
In particular, ${\mathcal L}$ generalizes $-\Delta_{1}-\Delta_{p}$.
 \subsection{Our strategy}\label{Subsect: Strategy}
 To explain the main difficulty, we formally differentiate (\ref{Eq (Section 1): General Parabolic Eq}) by $x_{j}$.
 Then, we have
 \begin{equation}\label{Eq (Section 1): Parabolic equation differentiated}
   \partial_{t}\partial_{x_{j}}u-\divx\left(\nabla^{2}E(\nabla u)\nabla\partial_{x_{j}}u \right)=\partial_{x_{j}}f\quad \textrm{in}\quad \Omega_{T},
 \end{equation}
 where $E\coloneqq E_{1}+E_{p}$.
 Roughly speaking, we face a problem that the coefficient matrix $\nabla^{2}E(\nabla u)$ in (\ref{Eq (Section 1): Parabolic equation differentiated}) becomes no longer uniform parabolic near a facet, the degenerate region of a gradient.
 Here uniform parabolicity can be measured by the ratio, defined as the maximum eigenvalue of $\nabla^{2}E(z)$ divided by the minimum one of $\nabla^{2}E(z)$, which is well-defined for $z\in{\mathbb R}^{n}\setminus\{0\}$.
 For elliptic problems, this type of ratio is often called the ellipticity ratio (see e.g., \cite{MR2291779}).
 Similarly, we would like to call the ratio \textit{parabolicity ratio}.
 For the special case $E(z)=\lvert z\rvert+\lvert z\rvert^{p}/p$, the parabolicity ratio of $\nabla^{2}E(\nabla u)$ is behaves like 
 \(1+\lvert\nabla u\rvert^{1-p}\),
 which blows up as $\nabla u\to 0$.
 Thus, it appears hard to obtain quantitative continuity estimates for $\partial_{x_{j}}u$.
 Such difficulty is substantially caused by the fact that diffusion of one-Laplacian becomes degenerate in the direction of a gradient, while this diffusion may become singular in other directions.
 In other words, $\Delta_{1}$ has anisotropic diffusivity, which is substantially different from $\Delta_{p}$. 
 This property makes it difficult to handle $\Delta_{1}$ in the existing regularity theory.
 
 However, when one constrains a region $\{\lvert \nabla u\rvert\ge \delta\}$ for fixed $\delta\in(0,\,1)$, then over such a place, the parabolicity ratio of $\nabla^{2}E(\nabla u)$ becomes bounded by some constant that depends on $\delta$.
 In this sense, we may say that (\ref{Eq (Section 1): Parabolic equation differentiated}) is locally uniformly parabolic outside a facet.
 With this in mind, we would like to introduce a truncation parameter $\delta\in(0,\,1)$, and consider a truncated gradient
 \({\mathcal G}_{\delta}(\nabla u)\coloneqq (\lvert \nabla u\rvert-\delta)_{+}\frac{\nabla u}{\lvert \nabla u\rvert}.\)
 We aim to prove that ${\mathcal G}_{\delta}(\nabla u)$ is locally H\"{o}lder continuous in $\Omega_{T}$ for each fixed $\delta\in(0,\,1)$. 
 Note that this truncated gradient is supported in $\{\lvert \nabla u\rvert\ge \delta\}$, and therefore such local H\"{o}lder regularity will be expected.
 Here it should be noted that our H\"{o}lder estimates of ${\mathcal G}_{\delta}(\nabla u)$ substantially depends on $\delta\in(0,\,1)$, and most of our continuity estimates will blow up as $\delta$ tends to $0$.
 Also, it is worth mentioning that ${\mathcal G}_{\delta}(\nabla u)$ uniformly converges to ${\mathcal G}_{0}(\nabla u)=\nabla u$ in $\Omega_{T}$.
 Thus, although some of our regularity estimates might break as $\delta$ tends to $0$, we conclude that $\nabla u$ is continuous.

 To give a rigorous proof, we have to appeal to approximation arguments.
 Here it should be recalled that the very singular operator $-\Delta_{1}-\Delta_{p}$, or more general ${\mathcal L}$, will become no longer uniformly parabolic near a facet of a solution.
 This prevents us from applying standard regularity methods, including difference quotient methods, directly to (\ref{Eq (Section 1): (1,p)-Laplace parabolic}) or (\ref{Eq (Section 1): General Parabolic Eq}).
 In particular, although we have formally deduced (\ref{Eq (Section 1): Parabolic equation differentiated}), this equation seems difficult to deal with in the sense of $L^{2}(0,\,T;\,W^{-1,\,2}(\Omega))$.
 For this reason, we have to consider a parabolic approximate problem, whose solution has improved regularity properties.
 Precisely speaking, for an approximation parameter $\varepsilon\in(0,\,1)$, we relax the principal part ${\mathcal L}u$ by \({\mathcal L}_{\varepsilon}u_{\varepsilon}\coloneqq -\divx \left(\nabla E_{\varepsilon}(\nabla u_{\varepsilon})\right)\).
 Here $E_{\varepsilon}$ is defined to be the convolution of $E=E_{1}+E_{p}$ with the Friedrichs standard mollifier $j_{\varepsilon}\in C_{\mathrm c}^{\infty}({\mathbb R}^{n})$.
 We also approximate  $f\in L^{q}(\Omega_{T})\cap L^{p^{\prime}}(0,\,T;\,W^{-1,\,p^{\prime}}(\Omega))$  by $f_{\varepsilon}\in C^{\infty}(\Omega_{T})$, which converges to $f$ in a weak sense.
 The resulting approximate equation is given by
 \(\partial_{t}u_{\varepsilon}+{\mathcal L}_{\varepsilon}u_{\varepsilon}=f_{\varepsilon}\) in \(\Omega_{T}.\)
 For this relaxed problem, we may assume $\nabla u_{\varepsilon}\in L_{\mathrm{loc}}^{\infty}$ and $\nabla^{2}u_{\varepsilon}\in L_{\mathrm{loc}}^{2}$ (see Section \ref{Subsect: Basic Weak Form} for the details), and therefore $u_{\varepsilon}$ satisfies
 \begin{equation}\label{Eq (Section 1): Approximation problem}
  \partial_{t}\partial_{x_{j}}u_{\varepsilon}-\divx\left(\nabla^{2}E_{\varepsilon}(\nabla u_{\varepsilon})\nabla\partial_{x_{j}} u_{\varepsilon} \right)=\partial_{x_{j}}f_{\varepsilon}\quad \text{in}\quad \Omega_{T}
 \end{equation}
 in the weak sense.
 The approximations of $E$ and $f$ are already found in elliptic problems \cite[\S 2]{T-scalar}, where a convergence result for an approximate solution is shown.
 Following the strategy in \cite[\S 2.6]{T-scalar}, we verify that an approximate solution $u_{\varepsilon}$, which satisfies a suitable Dirichlet boundary condition, converges to a weak solution to (\ref{Eq (Section 1): General Parabolic Eq}) strongly in $L^{p}(W^{1,\,p})$.

 With the aid of approximation arguments, our regularity problem is reduced to local a priori H\"{o}lder continuity of
 \({\mathcal G}_{2\delta,\,\varepsilon}(\nabla u_{\varepsilon})\coloneqq \left(\sqrt{\varepsilon^{2}+\lvert\nabla u_{\varepsilon}\rvert^{2}}-2\delta \right)_{+}\frac{\nabla u_{\varepsilon}}{\lvert \nabla u_{\varepsilon}\rvert}\) for each fixed \(\delta\in(0,\,1)\), where the approximation parameter $\varepsilon$ is smaller than $\delta/8$.
 We should remark that the principal part ${\mathcal L}u$ itself is relaxed by ${\mathcal L}_{\varepsilon}u_{\varepsilon}$.
 In particular, for (\ref{Eq (Section 1): Approximation problem}), the parabolicity ratio of $\nabla^{2}E_{\varepsilon}(\nabla u_{\varepsilon})$ should be measured by $V_{\varepsilon}\coloneqq \sqrt{\varepsilon^{2}+\lvert \nabla u_{\varepsilon}\rvert^{2}}$, rather than by $\lvert \nabla u_{\varepsilon}\rvert$. 
 For this reason, we have to consider another truncation mapping ${\mathcal G}_{2\delta,\,\varepsilon}$, instead of ${\mathcal G}_{2\delta}$.
 In this paper, we mainly prove H\"{o}lder estimates of ${\mathcal G}_{2\delta,\,\varepsilon}(\nabla u_{\varepsilon})$, independent of $\varepsilon\in(0,\,\delta/8)$.
 Then, the Arzel\`{a}--Ascoli theorem enables us to conclude that ${\mathcal G}_{2\delta}(\nabla u)$ is also $\alpha$-H\"{o}lder continuous for each fixed $\delta\in(0,\,1)$.
 We have to mention again that the exponent $\alpha\in(0,\,1)$ substantially depends on $\delta$. Indeed, our a priori estimates imply $\alpha\to 0$ as $\delta\to 0$ (see Section \ref{Subsect: Holder a priori}).
 
 Our proof of the H\"{o}lder continuity of ${\mathcal G}_{2\delta,\,\varepsilon}(\nabla u_{\varepsilon})$ is based on the modifications of \cite{T-scalar}, which deals the stationary problem of (\ref{Eq (Section 1): (1,p)-Laplace parabolic}).
 Here we mainly use standard methods in the parabolic regularity theory, including De Giorgi's truncation and comparisons with Dirichlet heat flows.
 Our parabolic computations concerning the H\"{o}lder continuity of ${\mathcal G}_{2\delta,\,\varepsilon}(\nabla u_{\varepsilon})$ are inspired by \cite[\S 3--7]{BDLS}, where the H\"{o}lder gradient continuity of a vector-valued $p$-harmonic flow is discussed (see also \cite{MR1230384}, \cite{MR783531}, \cite{MR814022} as fundamental works).
 Compared with \cite{BDLS}, our analysis is rather classical, since we do not use intrinsic scaling arguments at all (see Section \ref{Subsect: Literature}). 
 Instead, we always avoid analysis near the degenerate regions of $V_{\varepsilon}=\sqrt{\varepsilon^{2}+\lvert\nabla u_{\varepsilon}\rvert^{2}}$, so that we may regard (\ref{Eq (Section 1): Approximation problem}) as a uniformly parabolic equation in the classical sense, depending on $\delta$.
 The local H\"{o}lder estimate of ${\mathcal G}_{2\delta,\,\varepsilon}(\nabla u_{\varepsilon})$ is shown, as long as local $L^{\infty}$-bounds of $V_{\varepsilon}$, uniformly for $\varepsilon$, are guaranteed. We give the proof of this a priori continuity estimate under the settings $p\in(1,\,\infty)$ and $V_{\varepsilon}\in L_{\mathrm{loc}}^{\infty}$. 

 In showing the uniform $L^{\infty}$-bounds of $V_{\varepsilon}$, however, the basic approach will differ, depending on whether $p$ is greater than $p_{\mathrm c}=\frac{2n}{n+2}$ or not. 
 In this paper, where the supercritical case $p_{\mathrm c}<p<\infty$ is considered, we prove local $L^{\infty}$--$L^{p}$ estimates of $V_{\varepsilon}$ by Moser's iteration. For the critical or subcritical case $1<p\le p_{\mathrm c}$, even the local $L^{\infty}$-bound of a weak solution appears non-trivial, and some higher integrability assumption is required to prove the gradient continuity (see \cite{MR1135917}). 
 This remaining case is discussed in another paper \cite{T-subcritical}, under the assumption that a weak solution is in $L_{\mathrm{loc}}^{s}(\Omega_{T})$ with $s>n(2-p)/p\ge 2$. 
 It should be remarked that the compact embedding $W_{0}^{1,\,p}(\Omega)\hookrightarrow\hookrightarrow L^{2}(\Omega)$ holds if and only if $p\in(p_{\mathrm c},\,\infty\rbrack$, and that we cannot apply the Aubin--Lions lemma for $p\in(1,\,p_{\mathrm c}\rbrack$.
 In particular, when $p$ is very close to $1$, we will not be able to use any compact embedding concerning a parabolic function space.
 For this reason, the paper \cite{T-subcritical} fails to deal with any external force term (see also Remark \ref{Rmk; Subcase}).

 Although the continuity of the spatial gradient is proved for $(1,\,p)$-Laplace elliptic and even parabolic problems, it still remains an open problem whether or not the spatial derivative is H\"{o}lder continuous.
 More precisely, the function $w(x)\coloneqq (\lvert x\rvert-1)_{+}^{p^{\prime}}/p^{\prime}\,(x\in{\mathbb R}^{n})$, or the Fenchel dual function of $\lvert \xi\rvert+\lvert \xi\rvert^{p}/p\,(\xi\in{\mathbb R}^{n})$, satisfies the stationary equation $\Delta_{1}w+\Delta_{p}w=n$ in ${\mathbb R}^{n}$.
 The special solution $w$ indicates the best possible regularity of the solution. In particular, weak solutions to $(1,\,p)$-Laplace problems are at most expected to be in $C^{1,\,\alpha}$ with $\alpha\coloneqq \max\{\,1,\,1/(p-1)\,\}$. Even for stationary equations, however, this $C^{1,\,\alpha}$-regularity problem appears difficult, since we will directly face the non-uniformly elliptic structure of the $(1,\,p)$-Laplace operator, especially over facets of solutions.
 When it comes to the $C^{1}$-regularity, we can give affirmative results by appealing to the truncation approach.

 \subsection{Literature overview and comparisons with the paper}\label{Subsect: Literature}
 Section \ref{Subsect: Literature} briefly introduces previous results on the elliptic or parabolic regularity for the $p$-Laplace or the $(1,\,p)$-Laplace equation.
 We also compare this paper with \cite{T-scalar} and \cite{T-system}, the author's recent works on the gradient continuity of the stationary $(1,\,p)$-Laplace problem.

 For the elliptic $p$-Laplace problems, $C^{1,\,\alpha}$-regularity of a weak solution in the class $W^{1,\,p}(\Omega)$ is well-established in the full range $p\in(1,\,\infty)$ (see e.g., \cite{MR0997847}, \cite{MR709038}, \cite{MR672713}, \cite{MR721568}, \cite{MR2635642}, \cite{MR727034}, \cite{MR0244628}).
 For the parabolic $p$-Laplace problems, the H\"{o}lder gradient continuity was established by DiBenedetto--Friedman for $p\in (p_{\mathrm c},\,\infty)$ in \cite{MR783531} and \cite{MR814022} (see also \cite{MR0718944}, \cite{MR743967}, \cite{MR0886719} for weaker results on the gradient continuity).
 In the remaining case $p\in(1,\,p_{\mathrm c}\rbrack$, the same regularity was shown by Choe in \cite{MR1135917}, where a solution $u\in L^{p}(0,\,T;\,W^{1,\,p}(\Omega))\cap C(\lbrack 0,\,T\rbrack;\,L^{2}(\Omega))$ is also assumed to satisfy $u\in L_{\mathrm{loc}}^{s}(\Omega_{T})$ with $s>n(2-p)/p\ge 2$.
 Without this $L^{s}$ higher integrability assumption, no improved regularity result is generally expected (see \cite[III.~7]{MR1066761}).
 
 In the proof of the H\"{o}lder gradient continuity for the parabolic $p$-Laplace equations, found in \cite{BDLS} and \cite[Chapter VIII]{MR1230384}, careful rescaling arguments for local gradient bounds play an important role.
 This method is often called an intrinsic scaling argument, which is a powerful tool for deducing various regularity estimates for the parabolic $p$-Laplace equations (\cite{MR1230384}, \cite{MR2865434}).
 In our analysis for (\ref{Eq (Section 1): (1,p)-Laplace parabolic}), however, we do not appeal to intrinsic scaling arguments, since our equation seems no longer uniformly parabolic on the facet of a solution, even if it is carefully rescaled.
 Instead, we make careful use of the truncation parameter $\delta$, and try to avoid some delicate cases where a gradient may degenerate.
 In particular, we use no intrinsic parabolic cylinders, since our analysis works on standard parabolic cylinders, thanks to the careful truncation of a gradient.
 
 For the stationary $(1,\,p)$-Laplace problem $-\Delta_{1}u-\Delta_{p}u=f$ with $f=f(x)$ in $L^{q}$ for some $q\in(n,\,\infty\rbrack$
 the continuous differentiability of a solution $u=u(x)$ was first shown in \cite{MR4408168}, where a solution is assumed to be both scalar-valued and convex.
 Although the proof therein is rather easily carried out thanks to some basic tools from convex analysis, it is heavily based on a maximum principle and the convexity of a solution.
 Recently, the author removed these technical assumptions in \cite{T-scalar}, \cite{T-system}, where it is proved that a truncated gradient ${\mathcal G}_{\delta}(\nabla u)$, defined in Section \ref{Subsect: Strategy}, is locally H\"{o}lder continuous.
 The author's recent research (\cite{T-scalar}, \cite{T-system}) is highly inspired by a paper \cite{BDGPdN}, which shows the gradient continuity for a very degenerate elliptic problem of the form $-\divx\left(\lvert{\mathcal G}_{1}(\nabla v)\rvert^{p^{\prime}-2}{\mathcal G}_{1}(\nabla v) \right)=f$.
 Here the real-valued functions $v=v(x)$ and $f=f(x)$ are respectively unknown and given.
 For this degenerate problem, the three papers \cite{BDGPdN}, \cite{MR3133426}, \cite{MR2728558} prove that a truncated gradient ${\mathcal G}_{1}(\nabla v)$, similarly defined in Section \ref{Subsect: Strategy}, is continuous, provided $f\in L^{q}$ with $q\in(n,\,\infty\rbrack$.
 Among them, \cite{BDGPdN} is based on classical methods, including De Giorgi's truncation and Campanato's perturbation argument, and this strategy works even for the vector-valued case. 
 Moreover, this regularity result was extended to parabolic systems in the recent paper \cite{BDGPdN-p}. 
 Following the spirit of \cite{BDGPdN}, the author proved the gradient continuity for $(1,\,p)$-Laplace-type problems both in the scalar-valued \cite{T-scalar} and vector-valued \cite{T-system} cases.
 The continuity of ${\mathcal G}_{0}(\nabla u)=\nabla u$ and ${\mathcal G}_{1}(\nabla v)$ is proved in a qualitative way, by showing quantitative continuity estimates of the truncated gradients ${\mathcal G}_{\delta}(\nabla u)$ and ${\mathcal G}_{1+\delta}(\nabla v)$, whose estimates depend on the truncation parameter $\delta\in(0,\,1)$.
 These continuity results are known only for elliptic cases, although some other regularity results are shown for degenerate parabolic problems in recent papers \cite{ambrosio2022regularity}, \cite{gentile2023higher}.
 In this paper, by adjusting the arguments in \cite{T-scalar} to the parabolic problem, we aim to establish a gradient continuity result for the singular parabolic equation (\ref{Eq (Section 1): (1,p)-Laplace parabolic}).

 We conclude Section \ref{Subsect: Literature} by noting a difference between this paper and the author's previous works \cite{T-scalar}, \cite{T-system}, especially on the   convergence  of approximate solutions.
 To prove the strong $L^{p}$-convergence of $\nabla u_{\varepsilon}$, we have to make use of a weak compactness argument and a compact embedding for parabolic function spaces.
 There, the condition $p>p_{\mathrm c}$ is carefully used to apply the Aubin--Lions lemma, which plays a crucial role in treating the external force terms.
 Hence, our parabolic approximation arguments need some restrictions on the exponents $p$ and $q$, while \cite{T-scalar} and \cite{T-system} require no restrictive assumptions.
 This kind of restriction strongly appears when $p\le p_{\mathrm c}$, since the compact embedding $W_{0}^{1,\,p}(\Omega)\hookrightarrow\hookrightarrow L^{2}(\Omega)$ no longer holds true.
 In particular, the Aubin--Lions lemma cannot be applied, which prevents us from dealing with any external force term (see also Remark \ref{Rmk; Subcase}).
 Although our approximation argument partly works even for $p\in(1,\,p_{\mathrm c}\rbrack$ (see \cite[Proposition 3.3]{T-subcritical} for the details), no external force term can be treated.
 \subsection{Mathematical models}\label{Subsect: Model}
 The second-order parabolic equation (\ref{Eq (Section 1): (1,p)-Laplace parabolic}) appears when one models the motion of a Bingham fluid \cite[Chapter VI]{MR0521262}, a non-Newtonian fluid that has both plasticity and viscosity properties.
 Its motion is governed by 
 \(\partial_{t}U+(U\cdot \nabla)U-b_{1}\Delta_{1}U-b_{2}\Delta_{2}U+\nabla\pi=0\) and \(\divx U=0\)
 if there is no external force.
 Here $U=U(x_{1},\,x_{2},\,x_{3},\,t)$ denotes the three-dimensional velocity of a Bingham fluid, and a scalar function $\pi=\pi(x_{1},\,x_{2},\,x_{3},\,t)$ denotes the pressure.
 The positive constants $b_{1}$ and $b_{2}$ respectively reflect a Bingham fluid's plasticity and viscosity effects. 
 When $b_{1}=0$, this model becomes the incompressible Navier--Stokes equation.
 We consider a uni-directional laminar flow in a cylinder $\Omega\times {\mathbb R}\subset {\mathbb R}^{3}$ with velocity $U=(0,\,0,\,u(x_{1},\,x_{2},\,t))$.
 Since the equalities $\divx U=0$ and $(U\cdot \nabla)U=0$ are automatically fulfilled, the equation is reduced to
 \(\partial_{t}u-b_{1}\Delta_{1}u-b_{2}\Delta_{2}u=-\partial_{x_{3}}\pi\) and \(\partial_{x_{1}}\pi=\partial_{x_{2}}\pi=0\)
 by carrying out similar arguments in \cite[Chapter VI, \S 1.3]{MR0521262}.
 Moreover, the right-hand side $\partial_{x_{3}}\pi\equiv f$ depends at most on $t$, since the left-hand side is independent of $x_{3}$.
 From our main Theorem \ref{Thm: Main theorem parabolic}, given in Section \ref{Subsect:General Setting}, we conclude that a spatial derivative $\nabla u$ is continuous, provided $f=f(t)$ admits $L^{q}$-integrability with $q>4$.

 It will be worth mentioning that another important model can be found in the evaporation dynamics of crystal surfaces, modeled by Spohn in a paper \cite[\S 2]{spohn1993surface}.
 There the evolution of the height function of the crystal, denoted by $h$, is modeled as
 \(\partial_{t}h=-\mu\cdot (-\Delta_{1}h-\Delta_{3}h)\)
 with $\mu=\mu(\nabla h)>0$ standing for the mobility.
 When $\mu$ is constant, we can conclude the continuity of $\nabla h$ by our main Theorem \ref{Thm: Main theorem parabolic}.
 However, Spohn's discussion concerning mobility tells us that $\mu$ has to be of the form $\mu=\kappa \lvert\nabla h\rvert$ for some constant $\kappa>0$.
 In other words, in his model, the speed of each level set is assumed to move by $\Delta_{1}h+\Delta_{3}h$, and another equation 
 appears.

 \subsection{Notations, main results and outline of the paper}\label{Subsect:General Setting}
 Before stating our main theorem, we fix some notations.
 
 The set of all non-negative integers and the set of all natural numbers are respectively denoted by ${\mathbb Z}_{\ge 0}\coloneqq \{\,0,\,1,\,2,\,\dots\,\,\}$, and ${\mathbb N}\coloneqq {\mathbb Z}_{\ge 0}\setminus\{ 0\}$.
 For a given real number $a\in{\mathbb R}$, we write $a_{+}\coloneqq \max\{\,a,\,0\,\}$.
 For given vectors $x=(x_{1},\,\dots\,,\,x_{n})$, $y=(y_{1},\,\dots\,,\,y_{n})\in{\mathbb R}^{n}$, the canonical inner product and the Euclidean norm are respectively written by
 $\langle x\mid y \rangle\coloneqq x_{1}y_{1}+\cdots +x_{n}y_{n}\in{\mathbb R}$ and $\lvert x\rvert\coloneqq \sqrt{\langle x\mid x\rangle}\in\lbrack 0,\,\infty)$.
 For an $n\times n$ real matrix $A=(A_{j,\,k})_{j,\,k}$, we define the standard norms as
 \(\lVert A\rVert\coloneqq \sup_{\lvert x\rvert=1}\lvert Ax\rvert\), and \(\lvert A\rvert\coloneqq \sqrt{\sum_{j,\,k=1}^{n}A_{j,\,k}^{2}},\)
 called the operator norm and the Hilbert--Schmidt norm respectively.
 For $n\times n$ symmetric real matrices $A$ and $B$, we write $A\leqslant B$ when $B-A$ is positive semi-definitive.
 The symbols $\mathrm{id}$ and $O$ stand for the identity matrix and the zero matrix respectively.

 We introduce a parabolic metric $d_{\mathrm{p}}$ in ${\mathbb R}^{n+1}$ by
 \(d_{\mathrm{p}}((x,\,t),\,(y,\,s))\coloneqq \max\left\{\,\lvert x-y\rvert,\,\sqrt{\lvert t-s\rvert}\,\right\}\) for \((x,\,t),\,(y,\,s)\in{\mathbb R}^{n}\times {\mathbb R}.\)
 With respect to this metric, $\mathop{\mathrm{dist}}_{\mathrm{p}}({\mathcal Q},\,\partial_{\mathrm{p}}\Omega_{T})$ denotes the distance from the given set ${\mathcal Q}\Subset \Omega_{T}$ to the parabolic boundary $\partial_{\mathrm{p}}\Omega_{T}\coloneqq (\Omega\times \{0\})\cup (\partial\Omega\times \lbrack0,\,T\rbrack)$.
 For given $x_{0}\in{\mathbb R}^{n}$, $t_{0}\in{\mathbb R}$ and $r\in(0,\,\infty)$, we set an open ball $B_{r}(x_{0})\coloneqq \{x\in{\mathbb R}^{n}\mid \lvert x-x_{0}\rvert<r\}$, a half interval $I_{r}(t_{0})\coloneqq (t_{0}-r^{2},\,t_{0}\rbrack$, and a parabolic cylinder $Q_{r}(x_{0},\,t_{0})\coloneqq B_{r}(x_{0})\times I_{r}(t_{0})$.
 The closure and the interior of the half interval $I_{r}(t_{0})$ are respectively denoted by $\overline{I_{r}}\coloneqq \lbrack t_{0}-r^{2},\,t_{0}\rbrack$ and $\ring{I_{r}}\coloneqq (t_{0}-r^{2},\,t_{0})$.
 The center points $x_{0}$ and $t_{0}$ are often omitted when they are clear.

 For $p\in(1,\,\infty)$, the H\"{o}lder conjugate exponent is denoted by $p^{\prime}\coloneqq p/(p-1)\in(1,\,\infty)$.
 For $k,\,m\in{\mathbb N}$, we write $W^{k,\,p}(\Omega;\,{\mathbb R}^{m})$, and $L^{p}(\Omega;\,{\mathbb R}^{m})$ respectively by the Sobolev space of $k$-th order, and the Lebesgue space, equipped with the standard norms.
 When $m=1$, we often omit the codomain ${\mathbb R}^{m}$, and write $L^{p}(\Omega)$ and $W^{k,\,p}(\Omega)$.
 For $p\in(1,\,\infty)$, the closed subspace $W_{0}^{1,\,p}(\Omega)\subset W^{1,\,p}(\Omega)$ is defined to be the closure of infinitely differentiable functions compactly supported in $\Omega$, equipped with another equivalent norm
 \(\lVert \nabla u\rVert_{L^{p}(\Omega)}\coloneqq \left(\int_{\Omega}\lvert \nabla u\rvert^{p}\,{\mathrm d}x \right)^{1/p}\) for \(u\in W_{0}^{1,\,p}(\Omega).\)
 With respect to this norm, the dual of $W_{0}^{1,\,p}(\Omega)$ is denoted by $W^{-1,\,p^{\prime}}(\Omega)$. 
 For $F\in W^{-1,\,s^{\prime}}(\Omega)$ and $v\in W_{0}^{1,\,s}(\Omega)$ with $s\in(p_{\mathrm c},\,\infty)$, the dual pairing is often denoted by $\langle F,\,v\rangle_{W^{-1,\,s^{\prime}}(\Omega),\,W_{0}^{1,\,s}(\Omega)}\coloneqq F(v)$, or more simply by $\langle F,\,v\rangle$ when $s$ and $\Omega$ are clear.
 We write $C^{0}(\lbrack 0,\,T\rbrack;\,L^{2}(\Omega))$ for the set of all $L^{2}(\Omega)$-valued functions that are strongly continuous in $\lbrack 0,\,T\rbrack$.
 For $k\in {\mathbb Z}_{\ge 0}$, $l,\,m\in{\mathbb N}$, and $U\subset{\mathbb R}^{l}$, the symbol $C^{k}(U;\,{\mathbb R}^{m})$ denotes the set of all ${\mathbb R}^{m}$-valued functions of the $C^{k}$-class in $U$, and we abbreviate $C^{k}(U)\coloneqq C^{k}(U;\,{\mathbb R})$.

 To simplify notation, we often make use of the abbreviations $X=(x,\,t)$ and ${\mathrm d}X={\mathrm d}x{\mathrm d}t$.
 For a $k$-dimensional Lebesgue measurable set $U\subset {\mathbb R}^{k}$ with $k\in{\mathbb N}$, the symbol $\lvert U\rvert$ stands for the $k$-dimensional Lebesgue measure of $U$.
 For ${\mathbb R}^{k}$-valued functions $g=g(x)$, and $h=h(x,\,t)$ that are respectively integrable in $U\subset {\mathbb R}^{n}$, and $U\times I\subset{\mathbb R}^{n+1}$ with $0<\lvert U\rvert,\,\lvert I\rvert<\infty$, the average integrals are denoted by
 \(\fint_{U}g(x)\,{\mathrm d}x\coloneqq \lvert U\rvert^{-1}\int_{U}g(x)\,{\mathrm d}x\in{\mathbb R}^{k}\), \(\fiint_{U\times I}h(x,\,t)\,{\mathrm d}X\coloneqq \lvert U\rvert^{-1}\lvert I\rvert^{-1}\iint_{U}h(x,\,t)\,{\mathrm d}X\in{\mathbb R}^{k}.\)
 The former average integral is often denoted by $f_{U}$.
 The average integral of $h$ over a parabolic cylinder $Q_{r}$ is often denoted by $(h)_{Q_{r}}$ or $(h)_{r}$.

 The function spaces
 \[X^{p}(0,\,T;\,\Omega)\coloneqq \left\{u\in L^{p}(0,\,T;\,W^{1,\,p}(\Omega))\mathrel{}\middle|\mathrel{}\partial_{t}u\in L^{p^{\prime}}(0,\,T;\,W^{-1,\,p^{\prime}}(\Omega))\right\},\]
 and 
 \[X_{0}^{p}(0,\,T;\,\Omega)\coloneqq \left\{u\in L^{p}(0,\,T;\,W_{0}^{1,\,p}(\Omega))\mathrel{}\middle|\mathrel{}\partial_{t}u\in L^{p^{\prime}}(0,\,T;\,W^{-1,\,p^{\prime}}(\Omega))\right\}\]
 are considered for $p\in(p_{\mathrm c},\,\infty)$.
 By the embeddings $ W_{0}^{1,\,p}(\Omega)\hookrightarrow\hookrightarrow L^{2}(\Omega)\hookrightarrow W^{-1,\,p^{\prime}}(\Omega)$, we can use some inclusions or embeddings concerning $X_{0}^{p}(0,\,T;\,\Omega)$.
 More precisely, by the Lions--Magenes lemma \cite[Chapter III, Proposition 1.2]{MR1422252}, the inclusion $X_{0}^{p}(0,\,T;\,\Omega)\subset C^{0}(\lbrack 0,\,T\rbrack;\, L^{2}(\Omega))$ holds. 
 Also by the Aubin--Lions lemma \cite[Chapter III, Proposition 1.3]{MR1422252}, the compact embedding $X_{0}^{p}(0,\,T;\,\Omega)\hookrightarrow\hookrightarrow L^{p}(0,\,T;\,L^{2}(\Omega))$ can be applied.

 In this paper, we require $E_{1}\in C^{0}({\mathbb R}^{n})\cap C^{2}({\mathbb R}^{n}\setminus\{0\})$, and $E_{p}\in C^{1}({\mathbb R}^{n})\cap C^{2}({\mathbb R}^{n}\setminus \{0\})$ for the smoothness of the density functions.
 We let $E_{1}$ be positively one-homogeneous.
 In other words, $E_{1}$ is assumed to satisfy
 \begin{equation}\label{Eq (Section 1): Positively one-homogeneous}
   E_{1}(kz)=kE_{1}(z)\quad \textrm{for all }k\in(0,\,\infty),\,z\in{\mathbb R}^{n}.
 \end{equation}
 We also require the existence of a concave, non-decreasing modulus of continuity $\omega_{1}\colon\lbrack 0,\,\infty)\to \lbrack0,\,\infty)$ such that $\omega_{1}(0)=0$ and
 \begin{equation}\label{Eq (Section 1): (Assumption) modulus of continuity of Hess E1}
  \left\lVert \nabla^{2}E_{1}(z_{1})-\nabla^{2}E_{1}(z_{2}) \right\rVert\le \omega_{1}(\lvert z_{1}-z_{2}\rvert)
 \end{equation}
 for all $z_{1}$, $z_{2}\in{\mathbb R}^{n}$ with $1/32\le \lvert z_{j}\rvert\le 3$ for $j\in\{\,1,\,2\,\}$.
 The other density $E_{p}$ admits the constants $0<\lambda_{0}<\Lambda_{0}<\infty$ such that
 \begin{equation}\label{Eq (Section 1): Gradient growth bounds for E-p}
  \lvert \nabla E_{p}(z)\rvert\le \Lambda_{0}\lvert z\rvert^{p-1}\quad \textrm{for all }z\in{\mathbb R}^{n},
 \end{equation}
 \begin{equation}\label{Eq (Section 1): Hessian estimates for E-p}
  \lambda_{0}\lvert z\rvert^{p-2}\mathrm{id}_{n}\leqslant \nabla^{2}E_{p}(z)\leqslant \Lambda_{0}\lvert z\rvert^{p-2}\mathrm{id}_{n}\quad \textrm{for all }z\in{\mathbb R}^{n}\setminus\{0\}.
 \end{equation}
  For the continuity of $\nabla^{2} E_{p}$,  we assume that there exists a concave, non-decreasing, and continuous function $\omega_{p}\colon \lbrack 0,\,\infty)\to\lbrack 0,\,\infty)$, such that $\omega_{p}(0)=0$ and 
 \begin{equation}\label{Eq (Section 1): modulus of continuity of Hess Ep}
  \left\lVert \nabla^{2}E_{p}(z_{1})-\nabla^{2}E_{p}(z_{2}) \right\rVert\le C_{\delta,\,M}\omega_{p}(\lvert z_{1}-z_{2}\rvert/\mu)
 \end{equation}
 for all $z_{1}$, $z_{2}\in{\mathbb R}^{n}$ with $\mu/32\le \lvert z_{j}\rvert\le 3\mu$ for $j\in\{\,1,\,2\,\}$, and $\mu\in(\delta,\,M-\delta)$.
 Here $\delta$ and $M$ are fixed constants with $0<2\delta<M<\infty$, and the constant $C_{\delta,\,M}\in(0,\,\infty)$ depends on $\delta$ and $M$.

 The definition of a weak solution to (\ref{Eq (Section 1): General Parabolic Eq}) is given as follows.
 \begin{definition}\upshape
   Let the exponents $p$ and $q$ satisfy (\ref{Eq (Section 1): p-q condition}), and  $f\in L^{q}(\Omega_{T})\cap L^{p^{\prime}}(0,\,T;\,W^{-1,\,p^{\prime}}(\Omega))$. 
   A function $u\in X^{p}(0,\,T;\,\Omega)\cap C^{0}(\lbrack 0,\,T\rbrack;\,L^{2}(\Omega))$ is called a \textit{weak} solution to (\ref{Eq (Section 1): General Parabolic Eq}) if there exists a vector field $Z\in L^{\infty}(\Omega_{T})$ such that
   \begin{equation}\label{Eq (Section 1): Weak form Definition}
    \int_{0}^{T}\langle\partial_{t}u,\,\varphi\rangle\,{\mathrm d}t+\iint_{\Omega_{T}}\left\langle Z+\nabla E_{p}(\nabla u)\mathrel{}\middle|\mathrel{}\nabla\varphi \right\rangle\,{\mathrm d}X=\iint_{\Omega_{T}}f\varphi\,{\mathrm d}X
   \end{equation}
   for all $\varphi\in X_{0}^{p}(0,\,T;\,\Omega)$, and
   \begin{equation}\label{Eq (Section 1): Subgradient parabolic}
     Z(x,\,t)\in \partial E_{1}(\nabla u(x,\,t))\quad \textrm{for a.e.~}(x,\,t)\in\Omega_{T}.
   \end{equation}
    Here $\partial E_{1}$ denotes the subdifferential of $E_{1}$, defined as \[\partial E_{1}(\zeta)\coloneqq \left\{w\in{\mathbb R}^{n} \mathrel{} \middle|\mathrel{} E_{1}(z)\ge E_{1}(\zeta)+\langle w\mid z-\zeta\rangle \text{ for all }z\in{\mathbb R}^{n} \right\}\] for each $\zeta\in{\mathbb R}^{n}$.
   
 \end{definition}
 Our main result is the following Theorem \ref{Thm: Main theorem parabolic}.
 \begin{theorem}\label{Thm: Main theorem parabolic}
   Let $p$, $q$, $E_{1}$ and $E_{p}$ satisfy (\ref{Eq (Section 1): p-q condition}) and (\ref{Eq (Section 1): Positively one-homogeneous})--(\ref{Eq (Section 1): modulus of continuity of Hess Ep}).
   Assume that $u$ is a weak solution to (\ref{Eq (Section 1): General Parabolic Eq}) with  $f\in L^{q}(\Omega_{T})\cap L^{p^{\prime}}(0,\,T;\,W^{-1,\,p^{\prime}}(\Omega))$.  
   Then, $\nabla u$ is continuous in $\Omega_{T}$. 
 \end{theorem}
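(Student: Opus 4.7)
The overall plan is to reduce the gradient continuity of $u$ to quantitative a priori Hölder estimates for the truncated gradient of smooth approximate solutions, then pass to the limit. First I would fix an approximation parameter $\varepsilon\in(0,\,1)$ and let $u_{\varepsilon}$ be a (smooth, by bootstrap) solution to $\partial_{t}u_{\varepsilon}+{\mathcal L}_{\varepsilon}u_{\varepsilon}=f_{\varepsilon}$ with the same Dirichlet and initial data as $u$, where $E_{\varepsilon}=E\ast j_{\varepsilon}$ and $f_{\varepsilon}\in C^{\infty}$ approximates $f$. Standard monotonicity gives uniform bounds in $X_{0}^{p}(0,\,T;\,\Omega)$; the crucial regularity hypothesis (\ref{Eq (Section 1): Continuous embedding on distribution}) converts the $L^{q}$ datum into a uniform bound on $\partial_{t}u_{\varepsilon}$ in $L^{p^{\prime}}(0,\,T;\,W^{-1,\,p^{\prime}}(\Omega))$, and the Aubin--Lions lemma (available precisely because $p>2n/(n+2)$) then yields strong $L^{p}(0,\,T;\,L^{2}(\Omega))$ compactness, from which monotonicity plus the identification of the subgradient $Z\in\partial E_{1}(\nabla u)$ upgrades the convergence to strong $L^{p}(W^{1,\,p})$.

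Next I would fix $\delta\in(0,\,1)$ and restrict to $\varepsilon<\delta/8$, and establish the two a priori ingredients uniformly in $\varepsilon$. The first is a Moser iteration on (\ref{Eq (Section 1): Approximation problem}), differentiating the approximate equation, testing against powers of a convex function of $V_{\varepsilon}=\sqrt{\varepsilon^{2}+\lvert\nabla u_{\varepsilon}\rvert^{2}}$, and using the bound $q>n+2$ on $f$ to absorb the forcing term; this yields local $L^{\infty}$ bounds $\lVert V_{\varepsilon}\rVert_{L^{\infty}(Q)}\le M$ independent of $\varepsilon$. The second, and the main technical content, is a local Hölder estimate for
\[
{\mathcal G}_{2\delta,\,\varepsilon}(\nabla u_{\varepsilon})=\left(V_{\varepsilon}-2\delta\right)_{+}\frac{\nabla u_{\varepsilon}}{\lvert \nabla u_{\varepsilon}\rvert}
\]
with exponent $\alpha=\alpha(\delta,\,M)$ and constant $C(\delta,\,M)$ independent of $\varepsilon$. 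On the set $\{V_{\varepsilon}\ge \delta\}$, the Hessian $\nabla^{2}E_{\varepsilon}(\nabla u_{\varepsilon})$ has parabolicity ratio bounded in terms of $\delta$ (by the computation in Section \ref{Subsect: Strategy}), so (\ref{Eq (Section 1): Approximation problem}) is, on this set, a classical linear uniformly parabolic system in the components $\partial_{x_{j}}u_{\varepsilon}$ with right-hand side $\partial_{x_{j}}f_{\varepsilon}$. Following the scheme of \cite{BDLS} and \cite{T-scalar} adapted to parabolic cylinders, I would first use De Giorgi truncation on the level sets of $V_{\varepsilon}$ to propagate a smallness alternative either forward in time or across a parabolic cylinder, using truncated test functions that see only the region $\{V_{\varepsilon}\ge\delta\}$; then I would freeze coefficients and compare $\nabla u_{\varepsilon}$ to the solution of a Dirichlet heat-flow system with constant coefficients $\nabla^{2}E_{\varepsilon}(z_{0})$ at a sample point $z_{0}$, exploiting the modulus-of-continuity hypotheses (\ref{Eq (Section 1): modulus of continuity of Hess Ep}) and (\ref{Eq (Section 1): (Assumption) modulus of continuity of Hess E1}) to control the perturbation. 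Campanato-type iteration of the resulting excess decay then yields the claimed Hölder estimate on ${\mathcal G}_{2\delta,\,\varepsilon}(\nabla u_{\varepsilon})$, uniformly in $\varepsilon\in(0,\,\delta/8)$.

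Having the uniform Hölder estimate, Arzelà--Ascoli extracts a subsequence $\varepsilon_{k}\downarrow 0$ along which ${\mathcal G}_{2\delta,\,\varepsilon_{k}}(\nabla u_{\varepsilon_{k}})$ converges locally uniformly; the strong $L^{p}(W^{1,\,p})$-convergence of Step 1 forces the limit to coincide with ${\mathcal G}_{2\delta}(\nabla u)$, which is therefore locally $\alpha(\delta)$-Hölder continuous in $\Omega_{T}$ for each $\delta\in(0,\,1)$. Finally, the pointwise inequality $\lvert{\mathcal G}_{2\delta}(\nabla u)-\nabla u\rvert\le 2\delta$ gives uniform convergence ${\mathcal G}_{2\delta}(\nabla u)\to \nabla u$ as $\delta\to 0$, so $\nabla u$ is a uniform limit of continuous functions and is itself continuous on $\Omega_{T}$, which is the conclusion of Theorem \ref{Thm: Main theorem parabolic}.

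The main obstacle will be the a priori Hölder estimate for ${\mathcal G}_{2\delta,\,\varepsilon}(\nabla u_{\varepsilon})$: the anisotropic diffusion of the $1$-Laplacian piece means that the natural ellipticity constants in $\nabla^{2}E_{\varepsilon}$ degenerate like negative powers of $\delta$, so every De Giorgi alternative and every comparison estimate must be carried out on \emph{standard} parabolic cylinders (since intrinsic scaling cannot remove the facet degeneracy), and the smallness parameters and level choices must be tracked carefully so that the iteration closes with an exponent $\alpha(\delta)>0$, even though $\alpha(\delta)\to 0$ as $\delta\to 0$. The test-function gymnastics with the step-function-in-$t$ cut-offs described in Section \ref{Subsect: Literature} will be the place where the parabolic analysis diverges most substantially from the elliptic treatments in \cite{T-scalar,T-system}.
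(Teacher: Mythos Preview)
Your proposal is correct and follows essentially the same architecture as the paper's proof: approximation via $E_{\varepsilon}=E\ast j_{\varepsilon}$ and a Dirichlet problem with data $u_{\star}=u$ (Proposition~\ref{Prop: Convergence result}), uniform local gradient bounds by Moser iteration (Theorem~\ref{Proposition: Lipschitz}), a $\delta$-dependent H\"older estimate for ${\mathcal G}_{2\delta,\,\varepsilon}(\nabla u_{\varepsilon})$ via a De Giorgi/comparison dichotomy on standard cylinders (Theorem~\ref{Thm: Key Hoelder Estimate} from Propositions~\ref{Proposition: Parabolic De Giorgi}--\ref{Proposition: Parabolic Campanato}), and the Arzel\`a--Ascoli plus $\delta\to 0$ argument. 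The only cosmetic difference is that the paper carries out the convergence on $\Omega_{T-\tau}$ for arbitrary $\tau>0$ rather than on all of $\Omega_{T}$, but this is a standard localization that does not affect the interior conclusion.
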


 This paper is organized as follows.
 Section \ref{Sect:Approximation} aims to give an approximation scheme for the very singular parabolic equation (\ref{Eq (Section 1): General Parabolic Eq}).
 The basic strategy is to relax an energy density by convoluting this and the Friedrichs standard mollifier.
 In Section \ref{Sect:Approximation}, we verify that a weak solution to (\ref{Eq (Section 1): General Parabolic Eq}) can be constructed as a limit of the approximate parabolic equation (Proposition \ref{Prop: Convergence result}).
 By basic estimates for approximate solutions, including local gradient bounds (Theorem \ref{Proposition: Lipschitz}), De Giorgi-type oscillation estimates (Proposition \ref{Proposition: Parabolic De Giorgi}), and Campanato-type growth estimates (Proposition \ref{Proposition: Parabolic Campanato}), we prove a priori H\"{o}lder bounds for truncated gradients of approximate solutions (Theorem \ref{Thm: Key Hoelder Estimate}).
 From Proposition \ref{Prop: Convergence result} and Theorem \ref{Thm: Key Hoelder Estimate}, we complete the proof of Theorem \ref{Thm: Main theorem parabolic}.
 Sections \ref{Sect:Estimates from Weak Form}--\ref{Sect:Non-Degenerate Region} provides the proofs of Theorem \ref{Proposition: Lipschitz}, and Propositions \ref{Proposition: Parabolic De Giorgi}--\ref{Proposition: Parabolic Campanato}.
 In Section \ref{Sect:Estimates from Weak Form}, we deduce some basic energy estimates from weak formulations.
 Theorem \ref{Proposition: Lipschitz} is shown in Section \ref{Sect:Lipschitz Bounds}. 
 Sections \ref{Sect:Degenerate Region}--\ref{Sect:Non-Degenerate Region} provide various oscillation lemmata. 
 There, depending on the size of a superlevel set, we make different analyses for degenerate or non-degenerate cases.
 Section \ref{Sect:Degenerate Region}, which deals with the degenerate case, is focused on proving Proposition \ref{Proposition: Parabolic De Giorgi}.
 Section \ref{Sect:Non-Degenerate Region}, where the non-degenerate case is discussed, we appeal to comparisons with Dirichlet heat flows to show Proposition \ref{Proposition: Parabolic Campanato}.

 Finally, we would like to mention again that the remaining case $p\in(1,\,p_{\mathrm c}\rbrack$ is treated in another paper \cite{T-subcritical} by the author.
 There, the local boundedness of a solution or its gradient is mainly discussed, and Theorem \ref{Thm: Key Hoelder Estimate} in this paper, which result is valid for any $p\in(1,\,\infty)$ as long as uniform gradient bounds are guaranteed, is used without a proof.

 \section{Approximation problems}\label{Sect:Approximation}
 In Section \ref{Sect:Approximation}, we consider a parabolic approximate problem, based on the convolution of $E=E_{1}+E_{p}$ and the Friedrichs mollifiers.
 More precisely, we choose and fix a spherically symmetric and non-negative function $j\in C_{\mathrm c}^{\infty}({\mathbb R}^{n})$ such that $\lVert j\rVert_{L^{1}({\mathbb R}^{n})}=1$ holds, and its support is the closed ball $\overline{B_{1}(0)}$.
 For this smooth function $j$, we define
 \(j_{\varepsilon}(x)\coloneqq \varepsilon^{-n}j(x/\varepsilon)\) for \(\varepsilon\in(0,\,1),\,x\in{\mathbb R}^{n}\),
 often called the Friedrichs mollifier.
 For $\varepsilon\in(0,\,1)$, we define
 \begin{equation}\label{Eq (Section 2): Def of E-s-epsilon}
  E_{s,\,\varepsilon}(z)\coloneqq (j_{\varepsilon}\ast E_{s})(z)=\int_{{\mathbb R}^{n}}j_{\varepsilon}(z-y)E_{s}(y)\,{\mathrm d}y
 \end{equation}
 for $z\in{\mathbb R}^{n}$, $s\in\{\,1,\,p\,\}$.
 Section \ref{Sect:Approximation} aims to show convergence results for weak solutions to approximate problems, based on our straightforward relaxation of $E=E_{1}+E_{p}$ by $E_{\varepsilon}\coloneqq E_{1,\,\varepsilon}+E_{p,\,\varepsilon}$.
 Also, we would like to prove our main Theorem \ref{Thm: Main theorem parabolic} by making use of a priori estimates for approximate solutions, which are shown in Sections \ref{Sect:Estimates from Weak Form}--\ref{Sect:Non-Degenerate Region}.
 \subsection{Relaxation of divergence operators and some basic estimates}\label{Subsect: Relaxation Results}
 In Section \ref{Subsect: Relaxation Results}, we briefly note some basic estimates of $E_{1}$, $E_{\varepsilon}=E_{1,\,\varepsilon}+E_{p,\,\varepsilon}$, and ${\mathcal G}_{2\delta,\,\varepsilon}$.

 By (\ref{Eq (Section 1): Positively one-homogeneous}), for any \(k\in(0,\,\infty)\),\(z\in{\mathbb R}^{n}\setminus\{0\}\), $E_{1}$ satisfies 
 \(\nabla E_{1}(kz)=\nabla E_{1}(z)\) and \(\nabla^{2}E_{1}(kz)=k^{-1}\nabla^{2}E_{1}(z)\)
 In particular, by $E_{1}\in C^{2}({\mathbb R}^{n}\setminus\{0\})$ and  (\ref{Eq (Section 1): (Assumption) modulus of continuity of Hess E1}),  it is easy to find a constant $K_{0}\in(0,\,\infty)$ such that
 \begin{align}
  \label{Eq (Section 1): Gradient growth bounds for E-1}
  &\lvert \nabla E_{1}(z)\rvert\le K_{0}\quad \text{and}\quad  
  O\leqslant \nabla^{2}E_{1}(z)\leqslant K_{0}\lvert z\rvert^{-1}\mathrm{id}_{n}\quad \textrm{for all }z\in{\mathbb R}^{n}\setminus\{0\},\\ 
  \label{Eq (Section 1): modulus of continuity of Hess E1}
  &\left\lVert \nabla^{2}E_{1}(z_{1})-\nabla^{2}E_{1}(z_{2}) \right\rVert\le \mu^{-1}\omega_{1}(\lvert z_{1}-z_{2}\rvert/\mu) 
 \end{align}
 for all $\mu\in(0,\,\infty)$, $z_{1}$, $z_{2}\in{\mathbb R}^{n}\setminus \{0\}$ with $\mu/32\le \lvert z_{j}\rvert\le 3\mu$ for $j\in\{\,1,\,2\,\}$.
 
 By (\ref{Eq (Section 1): Gradient growth bounds for E-p})--(\ref{Eq (Section 1): Hessian estimates for E-p}) and (\ref{Eq (Section 1): Gradient growth bounds for E-1}), the relaxed density functions $E_{1,\,\varepsilon}$ and $E_{p,\,\varepsilon}$, given by (\ref{Eq (Section 2): Def of E-s-epsilon}), satisfies the following estimates for every $\varepsilon\in(0,\,1)$, $z\in{\mathbb R}^{n}$;
 \begin{align}
  \label{Eq (Section 2): bounds of E-1-epsilon}
  &\left\lvert \nabla E_{1,\,\varepsilon}(z) \right\rvert\le K,\quad O\leqslant \nabla^{2}E_{1,\,\varepsilon}(z)\leqslant K\left(\varepsilon^{2}+\lvert z\rvert^{2}\right)^{-\frac{1}{2}}  \mathop{\mathrm{id}_{n}},  \\ 
  \label{Eq (Section 2): bounds of E-p-epsilon}
  & \left\lvert \nabla E_{p,\,\varepsilon}(z) \right\rvert\le \Lambda\left(\varepsilon^{2}+\lvert z\rvert^{2} \right)^{\frac{p-1}{2}},\quad 
  \lambda\left(\varepsilon^{2}+\lvert z\rvert^{2} \right)^{\frac{p}{2}-1}  \mathop{\mathrm{id}_{n}}  \leqslant \nabla^{2}E_{p,\,\varepsilon}(z)\leqslant \Lambda\left(\varepsilon^{2}+\lvert z\rvert^{2} \right)^{\frac{p}{2}-1}  \mathop{\mathrm{id}_{n}}.  
\end{align}
 Here the positive constants $\lambda=\lambda(\lambda_{0},\,n,\,p)$, $\Lambda=\Lambda(\Lambda_{0},\,n,\,p)$, $K=K(K_{0},\,n)$ are independent of $\varepsilon\in(0,\,1)$.
 Moreover, by straightforward computations, we have
 \begin{equation}\label{Eq (Section 2): Strong monotoniticy of E-epsilon}
  \langle \nabla E_{\varepsilon}(z_{1})-\nabla E_{\varepsilon}(z_{2})\mid z_{1}-z_{2}\rangle
    \ge \left\{\begin{array}{cc}
    c(p)\lambda\lvert z_{1}-z_{2}\rvert^{p} & (p\ge 2),\\ 
    \lambda(\varepsilon^{2}+\lvert z_{1}\rvert^{2}+\lvert z_{2}\rvert^{2})^{p/2-1}\lvert z_{1}-z_{2}\rvert^{2}& (\textrm{otherwise}),
     \end{array}\right.
 \end{equation}
 for all $z_{1}$, $z_{2}\in{\mathbb R}^{n}$.
 Also, it is easy to verify that $E_{p}$ satisfies
 \begin{equation}\label{Eq (Section 2): Strong monotonicity of E-p}
   \left\langle \nabla E_{p}(z_{1})-\nabla E_{p}(z_{2})\mathrel{}\middle|\mathrel{} z_{1}-z_{2}\right\rangle>0
   \quad \text{for any}\quad z_{1},\,z_{2}\in{\mathbb R}^{n}\quad \text{with}\quad z_{1}\neq z_{2}.
 \end{equation}
 For the proofs of (\ref{Eq (Section 2): bounds of E-1-epsilon})--(\ref{Eq (Section 2): Strong monotoniticy of E-epsilon}), see \cite[\S A]{MR4201656} and \cite[\S 2]{T-scalar} (see also \cite{MR1634641}).

 Throughout Sections \ref{Sect:Approximation}--\ref{Sect:Non-Degenerate Region}, we deal with truncation mappings ${\mathcal G}_{\delta}$, ${\mathcal G}_{\delta,\,\varepsilon}\colon{\mathbb R}^{n}\to {\mathbb R}^{n}$, defined as
 \[ {\mathcal G}_{\delta}(z)\coloneqq (\lvert z\rvert-\delta)_{+}\frac{z}{\lvert z\rvert},\quad \text{and}\quad {\mathcal G}_{\delta,\,\varepsilon}(z)\coloneqq \left(\sqrt{\varepsilon^{2}+\lvert z\rvert^{2}}-\delta \right)_{+}\frac{z}{\lvert z\rvert}\]
 for $z\in{\mathbb R}^{n}$. 
 The truncation mapping ${\mathcal G}_{\delta,\,\varepsilon}$ makes sense as long as $0<\varepsilon<\delta$ holds, and this is Lipschitz continuous, provided $\varepsilon$ is much smaller than $\delta$.
 More precisely, if $0<\varepsilon<\delta/8$ holds, then by \cite[Lemma 2.4]{T-scalar}, we have
 \begin{equation}\label{Eq (Section 2): Lipschitz continuity of G-2delta-epsilon}
   \left\lvert {\mathcal G}_{2\delta,\,\varepsilon}(z_{1})-{\mathcal G}_{2\delta,\,\varepsilon}(z_{2}) \right\rvert\le c_{\dagger}\lvert z_{1}-z_{2}\rvert\quad \textrm{for all }z_{1},\,z_{2}\in{\mathbb R}^{n}
 \end{equation}
 with $c_{\dagger}\coloneqq 1+64/\sqrt{255}$.
 We also use some basic inequalities given in Lemma \ref{Lemma: Continuity estimates on Hessian}.
 \begin{lemma}\label{Lemma: Continuity estimates on Hessian}
   Let $E_{\varepsilon}=E_{1,\,\varepsilon}+E_{p,\,\varepsilon}$ be given by (\ref{Eq (Section 2): Def of E-s-epsilon}) for $\varepsilon\in(0,\,1)$.
   Assume that non-negative constants $\delta$, $\varepsilon$ and $M$ satisfy $\varepsilon<\delta/8$ and $2\delta<M$.
   Then, there exist constants $0<C_{1}<C_{2}<\infty$, depending at most on $n$, $p$, $\lambda$, $\Lambda$, $K$, $\delta$ and $M$, such that the following estimates (\ref{Eq (Section 2): Monotonicity outside a facet})--(\ref{Eq (Section 2): Growth outside a facet}) hold for all $z_{1}$, $z_{2}\in{\mathbb R}^{n}$ satisfying $\delta\le \lvert z_{1}\rvert\le M$ and $\lvert z_{2}\rvert\le M$.
   \begin{align}
    \label{Eq (Section 2): Monotonicity outside a facet}
    &\left\langle \nabla E_{\varepsilon}(z_{1})-\nabla E_{\varepsilon}(z_{2})\mathrel{}\middle|\mathrel{}z_{1}-z_{2} \right\rangle\ge C_{1}\lvert z_{1}-z_{2}\rvert^{2}.\\ 
    \label{Eq (Section 2): Growth outside a facet}
    &\left\lvert \nabla E_{\varepsilon}(z_{1})-\nabla E_{\varepsilon}(z_{2}) \right\rvert\le C_{2}\lvert z_{1}-z_{2}\rvert.
  \end{align}
 Moreover, there holds 
 \begin{equation}\label{Eq (Section 2): Continuity estimate on Hessian}
  \left\lvert \nabla^{2} E_{\varepsilon}(z_{1})(z_{1}-z_{2})-\left(\nabla E_{\varepsilon}(z_{1})-\nabla E_{\varepsilon}(z_{2}) \right) \right\rvert\le \lvert z_{1}-z_{2}\rvert\omega(\lvert z_{1}-z_{2}\rvert/\mu)
 \end{equation}
 for all $z_{1}$, $z_{2}\in{\mathbb R}^{n}$ satisfying $\delta+\frac{\mu}{4}\le \lvert z_{1}\rvert\le \delta+\mu$, $\lvert z_{2}\rvert\le \delta+\mu$ with $\delta<\mu<M-\delta$.
 Here the modulus of continuity $\omega=\omega_{\delta,\,M}$, which is concave, is of the form 
 \(\omega(\sigma)=C_{3}(\omega_{1}(\sigma)+\omega_{p}(\sigma)+\sigma)\)
 with $C_{3}\in (0,\,\infty)$ depending at most on $p$, $\Lambda$, $K$, $\delta$ and $M$.
 \end{lemma}
 In the special case where $\omega_{1}=\omega_{1}(\sigma)$ and $\omega_{p}=\omega_{p}(\sigma)$ behave like $\sigma^{\beta_{0}}$ for some $\beta_{0}\in(0,\,1)$, a similar continuity estimate to (\ref{Eq (Section 2): Continuity estimate on Hessian}) is shown in \cite[Lemma 2.6]{T-scalar}.
 The proofs of (\ref{Eq (Section 2): Monotonicity outside a facet})--(\ref{Eq (Section 2): Growth outside a facet}) are already given there.
 Modifying \cite[Lemma 2.6]{T-scalar}, we would like to give a proof of (\ref{Eq (Section 2): Continuity estimate on Hessian}).
 \begin{proof}
  By $\varepsilon<\delta/8$, we can check that
  \(\frac{\mu^{2}}{16}\le \varepsilon^{2}+\lvert z_{1}\rvert^{2}\le 5\mu^{2}\), and \(\varepsilon^{2}+\lvert z_{2}\rvert^{2}\le 5\mu^{2}\).
  Combining them with (\ref{Eq (Section 2): bounds of E-1-epsilon})--(\ref{Eq (Section 2): bounds of E-p-epsilon}) and $\delta<\mu<M-\delta$, we can easily check that the left-hand side of (\ref{Eq (Section 2): Continuity estimate on Hessian}) is bounded by some $C=C(p,\,\Lambda,\,K,\,\delta,\,M)\in(1,\,\infty)$,
  from which (\ref{Eq (Section 2): Continuity estimate on Hessian}) is easily concluded when $\lvert z_{1}-z_{2}\rvert>\mu/32$.
  For the remaining case $\lvert z_{1}-z_{2}\rvert\le \mu/32$, we use the triangle inequality to get 
  $\frac{7}{8}\delta+\frac{\mu}{4}\le \lvert z_{1}-y\rvert\le \frac{17}{8}\mu$, and $\frac{7}{8}\delta+\frac{\mu}{32}\le \lvert z_{1}-y+t(z_{1}-z_{2}) \rvert\le 3\mu$
  for all $y\in B_{\varepsilon}(0)$.
  By $\nabla^{2}E\in C({\mathbb R}^{n}\setminus\{0\})$, it is easy to check $\nabla^{2} E_{\varepsilon}=j_{\varepsilon}\ast \nabla^{2}E$ in ${\mathbb R}^{n}\setminus \overline{B_{\hat{\varepsilon}}(0)}$ for any fixed ${\hat\varepsilon}\in(\varepsilon,\,1)$.
  Using this identity, (\ref{Eq (Section 1): modulus of continuity of Hess Ep}), and (\ref{Eq (Section 1): modulus of continuity of Hess E1}) yields
  \begin{align*}
    &\left\lvert \nabla^{2} E_{\varepsilon}(z_{1})(z_{1}-z_{2})-\left(\nabla E_{\varepsilon}(z_{1})-\nabla E_{\varepsilon}(z_{2}) \right) \right\rvert\\ 
    &=\left\lvert \left(\int_{0}^{1} \left[\nabla^{2} E_{\varepsilon}(z_{1})-\nabla^{2}E_{\varepsilon}(z_{1}+t(z_{2}-z_{1}))\right]\,{\mathrm d}t\right)\cdot(z_{1}-z_{2}) \right\rvert\\  
    &\le \lvert z_{1}-z_{2}\rvert\sum_{s=1,\,p}\int_{0}^{1}\int_{B_{\varepsilon}(0)}\left\lVert \nabla^{2}E_{s}(z_{1}-y-t(z_{1}-z_{2}))-\nabla^{2}E_{s}(z_{1}-y) \right\rVert j_{\varepsilon}(y)\,{\mathrm d}y{\mathrm d}t\\ 
    &\le \lvert z_{1}-z_{2}\rvert\left[\delta^{-1}\omega_{1}(\lvert z_{1}-z_{2}\rvert/\mu)+C_{\delta,\,M}\omega_{p}(\lvert z_{1}-z_{2}\rvert/\mu)\right],
  \end{align*}
  from which we obtain (\ref{Eq (Section 2): Continuity estimate on Hessian}).
 \end{proof}
 
 Finally, for each $\varepsilon\in(0,\,1)$, we define the bijective mapping $G_{p,\,\varepsilon}\colon {\mathbb R}^{n}\rightarrow {\mathbb R}^{n}$ as 
 \begin{equation}\label{Eq (Section 2): Gpe}
  G_{p,\,\varepsilon}(z)\coloneqq \left(\varepsilon^{2}+\lvert z\rvert^{2} \right)^{(p-1)/2}z\quad  \text{for} \quad z\in{\mathbb R}^{n}.
 \end{equation}
 This mapping satisfies
 \begin{equation}\label{Eq (Section 6): Monotonicity of G-p-epsilon}
   \lvert G_{p,\,\varepsilon}(z_{1})-G_{p,\,\varepsilon}(z_{2}) \rvert\ge c(p) \max\left\{\,\lvert z_{1}\rvert^{p-1},\,\lvert z_{2}\rvert^{p-1} \right\}\lvert z_{1}-z_{2}\rvert
 \end{equation}
 for all $z_{1}$, $z_{2}\in{\mathbb R}^{n}$ (see \cite[Lemma 2.3 \& \S 3.5]{T-scalar}). 
 Moreover, by (\ref{Eq (Section 6): Monotonicity of G-p-epsilon}) and $G_{p,\,\varepsilon}(0)=0$, we have
 \begin{equation}\label{Eq (Section 6): Bound on G-p-epsilon-inverse}
   \left\lvert G_{p,\,\varepsilon}^{-1}(w)\right\rvert\le C(p)\lvert w\rvert^{1/p}\quad \textrm{for all}\quad w\in{\mathbb R}^{n}.
 \end{equation}
 The inequalities (\ref{Eq (Section 6): Monotonicity of G-p-epsilon})--(\ref{Eq (Section 6): Bound on G-p-epsilon-inverse}) are used in Section \ref{Sect:Non-Degenerate Region}.
 
 \subsection{Convergence of approximate solutions}\label{Subsect:Parabolic Convergence}
 For an approximation parameter $\varepsilon\in(0,\,1)$, we consider the following Dirichlet boundary value problem;
 \begin{equation}\label{Eq (Section 2): Approximated Dirichlet boundary}
  \left\{\begin{array}{rclcc}
    \partial_{t}u-\divx\left(\nabla E_{\varepsilon}(\nabla u_{\varepsilon})\right)&=&f_{\varepsilon}&\textrm{in}& \Omega_{T},\\ 
    u_{\varepsilon}& = & u_{\star} &\textrm{on}& \partial_{\mathrm{p}}\Omega_{T}.
  \end{array}\right.
 \end{equation}
 Here $u_{\star}\in X^{p}(0,\,T;\,\Omega)\cap C^{0}([0,\,T];\,L^{2}(\Omega))$ is a given function.
 For the approximation of  $f\in L^{q}(\Omega_{T})\cap L^{p^{\prime}}(0,\,T;\,W^{-1,\,p^{\prime}}(\Omega))$,  we only require the weak$^{\ast}$ convergence
 \begin{equation}\label{Eq (Section 2): Weak-star convergence of f}
   f_{\varepsilon}\stackrel{\ast}{\rightharpoonup} f\quad \textrm{in}\quad L^{q}(\Omega_{T})\quad \text{and}\quad  L^{p^{\prime}}(0,\,T;\,W^{-1,\,p^{\prime}}(\Omega)).  
  \end{equation}
   The assertion (\ref{Eq (Section 2): Weak-star convergence of f}) is easily justified, by extending $f$ to be zero outside $\Omega_{T}$, and convoluting this extended function with the $(n+1)$-dimensional Friedrichs mollifier. 
  In particular, we may let $f_{\varepsilon}\in C^{\infty}(\Omega_{T})$, and choose a constant $F\in(0,\,\infty)$ such that
   \begin{equation}\label{Eq (Section 2): Bound of f-epsilon}
     \lVert f_{\varepsilon}\rVert_{L^{q}(\Omega_{T})}+\lVert f_{\varepsilon}\rVert_{L^{p^{\prime}}(0,\,T;\,W^{-1,\,p^{\prime}}(\Omega))} \le F  \quad \textrm{for all}\quad \varepsilon\in(0,\,1).
   \end{equation}
 The weak solution of (\ref{Eq (Section 2): Approximated Dirichlet boundary}) is defined to be a function $u_{\varepsilon}\in u_{\star}+X_{0}^{p}(0,\,T;\,\Omega)$ satisfying $u_{\varepsilon}|_{t=0}=u_{\star}|_{t=0}$ in $L^{2}(\Omega)$, and
 \begin{equation}\label{Eq (Section 2): IDENTITY in the distribution}
  \partial_{t}u_{\varepsilon}-\divx\left(\nabla E_{\varepsilon}(\nabla u_{\varepsilon}) \right)=f_{\varepsilon}\quad \textrm{in}\quad L^{p^{\prime}}(0,\,T;\,L^{p^{\prime}}(\Omega)).
 \end{equation}
 In other words, the function $u_{\varepsilon}$ satisfies
 \begin{equation}\label{Eq (Section 2): Weak form for approximation problem}
   \int_{0}^{T}\left\langle \partial_{t}u_{\varepsilon},\,\varphi\right\rangle\,{\mathrm d}t+\iint_{\Omega_{T}}\left\langle \nabla E_{\varepsilon}(\nabla u_{\varepsilon})\mathrel{}\middle|\mathrel{}\nabla\varphi \right\rangle\,{\mathrm d}X=\iint_{\Omega_{T}}f_{\varepsilon}\varphi\,{\mathrm d}X.
  \end{equation}
 for all $\varphi\in L^{p}(0,\,T;\,W_{0}^{1,\,p}(\Omega))$.
The unique existence of the weak solution of (\ref{Eq (Section 2): Approximated Dirichlet boundary}) is well-established in 
 found in \cite[Chapitre 2, \S 1]{MR0259693}, \cite[\S III.4]{MR1422252}, we can show the unique existence of (\ref{Eq (Section 2): Approximated Dirichlet boundary}).
 There the continuous inclusions $ W_{0}^{1,\,p}(\Omega)\hookrightarrow L^{2}(\Omega)\hookrightarrow W^{-1,\,p^{\prime}}(\Omega)$ is used to construct a weak solution by the Faedo--Galerkin method.
 It appears that the monotonicity methods in \cite[Chapitre 2, \S 1]{MR0259693}, \cite[\S III.4]{MR1422252} cannot be directly applied to (\ref{Eq (Section 1): General Parabolic Eq}), since 
 the density $E_{1}$ lacks $C^{1}$-regularity at the origin.
 In particular, the existence of a subgradient vector field $Z$ itself appears non-trivial.
 To overcome this difficulty, we invoke a convergence lemma on vector fields (Lemma \ref{Lemma: Fundamental lemma on vector fields}).
 This lemma is a special case of \cite[Lemma 2.8]{T-scalar}, which helps to prove the convergence of weak solutions.
 \begin{lemma}\label{Lemma: Fundamental lemma on vector fields}
  Let $E_{1}\in C^{0}({\mathbb R}^{n})\cap C^{1}({\mathbb R}^{n}\setminus\{0\})$, $E_{p}\in C^{1}({\mathbb R}^{n})$ satisfy (\ref{Eq (Section 1): Positively one-homogeneous})--(\ref{Eq (Section 1): Gradient growth bounds for E-p}).
  For each $\varepsilon\in(0,\,1)$, define $E_{\varepsilon}=E_{1,\,\varepsilon}+E_{p,\,\varepsilon}$ by (\ref{Eq (Section 2): Def of E-s-epsilon}).
  Then, the following hold. 
  \begin{enumerate}
    \item\label{Item 1/2} For each fixed $v\in L^{p}(\Omega_{T})$, we have 
    \(\nabla E_{\varepsilon}(v)\to A_{0}(v)\) in \(L^{p^{\prime}}(\Omega_{T};\,{\mathbb R}^{n})\) as \(\varepsilon\to 0\).
    Here $A_{0}\colon{\mathbb R}^{n}\rightarrow {\mathbb R}^{n}$ is defined as $A_{0}(z)\coloneqq \nabla E(z)$ for $z\neq 0$, and $A_{0}(0)\coloneqq (j\ast \nabla E_{1})(0)$.
    \item\label{Item 2/2} Assume that a sequence $\{v_{\varepsilon_{k}}\}_{k}\subset L^{p}(\Omega_{T};\,{\mathbb R}^{n})$, where $\varepsilon_{k}\to 0$ as $k\to 0$, satisfies
      \(v_{\varepsilon_{k}}\to v_{0}\quad \textrm{in}\quad L^{p}(\Omega_{T};\,{\mathbb R}^{n})\) as \(k\to\infty\)
      for some $v_{0}\in L^{p}(\Omega_{T};\,{\mathbb R}^{n})$.
      Then, up to a subsequence, 
      \[\nabla E_{p,\,\varepsilon_{k}}(v_{\varepsilon_{k}})\to \nabla E_{p}(v_{0})\quad \textrm{in }L^{p^{\prime}}(\Omega_{T};\,{\mathbb R}^{n})\quad \text{and}\quad \nabla E_{1,\,\varepsilon_{k}}(v_{\varepsilon_{k}}) \stackrel{\ast}{\rightharpoonup}  Z \quad  \textrm{in } L^{\infty} (\Omega_{T};\,{\mathbb R}^{n})\]
      hold as $k\to\infty$, and the weak$^{\ast}$ limit $Z\in L^{\infty}(\Omega_{T};\,{\mathbb R}^{n})$ satisfies 
      \(Z(x,\,t)\in\partial E_{1}(v_{0}(x,\,t))\) for a.e.~\((x,\,t)\in\Omega_{T}.\)
    \end{enumerate}  
 \end{lemma}
 Applying Lemma \ref{Lemma: Fundamental lemma on vector fields}, we would like to prove that $u_{\varepsilon}$ converges to a weak solution of
 \begin{equation}\label{Eq (Section 2): Dirichlet boundary}
  \left\{\begin{array}{rclrl}
   \partial_{t}u+{\mathcal L}u & = & f&\textrm{in}&\Omega_{T-\tau},\\
   u & = & u_{\star}&\textrm{on}&\partial_{\mathrm{p}}\Omega_{T-\tau},
  \end{array} \right.
 \end{equation}
 for each fixed $\tau\in(0,\,T)$, in the sense of Definition \ref{Def: Dirichlet boundary} below.
 \begin{definition}\upshape\label{Def: Dirichlet boundary}
   Let $p$, $q$ satisfy (\ref{Eq (Section 1): p-q condition}).
   Fix $\tau\in(0,\,T)$,  $f\in L^{q}(\Omega_{T})\cap L^{p^{\prime}}(0,\,T;\,W^{-1,\,p^{\prime}}(\Omega))$,  and $u_{\star}\in X^{p}(0,\,T-\tau;\,\Omega)\cap C^{0}(\lbrack 0,\,T-\tau \rbrack;\,L^{2}(\Omega))$. 
   A function $u\in u_{\star}+X_{0}^{p}(0,\,T-\tau;\,\Omega)$ is called a weak solution of (\ref{Eq (Section 2): Dirichlet boundary}) if the identity $u|_{t=0}=u_{\star}|_{t=0}$ holds in $L^{2}(\Omega)$, and there exists $Z\in L^{\infty}(\Omega_{T-\tau})$ such that the pair $(u,\,Z)$ satisfies (\ref{Eq (Section 1): Weak form Definition})--(\ref{Eq (Section 1): Subgradient parabolic}) with $T$ replaced by $T-\tau$.
 \end{definition}
 \begin{proposition}\label{Prop: Convergence result}
   Let $p$, $q$ satisfy (\ref{Eq (Section 1): p-q condition}), and fix a function $u_{\star}\in X^{p}(0,\,T;\,\Omega)\cap C^{0}(\lbrack 0,\,T\rbrack;\,L^{2}(\Omega))$.
   Assume that $\{f_{\varepsilon}\}_{0<\varepsilon<1}\subset  L^{q}(\Omega_{T})\cap L^{p^{\prime}}(0,\,T;\,W^{-1,\,p^{\prime}}(\Omega)) $ satisfy (\ref{Eq (Section 2): Weak-star convergence of f}).
   For each $\varepsilon\in (0,\,1)$, we consider $u_{\varepsilon}\in u_{\star}+X_{0}^{p}(0,\,T;\,\Omega)$,  the  weak solution of (\ref{Eq (Section 2): Approximated Dirichlet boundary}).
   Then, there uniquely exists a function $u_{0}\in u_{\star}+X_{0}^{p}(0,\,T-\tau;\,\Omega)$ for each fixed $\tau\in(0,\,T)$, such that 
   \begin{equation}\label{Eq (Section 2): Convergence strong/pointwise}
     u_{\varepsilon_{j}}\to u_{0}\quad \textrm{in}\quad L^{p}(0,\,T-\tau;\,W^{1,\,p}(\Omega)),\quad \textrm{and}\quad \nabla u_{\varepsilon_{j}}\to \nabla u_{0}\quad \textrm{a.e. in}\quad \Omega_{T-\tau}.
   \end{equation}
   Moreover, this limit function $u_{0}$ is the unique weak solution of (\ref{Eq (Section 2): Dirichlet boundary}).
 \end{proposition}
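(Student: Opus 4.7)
The plan is to execute the standard monotone-operator/compactness scheme of Lions adapted to the fact that the one-homogeneous part $E_{1}$ is only $C^{1}$ off the origin, with Lemma \ref{Lemma: Fundamental lemma on vector fields} bridging the passage to the subdifferential $\partial E_{1}$ at the end. First, I would test the weak formulation (\ref{Eq (Section 2): Weak form for approximation problem}) with $\varphi=u_{\varepsilon}-u_{\star}\in X_{0}^{p}(0,\,T;\,\Omega)$. Using the Lions--Magenes identity to integrate the time-derivative term by parts (legal thanks to the inclusion $X_{0}^{p}\subset C^{0}([0,T];L^{2}(\Omega))$), together with the coercivity that follows from (\ref{Eq (Section 2): Strong monotoniticy of E-epsilon}) in the form $\langle \nabla E_{\varepsilon}(z)-\nabla E_{\varepsilon}(0)\mid z\rangle\ge C(p,\lambda)|z|^{p}-C\varepsilon^{p}$, the growth bounds (\ref{Eq (Section 2): bounds of E-1-epsilon}), (\ref{Eq (Section 2): bounds of E-p-epsilon}), the uniform bound (\ref{Eq (Section 2): Bound of f-epsilon}), and Remark \ref{Rmk: Continuous embedding} (which secures $f_{\varepsilon}\in L^{p^{\prime}}(W^{-1,p^{\prime}})$ uniformly), Young's inequality gives uniform bounds for $\{u_{\varepsilon}\}$ in $L^{\infty}(0,T;L^{2}(\Omega))\cap L^{p}(0,T;W^{1,p}(\Omega))$. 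Reading the equation (\ref{Eq (Section 2): IDENTITY in the distribution}) as a constraint on $\partial_{t}u_{\varepsilon}$ and re-using the same growth bounds yields a uniform bound for $\{\partial_{t}u_{\varepsilon}\}$ in $L^{p^{\prime}}(0,T;W^{-1,p^{\prime}}(\Omega))$.

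With these bounds I extract a subsequence $\varepsilon_{j}\to 0$ along which $u_{\varepsilon_{j}}\rightharpoonup u_{0}$ in $L^{p}(W^{1,p})$, $\partial_{t}u_{\varepsilon_{j}}\rightharpoonup \partial_{t}u_{0}$ in $L^{p^{\prime}}(W^{-1,p^{\prime}})$, $\nabla E_{p,\varepsilon_{j}}(\nabla u_{\varepsilon_{j}})\rightharpoonup \eta$ weakly in $L^{p^{\prime}}$, and $\nabla E_{1,\varepsilon_{j}}(\nabla u_{\varepsilon_{j}})\stackrel{\ast}{\rightharpoonup}Z$ in $L^{\infty}$. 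The assumption $p>2n/(n+2)$ gives the compact embedding $W_{0}^{1,p}(\Omega)\hookrightarrow\hookrightarrow L^{2}(\Omega)$ of (\ref{Eq (Section 1): Gelfand Triple}); combined with the bound on $\partial_{t}u_{\varepsilon_{j}}$, the Aubin--Lions lemma yields strong convergence $u_{\varepsilon_{j}}\to u_{0}$ in $L^{p}(0,T;L^{2}(\Omega))$, and Lions--Magenes transmits the initial datum, so that $u_{0}|_{t=0}=u_{\star}|_{t=0}$ in $L^{2}(\Omega)$.

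The decisive step is to upgrade weak to strong gradient convergence by a Minty-type argument. I would test (\ref{Eq (Section 2): Weak form for approximation problem}) with $\varphi=u_{\varepsilon_{j}}-u_{0}$, which lies in $X_{0}^{p}(0,T-\tau;\Omega)$ after extension/restriction, and exploit the Lions--Magenes identity
\[
  \int_{0}^{T-\tau}\langle\partial_{t}(u_{\varepsilon_{j}}-u_{0}),\,u_{\varepsilon_{j}}-u_{0}\rangle\,dt=\tfrac{1}{2}\|(u_{\varepsilon_{j}}-u_{0})(T-\tau)\|_{L^{2}(\Omega)}^{2}\ge 0,
\]
where the vanishing trace at $t=0$ is used; truncating to $T-\tau$ rather than $T$ is what lets me absorb the unsigned terminal boundary term. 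Subtracting $\iint\langle \nabla E_{\varepsilon_{j}}(\nabla u_{0})\mid \nabla(u_{\varepsilon_{j}}-u_{0})\rangle\,dX$ from both sides and using the weak $L^{p^{\prime}}$-convergence $\nabla E_{\varepsilon_{j}}(\nabla u_{0})\to A_{0}(\nabla u_{0})$ supplied by Lemma \ref{Lemma: Fundamental lemma on vector fields}\ref{Item 1/2}, the strong $L^{p}(L^{2})$-convergence of $u_{\varepsilon_{j}}$, and the weak$^{\ast}$/weak convergences of $\nabla E_{s,\varepsilon_{j}}(\nabla u_{\varepsilon_{j}})$, I obtain
\[
  \limsup_{j\to\infty}\iint_{\Omega_{T-\tau}}\bigl\langle \nabla E_{\varepsilon_{j}}(\nabla u_{\varepsilon_{j}})-\nabla E_{\varepsilon_{j}}(\nabla u_{0})\mathrel{}\bigm|\mathrel{}\nabla u_{\varepsilon_{j}}-\nabla u_{0}\bigr\rangle\,dX\le 0.
\]
The strong monotonicity (\ref{Eq (Section 2): Strong monotoniticy of E-epsilon}) then forces $\nabla u_{\varepsilon_{j}}\to\nabla u_{0}$ in $L^{p}(\Omega_{T-\tau};\mathbb{R}^{n})$ and hence a.e.\ along a further subsequence, giving (\ref{Eq (Section 2): Convergence strong/pointwise}). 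With a.e. convergence in hand, Lemma \ref{Lemma: Fundamental lemma on vector fields}\ref{Item 2/2} identifies $\eta=\nabla E_{p}(\nabla u_{0})$ and $Z(x,t)\in\partial E_{1}(\nabla u_{0}(x,t))$ a.e.\ in $\Omega_{T-\tau}$; passing to the limit in (\ref{Eq (Section 2): Weak form for approximation problem}) yields (\ref{Eq (Section 1): Weak form Definition})--(\ref{Eq (Section 1): Subgradient parabolic}) on $\Omega_{T-\tau}$. Uniqueness of $u_{0}$ follows by a standard energy comparison: testing the difference of two solutions against itself, monotonicity of $\partial E_{1}$ gives a nonnegative contribution, while strict monotonicity of $\nabla E_{p}$ (a limit of (\ref{Eq (Section 2): Strong monotoniticy of E-epsilon}), namely (\ref{Eq (Section 2): Strong monotonicity of E-p})) forces the gradients, and hence the functions via the vanishing initial trace, to coincide.

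The main obstacle is the Minty step: because $E_{1}$ is not $C^{1}$ at the origin, the subdifferential field $Z$ is produced only a posteriori by Lemma \ref{Lemma: Fundamental lemma on vector fields}, so one cannot test the limit equation with $\nabla u_{0}$ in the one-Laplacian part during the argument. The resolution is to keep $E_{1}$ and $E_{p}$ together inside the monotonicity inequality at the $\varepsilon$-level, and to invoke Lemma \ref{Lemma: Fundamental lemma on vector fields}\ref{Item 1/2} for the comparison field $\nabla E_{\varepsilon_{j}}(\nabla u_{0})$, whose limit $A_{0}(\nabla u_{0})$ is well-defined in $L^{p^{\prime}}$ despite the singular behaviour at $\nabla u_{0}=0$. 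Only after strong convergence of gradients is established do we pass to the non-trivial identification of $Z$ via Lemma \ref{Lemma: Fundamental lemma on vector fields}\ref{Item 2/2}.
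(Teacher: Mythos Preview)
Your proposal is correct and follows essentially the same route as the paper: uniform energy bounds by testing with $u_{\varepsilon}-u_{\star}$, weak compactness plus Aubin--Lions, a Minty-type monotonicity argument with the comparison field $\nabla E_{\varepsilon_{j}}(\nabla u_{0})$ handled through Lemma~\ref{Lemma: Fundamental lemma on vector fields}\ref{Item 1/2}, strong gradient convergence from (\ref{Eq (Section 2): Strong monotoniticy of E-epsilon}), identification of $Z$ via Lemma~\ref{Lemma: Fundamental lemma on vector fields}\ref{Item 2/2}, and uniqueness from (\ref{Eq (Section 2): Strong monotonicity of E-p}). The only noteworthy difference is technical: the paper inserts piecewise-linear time cutoffs $\theta_{j}(t)=(T_{j}-t)_{+}/T_{j}$ into the test functions (so the terminal boundary term vanishes and the argument lives on nested intervals $[0,T_{1}]\supset[0,T_{2}]$), whereas you work directly on $[0,T-\tau]$ and use the sign of the Lions--Magenes boundary term $\tfrac{1}{2}\lVert(u_{\varepsilon_{j}}-u_{0})(T-\tau)\rVert_{L^{2}}^{2}\ge 0$; both are legitimate and lead to the same conclusion. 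Two minor wording points: the convergence $\nabla E_{\varepsilon_{j}}(\nabla u_{0})\to A_{0}(\nabla u_{0})$ from Lemma~\ref{Lemma: Fundamental lemma on vector fields}\ref{Item 1/2} is \emph{strong} in $L^{p^{\prime}}$ (you wrote ``weak''), and the weak/weak$^{\ast}$ convergences of $\nabla E_{s,\varepsilon_{j}}(\nabla u_{\varepsilon_{j}})$ are not actually needed in the Minty inequality---they enter only afterwards, when passing to the limit in the equation.
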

 The strong $L^{p}(W^{1,\,p})$-convergence of weak solutions for parabolic approximate $p$-Laplace-type problems can be found in \cite[Lemma 3.1]{MR2916967}.
 The proof therein cannot be directly applied to our approximate problems, since the density $E_{1}$, unlike $E_{p}$, lacks continuous differentiability at the origin.
 In particular, we have to deduce (\ref{Eq (Section 2): Convergence strong/pointwise}) only by the weak convergence of $u_{\varepsilon}$ or $\partial_{t}u_{\varepsilon}$.
 To prove the strong $L^{p}$-convergence of a spatial gradient, we have to appeal to the Aubin--Lions lemma and Lemma \ref{Lemma: Fundamental lemma on vector fields}.
 In our proof, this bound estimate is required to make a weak compactness argument, and to apply the compact embedding $X_{0}^{p}(0,\,T-\tau/2;\,\Omega)\hookrightarrow\hookrightarrow L^{p}(0,\,T-\tau/2;\,L^{2}(\Omega))$.
 Since our convergence argument is based on a weak convergence method and a parabolic compact embedding, this is substantially different from the proof of \cite[Lemma 3.1]{MR2916967}.
 \begin{proof}
   We may choose $\varepsilon_{0}\in(0,\,1)$ and a constant $c_{\star}\in(K,\,\infty)$ such that
   \(\lvert E_{\varepsilon}(0)\rvert+\lvert \nabla E_{\varepsilon}(0)\rvert \le c_{\star}\) for all \(\varepsilon\in(0,\,\varepsilon_{0})\).
   We fix arbitrary $\tau\in(0,\,T)$, and define $0<T_{2}\coloneqq T-\tau<T_{1}\coloneqq T-\tau/2<T_{0}\coloneqq T$.
   For $j\in\{\,0,\,1,\,2\,\}$, we set a piecewise linear function
   \(\theta_{j}(t)\coloneqq \frac{(T_{j}-t)_{+}}{T_{j}}\in\lbrack 0,\,1\rbrack\) for \(t\in\lbrack 0,\,T\rbrack\).
   We first claim that there exists $C\in(1,\,\infty)$, independent of $\varepsilon\in(0,\,\varepsilon_{0})$, such that
   \begin{equation}\label{Eq (Section 2): First claim on convergence}
     \lVert \nabla u_{\varepsilon}\rVert_{L^{p}(\Omega_{T_{1}})}\le C.
   \end{equation}
   To verify this, we test $\varphi=\varphi_{0}\coloneqq (u_{\varepsilon}-u_{\star})\theta_{0}$ into (\ref{Eq (Section 2): Weak form for approximation problem}).
   As a result, we have   
   \begin{align*}
     {\mathbf L}_{1}+{\mathbf L}_{2}&\coloneqq -\frac{1}{2}\iint_{\Omega_{T}}\lvert u_{\varepsilon}-u_{\star}\rvert^{2}\partial_{t}\theta_{0}\,{\mathrm d}X+\iint_{\Omega_{T}}\left\langle \nabla E_{\varepsilon}(\nabla u_{\varepsilon})\mathrel{}\middle|\mathrel{}\nabla u_{\varepsilon}-\nabla u_{\star} \right\rangle\theta_{0}\,{\mathrm d}X\nonumber\\ 
     &=\iint_{\Omega_{T}}f_{\varepsilon}(u_{\varepsilon}-u_{\star})\theta_{0}\,{\mathrm d}X-\int_{0}^{T}\left\langle\partial_{t} u_{\star},\, \theta_{0}(u_{\varepsilon}-u_{\star})\right\rangle\,{\mathrm d}t\eqqcolon {\mathbf R}_{1}-{\mathbf R}_{2}.
   \end{align*}
   For ${\mathbf L}_{2}$, we use the convexity of $E_{\varepsilon}$ to obtain
   \[{\mathbf L}_{2}\ge \iint_{\Omega_{T}}\left[E_{\varepsilon}(\nabla u_{\varepsilon})-E_{\varepsilon}(\nabla u_{\star})\right]\theta_{0}\,{\mathrm d}X.\]
   By $\theta_{0}^{p}\le \theta_{0}\le 1$ and Minkowski's inequality, the integral ${\mathbf R}_{2}$ is estimated as follows;
   \[\lvert {\mathbf R}_{2}\rvert\le \lVert \partial_{t}u_{\star} \rVert_{L^{p^{\prime}}(0,\,T;\,W^{-1,\,p^{\prime}}(\Omega))}\left[\left(\iint_{\Omega_{T}}\lvert \nabla u_{\varepsilon}\rvert^{p} \theta_{0}\,{\mathrm d}X\right)^{1/p}+\left(\iint_{\Omega_{T}}\lvert \nabla u_{\star} \rvert^{p} \theta_{0}\,{\mathrm d}X\right)^{1/p} \right].\]
   For the remaining term ${\mathbf R}_{1}$, we use Young's inequality,  H\"{o}lder's inequality,  and (\ref{Eq (Section 2): Bound of f-epsilon}) to get 
   \[  \lvert {\mathbf R}_{1} \rvert\le \iint_{\Omega_{T}}\lvert f_{\varepsilon}\rvert\lvert u_{\varepsilon}-u_{\star} \rvert\,{\mathrm d}X \le \frac{1}{2T_{0}}\iint_{\Omega_{T}}\lvert u_{\varepsilon}-u_{\star}\rvert^{2}\,{\mathrm d}X+\frac{T_{0}}{2} \iint_{\Omega_{T}}\lvert f_{\varepsilon}\rvert^{2}\,{\mathrm d}X  \le{\mathbf L}_{1}+C(q,\,\Omega,\,T,\,T_{0})F^{2}.  \]
   By (\ref{Eq (Section 2): Strong monotoniticy of E-epsilon}) and straightforward computations found in \cite[\S 2.6]{T-scalar}, 
   we can find constants $c_{1}\in(0,\,1)$, $c_{2}\in(1,\,\infty)$, $c_{3}\in(1,\,\infty)$ such that
   \[c_{1}(p,\,\lambda)\left(\lvert z\rvert^{p}-c_{2}(p,\,c_{\star})\right)\le E_{\varepsilon}(z)\le c_{3}(p,\,\Lambda,\,K,\,c_{\star})(\lvert z\rvert^{p}+1)\]
   for all $z\in{\mathbb R}^{n}$.
   Carrying out a straightforward absorbing argument, we can find the constant $C\in(1,\,\infty)$ satisfying (\ref{Eq (Section 2): First claim on convergence}).

   By (\ref{Eq (Section 2): First claim on convergence}), it is easy to check that $\{u_{\varepsilon}-u_{\star}\}_{0<\varepsilon<1}\subset L^{p}(0,\,T;\,W_{0}^{1,\,p}(\Omega))$ is bounded.
   Moreover, by (\ref{Eq (Section 2): bounds of E-1-epsilon})--(\ref{Eq (Section 2): bounds of E-p-epsilon}), and (\ref{Eq (Section 2): Bound of f-epsilon})--(\ref{Eq (Section 2): IDENTITY in the distribution}),
   the time derivative $\partial_{t} u_{\varepsilon}\in L^{p^{\prime}}(0,\,T_{1};\,W^{-1,\,p^{\prime}}(\Omega))$ is also uniformly bounded for $\varepsilon\in(0,\,\varepsilon_{0})$.
   Therefore, by a weak compactness argument, there exists a sequence $\{\varepsilon_{j}\}_{j=1}^{\infty}\subset (0,\,\varepsilon_{0})$, which converges to $0$, such that
   \begin{align}
    &u_{\varepsilon_{j}}-u_{\star}\rightharpoonup u_{0}-u_{\star}\quad  \textrm{in}\quad L^{p}(0,\,T_{1};\,W_{0}^{1,\,p}(\Omega)),\label{Eq (Section 2): Weak convergence 1} \\ 
    &\partial_{t}u_{\varepsilon_{j}}\stackrel{\ast}{\rightharpoonup} \partial_{t}u_{0}\quad \textrm{in}\quad L^{p^{\prime}}(0,\,T_{1};\,W^{-1,\,p^{\prime}}(\Omega)),\label{Eq (Section 2): Weak convergence 2}
   \end{align}
   for some $u_{0}\in u_{\star}+X_{0}^{p}(0,\,T_{1};\,\Omega)$.
   By making use of (\ref{Eq (Section 2): Weak convergence 1})--(\ref{Eq (Section 2): Weak convergence 2}), and integration by parts for the time variable, it is easy to check that $u_{0}|_{t=0}=u_{\star}|_{t=0}$ in $L^{2}(\Omega)$.
   We also invoke the compact embedding $X_{0}^{p}(0,\,T_{1};\,\Omega)\hookrightarrow \hookrightarrow L^{p}(0,\,T_{1};\,L^{2}(\Omega))$ from the Aubin--Lions lemma.
   In particular, we are allowed to use a strong convergence
   \begin{equation}\label{Eq (Section 2): Strong convergence from Aubin-Lions}
     u_{\varepsilon_{j}}\to u_{0}\quad \textrm{in}\quad L^{p}(0,\,T_{1};\,L^{2}(\Omega))
   \end{equation} 
   by taking a subsequence if necessary. 

   From (\ref{Eq (Section 2): Weak convergence 1})--(\ref{Eq (Section 2): Strong convergence from Aubin-Lions}), we would like to show
   \begin{equation}\label{Eq (Section 2): Second claim on convergence}
    {\tilde {\mathbf L}}\coloneqq  \iint_{\Omega_{T_{1}}}\left\langle \nabla E_{\varepsilon_{j}}(\nabla u_{\varepsilon_{j}})-\nabla E_{\varepsilon_{j}}(\nabla u_{0})\mathrel{}\middle|\mathrel{}\nabla u_{\varepsilon_{j}}-\nabla u_{0} \right\rangle \theta_{1}\,{\mathrm d}X\to 0
   \end{equation}
   as $j\to\infty$.
   To prove (\ref{Eq (Section 2): Second claim on convergence}), we test $\varphi=\varphi_{1}\coloneqq (u_{\varepsilon}-u_{0})\theta_{1}$ into (\ref{Eq (Section 2): Weak form for approximation problem}) with $\varepsilon=\varepsilon_{j}$.
   Then, we have 
   \begin{align*}
   {\tilde{\mathbf L}_{1}}+{\tilde {\mathbf L}}&\coloneqq -\frac{1}{2}\iint_{\Omega_{T_{1}}}\lvert u_{\varepsilon_{j}}-u_{0}\rvert^{2}\partial_{t}\theta_{1}\,{\mathrm d}X+{\tilde {\mathbf L}}\\ 
   &=\iint_{\Omega_{T_{1}}}f_{\varepsilon_{j}}(u_{\varepsilon_{j}}-u_{0})\theta_{1}\,{\mathrm d}X-\int_{0}^{T_{1}}\left\langle \partial_{t}u_{0},\, (u_{\varepsilon_{j}}-u_{0})\theta_{1}\right\rangle_{W^{-1,\,p^{\prime}}(\Omega),\,W_{0}^{1,\,p}(\Omega)}\,{\mathrm d}t\\ 
   &\quad -\iint_{\Omega_{T_{1}}}\left\langle \theta_{1} \nabla E_{\varepsilon_{j}}(\nabla u_{0}) \mathrel{}\middle|\mathrel{} \nabla u_{\varepsilon_{j}}-\nabla u_{0} \right\rangle\,{\mathrm d}X\\ 
   &\eqqcolon {\tilde {\mathbf R}_{1}}-{\tilde {\mathbf R}_{2}}-{\tilde {\mathbf R}_{3}}.
   \end{align*}
   To estimate ${\tilde{\mathbf R}_{1}}$, we introduce $\kappa\in(0,\,1)$, and define
   $r=r(\kappa,\,q)\in (0,\,q^{\prime})$ as $r(\kappa,\,q)\coloneqq \frac{2q\kappa}{(1+\kappa)q-2}$ for finite $q$, and $r(\kappa,\,\infty)\coloneqq \frac{2\kappa}{1+\kappa}$.
   By H\"{o}lder's inequality and Young's inequality, we have
   \begin{align*}
    \lvert {\tilde {\mathbf R}_{1}}\rvert&\le \left(\iint_{\Omega_{T_{1}}}\lvert u_{\varepsilon_{j}}-u_{0} \rvert^{2}\,{\mathrm d}X \right)^{\frac{1-\kappa}{2}}\left(\iint_{\Omega_{T_{1}}}\lvert f_{\varepsilon_{j}}\rvert^{\frac{2}{1+\kappa}}\lvert u_{\varepsilon_{j}}-u_{0} \rvert^{\frac{2\kappa}{1+\kappa}}\,{\mathrm d}X\right)^{\frac{1+\kappa}{2}}\\ 
    &\le {\tilde {\mathbf L}_{1}}+C(\kappa)T_{1}^{\frac{1-\kappa}{1+\kappa}}\left(\iint_{\Omega_{T_{1}}}\lvert f_{\varepsilon_{j}}\rvert^{q}\,{\mathrm d}X\right)^{\frac{2}{q(1+\kappa)}}\left(\iint_{\Omega_{T_{1}}}\lvert u_{\varepsilon_{j}}-u_{0}\rvert^{r}  \,{\mathrm d}X \right)^{\frac{2\kappa}{(1+\kappa)r}}\eqqcolon {\tilde {\mathbf L}_{1}}+{\tilde {\mathbf R}_{4}}.
   \end{align*}
   As a result, we get
   \(0\le {\tilde{\mathbf L}}\le -({\tilde{\mathbf R}_{2}}+{\tilde{\mathbf R}_{3}})+{\tilde{\mathbf R}_{4}},\)
   and therefore it suffices to check $\tilde{\mathbf R}_{k}\to 0$ for each $k\in\{\,2,\,3,\,4\,\}$.
   The integral ${\tilde {\mathbf R}_{2}}$ is computed as follows 
   \[{\tilde{\mathbf R}_{2}}=\int_{0}^{T_{1}}\langle \theta_{1}\partial_{t}u_{0},\,(u_{\varepsilon_{j}}-u_{\star})-(u_{0}-u_{\star}) \rangle_{W^{-1,\,p^{\prime}}(\Omega),\,W_{0}^{1,\,p}(\Omega)}\,{\mathrm d}t.\]
   Noting $\theta_{1}\partial_{t}u_{0}\in L^{p^{\prime}}(0,\,T_{1};\,W^{-1,\,p^{\prime}}(\Omega))$ and (\ref{Eq (Section 2): Weak convergence 2}), we can deduce ${\tilde {\mathbf R}_{2}}\to 0$.
   For ${\tilde{\mathbf R}_{3}}$, we recall Lemma \ref{Lemma: Fundamental lemma on vector fields}, so that we are allowed to use $\theta_{1}\nabla E_{\varepsilon_{j}}(\nabla u_{0})\to \theta_{1}A_{0}(\nabla u_{0})$ in $L^{p^{\prime}}(\Omega_{T_{1}})$.
   We already know $\nabla u_{\varepsilon_{j}}\rightharpoonup \nabla u_{0}$ in $L^{p}(\Omega_{T_{1}})$ by (\ref{Eq (Section 2): Weak convergence 1}).
   Combining these convergence results, we can easily notice ${\tilde {\mathbf R}}_{3}\to 0$.
   For ${\tilde{\mathbf R}_{4}}$, we choose and fix $\kappa\in(0,\,1)$ satisfying $0<r(\kappa)<\min\{\,p,\,2\,\}$.
   Then, (\ref{Eq (Section 2): Bound of f-epsilon}), (\ref{Eq (Section 2): Strong convergence from Aubin-Lions}) and the continuous embedding $L^{p}(0,\,T_{1};\,L^{2}(\Omega))\hookrightarrow L^{r}(\Omega_{T_{1}})$ yield ${\mathbf R}_{4}\to 0$.
   This completes the proof of (\ref{Eq (Section 2): Second claim on convergence}).

   From (\ref{Eq (Section 2): Strong monotoniticy of E-epsilon}), (\ref{Eq (Section 2): First claim on convergence}), and (\ref{Eq (Section 2): Second claim on convergence}), it is easy to deduce $\nabla u_{\varepsilon_{j}}\to \nabla u_{0}$ in $L^{p}(\Omega_{T_{2}})$.
   whence we may let (\ref{Eq (Section 2): Convergence strong/pointwise}) hold, by taking a subsequence if necessary.
   This fact enables us to apply Lemma \ref{Lemma: Fundamental lemma on vector fields} \ref{Item 2/2}.
   In particular, by choosing a suitable sequence $\{\varepsilon_{j}\}_{j}\subset (0,\,\varepsilon_{0})$, and letting $j\to 0$, we conclude that the limit $u_{0}\in u_{\star}+X_{0}^{p}(0,\,T_{2};\,\Omega)$ admits a vector field $Z_{0}\in L^{\infty}(\Omega_{T_{2}})$ such that the pair $(u_{0},\,Z_{0})$ satisfies (\ref{Eq (Section 1): Weak form Definition})--(\ref{Eq (Section 1): Subgradient parabolic}) with $T$ replaced by $T_{2}$.
   In other words, the limit $u_{0}$ is a weak solution of (\ref{Eq (Section 2): Dirichlet boundary}).
   Our proof is completed by showing that the weak solution of (\ref{Eq (Section 2): Dirichlet boundary}) is unique.
   Let another pair $({\tilde u},\,{\tilde Z})\in (u_{\star}+X_{0}^{p}(0,\,T_{2};\,\Omega))\times L^{\infty}(\Omega_{T_{2}})$ satisfy (\ref{Eq (Section 1): Weak form Definition})--(\ref{Eq (Section 1): Subgradient parabolic}) with $T$ replaced by $T_{2}$. 
   Then, 
   \begin{align}\label{Eq (Section 2): Comparison u-0 vs u-tilde}
     &\int_{0}^{T_{2}}\left\langle \partial_{t}(u_{0}-{\tilde u}),\, \varphi\right\rangle_{W^{-1,\,p^{\prime}}(\Omega),\,W_{0}^{1,\,p}(\Omega)}\,{\mathrm d}t+\iint_{\Omega_{T_{2}}}\langle Z_{0}-{\tilde Z}\mid \nabla\varphi\rangle\,{\mathrm d}X\\ 
     &\quad  +\iint_{\Omega_{T_{2}}}\langle \nabla E_{p}(\nabla u_{0})-\nabla E_{p}(\nabla {\tilde u})\mid \nabla\varphi\rangle\,{\mathrm d}X=0\nonumber
   \end{align}
   holds for all $\varphi\in X_{0}^{p}(0,\,T_{2};\,\Omega)$.
   Thanks to the Steklov average, we may test $\varphi=\varphi_{2}\coloneqq (u_{0}-{\tilde u})\theta_{2}$ into (\ref{Eq (Section 2): Comparison u-0 vs u-tilde}).
   Here we may discard the first and the second integrals in the left-hand side of (\ref{Eq (Section 2): Comparison u-0 vs u-tilde}), since they are positive.
   In fact, integrating by parts in time, we easily notice that the first integral in (\ref{Eq (Section 2): Comparison u-0 vs u-tilde}) is non-negative.
   For the second integral, we use subgradient inequalities to deduce
   \(\langle Z_{0}-{\tilde Z}\mid \nabla u_{0}-\nabla {\tilde u}\rangle\ge 0\) a.e.~in $\Omega_{T_{2}}$.
   Letting $\tilde\varepsilon\to 0$ and recalling (\ref{Eq (Section 2): Strong monotonicity of E-p}), we have $\nabla (u_{0}-{\tilde u})=0$ in $L^{p}(\Omega_{T_{2}})$.
   Recalling $u_{0}-\tilde{u}\in L^{p}(0,\,T_{2};\,W_{0}^{1,\,p}(\Omega))$ yields $u_{0}=\tilde{u}$ in $L^{p}(0,\,T_{2};\,W_{0}^{1,\,p}(\Omega))$.
 \end{proof}
 \begin{remark}\label{Rmk; Subcase}\upshape
 In the proof of Proposition \ref{Prop: Convergence result}, we only use the compact embedding $X_{0}^{p}(0,\,T;\,\Omega)\hookrightarrow \hookrightarrow L^{p}(0,\,T;\,L^{2}(\Omega))$ to deal with the non-trivial external force term $f_{\varepsilon}$.
 In the simple case $f_{\varepsilon}\equiv 0$, Proposition \ref{Prop: Convergence result} is shown without this compact embedding.
  \end{remark}
 \subsection{Uniform a priori estimates and proof of Theorem \ref{Thm: Main theorem parabolic}}\label{Subsect: Holder a priori}
 By Proposition \ref{Prop: Convergence result}, our gradient continuity problem is reduced to the regularity for $u_{\varepsilon}\in X^{p}(0,\,T;\,\Omega)\cap C^{0}(\lbrack 0,\,T\rbrack;\,L^{2}(\Omega))$, a weak solution to 
 \begin{equation}\label{Eq (Section 2): Approximation equation}
   \partial_{t}u_{\varepsilon}-\divx\left(\nabla E_{\varepsilon}(\nabla u_{\varepsilon}) \right)=f_{\varepsilon} 
 \end{equation}
 in $\Omega_{T}$.
 We prove local H\"{o}lder estimates of ${\mathcal G}_{2\delta,\,\varepsilon}(\nabla u_{\varepsilon})$. 

 We consider (\ref{Eq (Section 2): Approximation equation}) in a parabolic cylinder $Q_{R}=Q_{R}(x_{\ast},\,t_{\ast})\coloneqq B_{R}(x_{\ast})\times(t_{\ast}-R^{2},\,t_{\ast}\rbrack\Subset \Omega_{T}$.
 Then, we are allowed to use the interior regularity 
 \begin{equation}\label{Eq (Section 2): Improved reg}
  \nabla u_{\varepsilon}\in L^{\infty}(Q_{R};\,{\mathbb R}^{n})\quad \text{and}\quad  \nabla^{2}u_{\varepsilon}\in L^{2}(Q_{R};\,{\mathbb R}^{n\times n}). 
 \end{equation}
 Here we should note the fact that the parabolicity ratio of the Hessian matrix $\nabla^{2} E_{\varepsilon}(\nabla u_{\varepsilon})$ is everywhere bounded for each fixed $\varepsilon\in(0,\,1)$.
 Therefore, it is possible to make standard parabolic arguments, including difference quotient methods and the De Giorgi--Nash--Moser theory (\cite[Chapters VIII]{MR1230384}, \cite[Chapters III--V]{MR0241822}; see also \cite[Appendix]{MR1135917}).
 Differentiating (\ref{Eq (Section 2): Approximation equation}) with respect to $x_{j}$ for each $j\in\{\,1,\,\dots\,,\,n\,\}$, we have 
 \begin{equation}\label{Eq (Section 3): Weak formulation differentiated}
   -\int_{t_{\ast}-R^{2}}^{t_{\ast}}\langle \partial_{t}\partial_{x_{j}}u_{\varepsilon},\,\varphi\rangle\, {\mathrm d}t+\iint_{Q_{R}}\left[\left\langle\nabla^{2}E_{\varepsilon}(\nabla u_{\varepsilon})\nabla \partial_{x_{j}}u_{\varepsilon}\mathrel{}\middle|\mathrel{}\nabla\varphi \right\rangle+f_{\varepsilon}\partial_{x_{j}}\varphi\right]\,{\mathrm d}X=0
 \end{equation}
 for all $\varphi\in X_{0}^{2}(t_{\ast}-R^{2},\,t_{\ast};\, B_{R}(x_{\ast}))$.
 \begin{remark}\upshape
   Throughout Sections \ref{Sect:Estimates from Weak Form}--\ref{Sect:Non-Degenerate Region}, we often give formal computations, in the sense that the time derivatives $\partial_{t}u_{\varepsilon}$, $\partial_{t}\partial_{x_{j}}u_{\varepsilon}$ are treated as some sort of \textit{functions}, although they should be treated in the \textit{functional} sense, including $L^{p^{\prime}}(W^{-1,\,p^{\prime}})$ and $L^{2}(W^{-1,\,2})$.
  However, we remark that all of the computations in Sections \ref{Sect:Estimates from Weak Form}--\ref{Sect:Non-Degenerate Region} make sense by appealing to the Stekrov average.  
 \end{remark}
 
  Theorem \ref{Proposition: Lipschitz} 
  states local $L^{\infty}-L^{p}$ estimates of $V_{\varepsilon}\coloneqq \sqrt{\varepsilon^{2}+\lvert \nabla u_{\varepsilon}\rvert^{2}}$.
  \begin{theorem}\label{Proposition: Lipschitz}
   Assume that the exponents $p$ and $q$ satisfy (\ref{Eq (Section 1): p-q condition}).
   Let $u_{\varepsilon}$ be a weak solution to (\ref{Eq (Section 2): Approximation equation}) in $\Omega_{T}$ with $\varepsilon\in(0,\,1)$.
   Define the positive exponents $d$ and $d_{1}$ by 
   $(d,\,d_{1})\coloneqq (p/2,\,2)$ when $p\ge 2$, and otherwise $(d,\,d_{1})\coloneqq (2p/[p(n+2)-2n],\,p^{\prime})$.
   We also set $d_{2}\coloneqq d(n+2)/p$ when $n\ge 3$, and $d_{2}\coloneqq 2d\sigma/p$ when $n=2$, 
   where $\sigma\in(2,\,q/2)$ is arbitrarily fixed.
   Then, there exists a constant $C\in(1,\,\infty)$, which depends at most on $n$, $p$, $q$, $\lambda$, $\Lambda$, $K$, and $\sigma$, such that
   \begin{equation}\label{Eq (Section 2): Local Lipschitz bounds}
     \sup_{Q_{\theta R}}\,V_{\varepsilon} \le \frac{C}{(1-\theta)^{d_{2}}}\left(1+\lVert f_{\varepsilon}\rVert_{L^{q}(Q_{R})}^{d_{1}}+\fiint_{Q_{R}} V_{\varepsilon}^{p}\,{\mathrm d}X \right)^{\frac{d}{p}}
   \end{equation}
   for every fixed $Q_{R}=Q_{R}(x_{\ast},\,t_{\ast})\Subset \Omega_{T}$ with $R\in(0,\,1)$ and $\theta\in(0,\,1)$.
 \end{theorem}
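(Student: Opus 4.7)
The plan is to perform Moser iteration on the differentiated equation (\ref{Eq (Section 3): Weak formulation differentiated}). First I would take a smooth space-time cut-off $\eta$ supported in $Q_R$ and equal to $1$ on $Q_{\theta R}$, and test (\ref{Eq (Section 3): Weak formulation differentiated}) with $\eta^2 V_\varepsilon^{2\beta}\partial_{x_j}u_\varepsilon$ for an exponent $\beta\ge 0$ to be iterated, summing over $j\in\{1,\dots,n\}$. In the time term, the identity $\sum_j\partial_{x_j}u_\varepsilon\,\partial_t\partial_{x_j}u_\varepsilon=\tfrac{1}{2}\partial_t V_\varepsilon^2$ lets me rewrite the contribution as an $x$-integral of $\partial_t\Phi_\beta(V_\varepsilon)$ with $\Phi_\beta'(s)\sim s^{2\beta+1}$, and integration by parts in $t$ produces $\sup_t\int\eta^2 V_\varepsilon^{2\beta+2}\,dx$ modulo terms carrying $\partial_t\eta$. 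In the diffusion term, the non-negativity of $\nabla^2 E_{1,\varepsilon}$ in (\ref{Eq (Section 2): Eigenvalues of E-1-epsilon}) allows me to discard the one-Laplacian contribution on the positive side, while the lower bound in (\ref{Eq (Section 2): Eigenvalues of E-p-epsilon}) gives
\[
\sum_j\eta^2 V_\varepsilon^{2\beta}\langle\nabla^2E_\varepsilon(\nabla u_\varepsilon)\nabla\partial_{x_j}u_\varepsilon\mid \nabla\partial_{x_j}u_\varepsilon\rangle\ge c(p,\lambda)\,\eta^2\,\bigl|\nabla V_\varepsilon^{(p+2\beta)/2}\bigr|^2.
\]
The cross terms involving $\nabla(\eta^2 V_\varepsilon^{2\beta})$ are controlled with Young's inequality, and the force term is handled by integrating $\partial_{x_j}f_\varepsilon$ by parts in $x_j$ (thereby avoiding any derivative of $f_\varepsilon$) and then using Hölder with exponent $q$ together with $2^\ast$-type Sobolev embedding in space.

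The outcome is a Caccioppoli-type estimate of the form
\[
\sup_{t}\int_{B_{r}}\eta^2 V_\varepsilon^{2\beta+2}\,dx+\iint_{Q_{r}}\bigl|\nabla(\eta V_\varepsilon^{(p+2\beta)/2})\bigr|^2\,dX\le \frac{C}{\bigl(r(1-\theta)\bigr)^2}\iint_{Q_{R}}V_\varepsilon^{2\beta+p}\,dX+\text{(force terms)},
\]
where the force contributions involve $\|f_\varepsilon\|_{L^q(Q_R)}$ raised to an exponent depending on $p$. Combining this with the parabolic Sobolev embedding $L^\infty_tL^2_x\cap L^2_tH^1_x\hookrightarrow L^{2(n+2)/n}(Q_r)$ for $n\ge 3$ — or, when $n=2$, the embedding $L^\infty_tL^2_x\cap L^2_tH^1_x\hookrightarrow L^{2\sigma}(Q_r)$ for any $\sigma\in(2,q/2)$, which explains the parameter $\sigma$ in the statement — produces a reverse Hölder-type inequality promoting an $L^{p+2\beta}$-bound on $V_\varepsilon$ to an $L^{(p+2\beta)(n+2)/n}$-bound (resp.\ $L^{(p+2\beta)\sigma}$-bound), on cylinders with slightly smaller radius. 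Iterating along a geometric sequence of exponents $\beta_k$ and radii $R_k\downarrow \theta R$ by the standard Moser scheme yields the desired local $L^\infty$-bound on $V_\varepsilon$.

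The main obstacle is tracking the precise scaling deficit $d$ through the iteration. Because the $p$-Laplace term is not homogeneous of degree one in $u_\varepsilon$ unless $p=2$, the Caccioppoli inequality is not scale-invariant in the usual sense, and a careful bookkeeping of powers of $V_\varepsilon$ against the force normalization is required. For $p\ge 2$ one gets the degenerate deficit $d=p/2$ with force exponent $d_1=2$ (absorbed via Young's inequality with exponents $(2,2)$); for $\tfrac{2n}{n+2}<p<2$ the singular regime forces $d=\tfrac{2p}{p(n+2)-2n}$ with $d_1=p'$, where the denominator $p(n+2)-2n$ is precisely what blows up at the critical exponent $p=\tfrac{2n}{n+2}$, explaining the restriction in (\ref{Eq (Section 1): p-q condition}). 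A subsidiary technical point is keeping the dependence on $1-\theta$ sharp, which requires summing the telescoping geometric series of factors $(R_{k}-R_{k+1})^{-2}$ and matching it against the Moser exponents — this is what produces the exponent $d_2$ on $(1-\theta)^{-1}$ in (\ref{Eq (Section 2): Local Lipschitz bounds}). Once these bookkeeping issues are handled, the argument is parallel to the standard Moser iteration for parabolic $p$-Laplace equations, the contribution of the one-Laplacian being benign thanks to (\ref{Eq (Section 2): Eigenvalues of E-1-epsilon}).
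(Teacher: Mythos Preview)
Your overall plan---Moser iteration on the differentiated equation (\ref{Eq (Section 3): Weak formulation differentiated})---is exactly what the paper does, but there is a genuine gap in how you handle the one-Laplacian part. You write that ``the non-negativity of $\nabla^2 E_{1,\varepsilon}$ in (\ref{Eq (Section 2): Eigenvalues of E-1-epsilon}) allows me to discard the one-Laplacian contribution on the positive side'' and that the contribution is ``benign''. That is correct only for the coercivity terms. The \emph{upper} bound in (\ref{Eq (Section 2): Eigenvalues of E-1-epsilon}), namely $\nabla^2 E_{1,\varepsilon}(\nabla u_\varepsilon)\leqslant K V_\varepsilon^{-1}\,\mathrm{id}_n$, is singular near the facet, and it is precisely this upper bound that enters when you estimate the cross terms carrying $\nabla\eta$. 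With your test function $\eta^2 V_\varepsilon^{2\beta}\partial_{x_j}u_\varepsilon$, the Young-type absorption of the cross term into the good Hessian term leaves a remainder of the form $\iint V_\varepsilon^{2\beta+2-p}\lvert\nabla\eta\rvert^2$; the force term, after absorption, produces the same exponent $\iint \lvert f_\varepsilon\rvert^2 V_\varepsilon^{2\beta+2-p}$. For $p>2$ and small $\beta$ this exponent is negative, and since $V_\varepsilon$ can be as small as $\varepsilon$ near the facet, these terms blow up as $\varepsilon\to 0$, destroying the $\varepsilon$-independence required in (\ref{Eq (Section 2): Local Lipschitz bounds}).

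The paper's remedy is to truncate: instead of $V_\varepsilon$ one iterates on $W_k\coloneqq\bigl(k^2+\sum_j g_k(\partial_{x_j}u_\varepsilon)^2\bigr)^{1/2}$, where $g_k$ is the soft truncation at level $k\ge 1$. The test function $\zeta\psi(W_k)v_{j,k}$ is then supported in $\{\lvert\partial_{x_j}u_\varepsilon\rvert>k\}\subset D_k=\{\lvert\nabla u_\varepsilon\rvert>k\}$, and on $D_k$ one has the clean two-sided bound $\tilde\lambda W_k^{p-2}\,\mathrm{id}_n\leqslant\nabla^2 E_\varepsilon(\nabla u_\varepsilon)\leqslant\tilde\Lambda W_k^{p-2}\,\mathrm{id}_n$ with $\tilde\lambda,\tilde\Lambda$ independent of $\varepsilon$---the singular $K/V_\varepsilon$ has been absorbed because $V_\varepsilon\gtrsim W_k\ge 1$ there. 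The level $k$ is chosen as $1+\lVert f_\varepsilon\rVert_{L^q}^{2/p}$ (for $p\ge 2$) or $1+\lVert f_\varepsilon\rVert_{L^q}^{1/(p-1)}$ (for $p<2$), which simultaneously normalises the force so that $\tilde f_\varepsilon\coloneqq W_k^{-p}\lvert f_\varepsilon\rvert^2$ (resp.\ $W_k^{-2(p-1)}\lvert f_\varepsilon\rvert^2$) has $L^{q/2}$-norm at most $1$; this is what produces the exact exponent $d_1$ in the statement. Without this device your cross-term and force-term estimates are not uniform in $\varepsilon$, and the iteration does not close.
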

 The key estimate in this paper is Theorem \ref{Thm: Key Hoelder Estimate}, where we do not necessarily require $p\in (p_{\mathrm c},\,\infty)$, but assume local $L^{\infty}$-bounds of $V_{\varepsilon}$, uniformly for sufficiently small $\varepsilon$.
 In the case (\ref{Eq (Section 1): p-q condition}), this assumption is already guaranteed by Theorem \ref{Proposition: Lipschitz}.
 \begin{theorem}\label{Thm: Key Hoelder Estimate}
  Let $p\in(1,\,\infty)$ and $q\in(n+2,\,\infty\rbrack$. 
  Fix $\delta\in(0,\,1)$, and let $\varepsilon\in(0,\,\delta/8)$.
   Assume that $u_{\varepsilon}$ is a weak solution to (\ref{Eq (Section 2): Approximation equation}) in $Q_{R}\Subset \Omega_{T}$ with $R\in(0,\,1)$ and (\ref{Eq (Section 2): Improved reg}).
   Also, let the positive numbers $F$ and $\mu_{0}$ satisfy (\ref{Eq (Section 2): Bound of f-epsilon}) and
   \begin{equation}
     \esssup_{Q_{R}} V_{\varepsilon}\le \delta+\mu_{0}
   \end{equation}
   respectively.
    Then, the estimates
    \begin{align}
      \label{Eq (Section 2): Main estimate Bounds}
     &\left\lvert {\mathcal G}_{2\delta,\,\varepsilon}(\nabla u_{\varepsilon}(X)) \right\rvert\le \mu_{0}\quad \textrm{for all}\quad X\in Q_{\rho_{0}}(x_{\ast},\,t_{\ast}),\\ 
     \label{Eq (Section 2): Main estimate Holder}
     &\left\lvert {\mathcal G}_{2\delta,\,\varepsilon}(\nabla u_{\varepsilon}(X_{1}))-{\mathcal G}_{2\delta,\,\varepsilon}(\nabla u_{\varepsilon}(X_{2})) \right\rvert\le C\left(\frac{d_{\mathrm{p}}(X_{1},X_{2})}{\rho_{0}}\right)^{\alpha}\mu_{0}
    \end{align}
   hold for all $X_{1}$, $X_{2}\in Q_{\rho_{0}/2}(x_{\ast},\,t_{\ast})$.
   Here the radius  $\rho_{0}\in(0,\,R/4\rbrack$,  the exponent $\alpha\in(0,\,1)$, and the constant $C\in(1,\,\infty)$ depend at most on $n$, $p$, $q$, $\lambda$, $\Lambda$, $K$, $F$, $\omega_{1}$, $\omega_{p}$, $\mu_{0}$, and $\delta$.
 \end{theorem}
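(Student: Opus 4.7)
The plan is to prove Theorem \ref{Thm: Key Hoelder Estimate} by a standard oscillation-decay argument run on a dyadic sequence of parabolic cylinders, where at each scale one of two alternatives (degenerate versus non-degenerate) is triggered and is handled respectively by Proposition \ref{Proposition: Parabolic De Giorgi} or Proposition \ref{Proposition: Parabolic Campanato}. The bound (\ref{Eq (Section 2): Main estimate Bounds}) will essentially be built into the induction hypothesis: starting from $V_\varepsilon \le \delta + \mu_0$ on $Q_R$, I will produce radii $\rho_0 > \rho_1 > \cdots$ and levels $\mu_0 \ge \mu_1 \ge \cdots$ with $V_\varepsilon \le \delta + \mu_j$ on $Q_{\rho_j}$, and the Hölder estimate (\ref{Eq (Section 2): Main estimate Holder}) will follow from a geometric decay $\mu_{j+1} \le \eta\,\mu_j$ for some $\eta \in (0,1)$ and $\rho_{j+1} = \sigma \rho_j$ for a fixed $\sigma \in (0,1)$, via the classical Campanato-type characterization of Hölder continuity.

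At each step I fix a cylinder $Q_{\rho_j} \subset Q_R$ and consider the sublevel set $\{V_\varepsilon \le \delta + \mu_j/2\} \cap Q_{\rho_j}$. The \emph{degenerate alternative} occurs when this sublevel set occupies a density exceeding a threshold $\nu = \nu(n,p,\lambda,\Lambda,K)$. In that case Proposition \ref{Proposition: Parabolic De Giorgi} (a parabolic De Giorgi density lemma, proved via $L^\infty$ truncation applied to the component equations (\ref{Eq (Section 3): Weak formulation differentiated}) and iteration on levels) upgrades this measure-theoretic information to the pointwise bound $V_\varepsilon \le \delta + (1-\tau)\mu_j$ on a smaller cylinder $Q_{\sigma \rho_j}$, for some $\tau \in (0,1/2)$. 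Consequently $|{\mathcal G}_{2\delta,\varepsilon}(\nabla u_\varepsilon)| \le \mu_j/2$ on $Q_{\sigma \rho_j}$ once the truncation is taken into account, which feeds back into the induction with $\mu_{j+1} = \mu_j/2$.

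The \emph{non-degenerate alternative} occurs when the opposite density inequality holds, so $V_\varepsilon > \delta + \mu_j/2$ on a set of positive density. In this regime the Hessian $\nabla^2 E_\varepsilon(\nabla u_\varepsilon)$ is uniformly elliptic on this set with constants depending on $\delta$ and $\mu_j$ (via Lemma \ref{Lemma: Continuity estimates on Hessian} applied with $\mu \asymp \mu_j$), and I will compare $u_\varepsilon$ with the solution $h$ of the frozen Dirichlet heat-type problem $\partial_t h - \mathrm{div}(\nabla^2 E_\varepsilon(\nabla u_\varepsilon(x_\ast,t_\ast))\nabla h) = 0$ on $Q_{\sigma \rho_j}$ matching $u_\varepsilon$ on the parabolic boundary. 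Proposition \ref{Proposition: Parabolic Campanato}, proved using the smoothness of $h$, the modulus of continuity of $\nabla^2 E_\varepsilon$ from (\ref{Eq (Section 2): Continuity estimate on Hessian}), and a Caccioppoli-type energy estimate for $u_\varepsilon - h$ that exploits the forcing term $f_\varepsilon \in L^q$ with $q > n+2$, yields a Campanato-type excess decay
\[
\fiint_{Q_{\sigma\rho_j}} \bigl|{\mathcal G}_{2\delta,\varepsilon}(\nabla u_\varepsilon) - ({\mathcal G}_{2\delta,\varepsilon}(\nabla u_\varepsilon))_{\sigma\rho_j}\bigr|^2\,\mathrm dX \le C\sigma^{2\beta} \fiint_{Q_{\rho_j}} \bigl|{\mathcal G}_{2\delta,\varepsilon}(\nabla u_\varepsilon) - ({\mathcal G}_{2\delta,\varepsilon}(\nabla u_\varepsilon))_{\rho_j}\bigr|^2\,\mathrm dX + C\rho_j^{2\gamma}\mu_j^2,
\]
with $\beta,\gamma$ depending only on the data, and with the last term coming from $\|f_\varepsilon\|_{L^q}$ through the Sobolev exponent $q > n+2$. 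Choosing $\sigma$ small to dominate the first term and $\rho_0$ small to absorb the forcing remainder, I translate this $L^2$-excess decay into an oscillation decay $\mathrm{osc}_{Q_{\sigma\rho_j}} {\mathcal G}_{2\delta,\varepsilon}(\nabla u_\varepsilon) \le \eta\,\mu_j$ using the Lipschitz property (\ref{Eq (Section 2): Lipschitz continuity of G-2delta-epsilon}) together with the local $L^\infty$ control on $V_\varepsilon$ that is already in force.

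The main obstacle is the \emph{uniformity in $\varepsilon$} coupled with the \emph{$\delta$-dependence} of every ellipticity constant. Both Propositions \ref{Proposition: Parabolic De Giorgi} and \ref{Proposition: Parabolic Campanato} must be applied at each dyadic scale with constants that, although depending on $\delta$, do not deteriorate as $j \to \infty$; this forces me to phrase the induction so that the relevant quantity being controlled is always ${\mathcal G}_{2\delta,\varepsilon}(\nabla u_\varepsilon)$, whose Lipschitz nature with respect to $\nabla u_\varepsilon$ (\ref{Eq (Section 2): Lipschitz continuity of G-2delta-epsilon}) is $\varepsilon$-uniform under (\ref{Eq (Section 2): epsilon<<delta}). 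A secondary subtlety is the splitting of the alternative threshold $\nu$: it must be fixed \emph{before} $\sigma$ in the non-degenerate branch so that the excess-decay contraction can swallow $\nu$, yet \emph{after} the constants in the De Giorgi lemma so that the density condition indeed forces the pointwise reduction; the standard way out is to fix $\eta < 1$ first, derive $\sigma$ from the Campanato step, then $\nu$ from the De Giorgi step, and finally choose $\rho_0$ small enough for the $f_\varepsilon$ contribution to be subordinate on every scale.
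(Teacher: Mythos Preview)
Your iteration scheme has a structural gap. You want $V_\varepsilon \le \delta + \mu_j$ on $Q_{\rho_j}$ with a geometric decay $\mu_{j+1} \le \eta\,\mu_j$ at \emph{every} step. In the degenerate alternative this is fine: Proposition~\ref{Proposition: Parabolic De Giorgi} does reduce the sup-bound. But in the non-degenerate alternative the hypothesis is precisely that $V_\varepsilon > \delta + (1-\nu)\mu_j$ on a set of density $>1-\nu$; the modulus $V_\varepsilon$ \emph{stays} near $\delta + \mu_j$ on any subcylinder, so no inequality $V_\varepsilon \le \delta + \eta\mu_j$ can hold there. Your proposed remedy---turning one step of $L^2$-excess decay into an $L^\infty$-oscillation bound $\mathrm{osc}\,\mathcal G_{2\delta,\varepsilon}\le\eta\mu_j$---does not follow from a single step (that passage is Campanato's characterization, which itself requires iteration), and in any case an oscillation bound on $\mathcal G_{2\delta,\varepsilon}$ does not restore your induction hypothesis, which is a \emph{sup} bound on $V_\varepsilon-\delta$. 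You also omit the case $\mu_j\le\delta$, where $\mathcal G_{2\delta,\varepsilon}(\nabla u_\varepsilon)\equiv 0$ and the iteration should simply stop.

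The paper resolves this by making the non-degenerate alternative \emph{terminal} rather than one step of an alternating loop. One runs Proposition~\ref{Proposition: Parabolic De Giorgi} for $k=0,\ldots,k_\star-1$, obtaining $\sup_{Q_{2\rho_k}}\lvert\mathcal G_{\delta,\varepsilon}(\nabla u_\varepsilon)\rvert\le\mu_k:=\kappa^k\mu_0$ with $\rho_k:=\sigma^k\rho_0$, $\sigma=\sqrt{\nu}/6$. The index $k_\star$ is the first scale where either $\mu_{k_\star}\le\delta$ (then $\mathcal G_{2\delta,\varepsilon}\equiv 0$ on $Q_{\rho_{k_\star}}$) or the non-degenerate measure condition holds. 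In the latter case Proposition~\ref{Proposition: Parabolic Campanato}---whose own proof contains an internal iteration showing that once $\lvert(\nabla u_\varepsilon)_\rho\rvert\ge\delta+\mu/4$ the averages $(\nabla u_\varepsilon)_r$ remain bounded away from zero for all $r\le\rho$---delivers the full Campanato estimate $\fiint_{Q_r}\lvert\mathcal G_{2\delta,\varepsilon}(\nabla u_\varepsilon)-\Gamma_{2\delta,\varepsilon}\rvert^2\le(r/\rho_{k_\star})^{2\beta}\mu_{k_\star}^2$ for \emph{all} $r\in(0,\rho_{k_\star}]$ at once; one never re-enters the degenerate branch. The two regimes are then glued by setting $\alpha:=\log\kappa/\log\sigma<\beta$, so that both the De Giorgi decay on $[\rho_{k_\star},\rho_0]$ and the Campanato decay on $(0,\rho_{k_\star}]$ are dominated by $C(r/\rho_0)^{2\alpha}\mu_0^2$. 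Two smaller points: the comparison problem should be frozen at the average $(\nabla u_\varepsilon)_\rho$, not at a point value $\nabla u_\varepsilon(x_\ast,t_\ast)$ (which is not defined a priori); and your parameter order is inverted---$\nu$ is fixed first by Proposition~\ref{Proposition: Parabolic Campanato}, and only then does Proposition~\ref{Proposition: Parabolic De Giorgi} supply $\kappa=\kappa(\nu)$.
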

 
 To prove Theorem \ref{Thm: Key Hoelder Estimate}, we always assume
 \begin{equation}\label{Eq (Section 2): Bounds of G-delta-epsilon-nabla u}
   \sup_{Q_{2\rho}}\,\left\lvert {\mathcal G}_{\delta,\,\varepsilon}(\nabla u_{\varepsilon})\right\rvert\le \mu\le \mu+\delta\le M,
 \end{equation}
 or equivalently
 \begin{equation}\label{Eq (Section 2): Bounds of V-epsilon}
   \sup_{Q_{2\rho}}\,V_{\varepsilon} \le \mu+\delta\le M,
 \end{equation}
 for $Q_{2\rho}=Q_{2\rho}(x_{0},\,t_{0})\subset Q_{R}$.
 Here $M\coloneqq \mu_{0}+\delta$ is a fixed constant, and $\mu\in(0,\,\mu_{0}\rbrack$ denotes a parameter that stands for a local bound of ${\mathcal G}_{\delta,\,\varepsilon}(\nabla u_{\varepsilon})$.
 We introduce a sufficiently small ratio $\nu\in(0,\,1/4)$, which is determined later in Section \ref{Sect:Non-Degenerate Region}, and define a superlevel set
 \[ S_{\rho,\,\mu,\,\nu}\coloneqq \{(x,\,t)\in Q_{\rho}\mid V_{\varepsilon}(x,\,t)-\delta>(1-\nu)\mu\}, \]
 which is often denoted by $S_{\rho}$ when $\mu$ and $\nu$ are clear. 
 We may often let
 \begin{equation}\label{Eq (Section 2): Delta vs Mu}
   0<\delta<\mu,
 \end{equation}
 since otherwise ${\mathcal G}_{2\delta,\,\varepsilon}(\nabla u_{\varepsilon})\equiv 0$ holds in $Q_{2\rho}$, and therefore oscillation estimates for ${\mathcal G}_{2\delta,\,\varepsilon}(\nabla u_{\varepsilon})$ become trivial.
 We introduce an exponent $\beta\in(0,\,1)$ by $\beta\coloneqq 1-(n+2)/q$ when $q\in(n+2,\,\infty)$.
 For $q=\infty$, we let $\beta$ be an arbitrarily fixed number in $(0,\,1)$.
 Dividing the possible cases by measuring the size of $S_{\rho,\,\mu,\,\nu}\subset Q_{\rho}$, we prove Propositions \ref{Proposition: Parabolic De Giorgi}--\ref{Proposition: Parabolic Campanato} as below.
 \begin{proposition}\label{Proposition: Parabolic De Giorgi}
  In addition to the assumptions of Theorem \ref{Thm: Key Hoelder Estimate}, let the positive numbers $\mu$, $M$, and a cylinder $Q_{2\rho}=Q_{2\rho}(x_{0},\,t_{0})\Subset \Omega_{T}$ satisfy (\ref{Eq (Section 2): Bounds of G-delta-epsilon-nabla u}) and (\ref{Eq (Section 2): Delta vs Mu}). 
   If 
   \begin{equation}\label{Eq (Section 2): Measure Assumption De Giorgi}
     \lvert S_{\rho,\,\mu,\,\nu}\rvert\le (1-\nu)\lvert Q_{\rho}\rvert
   \end{equation}
   holds for some $\nu\in(0,\,1/4)$, then there exist an exponent $\kappa\in \left((\sqrt{\nu}/6)^{\beta},\,1\right)$ and a radius ${\tilde \rho}\in(0,\,1)$, depending at most on $n$, $p$, $q$, $\lambda$, $\Lambda$, $K$, $F$, $M$, $\delta$, and $\nu$, such that 
   \begin{equation}\label{Eq (Section 2): De Giorgi Oscillation result}
     \sup_{Q_{\sqrt{\nu}\rho/3}}\,\left\lvert {\mathcal G}_{\delta,\,\varepsilon}(\nabla u_{\varepsilon}) \right\rvert\le \kappa\mu,
   \end{equation}
   holds, provided $\rho\le {\tilde\rho}$.
 \end{proposition}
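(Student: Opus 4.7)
The proof follows the classical parabolic De~Giorgi scheme in the ``first alternative'' (degenerate) case, suitably adapted to the truncated gradient. Since (\ref{Eq (Section 2): Bounds of V-epsilon}) yields $\delta\le V_{\varepsilon}\le M$ wherever $V_{\varepsilon}\ge\delta$, Lemma~\ref{Lemma: Continuity estimates on Hessian} gives that $\nabla^{2}E_{\varepsilon}(\nabla u_{\varepsilon})$ has uniform ellipticity constants depending only on $\delta$ and $M$ on this set, so (\ref{Eq (Section 2): Approximation equation}) and its differentiated form may be treated as a uniformly parabolic equation with constants independent of $\varepsilon\in(0,\delta/8)$. My plan is to differentiate (\ref{Eq (Section 2): Approximation equation}) in $x_{j}$, test against $\eta^{2}(V_{\varepsilon}-k)_{+}\partial_{x_{j}}u_{\varepsilon}/V_{\varepsilon}$ for levels $k\ge\delta+(1-\nu)\mu/2$, sum over $j$, and integrate by parts. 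The ellipticity from (\ref{Eq (Section 2): Monotonicity outside a facet})--(\ref{Eq (Section 2): Growth outside a facet}) produces a Caccioppoli inequality of the form
\[\sup_{t}\int_{B_{\rho}}(V_{\varepsilon}-k)_{+}^{2}\eta^{2}\,{\mathrm d}x+\iint\lvert\nabla(V_{\varepsilon}-k)_{+}\rvert^{2}\eta^{2}\,{\mathrm d}X\le C\iint(V_{\varepsilon}-k)_{+}^{2}(\lvert\nabla\eta\rvert^{2}+\lvert\partial_{t}\eta\rvert)\,{\mathrm d}X+CF^{2}\rho^{2\beta}\mu^{2}\lvert A_{k,\rho}\rvert,\]
where $A_{k,\rho}\coloneqq\{V_{\varepsilon}>k\}\cap Q_{\rho}$, and the forcing contribution is obtained by moving $\partial_{x_{j}}$ onto the test function and using $f_{\varepsilon}\in L^{q}$ with $q>n+2$, which accounts for the exponent $\beta=1-(n+2)/q$.

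Next I would carry out the DiBenedetto--Friedman-type propagation of the measure hypothesis. A Fubini/pigeonhole argument applied to (\ref{Eq (Section 2): Measure Assumption De Giorgi}) delivers a starting time $t^{\star}\in(t_{0}-\rho^{2},\,t_{0}-\nu\rho^{2}/2)$ satisfying
\[\lvert\{x\in B_{\rho}\,:\,V_{\varepsilon}(x,t^{\star})-\delta\le(1-\nu)\mu\}\rvert\ge (\nu/2)\lvert B_{\rho}\rvert.\]
Applying the Caccioppoli inequality above on the cylinders $B_{\rho}\times(t^{\star},t)$ with shrinking levels $k_{j}=\delta+(1-\nu/2^{j})\mu$, and combining this with the standard geometric-decay lemma on each time slice, I obtain an index $j_{\star}=j_{\star}(n,\delta,M,\nu)$ such that
\[\lvert\{x\in B_{\rho}\,:\,V_{\varepsilon}(x,t)-\delta>(1-\nu/2^{j_{\star}})\mu\}\rvert\le \nu_{\ast}\lvert B_{\rho}\rvert\quad\text{for all }t\in(t^{\star}+\nu\rho^{2}/4,\,t_{0}\rbrack,\]
where $\nu_{\ast}$ is the small measure threshold required by the parabolic De~Giorgi $L^{\infty}$-lemma. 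Contributions of $f_{\varepsilon}$ enter as additive errors of order $F\rho^{\beta}\mu$, to be absorbed at the end by imposing $\rho\le\tilde\rho$.

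The final step is the classical De~Giorgi iteration on $Q_{\sqrt{\nu}\rho/3}$ with levels $h_{j}=\delta+\mu(\kappa+(1-\kappa)2^{-j})$ for a parameter $\kappa\in(0,1)$ to be fixed. The iteration closes and yields (\ref{Eq (Section 2): De Giorgi Oscillation result}) once $\kappa$ is large enough that the forcing contribution $CF\rho^{\beta}$ is absorbed into $(\kappa-(\sqrt{\nu}/6)^{\beta})\mu$, which is precisely the origin of the lower bound $\kappa>(\sqrt{\nu}/6)^{\beta}$ in the statement. The main obstacle I expect is the bookkeeping in the measure-propagation stage: because $f_{\varepsilon}\in L^{q}(\Omega_{T})$ only, the inhomogeneity $\partial_{x_{j}}f_{\varepsilon}$ must be handled by integration by parts onto the test function, and one must verify that the resulting term scales uniformly in the level $k$ like $\rho^{\beta}$ with no hidden dependence on $\mu$ or $\delta$ in the exponent that would obstruct the iteration. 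This bookkeeping, together with ensuring that all ellipticity constants from Lemma~\ref{Lemma: Continuity estimates on Hessian} enter only through the multiplicative constant $C$, is the delicate step.
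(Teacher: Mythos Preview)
Your overall plan---Caccioppoli for the truncated modulus, pigeonhole to find a good initial time slice, forward propagation of the sublevel set, then a De~Giorgi iteration---is the paper's approach essentially line for line (the paper works with $U_{\delta,\varepsilon}=(V_{\varepsilon}-\delta)_{+}^{2}$ rather than $V_{\varepsilon}-k$, a cosmetic difference). Two points deserve correction.

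First, your explanation of the lower bound $\kappa>(\sqrt{\nu}/6)^{\beta}$ is wrong. The forcing contribution is absorbed entirely by choosing $\tilde\rho$ small (the paper takes $\tilde\rho^{\beta}<2^{-N}\theta_{0}\nu$ for a suitable $N$), not by enlarging $\kappa$. The De~Giorgi argument itself produces some $\kappa_{0}=\sqrt{1-2^{-N}}$, and the paper then \emph{defines} $\kappa=\max\{(\sqrt{\nu}/6)^{\beta},\kappa_{0}\}$; the lower bound is imposed a posteriori purely so that the H\"older exponent $\alpha=\log\kappa/\log(\sqrt{\nu}/6)$ in the proof of Theorem~\ref{Thm: Key Hoelder Estimate} stays below $\beta$, which is needed for compatibility with the Campanato branch (Proposition~\ref{Proposition: Parabolic Campanato}).

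Second, your outline understates the time-propagation step. Expansion of positivity only carries the sublevel-set bound forward over an interval of length $\sim\gamma_{0}\rho^{2}$ with $\gamma_{0}$ small (depending on $\nu$), whereas $t_{0}-t^{\star}$ can be of order $\rho^{2}$. The paper covers $[t^{\star},t_{0}]$ by $l_{0}\sim 1/\gamma_{0}$ sub-intervals and iterates the triple (expansion of positivity, superlevel-set shrinking via the isoperimetric inequality, density lemma) on each; the level drops by a factor $2^{-(i_{\star}+1)}$ per step, whence $N=(i_{\star}+1)(9l_{0}-8)$ in the final $\kappa$. Your one-shot claim ``for all $t\in(t^{\star}+\nu\rho^{2}/4,\,t_{0}]$'' hides this iteration; when you fill in details you will need it.
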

  \begin{proposition}\label{Proposition: Parabolic Campanato}
  In addition to the assumptions of Theorem \ref{Thm: Key Hoelder Estimate}, let the positive numbers $\mu$, $M$, and a cylinder $Q_{2\rho}=Q_{2\rho}(x_{0},\,t_{0})\Subset Q_{R}$ satisfy (\ref{Eq (Section 2): Bounds of V-epsilon})--(\ref{Eq (Section 2): Delta vs Mu}).
   Then, there exist ${\hat \rho}\in(0,\,1)$ and $\nu\in(0,\,1/4)$, depending at most on $n$, $p$, $q$, $\lambda$, $\Lambda$, $K$, $F$, $M$, $\delta$, and $\omega=\omega_{\delta,\,M}$ given by Lemma \ref{Lemma: Continuity estimates on Hessian}, such that if there hold 
   \begin{equation}\label{Eq (Section 2): Measure Assumption Campanato}
     \lvert S_{\rho,\,\mu,\,\nu}\rvert>(1-\nu)\lvert Q_{\rho}\rvert
   \end{equation}
   and $\rho\le {\hat\rho}$, then the limit
    \[\Gamma_{2\delta,\,\varepsilon}(x_{0},\,t_{0})\coloneqq \lim\limits_{r\to 0}\left({\mathcal G}_{2\delta,\,\varepsilon}(\nabla u_{\varepsilon}) \right)_{Q_{r}(x_{0},\,t_{0})}\in{\mathbb R}^{n}  \]
   exists. Moreover, this limit satisfies 
   \[\fiint_{ Q_{r}(x_{0},\,t_{0})  }\left\lvert {\mathcal G}_{2\delta,\,\varepsilon}(\nabla u_{\varepsilon})- \Gamma_{2\delta,\,\varepsilon}(x_{0},\,t_{0})  \right\rvert^{2}\,{\mathrm d}X\le \left(\frac{r}{\rho} \right)^{2\beta}\mu^{2}\quad \textrm{for all }r\in(0,\,\rho\rbrack.\]
 \end{proposition}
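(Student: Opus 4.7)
The plan is to exploit the measure assumption (\ref{Eq (Section 2): Measure Assumption Campanato}) to enter a uniformly parabolic regime, then run a Campanato-type comparison with a frozen-coefficient linear parabolic equation, adapting the elliptic arguments of \cite{BDGPdN} and \cite[\S 4]{T-scalar} to the time-dependent setting as in \cite[\S 7]{BDLS}. The first step is to upgrade the measure hypothesis to a pointwise non-degeneracy bound. Applying Caccioppoli-type energy estimates from Section \ref{Sect:Estimates from Weak Form} for the differentiated equation (\ref{Eq (Section 3): Weak formulation differentiated}) together with a De Giorgi-type iteration on the sublevel sets $\{V_\varepsilon<\delta+(1-c_0\nu^{1/2})\mu\}$, and combining with (\ref{Eq (Section 2): Measure Assumption Campanato}), I would obtain the pointwise lower bound $V_\varepsilon\ge \delta+(1-c_0\nu^{1/2})\mu$ throughout the sub-cylinder $Q_{\rho/2}(x_0,t_0)$, provided $\nu$ is small. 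On this sub-cylinder, Lemma \ref{Lemma: Continuity estimates on Hessian} yields uniform parabolicity together with the continuity estimate (\ref{Eq (Section 2): Continuity estimate on Hessian}) with modulus $\omega=\omega_{\delta,M}$.

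Next, I would freeze the coefficient at the symmetric matrix $A_0\coloneqq \nabla^2 E_\varepsilon(z_0)$, where $z_0$ denotes the spatial-temporal average of $\nabla u_\varepsilon$ over $Q_{\rho/2}$, and introduce the comparison function $v$ solving the linear heat-type Dirichlet problem $\partial_t v-\divx(A_0\nabla v)=0$ in $Q_{\rho/2}$ with $v=u_\varepsilon$ on $\partial_{\mathrm{p}} Q_{\rho/2}$. Testing the weak equation for $u_\varepsilon-v$ with itself, invoking (\ref{Eq (Section 2): Continuity estimate on Hessian}) to control the coefficient error and H\"older's inequality on the $L^q$-force term $f_\varepsilon$, should produce
\[
\fiint_{Q_{\rho/2}} \lvert \nabla u_\varepsilon - \nabla v \rvert^2 \,{\mathrm d}X \le C\,\omega(c_1\nu^{1/2})^2\mu^2 + C F^2 \rho^{2\beta}.
\]
For the frozen linear equation with constant coefficients, classical interior regularity provides $v\in C^\infty$ and in particular the excess decay
\[
\fiint_{Q_r} \lvert \nabla v - (\nabla v)_r \rvert^2 \,{\mathrm d}X \le C_0 \left(\frac{r}{\rho}\right)^{2} \fiint_{Q_{\rho/2}} \lvert \nabla v - (\nabla v)_{\rho/2} \rvert^2 \,{\mathrm d}X
\]
for all $r\in(0,\rho/4]$, with $C_0$ depending only on $n$, $p$, $\delta$ and $M$.

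Setting $\Phi(r)\coloneqq \fiint_{Q_r} \lvert {\mathcal G}_{2\delta,\varepsilon}(\nabla u_\varepsilon)-({\mathcal G}_{2\delta,\varepsilon}(\nabla u_\varepsilon))_r \rvert^2\,{\mathrm d}X$ and combining the two previous estimates via the Lipschitz property (\ref{Eq (Section 2): Lipschitz continuity of G-2delta-epsilon}) of ${\mathcal G}_{2\delta,\varepsilon}$, I should arrive at an iterative inequality of the form
\[
\Phi(\theta\rho) \le C\theta^{2}\mu^2 + C\,\omega(c_1\nu^{1/2})^2\mu^2 + C F^2 \rho^{2\beta}
\]
for every $\theta\in(0,1/4)$. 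Fixing $\theta$ so that $C\theta^{2}\le \frac{1}{4}\theta^{2\beta}$, then choosing $\nu$ small enough that $C\,\omega(c_1\nu^{1/2})^{2}\le\frac{1}{4}\theta^{2\beta}$ (possible because $\omega(0)=0$), and finally shrinking $\hat\rho$ so that the force contribution is subsumed, the classical iteration lemma of Campanato--Giaquinta type yields $\Phi(r)\le (r/\rho)^{2\beta}\mu^{2}$ for all $r\in(0,\rho]$. The standard Campanato--Meyers argument then shows that $({\mathcal G}_{2\delta,\varepsilon}(\nabla u_\varepsilon))_r$ is Cauchy as $r\to 0$, which defines $\Gamma_{2\delta,\varepsilon}$, and the claimed bound follows by the triangle inequality. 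The main obstacle lies in the first step: the smallness of $\nu$ cannot be fixed absolutely, since $\omega$ inherits the possibly-slow moduli $\omega_1,\omega_p$, so the choices of $\nu$ and $\hat\rho$ must be made sequentially in a way compatible with the radius restrictions of Lemma \ref{Lemma: Continuity estimates on Hessian}, matching the dependencies stated in the proposition.
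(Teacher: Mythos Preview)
Your proposal has a genuine gap at the comparison step. A pointwise lower bound on $V_\varepsilon$ controls only the \emph{modulus} $\lvert\nabla u_\varepsilon\rvert$, not the oscillation $\lvert\nabla u_\varepsilon - z_0\rvert$: even if $V_\varepsilon$ stays within $c_0\nu^{1/2}\mu$ of $\delta+\mu$ throughout $Q_{\rho/2}$, the \emph{direction} of $\nabla u_\varepsilon$ may still vary by order~$\mu$. The coefficient error furnished by (\ref{Eq (Section 2): Continuity estimate on Hessian}) is $\omega(\lvert\nabla u_\varepsilon - z_0\rvert/\mu)\,\lvert\nabla u_\varepsilon - z_0\rvert$, so testing gives at best
\(\fiint_{Q_{\rho/2}}\lvert\nabla u_\varepsilon-\nabla v\rvert^2\,{\mathrm d}X\le C\fiint_{Q_{\rho/2}}\omega(\lvert\nabla u_\varepsilon-z_0\rvert/\mu)^2\lvert\nabla u_\varepsilon-z_0\rvert^2\,{\mathrm d}X\),
and without separate control on $\lvert\nabla u_\varepsilon - z_0\rvert$ this is only $O(\mu^2)$, not $O(\omega(c_1\nu^{1/2})^2\mu^2)$. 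Even granting your claimed bound, the term $C\omega(c_1\nu^{1/2})^2\mu^2$ is a \emph{fixed} quantity independent of scale; iterating your displayed inequality therefore yields only $\Phi(r)\le C\omega(c_1\nu^{1/2})^2\mu^2$ for small $r$, not the power decay $(r/\rho)^{2\beta}\mu^2$. The Campanato--Giaquinta lemma does not absorb a constant additive error.

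The paper avoids pointwise lower bounds on $V_\varepsilon$ altogether. Instead, Lemma~\ref{Lemma: Oscillation from Hessian} uses the \emph{Hessian} energy estimates of Lemma~\ref{Lemma: Energy estimates from weak form} to extract directly from (\ref{Eq (Section 2): Measure Assumption Campanato}) both the initial smallness $\Phi(\rho)\coloneqq\fiint_{Q_\rho}\lvert\nabla u_\varepsilon-(\nabla u_\varepsilon)_\rho\rvert^2\,{\mathrm d}X\le\theta\mu^2$ and the non-degeneracy of the \emph{average} $\lvert(\nabla u_\varepsilon)_\rho\rvert\ge\delta+\mu/2$. The comparison error then reads $\omega\bigl(\sqrt{\Phi(\rho)/\mu^2}\bigr)^{\vartheta/(1+\vartheta)}\Phi(\rho)$, where the exponent $\vartheta$ comes from a Gehring-type higher integrability (Lemma~\ref{Lemma: Higher integrability parabolic}); this error is \emph{self-improving} because it is controlled by $\Phi$ itself. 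The induction (\ref{Eq (Section 6): Induction Claim 1})--(\ref{Eq (Section 6): Induction Claim 2}) then simultaneously propagates the decay $\Phi(\rho_k)\le\sigma^{2k\beta}\theta\mu^2$ and the lower bound $\lvert(\nabla u_\varepsilon)_{\rho_k}\rvert\ge\delta+\mu/4$, the latter being required to re-freeze and re-compare at each scale. Your sketch misses both the Hessian-energy step and the higher-integrability step, and without a self-improving error term the scheme cannot close.
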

 The desired exponent $\alpha$ in Theorem \ref{Thm: Key Hoelder Estimate} is given by 
 \begin{equation}\label{Eq (Section 2): Def of Holder exp}
   \alpha\coloneqq \frac{\log \kappa}{\log (\sqrt{\nu}/6)}\in(0,\,\beta),
 \end{equation}
 where $\nu$ and $\kappa=\kappa(\nu)$ are respectively determined by Propositions \ref{Proposition: Parabolic Campanato} and \ref{Proposition: Parabolic De Giorgi}.
 To see our proofs of Propositions \ref{Proposition: Parabolic De Giorgi}--\ref{Proposition: Parabolic Campanato}, given in Sections \ref{Sect:Degenerate Region}--\ref{Sect:Non-Degenerate Region}, we notice that $\nu\to 0$ and $\kappa\to 1$, as $\delta\to 0$.
 Therefore, the exponent $\alpha$ determined by (\ref{Eq (Section 2): Def of Holder exp}) will vanish as $\delta\to 0$.

 We make different analyses, depending on whether a gradient may vanish or not.
 This will be judged by measuring the ratio $\lvert S_{\rho,\,\mu,\,\nu}\rvert/\lvert Q_{\rho}\rvert$.
 It is worth mentioning that (\ref{Eq (Section 2): Measure Assumption Campanato}), the assumption in Proposition \ref{Proposition: Parabolic Campanato}, roughly states that the modulus $V_{\varepsilon}$ will stay close to its upper bound $\mu+\delta$.
 This indicates that the average integral of $\nabla u_{\varepsilon}$ will never vanish, which is rigorously verified by non-trivial energy estimates.
 Under the other condition (\ref{Eq (Section 2): Measure Assumption De Giorgi}) in Proposition \ref{Proposition: Parabolic De Giorgi}, we will probably face the case where a gradient may vanish.
 For this degenerate case, we should make suitable truncation to avoid analysis over facets.
 Such divisions by cases, based on the size of the superlevel set of the modulus, can be found in fundamental works in the $p$-Laplace regularity theory (e.g., \cite{MR1230384}, \cite{MR2635642}).
 In the proof of Propositions \ref{Proposition: Parabolic De Giorgi}--\ref{Proposition: Parabolic Campanato}, however, we fully appeal to (\ref{Eq (Section 2): Delta vs Mu}), and do not use the intrinsic scaling methods found in the $p$-Laplace regularity theory.

 Theorem \ref{Proposition: Lipschitz}, Propositions \ref{Proposition: Parabolic De Giorgi} and \ref{Proposition: Parabolic Campanato} are respectively shown in Sections \ref{Sect:Lipschitz Bounds}, \ref{Sect:Degenerate Region}, and \ref{Sect:Non-Degenerate Region}.
 We conclude Section \ref{Sect:Approximation} by giving the proofs of Theorem \ref{Thm: Key Hoelder Estimate} and Theorem \ref{Thm: Main theorem parabolic}.
 \begin{proof}[Proof of Theorem \ref{Thm: Key Hoelder Estimate}]
   We set $M\coloneqq \mu_{0}+\delta$, and choose $\nu\in(0,\,1/4)$ and ${\hat \rho}\in(0,\,1)$ as in Proposition \ref{Proposition: Parabolic Campanato}.
   Corresponding to this $\nu$, we set $\sigma\coloneqq \sqrt{\nu}/6$, and choose $\kappa\in(\sigma^{\beta},\,1)$ and $\tilde{\rho}\in(0,\,1)$ as in Proposition \ref{Proposition: Parabolic De Giorgi}.
   We set $\rho_{0}\coloneqq \min\{\,{\hat\rho},\,\tilde{\rho},\,R/4\,\}\in (0,\,R/4\rbrack$.   
   Then, (\ref{Eq (Section 2): Bounds of G-delta-epsilon-nabla u})--(\ref{Eq (Section 2): Bounds of V-epsilon}) hold with $(\rho,\,\mu)\coloneqq (\rho_{0},\,\mu_{0})$ and for every center point $(x_{0},\,t_{0})\in Q_{\rho_{0}}(x_{\ast},\,t_{\ast})$.
   
  We claim that the limit $\Gamma_{2\delta,\,\varepsilon}(x_{0},\,t_{0})\coloneqq \lim\limits_{r\to 0}\left({\mathcal G}_{2\delta,\,\varepsilon}(\nabla u_{\varepsilon}) \right)_{Q_{r}(x_{0},\,t_{0})}\in{\mathbb R}^{n}$ exists for each $(x_{0},\,t_{0})\in Q_{\rho_{0}}(x_{\ast},\,t_{\ast})$.
  Moreover, this limit satisfies
   \begin{equation}\label{Eq (Section 2): Alpha-Growth Campanato}
     \fiint_{Q_{r}(x_{0},\,t_{0})}\left\lvert {\mathcal G}_{2\delta,\,\varepsilon}(\nabla u_{\varepsilon})-\Gamma_{2\delta,\,\varepsilon}(x_{0},\,t_{0})\right\rvert^{2}\,{\mathrm d}X\le C\left(\frac{r}{\rho_{0}}\right)^{2\alpha}\mu_{0}^{2}
   \end{equation}
   for all $r\in(0,\,\rho_{0}\rbrack$.
   Here the exponent $\alpha\in(0,\,\beta)$ is defined by (\ref{Eq (Section 2): Def of Holder exp}), and $C\in(1,\,\infty)$ depends at most on $\nu$ and $\alpha$.
   To show (\ref{Eq (Section 2): Alpha-Growth Campanato}), we define $\mu_{k}\coloneqq \kappa^{k}\mu_{0}$, $\rho_{k}\coloneqq \sigma^{k}\rho_{0}$ for $k\in{\mathbb N}$.
   For these decreasing sequences $\{\mu_{k}\}_{k=0}^{\infty}$, $\{\rho_{k}\}_{k=0}^{\infty}$, we define
   \[{\mathcal N}\coloneqq \{k\in{\mathbb Z}_{\ge 0}\mid \mu_{k}>\delta,\textrm{ and }\lvert S_{\rho_{k},\,\mu_{k},\,\nu}(x_{0},\,t_{0}) \rvert\le (1-\nu)\lvert Q_{\rho_{k}}(x_{0},\,t_{0})\rvert\},\]
   The proper inclusion ${\mathcal N}\subsetneq {\mathbb Z}_{\ge 0}$ is clear, since $\mu_{k}\to 0$ as $k\to\infty$.
   Thus, we can define $k_{\star}\in{\mathbb Z}_{\ge 0}$ by the minimum number satisfying $k_{\star}\not\in {\mathcal N}$.
   By repeatedly applying Proposition \ref{Proposition: Parabolic De Giorgi} with $(\rho,\,\mu)=(\rho_{k},\,\mu_{k})$ for $k\in\{\,0,\,\dots\,,\,k_{\star}-1\,\}$, we have
   \begin{equation}\label{Eq (Section 2): De Giorgi Iteration}
    \sup_{Q_{2\rho_{k}}(x_{0},\,t_{0})}\,\lvert {\mathcal G}_{2\delta,\,\varepsilon}(\nabla u_{\varepsilon})\rvert\le\sup_{Q_{2\rho_{k}}(x_{0},\,t_{0})}\,\lvert {\mathcal G}_{\delta,\,\varepsilon}(\nabla u_{\varepsilon})\rvert\le \mu_{k}
   \end{equation}
   for every $k\in\{\,0,\,\dots\,,\,k_{\star}\,\}$.
   For $k_{\star}\not\in {\mathcal N}$, there are two possible cases.

   If $\mu_{k_{\star}}>\delta$, then $\lvert S_{\rho_{k_{\star}},\,\mu_{k_{\star}},\,\nu}\rvert>(1-\nu)\lvert Q_{\rho_{k_{\star}}}\rvert$ must hold.
   By Proposition \ref{Proposition: Parabolic Campanato}, the limit $\Gamma_{2\delta,\,\varepsilon}(x_{0},\,t_{0})\in{\mathbb R}^{n}$ exists. 
   Moreover, we have $\lvert\Gamma_{2\delta,\,\varepsilon}(x_{0},\,t_{0})\rvert\le \mu_{k_{\star}}$, and
   \[\fiint_{Q_{r}(x_{0},\,t_{0})}\left\lvert {\mathcal G}_{2\delta,\,\varepsilon}(\nabla u_{\varepsilon})-\Gamma_{2\delta,\,\varepsilon}(x_{0},\,t_{0}) \right\rvert^{2}\,{\mathrm d}X\le \left(\frac{r}{\rho_{k_{\star}}} \right)^{2\beta}\mu_{k_{\star}}^{2}    \le \left(\frac{r}{\rho_{0}}\right)^{2\alpha}\mu_{0}^{2}\]
   for all $r\in (0,\,\rho_{k_{\star}}\rbrack$, where we have used $\alpha\le \beta$ and $\kappa=\sigma^{\alpha}$.
   For $r\in(\rho_{k_{\star}},\,\rho_{0}\rbrack$, there corresponds a unique $k\in\{\,0,\,\dots\,,\,k_{\star}-1 \,\}$ such that $\rho_{k+1}<r\le\rho_{k}$.
   Then, it is obvious that $\sigma^{k}<\sigma^{-1}\cdot \frac{r}{\rho_{0}}=\frac{6}{\sqrt{\nu}}\cdot\frac{r}{\rho_{0}}$.
   Hence, we can compute
   \begin{align*}
    \fiint_{Q_{r}(x_{0},\,t_{0})}\left\lvert {\mathcal G}_{2\delta,\,\varepsilon}(\nabla u_{\varepsilon})-\Gamma_{2\delta,\,\varepsilon}(x_{0},\,t_{0}) \right\rvert^{2}\,{\mathrm d}X&\le 2\left(\lvert \Gamma_{2\delta,\,\varepsilon}(x_{0},\,t_{0}) \rvert^{2}+ \sup_{Q_{\rho_{k}}}\left\lvert {\mathcal G}_{2\delta,\,\varepsilon}(\nabla u_{\varepsilon}) \right\rvert^{2}\right) \\ 
    &\le 4\mu_{k}^{2}\le 4\left(\frac{6}{\sqrt{\nu}}\right)^{2\alpha}\left(\frac{r}{\rho_{0}}\right)^{2\alpha}\mu_{0}^{2}.
   \end{align*}
   This completes the proof of (\ref{Eq (Section 2): Alpha-Growth Campanato}) in the case $\mu_{k_{\star}}>\delta$.
   
   In the remaining case $0<\mu_{k_{\star}}\le \delta$, it is clear that ${\mathcal G}_{2\delta,\,\varepsilon}(\nabla u_{\varepsilon})\equiv 0$ in $Q_{\rho_{k_{\star}}}$.
   In particular, the identity $\Gamma_{2\delta,\,\varepsilon}(x_{0},\,t_{0})=0$ holds, and it is easy to check that (\ref{Eq (Section 2): De Giorgi Iteration}) is satisfied for all $k\in{\mathbb Z}_{\ge 0}$.
   For each $r\in(0,\,\rho_{0}\rbrack$, there corresponds a unique number $k\in{\mathbb Z}_{\ge 0}$ such that $\rho_{k+1}<r\le \rho_{k}$.
   By (\ref{Eq (Section 2): De Giorgi Iteration}) and $\sigma^{k}<\frac{6}{\sqrt{\nu}}\cdot\frac{r}{\rho_{0}}$, we have 
   \[\fiint_{Q_{r}(x_{0},\,t_{0})}\left\lvert {\mathcal G}_{2\delta,\,\varepsilon}(\nabla u_{\varepsilon})-\Gamma_{2\delta,\,\varepsilon}(x_{0},\,t_{0}) \right\rvert^{2}\,{\mathrm d}X\le \mu_{k}^{2}=\sigma^{2\alpha k}\mu_{0}^{2}\le \left(\frac{6}{\sqrt{\nu}} \right)^{2\alpha}\left(\frac{r}{\rho_{0}} \right)^{2\alpha}\mu_{0}^{2}.\]
   In any possible cases, we conclude (\ref{Eq (Section 2): Alpha-Growth Campanato}) with $C\coloneqq 4\cdot(6/\sqrt{\nu})^{2\alpha}$.

   The mapping $\Gamma_{2\delta,\,\varepsilon}$ is a Lebesgue representative of ${\mathcal G}_{2\delta,\,\varepsilon}(\nabla u_{\varepsilon})$ by Lebesgue's differentiation theorem.
   Therefore, (\ref{Eq (Section 2): Main estimate Bounds}) is obvious since $\lvert\Gamma_{2\delta,\,\varepsilon}(x_{0},\,t_{0})\rvert\le \mu_{0}$ for all $(x_{0},\,t_{0})\in Q_{\rho_{0}}(x_{\ast},\,t_{\ast})$.
   The proof of (\ref{Eq (Section 2): Main estimate Holder}) is completed by showing that $\Gamma_{2\delta,\,\varepsilon}$ satisfies
   \(\left\lvert \Gamma_{2\delta,\,\varepsilon}(X_{1})-\Gamma_{2\delta,\,\varepsilon}(X_{2}) \right\rvert\le C\left(\frac{d_{\mathrm{p}}(X_{1},\,X_{2})}{\rho_{0}}\right)^{\alpha}\mu_{0}\)
   for all $X_{1}=(x_{1},\,t_{1})$, $X_{2}=(x_{2},\,t_{2})\in Q_{\rho_{0}/2}(x_{\ast},\,t_{\ast})$.
   Firstly, we consider the case $r\coloneqq d_{\mathrm{p}}(X_{1},\,X_{2})\le 2\rho_{0}/3$. We fix a point
   \(X_{3}=(x_{3},\,t_{3})\coloneqq\left(\frac{x_{1}+x_{2}}{2},\,\min\{\,t_{1},\,t_{2}\,\}\right).\)
   The inclusions $Q_{r}(X_{3})\subset Q_{3r/2}(X_{j})\subset Q_{\rho_{0}}(X_{j})\subset Q_{2\rho_{0}}(X_{j})\subset Q_{4\rho_{0}}(X_{\ast})$ are easily checked for each $j\in\{\,1,\,2\,\}$.
   Applying (\ref{Eq (Section 2): Alpha-Growth Campanato}) with $Q_{r}(x_{0},\,t_{0})$ replaced by $Q_{3r/2}(X_{j})$ ($j\in\{\,1,\,2\,\}$), we obtain
   \[\left\lvert \Gamma_{2\delta,\,\varepsilon}(X_{1})-\Gamma_{2\delta,\,\varepsilon}(X_{2}) \right\rvert^{2}\le 2\sum_{j=1}^{2}\fiint_{Q_{r}(X_{3})}\left\lvert {\mathcal G}_{2\delta,\,\varepsilon}(\nabla u_{\varepsilon})-\Gamma_{2\delta,\,\varepsilon}(X_{j}) \right\rvert^{2}\,{\mathrm d}X\le C_{n,\,\nu}\left(\frac{r}{\rho_{0}}\right)^{2\alpha}\mu_{0}^{2},\]
   from which (\ref{Eq (Section 2): Main estimate Holder}) is clear.
   The remaining case $r>2\rho_{0}/3$ is easier, since this condition allows us to compute
   \(\left\lvert \Gamma_{2\delta,\,\varepsilon}(X_{1})-\Gamma_{2\delta,\,\varepsilon}(X_{2})\right\rvert\le 2\mu_{0}\le 2\cdot\left(\frac{3r}{2\rho_{0}} \right)^{\alpha}\mu_{0},\)
   which completes the proof of (\ref{Eq (Section 2): Main estimate Holder}).
  \end{proof}
 \begin{proof}[Proof of Theorem \ref{Thm: Main theorem parabolic}]
   Fix subdomains ${\mathcal Q}\Subset  \tilde{\mathcal Q}\Subset \Omega_{T-\tau}$ with $\tau\in(0,\,T)$, and let $\delta\in(0,\,1)$ and $\varepsilon_{k}\in(0,\,\delta/8)$.
   For each $\varepsilon=\varepsilon_{k}$, we consider $u_{\varepsilon_{k}}$, the unique weak solution of (\ref{Eq (Section 2): Approximated Dirichlet boundary}).
   Proposition \ref{Prop: Convergence result} allows us to assume that 
   \(\nabla u_{\varepsilon_{k}}\to \nabla u\) a.e.~in \(\Omega_{T-\tau}\),
   by taking a subsequence if necessary.
   Moreover, we can choose the positive constants $U_{\tau}$ and $F$, satisfying \(\lVert\nabla u_{\varepsilon_{k}} \rVert_{L^{p}(\Omega_{T-\tau})}\le U_{\tau}\) and \(\lVert f_{\varepsilon_{k}}\rVert_{L^{q}(\Omega_{T})}\le F\) respectively.
   By Theorem \ref{Proposition: Lipschitz}, we can find a constant $\mu_{0}\in(1,\,\infty)$, which depends at most on $n$, $p$, $q$, $\lambda$, $\Lambda$, $K$, $U_{\tau}$, $F$, and $\mathop{\mathrm{dist}}_{\mathrm{p}}(\tilde{\mathcal Q},\,\partial_{\mathrm{p}}\Omega_{T-\tau})$, such that $V_{\varepsilon}\le \delta+\mu_{0}$ holds a.e.~in $\tilde{\mathcal Q}$.
   Thanks to this uniform bound and Theorem \ref{Thm: Key Hoelder Estimate}, we can apply the Arzel\`{a}--Ascoli theorem to the sequence $\{{\mathcal G}_{2\delta,\,\varepsilon_{k}}(\nabla u_{\varepsilon_{k}}) \}_{k}\subset C^{0}({\mathcal Q};\,{\mathbb R}^{n})$.
   In particular, we may let ${\mathcal G}_{2\delta,\,\varepsilon_{k}}(\nabla u_{\varepsilon_{k}})$ uniformly converge to a H\"{o}lder continuous mapping $v_{2\delta}$ in ${\mathcal Q}$.
   Meanwhile, we have ${\mathcal G}_{2\delta,\,\varepsilon_{k}}(\nabla u_{\varepsilon_{k}})\to {\mathcal G}_{2\delta}(\nabla u)$ a.e.~in $\Omega_{T-\tau}$.
   These convergence results imply that ${\mathcal G}_{2\delta}(\nabla u)=v_{2\delta}$ a.e.~in ${\mathcal Q}\Subset \Omega_{T-\tau}$.
   Thus, we conclude ${\mathcal G}_{2\delta}(\nabla u)\in C^{0}(\Omega_{T};\,{\mathbb R}^{n})$ for each fixed $\delta\in(0,\,1)$.

   By the definition of ${\mathcal G}_{\delta}$, it is obvious that 
   \(\sup_{\Omega_{T}}\,\lvert{\mathcal G}_{\delta_{1}}(\nabla u)-{\mathcal G}_{\delta_{2}}(\nabla u) \rvert\le \lvert \delta_{1}-\delta_{2} \rvert\) for all \(\delta_{1},\,\delta_{2}\in(0,\,1).\)
   Therefore, ${\mathcal G}_{\delta}(\nabla u)\in C^{0}(\Omega_{T};\,{\mathbb R}^{n})$ uniformly converges as $\delta\to 0$.
   Since ${\mathcal G}_{\delta}(\nabla u)\to \nabla u$ a.e.~in $\Omega_{T}$ as $\delta \to 0$, this implies $\nabla u\in C^{0}(\Omega_{T};\,{\mathbb R}^{n})$.
 \end{proof}

\section{Basic estimates of approximate solutions}\label{Sect:Estimates from Weak Form}
 Section \ref{Sect:Estimates from Weak Form} aims to give weak formulations and local estimates of $u_{\varepsilon}$, which are used in Sections \ref{Sect:Degenerate Region}--\ref{Sect:Non-Degenerate Region}.
 \subsection{A basic weak formulation}\label{Subsect: Basic Weak Form}
 From (\ref{Eq (Section 3): Weak formulation differentiated}), we deduce a basic weak formulation concerning $V_{\varepsilon}=\sqrt{\varepsilon^{2}+\lvert \nabla u_{\varepsilon}\rvert^{2}}$ in a systematic approach. 
 \begin{lemma}\label{Lemma: Basic Weak Form}
   Let $u_{\varepsilon}$ be a weak solution to (\ref{Eq (Section 2): Approximation equation}) in $Q=Q_{R}(x_{\ast},\,t_{\ast})\Subset \Omega_{T}$ with $\varepsilon\in(0,\,1)$ and (\ref{Eq (Section 2): Improved reg}).
   Assume that $\psi$ is a non-decreasing, non-negative, and locally Lipschitz function that is differentiable except at finitely many points. 
   For this $\psi$, we define a convex function
   \begin{equation}\label{Eq (Section 3): Def of PSI}
     \Psi(s)\coloneqq \int_{0}^{s}\sigma\psi(\sigma)\,{\mathrm{d}}\sigma\quad \textrm{for}\quad s\in\lbrack 0,\,\infty).
   \end{equation}
   Let $\zeta\in  X_{0}^{2}(t_{\ast}-R^{2},\,t_{\ast};\,B_{R}(x_{\ast}))\cap L^{\infty}(\Omega_{T}) $ be a non-negative function that is compactly supported in the interior of $Q$. 
   Then, there holds
   \begin{equation}\label{Eq (Section 3): Weak form of V-epsilon}
   2J_{0}+2J_{1}+J_{2}+J_{3}\le n\lambda^{-1}J_{4}+2J_{5},
   \end{equation}
   where $J_{0},\,\dots\,,\,J_{5}$ are the integrals defined as
   \begin{align*}
    &J_{0} \coloneqq \displaystyle\int_{t_{\ast}-R^{2}}^{t_{\ast}} \langle \partial_{t}\left[\Psi(V_{\varepsilon})\right],\, \zeta\rangle\,{\mathrm d}t,\quad  J_{1} \coloneqq \displaystyle\iint_{Q}\mleft\langle \nabla^{2}E_{\varepsilon}(\nabla u_{\varepsilon})\nabla[\Psi(V_{\varepsilon})] \mathrel{}\middle|\mathrel{}\nabla\zeta \mright\rangle\,{\mathrm{d}}X,  \\ 
    &J_{2} \coloneqq \displaystyle\iint_{Q}\mleft\langle \nabla^{2}E_{\varepsilon}(\nabla u_{\varepsilon})\nabla V_{\varepsilon}\mathrel{} \middle| \mathrel{}\nabla V_{\varepsilon}\mright\rangle\zeta\psi^{\prime}(V_{\varepsilon})V_{\varepsilon}\,\mathrm{d}X,\\ 
    &J_{3} \coloneqq \displaystyle\sum_{j=1}^{n}\displaystyle\iint_{Q}\mleft\langle \nabla^{2}E_{\varepsilon}(\nabla u_{\varepsilon})\nabla\partial_{x_{j}}u_{\varepsilon} \mathrel{}\middle|\mathrel{}\nabla\partial_{x_{j}}u_{\varepsilon} \mright\rangle\zeta\psi(V_{\varepsilon})\,\mathrm{d}X,\\
    &J_{4} \coloneqq \displaystyle\iint_{Q}\lvert f_{\varepsilon}\rvert^{2}V_{\varepsilon}^{2-p}\left(\psi(V_{\varepsilon})+\psi^{\prime}(V_{\varepsilon})V_{\varepsilon} \right)\zeta\,{\mathrm{d}}X,\quad J_{5}\coloneqq \displaystyle\iint_{Q}\lvert f_{\varepsilon}\rvert\lvert \nabla\zeta\rvert\psi(V_{\varepsilon})V_{\varepsilon}\,{\mathrm{d}}X.
  \end{align*}
   
   \end{lemma}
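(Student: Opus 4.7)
The strategy is to test the differentiated weak formulation (3.1) with $\varphi_j \coloneqq \zeta\,\psi(V_{\varepsilon})\,\partial_{x_{j}}u_{\varepsilon}$ for each $j\in\{1,\dots,n\}$ and then sum the resulting identities over $j$. Thanks to the $C^{\infty}$-regularity of $u_{\varepsilon}$ recorded in the preceding Remark, together with $\zeta\in X_{0}^{2}\cap L^{\infty}$ compactly supported in the interior of $Q$, each $\varphi_{j}$ is admissible; the finitely many non-differentiable points of $\psi$ are handled by first replacing $\psi$ by smooth non-decreasing approximations $\psi_{k}$, deriving the inequality for each $\psi_{k}$, and passing to the limit via dominated convergence using the $L^{\infty}_{\mathrm{loc}}$-bound on $V_{\varepsilon}$.

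For the parabolic contribution, the smoothness of $u_{\varepsilon}$ lets one rewrite the time term as $\sum_{j}\iint \varphi_{j}\,\partial_{t}\partial_{x_{j}}u_{\varepsilon}\,{\mathrm d}X$, and the identities
\[
\sum_{j=1}^{n}\partial_{x_{j}}u_{\varepsilon}\,\partial_{t}\partial_{x_{j}}u_{\varepsilon}=V_{\varepsilon}\,\partial_{t}V_{\varepsilon},\qquad \Psi^{\prime}(s)=s\,\psi(s),
\]
collapse the sum to $J_{0}$. For the elliptic contribution, expanding
\[
\nabla\varphi_{j}=\psi(V_{\varepsilon})\,\partial_{x_{j}}u_{\varepsilon}\,\nabla\zeta + \zeta\,\psi^{\prime}(V_{\varepsilon})\,\partial_{x_{j}}u_{\varepsilon}\,\nabla V_{\varepsilon} + \zeta\,\psi(V_{\varepsilon})\,\nabla\partial_{x_{j}}u_{\varepsilon}
\]
and using the key identity $V_{\varepsilon}\nabla V_{\varepsilon}=\sum_{k}\partial_{x_{k}}u_{\varepsilon}\,\nabla\partial_{x_{k}}u_{\varepsilon}$ together with $\nabla[\Psi(V_{\varepsilon})]=\psi(V_{\varepsilon})V_{\varepsilon}\nabla V_{\varepsilon}$, the three pieces produce exactly $J_{1}$, $J_{2}$ and $J_{3}$ after the sum over $j$.

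The external force term splits analogously into three integrals. The $\nabla\zeta$-piece is dominated in absolute value by $J_{6}$ using $|\nabla u_{\varepsilon}|\le V_{\varepsilon}$. For the other two I invoke the lower spectral bound $\nabla^{2}E_{\varepsilon}\ge \lambda V_{\varepsilon}^{p-2}\mathrm{id}_{n}$, a consequence of (2.9) and (2.12), which yields
\[
J_{2}\ge \lambda\iint_{Q}V_{\varepsilon}^{p-1}|\nabla V_{\varepsilon}|^{2}\zeta\,\psi^{\prime}(V_{\varepsilon})\,{\mathrm d}X,\qquad J_{3}\ge \lambda\iint_{Q}V_{\varepsilon}^{p-2}|\nabla^{2}u_{\varepsilon}|^{2}\zeta\,\psi(V_{\varepsilon})\,{\mathrm d}X.
\]
Combined with the pointwise Cauchy--Schwarz bound $|\Delta u_{\varepsilon}|^{2}\le n|\nabla^{2}u_{\varepsilon}|^{2}$, Young's inequality with weights $\alpha=n/\lambda$ and $\alpha=1/\lambda$ respectively controls the $\Delta u_{\varepsilon}$-piece by $\tfrac{n}{2\lambda}J_{4}+\tfrac{1}{2}J_{3}$ and the $\nabla V_{\varepsilon}$-piece by $\tfrac{1}{2\lambda}J_{5}+\tfrac{1}{2}J_{2}$. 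Absorbing $\tfrac{1}{2}J_{2}$ and $\tfrac{1}{2}J_{3}$ into the left-hand side and multiplying by $2$ produces exactly (3.2).

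The main delicate point I expect is the chain-rule step $\nabla\Psi(V_{\varepsilon})=\psi(V_{\varepsilon})V_{\varepsilon}\nabla V_{\varepsilon}$ and the analogous identity for $\partial_{t}\Psi(V_{\varepsilon})$, which require either $\psi\in C^{1}$ or the smoothing argument described above; once that approximation is in place, the rest is bookkeeping of the Young-inequality constants so that the coefficients $2,2,1,1$ on the left and $n\lambda^{-1},\lambda^{-1},2$ on the right in (3.2) match.
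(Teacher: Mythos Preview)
Your proposal is correct and follows essentially the same approach as the paper: test the differentiated weak formulation with $\varphi_{j}=\zeta\,\psi(V_{\varepsilon})\,\partial_{x_{j}}u_{\varepsilon}$, sum over $j$ to obtain the identity $J_{0}+J_{1}+J_{2}+J_{3}=-\iint_{Q}f_{\varepsilon}\bigl(\psi(V_{\varepsilon})\langle\nabla u_{\varepsilon}\mid\nabla\zeta\rangle+\zeta\psi^{\prime}(V_{\varepsilon})\langle\nabla u_{\varepsilon}\mid\nabla V_{\varepsilon}\rangle+\zeta\psi(V_{\varepsilon})\Delta u_{\varepsilon}\bigr)\,{\mathrm d}X$, and then estimate the three force pieces via Young's inequality together with the ellipticity lower bounds for $J_{2}$ and $J_{3}$. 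You have in fact supplied more detail than the paper, which defers the computation to \cite[Lemma~3.3]{T-scalar}; the smoothing of $\psi$ you describe is a safe way to justify the chain rule steps and is implicit in the paper's appeal to the $C^{\infty}$ regularity of $u_{\varepsilon}$.
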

   \begin{proof}
          We note  that the integrals $J_{2}$ and $J_{3}$  satisfies 
   \begin{equation}\label{Eq (Section 3): Ellipticity of J-2 and J-3}
   J_{2}\ge \lambda\iint_{Q}V_{\varepsilon}^{p-1}\lvert \nabla V_{\varepsilon}\rvert^{2}\zeta\psi^{\prime}(V_{\varepsilon})\,{\mathrm{d}}X\quad \textrm{and}\quad J_{3}\ge \lambda\iint_{Q}V_{\varepsilon}^{p-2}\mleft\lvert\nabla^{2} u_{\varepsilon}\mright\rvert^{2}\zeta\psi(V_{\varepsilon})\,{\mathrm{d}}X,
   \end{equation}
    which inequalities  follow from (\ref{Eq (Section 2): bounds of E-1-epsilon})--(\ref{Eq (Section 2): bounds of E-p-epsilon}) and $\psi,\,\psi^{\prime},\,\zeta\ge 0$. 
   For each $j\in\{\,1,\,\dots\,,\,n\,\}$, we test $\varphi\coloneqq \zeta\psi(V_{\varepsilon})\partial_{x_{j}}u_{\varepsilon}\in   X_{0}^{2}(t_{\ast}-R^{2},\,t_{\ast};\,B_{R}(x_{\ast})) $ into (\ref{Eq (Section 3): Weak formulation differentiated}). 
   Summing over $j\in\{\,1,\,\dots\,,\,n\,\}$ and computing similarly to  \cite[Lemma 3.5]{T-scalar},  we have
   \begin{align*}
   J_{0}+J_{1}+J_{2}+J_{3}&=-\iint_{Q}f_{\varepsilon}\left(\psi(V_{\varepsilon})\langle \nabla u_{\varepsilon}\mid \nabla\zeta\rangle+\zeta\psi^{\prime}(V_{\varepsilon})\langle\nabla u_{\varepsilon}\mid\nabla V_{\varepsilon}\rangle+\zeta\psi(V_{\varepsilon})\Delta u_{\varepsilon}\right)\,{\mathrm d}X  \\ 
   &\le J_{5}+\frac{1}{2}(J_{2}+J_{3})+\frac{n}{2\lambda}J_{4}\nonumber
   \end{align*}
   by Young's inequality and (\ref{Eq (Section 3): Ellipticity of J-2 and J-3}). From this, (\ref{Eq (Section 3): Weak form of V-epsilon}) immediately follows.
  \end{proof}
  The weak formulation (\ref{Eq (Section 3): Weak form of V-epsilon}) is viewed in two ways. 
  The first is that the composite function $\Psi(V_{\varepsilon})$ becomes a subsolution to a certain parabolic equation.
  This is easy to deduce by discarding the non-negative terms $J_{2}$ and $J_{3}$ in (\ref{Eq (Section 3): Weak form of V-epsilon}).
  In Section \ref{Subsect:De Giorgi Truncation}, we suitably choose $\psi$ to apply De Giorgi's truncation.
  The second is that we can obtain local $L^{2}$-estimates for the Hessian matrix $\nabla^{2}u_{\varepsilon}$ from the integral $J_{3}$.
  The detailed computations are given in Section \ref{Subsect:Hessian Energy}.
 \subsection{De Giorgi's truncation}\label{Subsect:De Giorgi Truncation}
 We prove that the convex composite function \[ U_{\delta,\,\varepsilon}\coloneqq (V_{\varepsilon}-\delta)_{+}^{2}=\lvert {\mathcal G}_{\delta,\,\varepsilon}(\nabla u_{\varepsilon})\rvert^{2}  \] is a weak subsolution to a uniformly parabolic equation.
 This fact implies that as in Lemma \ref{Lemma: De Giorgi truncation lemma} below, $U_{\delta,\,\varepsilon}$ belongs to a certain parabolic De Giorgi class (see \cite[Chapter II, \S 7]{MR0241822}, \cite[Chapter VI, \S 13]{MR1465184} as related items).
 \begin{lemma}\label{Lemma: De Giorgi truncation lemma}
   Let the assumptions of Proposition \ref{Proposition: Parabolic De Giorgi} be verified.
   Fix a subcylinder $Q_{0}\coloneqq B_{r}\times (T_{0},\,T_{1}\rbrack\subset Q_{2\rho}$.
   Then, for all $k\in(0,\,\infty)$, $\eta\in C_{\mathrm c}^{1}(B_{r};\,\lbrack 0,\,1\rbrack)$, $\phi_{\mathrm c}\in C^{1}([T_{0},\,T_{1}];\,\lbrack 0,\,1\rbrack)$ satisfying $\phi_{\mathrm c}(T_{0})=0$, we have 
   \begin{align}\label{Eq (Section 3): De Giorgi Truncation 1}
    &\esssup_{\tau\in (T_{0},\,T_{1})} \,\int_{B_{r}\times\{\tau\}}(U_{\delta,\,\varepsilon}-k)_{+}^{2}\eta^{2}\phi_{\mathrm c}\,{\mathrm d}x+\iint_{Q_{0}}\lvert \nabla (U_{\delta,\,\varepsilon}-k)_{+} \rvert^{2}\eta^{2}\phi_{\mathrm c}\,{\mathrm d}X\\ 
    & \le C\left[\iint_{Q_{0}} (U_{\delta,\,\varepsilon}-k)_{+}^{2}\left(\lvert\nabla\eta\rvert^{2}+\lvert\partial_{t}\phi_{\mathrm c}\rvert\right) \,{\mathrm d}X+\mu^{4}F^{2}\lvert A_{k}\rvert^{1-2/q} \right],\nonumber\\ 
    \label{Eq (Section 3): De Giorgi Truncation 2}
    &\int_{B_{r}\times\{\tau\}}  (U_{\delta,\,\varepsilon}-k)_{+}^{2}  \eta^{2}\,{\mathrm d}x -\int_{B_{r}\times\{T_{0}\}}   (U_{\delta,\,\varepsilon}-k)_{+}^{2}  \eta^{2}\,{\mathrm d}x\\ 
    &\le C\left[  \iint_{Q_{0}}  (U_{\delta,\,\varepsilon}-k)_{+}^{2}\lvert\nabla\eta\rvert^{2}\,{\mathrm d}X+\mu^{4}F^{2}\lvert A_{k}\rvert^{1-2/q}\right]\quad \text{for a.e.~} \tau\in (T_{0},\,T_{1}). \nonumber
   \end{align}
   Here $A_{k}\coloneqq \{(x,\,t)\in Q_{0}\mid U_{\delta,\,\varepsilon}(x,\,t)>k\}$, and the constant $C\in(1,\,\infty)$ depends at most on $n$, $p$, $q$, $\lambda$, $\Lambda$, $K$, $M$, and $\delta$.
  \end{lemma}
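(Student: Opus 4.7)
The plan is to derive both estimates directly from Lemma~\ref{Lemma: Basic Weak Form} by selecting, for each bound, a non-decreasing $\psi$ so that $\Psi(V_{\varepsilon})$ reproduces either a truncation of $U_{\delta,\varepsilon}$ or its square, and then taking $\zeta$ to be built from $\eta$, $\phi_{\mathrm c}$, and the indicator of a time slab. Two structural observations drive the calculation. First, on the support of $\nabla U_{\delta,\varepsilon}$ one has $V_{\varepsilon}\ge\delta$, so by (\ref{Eq (Section 2): Eigenvalues of E-1-epsilon})--(\ref{Eq (Section 2): Eigenvalues of E-p-epsilon}) together with $V_{\varepsilon}\le\delta+\mu\le M$, the matrix $A_{\varepsilon}\coloneqq\nabla^{2}E_{\varepsilon}(\nabla u_{\varepsilon})$ is uniformly elliptic with constants depending only on $n,p,\lambda,\Lambda,K,\delta,M$. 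Second, we discard the non-negative integrals $J_{2}$ and $J_{3}$ throughout, which reduces (\ref{Eq (Section 3): Weak form of V-epsilon}) to a subsolution-type inequality for $\Psi(V_{\varepsilon})$.

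For (\ref{Eq (Section 3): De Giorgi Truncation 1}) I would take $\psi(s)=2((s-\delta)_{+}^{2}-k)_{+}(s-\delta)_{+}/s$, which is continuous, non-decreasing, and differentiable except at finitely many points; a direct integration gives $\Psi(V_{\varepsilon})=\tfrac{1}{2}(U_{\delta,\varepsilon}-k)_{+}^{2}$ and $\nabla\Psi(V_{\varepsilon})=(U_{\delta,\varepsilon}-k)_{+}\nabla U_{\delta,\varepsilon}$. Using $\zeta=\eta^{2}\phi_{\mathrm c}\chi_{[T_{0},\tau]}$ (realized by a smooth temporal approximation within the admissible class, permitted since $u_{\varepsilon}\in C^{\infty}$), the term $J_{0}$ yields the slice integral at time $\tau$ minus $\iint(U_{\delta,\varepsilon}-k)_{+}^{2}\eta^{2}|\partial_{t}\phi_{\mathrm c}|\,{\mathrm d}X$; the term $J_{1}$, expanded via $\nabla\zeta=2\eta\phi_{\mathrm c}\nabla\eta$, is split by a Young inequality against $A_{\varepsilon}$ to produce $|\nabla(U_{\delta,\varepsilon}-k)_{+}|^{2}\eta^{2}\phi_{\mathrm c}$ on the left and $(U_{\delta,\varepsilon}-k)_{+}^{2}|\nabla\eta|^{2}\phi_{\mathrm c}$ on the right. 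The forcing contributions $J_{4},J_{5},J_{6}$ are controlled by estimating $\psi,\psi'$ through $V_{\varepsilon}\in[\delta,M]$, using $(U_{\delta,\varepsilon}-k)_{+}\le\mu^{2}$ together with $(V_{\varepsilon}-\delta)_{+}\le\mu$, and applying H\"older in the form $\iint|f_{\varepsilon}|^{2}\chi_{A_{k}}\,{\mathrm d}X\le F^{2}|A_{k}|^{1-2/q}$; the cross term from $J_{6}$ is Young-split and absorbed into the gradient integral. Taking the supremum in $\tau$ yields (\ref{Eq (Section 3): De Giorgi Truncation 1}).

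For (\ref{Eq (Section 3): De Giorgi Truncation 2}) I would argue in parallel with $\psi(s)=2(s-\delta)_{+}\chi_{\{(s-\delta)_{+}^{2}>k\}}/s$, smoothly regularized near $s=\delta+\sqrt{k}$ and then passing to the limit, so that $\Psi(V_{\varepsilon})=(U_{\delta,\varepsilon}-k)_{+}$. Testing with $\zeta=\eta^{2}\chi_{[T_{0},\tau]}$ (again after temporal smoothing), $J_{0}$ produces the slice difference on the left of (\ref{Eq (Section 3): De Giorgi Truncation 2}). The diffusion contribution becomes $2\iint\langle A_{\varepsilon}\nabla(U_{\delta,\varepsilon}-k)_{+}\mid\eta\nabla\eta\rangle\,{\mathrm d}X$, which I bound by a spatial integration by parts transferring one derivative onto the smooth field $A_{\varepsilon}\eta\nabla\eta$ and using the interior bounds (\ref{Eq (Section 2): Improved reg}), so that the resulting integrand is pointwise linear in $(U_{\delta,\varepsilon}-k)_{+}$; this is then dominated by $(U_{\delta,\varepsilon}-k)_{+}^{2}|\nabla\eta|^{2}$ after Young's inequality, possibly combined with the gradient estimate already furnished by (\ref{Eq (Section 3): De Giorgi Truncation 1}). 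The forcing is handled exactly as in the first step.

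The main obstacle I anticipate is the careful bookkeeping of the $\delta$- and $M$-dependence through the factors $V_{\varepsilon}^{1-p}$, $\psi'(V_{\varepsilon})$, and the one-homogeneous part of $A_{\varepsilon}$: every blow-up of $\nabla^{2}E_{1,\varepsilon}$ that behaves like $1/V_{\varepsilon}$ must be tamed by the support condition $V_{\varepsilon}\ge\delta$ on $\nabla U_{\delta,\varepsilon}$, while the restriction $V_{\varepsilon}\le M$ is needed for the $p$-Laplacian part when $p<2$. A secondary technical point is the Steklov-type approximation required to place $\zeta$ within the admissible class of Lemma~\ref{Lemma: Basic Weak Form}, which only accepts test functions compactly supported in the interior of $Q$; once this is set up, the remaining calculations are routine Caccioppoli arithmetic.
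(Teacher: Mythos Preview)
There is a genuine gap in your derivation of \eqref{Eq (Section 3): De Giorgi Truncation 1}. With $\Psi(V_{\varepsilon})=\tfrac12(U_{\delta,\varepsilon}-k)_{+}^{2}$ and $\zeta=\eta^{2}\phi_{\mathrm c}\chi_{[T_{0},\tau]}$, the term
\[
J_{1}=\iint_{Q_{0}}\bigl\langle A_{\varepsilon}\,(U_{\delta,\varepsilon}-k)_{+}\nabla U_{\delta,\varepsilon}\bigm| 2\eta\nabla\eta\bigr\rangle\phi_{\mathrm c}\chi\,{\mathrm d}X
\]
is a pure cross term with no sign. A Young inequality applied to it yields only an \emph{upper} bound on $\lvert J_{1}\rvert$; it cannot manufacture $\iint\lvert\nabla(U_{\delta,\varepsilon}-k)_{+}\rvert^{2}\eta^{2}\phi_{\mathrm c}$ as a \emph{coercive} contribution on the left-hand side. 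Since you have discarded $J_{2}$ and $J_{3}$, nothing remains to furnish this gradient term. The simple repair is to \emph{keep} $J_{2}$: from your $\psi$ one computes $V_{\varepsilon}\psi'(V_{\varepsilon})\ge 4(V_{\varepsilon}-\delta)_{+}^{2}\chi_{\{U_{\delta,\varepsilon}>k\}}$ on its support, whence $J_{2}\ge\lambda_{\ast}\iint\lvert\nabla(U_{\delta,\varepsilon}-k)_{+}\rvert^{2}\eta^{2}\phi_{\mathrm c}\chi\,{\mathrm d}X$, after which $J_{1}$ is absorbed by Young in the usual Caccioppoli fashion. Your route to \eqref{Eq (Section 3): De Giorgi Truncation 2} via spatial integration by parts on $A_{\varepsilon}\eta\nabla\eta$ is also problematic: it introduces $\nabla A_{\varepsilon}=\nabla^{3}E_{\varepsilon}(\nabla u_{\varepsilon})\nabla^{2}u_{\varepsilon}$, whose bounds depend on $\varepsilon$ through $\nabla^{3}E_{\varepsilon}$, destroying the required uniformity of the constant $C$.

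The paper sidesteps both issues with a cleaner two-step argument. It applies Lemma~\ref{Lemma: Basic Weak Form} \emph{once} with $\psi(\sigma)=2(1-\delta/\sigma)_{+}$, so that $\Psi(V_{\varepsilon})=U_{\delta,\varepsilon}$; discarding $J_{2},J_{3}$ then yields a genuine subsolution inequality for $U_{\delta,\varepsilon}$ with uniformly elliptic coefficients on the relevant support. It then tests \emph{that} inequality with $\zeta=(U_{\delta,\varepsilon}-k)_{+}\eta^{2}\phi$: now $\nabla\zeta$ contains the piece $\nabla(U_{\delta,\varepsilon}-k)_{+}\,\eta^{2}\phi$, so the diffusion term directly produces $\lambda_{\ast}\iint\lvert\nabla(U_{\delta,\varepsilon}-k)_{+}\rvert^{2}\eta^{2}\phi$ on the left. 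A single intermediate Caccioppoli inequality then delivers both \eqref{Eq (Section 3): De Giorgi Truncation 1} and \eqref{Eq (Section 3): De Giorgi Truncation 2} by selecting different temporal cutoffs $\phi$; for the latter one simply discards the now-positive gradient term rather than attempting any further integration by parts.
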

 \begin{proof}
   We apply Lemma \ref{Lemma: Basic Weak Form} with $\psi(\sigma)\coloneqq 2(1-\delta/\sigma)_{+}$, so that $\Psi(V_{\varepsilon})=U_{\delta,\,\varepsilon}$.
   Then, it is easy to compute $V_{\varepsilon}\psi(V_{\varepsilon})=2U_{\delta,\,\varepsilon}^{1/2}$, and
   \(V_{\varepsilon}^{2-p}\left(\psi(V_{\varepsilon})+V_{\varepsilon}\psi^{\prime}(V_{\varepsilon})\right)\le 2\delta^{1-p}V_{\varepsilon}\).
   Remarking that all the integrands in (\ref{Eq (Section 3): Weak form of V-epsilon}) vanish in the place $\{V_{\varepsilon}\le \delta\}$, we may replace $\nabla^{2} E_{\varepsilon}(\nabla u_{\varepsilon})$ by 
   a measurable matrix ${\mathcal A}_{\delta,\,\varepsilon}(\nabla u_{\varepsilon})$, which coincides with $\nabla^{2}E_{\varepsilon}(\nabla u_{\varepsilon})$ on $\{V_{\varepsilon}>\delta\}$, but ${\mathcal A}_{\delta,\,\varepsilon}(\nabla u_{\varepsilon})\equiv \mathrm{id}_{n}$ on $\{V_{\varepsilon}\le \delta\}$.
   This matrix satisfies
   \(
     \lambda_{\ast}\mathrm{id}_{n}\leqslant {\mathcal A}_{\delta,\,\varepsilon}(\nabla u_{\varepsilon})\leqslant \Lambda_{\ast}\mathrm{id}_{n}
   \)
   for some constants $\lambda_{\ast}\in(0,\,1)$, $\Lambda_{\ast}\in(1,\,\infty)$, depending at most on $\lambda$, $\Lambda$, $K$, $\delta$ and $M$.
   Discarding $J_{2}\ge 0$ and $J_{3}\ge 0$, we obtain
   \[\int_{T_{0}}^{T_{1}}\langle \partial_{t}U_{\delta,\,\varepsilon},\,\zeta\rangle\,{\mathrm d}t+\iint_{Q_{0}}\left\langle{\mathcal A}_{\delta,\,\varepsilon}(\nabla u_{\varepsilon})\nabla U_{\delta,\,\varepsilon} \mathrel{}\middle|\mathrel{}\nabla\zeta\right\rangle\,{\mathrm d}X\le C\mu\left(\iint_{Q_{0}}\left[\lvert f_{\varepsilon}\rvert^{2}\zeta+\lvert f_{\varepsilon}\rvert\lvert \nabla\zeta\rvert\right] \,{\mathrm d}X\right)\]  
   for any non-negative $\zeta\in X_{0}^{2}(T_{0},\,T_{1};\,B_{r})\cap L^{\infty}(Q_{0})$ that is compactly supported in the interior of $Q_{0}$.
   We test $\zeta\coloneqq (U_{\delta,\,\varepsilon}-k)_{+}\eta^{2}\phi$, where $\phi\colon \lbrack T_{0},\,T_{1}\rbrack\rightarrow \lbrack 0,\,1\rbrack$ is an arbitrary Lipschitz function satisfying $\phi(T_{0})=\phi(T_{1})=0$.
   Then, we have 
   \begin{align*}
    &-\frac{1}{2}\iint_{Q_{0}}(U_{\delta,\,\varepsilon}-k)_{+}^{2}\eta^{2}\partial_{t}\phi\,{\mathrm d}X+\lambda_{\ast}\iint_{Q_{0}}\lvert \nabla (U_{\delta,\,\varepsilon}-k)_{+}\rvert^{2}\eta^{2}\phi\,{\mathrm d}X  \\ 
    &\le \frac{\lambda_{\ast}}{2}\iint_{Q_{0}}\lvert \nabla (U_{\delta,\,\varepsilon}-k)_{+}\rvert^{2}\eta^{2}\phi \,{\mathrm d}X+C\left[\iint_{Q_{0}}(U_{\delta,\,\varepsilon}-k)_{+}^{2}\lvert\nabla\eta\rvert^{2}\phi \,{\mathrm d}X+\mu^{4}\iint_{A_{k}}\lvert f_{\varepsilon}\rvert^{2}\eta^{2}\phi\,{\mathrm d}X\right],
   \end{align*}
   where we have used Young's inequality and (\ref{Eq (Section 2): Delta vs Mu}).
   By H\"{o}lder's inequality, we obtain
   \begin{align*}
    &-\frac{1}{2}\iint_{Q_{0}}(U_{\delta,\,\varepsilon}-k)_{+}^{2}\eta^{2}\partial_{t}\phi\,{\mathrm d}X+\frac{\lambda_{\ast}}{2}\iint_{Q_{0}}\lvert \nabla (U_{\delta,\,\varepsilon}-k)_{+}\rvert^{2}\eta^{2}\phi\,{\mathrm d}X \nonumber \\  
    &\le C\left[\iint_{Q_{0}}(U_{\delta,\,\varepsilon}-k)_{+}^{2} \lvert \nabla\eta\rvert^{2}\phi\,{\mathrm d}X+\mu^{4}F^{2}\lvert A_{k}\rvert^{1-2/q}\right].
   \end{align*}
   The desired claims (\ref{Eq (Section 3): De Giorgi Truncation 1})--(\ref{Eq (Section 3): De Giorgi Truncation 2}) are easily deduced by suitably choosing $\phi$. 
  \end{proof}

 \subsection{$L^{2}$-energy estimates}\label{Subsect:Hessian Energy}
  Under the assumptions (\ref{Eq (Section 2): Delta vs Mu}) and $0<\nu<1/4$, we deduce energy estimates for $G_{p,\,\varepsilon}(\nabla u_{\varepsilon})=V_{\varepsilon}^{p-1}\nabla u_{\varepsilon}$, where $G_{p,\,\varepsilon}$ is given by (\ref{Eq (Section 2): Gpe}).

  
 \begin{lemma}\label{Lemma: Energy estimates from weak form}
  In addition to the assumptions of Theorem \ref{Thm: Key Hoelder Estimate}, let the positive numbers $\mu$, $M$, and a cylinder $Q_{2\rho}=Q_{2\rho}(x_{0},\,t_{0})\Subset \Omega_{T}$ satisfy (\ref{Eq (Section 2): Bound of f-epsilon}) and (\ref{Eq (Section 2): Bounds of V-epsilon})--(\ref{Eq (Section 2): Delta vs Mu}).
   Then, the following estimates hold for any $\sigma\in(0,\,1)$, $\nu\in(0,\,1/4)$.
   \begin{align}
    \fiint_{Q_{\sigma\rho}}\left\lvert \nabla\left[G_{p,\,\varepsilon}(\nabla u_{\varepsilon}) \right] \right\rvert^{2}\,{\mathrm d}X\le \frac{C\mu^{2p}}{\sigma^{n+2}\rho^{2}}\left[\frac{1}{(1-\sigma)^{2}}+ F^{2}\rho^{2\beta}\right].\label{Eq (Section 3): Hessian L-2 energy on Q}\\ 
    \lvert Q_{\sigma\rho}\rvert^{-1}  \iint_{S_{\sigma\rho,\,\mu,\,\nu}}  \left\lvert \nabla\left[G_{p,\,\varepsilon}(\nabla u_{\varepsilon}) \right] \right\rvert^{2}\,{\mathrm d}X\le \frac{C\mu^{2p}}{\sigma^{n+2}\rho^{2}}\left[\frac{\nu}{(1-\sigma)^{2}}+\frac{F^{2}\rho^{2\beta}}{\nu}\right].\label{Eq (Section 3): Hessian L-2 energy on S}
  \end{align}
   %
   %
   Here $C\in(0,\,\infty)$ in (\ref{Eq (Section 3): Hessian L-2 energy on Q})--(\ref{Eq (Section 3): Hessian L-2 energy on S}) depends at most on $n$, $p$, $\lambda$, $\Lambda$, $K$, $\delta$, and $M$.
 \end{lemma}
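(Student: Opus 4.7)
The plan is to invoke Lemma \ref{Lemma: Basic Weak Form} with a carefully chosen pair $(\psi,\zeta)$ and to isolate the coercive integral $J_{3}$, which will control $\iint\lvert\nabla[G_{p,\varepsilon}(\nabla u_{\varepsilon})]\rvert^{2}\zeta\,{\mathrm d}X$. The pointwise identity $\partial_{x_{k}}V_{\varepsilon}=V_{\varepsilon}^{-1}\langle\nabla u_{\varepsilon}\mid\nabla\partial_{x_{k}}u_{\varepsilon}\rangle$ together with $\lvert\nabla u_{\varepsilon}\rvert\le V_{\varepsilon}$ yields $\lvert\nabla[G_{p,\varepsilon}(\nabla u_{\varepsilon})]\rvert^{2}\le C_{p}V_{\varepsilon}^{2(p-1)}\lvert\nabla^{2}u_{\varepsilon}\rvert^{2}$, so recalling (\ref{Eq (Section 3): Ellipticity of J-2 and J-3}), the natural choice is $\psi(\sigma)=\sigma^{p}$, for which $J_{3}\ge c(p,\lambda)\iint V_{\varepsilon}^{2p-2}\lvert\nabla^{2}u_{\varepsilon}\rvert^{2}\zeta\,{\mathrm d}X$ dominates the target left-hand side.

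For (\ref{Eq (Section 3): Hessian L-2 energy on Q}), with $\psi(\sigma)=\sigma^{p}$ and $\Psi(s)=s^{p+2}/(p+2)$, I would take $\zeta(x,t)=\eta(x)^{2}\phi(t)$ where $\eta\in C_{\mathrm c}^{1}(B_{2\rho};[0,1])$ equals $1$ on $B_{\sigma\rho}$ with $\lvert\nabla\eta\rvert\le C/((1-\sigma)\rho)$, and $\phi\in C^{1}([t_{0}-4\rho^{2},t_{0}];[0,1])$ equals $1$ on $\overline{I_{\sigma\rho}}$, vanishes at $t=t_{0}-4\rho^{2}$, and satisfies $\lvert\partial_{t}\phi\rvert\le C/((1-\sigma)^{2}\rho^{2})$. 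Discarding $J_{2}\ge 0$ from (\ref{Eq (Section 3): Weak form of V-epsilon}) produces $J_{3}\le-2J_{0}-2J_{1}+\lambda^{-1}(nJ_{4}+J_{5})+2J_{6}$. The time-derivative term $J_{0}$ is controlled by $\Psi(V_{\varepsilon})\le C\mu^{p+2}$ (since $V_{\varepsilon}\le\mu+\delta\le 2\mu$), and writing $\mu^{p+2}=\mu^{2p}\cdot\mu^{2-p}$ with $\delta<\mu\le M$ absorbs $\mu^{2-p}$ into a constant depending on $\delta$ and $M$. The convective term $J_{1}$ is treated by Cauchy--Schwarz on the positive semi-definite form $\nabla^{2}E_{\varepsilon}$ together with Young's inequality, absorbing half of $J_{3}$ on the left; the residual is bounded using $\nabla^{2}E_{\varepsilon}\leqslant(\Lambda V_{\varepsilon}^{p-2}+KV_{\varepsilon}^{-1})\mathrm{id}_{n}$ from (\ref{Eq (Section 2): Eigenvalues of E-1-epsilon})--(\ref{Eq (Section 2): Eigenvalues of E-p-epsilon}). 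The forcing integrals $J_{4},J_{5},J_{6}$ are estimated via H\"older's inequality against $\lVert f_{\varepsilon}\rVert_{L^{q}}\le F$ and the pointwise bound $V_{\varepsilon}\le 2\mu$; the resulting volume factor $\lvert Q_{2\rho}\rvert^{1-2/q}$ supplies the parabolic H\"older gain $\rho^{2\beta}$ with $\beta=1-(n+2)/q$. Dividing by $\lvert Q_{\sigma\rho}\rvert$ and using $\lvert Q_{2\rho}\rvert/\lvert Q_{\sigma\rho}\rvert\le C/\sigma^{n+2}$ produces (\ref{Eq (Section 3): Hessian L-2 energy on Q}).

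For (\ref{Eq (Section 3): Hessian L-2 energy on S}), the strategy is parallel but with $\psi$ truncated so that it is supported above the threshold $k\coloneqq\delta+(1-\nu)\mu$. I would set $\psi(\sigma)\coloneqq p(\sigma^{p}-k^{p})_{+}$, which remains non-decreasing and locally Lipschitz. Its primitive $\Psi$ vanishes on $\{V_{\varepsilon}\le k\}$, and since $V_{\varepsilon}-k\le\nu\mu$ on the complement, one obtains $\Psi(V_{\varepsilon})\le C\mu^{p+1}(V_{\varepsilon}-k)_{+}\le C\nu\mu^{p+2}$, producing the factor $\nu$ in the elastic term of (\ref{Eq (Section 3): Hessian L-2 energy on S}). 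On the support of $\psi$, the factor $\psi(V_{\varepsilon})$ is comparable to $V_{\varepsilon}^{p}$ up to a multiplicative constant controlled by $\delta$ and $M$, so $J_{3}$ still dominates $\int_{S_{\sigma\rho,\mu,\nu}}\lvert\nabla[G_{p,\varepsilon}(\nabla u_{\varepsilon})]\rvert^{2}\zeta\,{\mathrm d}X$. The convective $J_{1}$ is handled as before, using that $\nabla\Psi(V_{\varepsilon})$ vanishes on $\{V_{\varepsilon}\le k\}$. Each forcing integral $J_{4}$--$J_{6}$ is split via Young's inequality with a weight $\nu$ on the piece absorbable into $J_{3}$ and the complementary weight $1/\nu$ on the piece paired with $\lVert f_{\varepsilon}\rVert_{L^{q}}\le F$, supplying the $F^{2}\rho^{2\beta}/\nu$ contribution.

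The main obstacle will be the balancing in the proof of (\ref{Eq (Section 3): Hessian L-2 energy on S}): one must simultaneously absorb $J_{1}$ into $J_{3}$ without losing the factor $\nu$ on the elastic side, and arrange the Young splitting of the forcing so that it contributes precisely $1/\nu$ rather than a worse $\nu$-dependence. This requires tracking the support restriction of $\psi$ and $\nabla\Psi$ through every Cauchy--Schwarz step, and exploiting that on $S_{\sigma\rho,\mu,\nu}$ the gradient is trapped in the thin shell $V_{\varepsilon}-k\in(0,\nu\mu]$.
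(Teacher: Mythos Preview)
Your approach to (\ref{Eq (Section 3): Hessian L-2 energy on Q}) is essentially the paper's, and it works. The issue is (\ref{Eq (Section 3): Hessian L-2 energy on S}).

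First, a small but genuine error: with your threshold $k=\delta+(1-\nu)\mu$, the set $S_{\sigma\rho,\mu,\nu}$ is exactly $\{V_{\varepsilon}>k\}$, so there is no gap and no uniform lower bound for $\psi(V_{\varepsilon})$ on $S_{\sigma\rho,\mu,\nu}$. The claim ``$\psi(V_{\varepsilon})$ is comparable to $V_{\varepsilon}^{p}$ on the support of $\psi$'' is false: near $V_{\varepsilon}=k$ one has $\psi(V_{\varepsilon})\to 0$. You must shift the cutoff, e.g.\ to $k=\delta+(1-2\nu)\mu$, so that on $S_{\sigma\rho,\mu,\nu}$ you get $V_{\varepsilon}-k>\nu\mu$ and hence $\psi(V_{\varepsilon})\ge c\,\nu\mu^{p}$.

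Second, and more seriously: even after this fix, discarding $J_{2}$ and absorbing $J_{1}$ into $J_{3}$ cannot give the factor $\nu$ in the elastic term. With a weight that is first-order in the shell (your $\psi(\sigma)=p(\sigma^{p}-k^{p})_{+}\sim\nu\mu^{p}$ on the support), the coercive lower bound reads $J_{3}\ge c\,\nu\int_{S_{\sigma\rho,\mu,\nu}}\lvert\nabla G_{p,\varepsilon}\rvert^{2}$, while the residual from your Cauchy--Schwarz on $J_{1}$ is $\int\psi(V_{\varepsilon})V_{\varepsilon}^{2}\lVert\nabla^{2}E_{\varepsilon}\rVert\lvert\nabla\eta\rvert^{2}\sim\nu\mu^{2p}\lvert Q_{\rho}\rvert/[(1-\sigma)\rho]^{2}$. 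Both sides carry a single $\nu$, so after dividing you obtain only
\[
\lvert Q_{\sigma\rho}\rvert^{-1}\int_{S_{\sigma\rho,\mu,\nu}}\lvert\nabla G_{p,\varepsilon}\rvert^{2}\,\mathrm{d}X\le \frac{C\mu^{2p}}{\sigma^{n+2}\rho^{2}}\Bigl[\frac{1}{(1-\sigma)^{2}}+\frac{F^{2}\rho^{2\beta}}{\nu}\Bigr],
\]
which is (\ref{Eq (Section 3): Hessian L-2 energy on Q}) plus a forcing term, not (\ref{Eq (Section 3): Hessian L-2 energy on S}). The $\nu$ in front of $(1-\sigma)^{-2}$ is lost.

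The paper's remedy is to keep $J_{2}$ and absorb $J_{1}$ into $J_{2}$ instead. The Cauchy--Schwarz/Young step then leaves a residual proportional to $\psi(V_{\varepsilon})^{2}/\psi'(V_{\varepsilon})$ rather than $\psi(V_{\varepsilon})$. Writing $\psi=\sigma^{p}\tilde\psi$ and choosing a \emph{quadratic} shell weight $\tilde\psi(s)=(s-\delta-k)_{+}^{2}$ with $k=(1-2\nu)\mu$, one finds $\tilde\psi(V_{\varepsilon})\ge(\nu\mu)^{2}$ on $S$, while $\psi^{2}/\psi'\sim V_{\varepsilon}^{p+1}\tilde\psi^{2}/(p\tilde\psi+V_{\varepsilon}\tilde\psi')\le C\nu^{3}\mu^{p+2}$ and $\Psi(V_{\varepsilon})\le C\nu^{3}\mu^{p+4}$. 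Thus the coercivity carries $\nu^{2}$, the elastic residual carries $\nu^{3}$, and after dividing by $\nu^{2}$ you recover the required $\nu/(1-\sigma)^{2}$. The extra power of $\nu$ comes precisely from the ratio $\tilde\psi^{2}/\tilde\psi'\sim(\nu\mu)^{3}$ versus $\tilde\psi\sim(\nu\mu)^{2}$, which your scheme of absorbing into $J_{3}$ cannot exploit.
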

 \begin{proof}
   For each fixed $\sigma\in(0,\,1)$, we choose cut-off functions $\eta\in C_{\mathrm c}^{1}(B_{\rho};\,\lbrack 0,\,1\rbrack)$ and $\phi_{\mathrm c}\in C^{1}(\overline{I_{\rho}};\,\lbrack 0,\,1\rbrack)$ satisfying $\phi_{\mathrm c}(t_{0}-\rho^{2})=0$, $\eta|_{B_{\sigma\rho}}\equiv 1$, and $\phi_{\mathrm c}|_{I_{\sigma\rho}}\equiv 1$.
   For sufficiently small ${\tilde\varepsilon}>0$, which tends to $0$ later,
   we choose $\phi_{\mathrm h}\colon \overline{I_{\rho}}\to \lbrack 0,\,1\rbrack$ satisfying $\phi_{\mathrm h}\equiv 1$ on $\lbrack t_{0}-\rho^{2},\,t_{0}-{\tilde \varepsilon}\rbrack$, and linearly interpolated on the other domain with $\phi_{\mathrm h}(t_{0})=0$.
   By $\lvert \nabla V_{\varepsilon}\rvert^{2}\le \lvert \nabla^{2}u_{\varepsilon}\rvert^{2}$, it is easy to check that $\lvert \nabla [G_{p,\,\varepsilon}(\nabla u_{\varepsilon})] \rvert^{2}\le c_{p}V_{\varepsilon}^{2p-2}\lvert \nabla^{2}u_{\varepsilon}\rvert^{2}$ for some $c_{p}\in(1,\,\infty)$.
   With this in mind, we apply Lemma \ref{Lemma: Basic Weak Form} with $\psi(s)\coloneqq {\tilde \psi}(s) \sigma^{p}$, and $\zeta\coloneqq \eta^{2}\phi$ with $\phi\coloneqq \phi_{\mathrm c}\phi_{\mathrm h}$.
   Here ${\tilde \psi}\colon [0,\,\infty)\rightarrow \lbrack0,\,\infty)$ is a non-decreasing function that is chosen later. 
   The weak formulation (\ref{Eq (Section 3): Weak form of V-epsilon}) becomes
   \begin{align*}
      {\mathbf L}_{1}+{\mathbf L}_{2}+{\mathbf L}_{3}
     &\coloneqq -2\iint_{Q_{\rho}}\eta^{2}\Psi(V_{\varepsilon})\phi_{\mathrm c}\partial_{t}\phi_{\mathrm h}\,{\mathrm d}X+\iint_{Q_{\rho}}\left\langle \nabla^{2}E_{\varepsilon}(\nabla u_{\varepsilon})\nabla V_{\varepsilon}\mathrel{}\middle|\mathrel{}\nabla V_{\varepsilon} \right\rangle \eta^{2}\phi V_{\varepsilon}\psi^{\prime}(V_{\varepsilon})\,{\mathrm d}X \\ 
     &\quad +\sum_{j=1}^{n}\iint_{Q_{\rho}}\left\langle \nabla^{2}E_{\varepsilon}(\nabla u_{\varepsilon})\nabla\partial_{x_{j}} u_{\varepsilon}\mathrel{}\middle|\mathrel{}\nabla\partial_{x_{j}}u_{\varepsilon}  \right\rangle \eta^{2}\phi\psi(V_{\varepsilon})\,{\mathrm d}X\\ 
     &\le 2\iint_{Q_{\rho}}\Psi(V_{\varepsilon})\eta^{2}\phi_{\mathrm h}\partial_{t}\phi_{\mathrm c}\,{\mathrm d}X+4\iint_{Q_{\rho}}\left\lvert \left\langle \nabla^{2}E_{\varepsilon}(\nabla u_{\varepsilon})\nabla V_{\varepsilon} \mathrel{}\middle|\mathrel{}\nabla\eta \right\rangle\right\rvert\eta\phi\psi(V_{\varepsilon})V_{\varepsilon}\,{\mathrm d}X\\ 
     & \quad +\frac{n}{2\lambda}\iint_{Q_{\rho}}\lvert f_{\varepsilon}\rvert^{2}V_{\varepsilon}^{2-p}\left(\psi(V_{\varepsilon})+V_{\varepsilon}\psi^{\prime}(V_{\varepsilon}) \right)\eta^{2}\phi \,{\mathrm d}X+4\iint_{Q_{\rho}}\lvert f_{\varepsilon}\rvert\lvert\nabla\eta\rvert\psi(V_{\varepsilon})V_{\varepsilon}\eta\phi\,{\mathrm d}X\\ 
     &\eqqcolon 2{\mathbf R}_{1}+4{\mathbf R}_{2}+\frac{n}{2\lambda}{\mathbf R}_{3}+4{\mathbf R}_{4}.
    \end{align*} 
   We may discard ${\mathbf L}_{1}\ge 0$.
   By the Cauchy--Schwarz inequality and Young's inequality, ${\mathbf R}_{2}$ and ${\mathbf R}_{4}$ are estimated as follows:
   \[\begin{array}{rcl}4{\mathbf R}_{2}&\le& {\mathbf L}_{2}+4\displaystyle\iint_{Q_{\rho}}\left\langle \nabla^{2}E_{\varepsilon}(\nabla u_{\varepsilon})\nabla\eta\mathrel{}\middle|\mathrel{}\nabla\eta \right\rangle V_{\varepsilon}\phi\displaystyle\frac{\psi(V_{\varepsilon})^{2}}{\psi^{\prime}(V_{\varepsilon})} \,{\mathrm d}X,\\ 
    4{\mathbf R}_{4}&\le& 2{\mathbf R}_{3}+2\displaystyle\iint_{Q_{\rho}}V_{\varepsilon}^{p-1}\lvert \nabla\eta \rvert^{2}\phi\displaystyle\frac{\psi(V_{\varepsilon})^{2}}{\psi^{\prime}(V_{\varepsilon})}\,{\mathrm d}X.
   \end{array}\]
   Also, we note that our choice of $\psi$ yields
   \[\frac{\psi(V_{\varepsilon})^{2}}{\psi^{\prime}(V_{\varepsilon})}=\frac{V_{\varepsilon}^{p+1}{\tilde \psi}(V_{\varepsilon})^{2}}{p{\tilde \psi}(V_{\varepsilon})+V_{\varepsilon}{\tilde \psi}^{\prime}(V_{\varepsilon})},\quad 
   \text{and}
   \quad \psi(V_{\varepsilon})+V_{\varepsilon}\psi^{\prime}(V_{\varepsilon})=V_{\varepsilon}^{p}\left((p+1){\tilde \psi}(V_{\varepsilon})+V_{\varepsilon}{\tilde\psi}^{\prime}(V_{\varepsilon}) \right).\]
   Letting ${\tilde\varepsilon}\to 0$, recalling our choice of $\eta$ and $\phi_{\mathrm c}$, and using (\ref{Eq (Section 2): Bounds of V-epsilon})--(\ref{Eq (Section 2): Delta vs Mu}) and (\ref{Eq (Section 3): Ellipticity of J-2 and J-3}), we have
   \begin{align}\label{Eq (Section 3): Claim on L-2-energy bounds}
     \iint_{Q_{\sigma\rho}}\left\lvert \nabla \left[ G_{p,\,\varepsilon}(\nabla u_{\varepsilon}) \right] \right\rvert^{2}{\tilde\psi}(V_{\varepsilon})\,{\mathrm d}X & 
     \le \frac{C}{[(1-\sigma)\rho]^{2}}\iint_{Q_{\rho}}\left(\frac{\mu^{2p}{\tilde \psi}(V_{\varepsilon})^{2}}{p{\tilde \psi}(V_{\varepsilon})+V_{\varepsilon}{\tilde\psi}^{\prime}(V_{\varepsilon})}+\Psi(V_{\varepsilon})\right) \,{\mathrm d}X\\ &\quad +C\mu^{2p}\iint_{Q_{\rho}}\lvert f_{\varepsilon}\rvert^{2}\left({\tilde \psi}(V_{\varepsilon})+V_{\varepsilon}{\tilde\psi}^{\prime}(V_{\varepsilon}) \right)\,{\mathrm d}X.\nonumber
    \end{align}
   When ${\tilde \psi}(s)\equiv p+2$, which yields $\Psi(s)=s^{p+2}$, we simply use (\ref{Eq (Section 2): Delta vs Mu}) to get $\Psi(V_{\varepsilon})\le C_{p,\,\delta,\,M}\mu^{2p}$.
   By (\ref{Eq (Section 3): Claim on L-2-energy bounds}) and H\"{o}lder's inequality, we have
   \[\iint_{Q_{\sigma\rho}}\left\lvert \nabla \left[ G_{p,\,\varepsilon}(\nabla u_{\varepsilon}) \right] \right\rvert^{2}\,{\mathrm d}X\le C\mu^{2p}\left[\frac{\lvert Q_{\rho}\rvert}{[(1-\sigma)\rho]^{2}}+F^{2}\lvert Q_{\rho}\rvert^{1-2/q}\right].\]
   Dividing each side by $\lvert Q_{\sigma\rho}\rvert=\sigma^{n+2}\lvert Q_{\rho}\rvert$, we obtain (\ref{Eq (Section 3): Hessian L-2 energy on Q}).
   When ${\tilde \psi}(s)=(s-\delta-k)_{+}^{2}$ with $k\coloneqq (1-2\nu)\mu>\mu/2$, we note that for every $s\in(0,\,\mu+\delta)$
   \[
     \Psi(s)\le (\mu+\delta)^{p+1}\int_{0}^{\mu+\delta}(\sigma-\delta-k)_{+}^{2}\,{\mathrm d}\sigma \le \frac{(\mu-k)^{3}(\mu+\delta)^{p+1}}{3}, 
   \]
   which yields $\Psi(V_{\varepsilon})\le C_{p,\,\delta,\,M}\nu^{3}\mu^{2p+2}$ by  (\ref{Eq (Section 2): Bounds of V-epsilon})--(\ref{Eq (Section 2): Delta vs Mu}). 
   We also note that $k\in(\mu/2,\,\mu)$ yields
   \begin{align*}
    &\frac{{\tilde \psi}(V_{\varepsilon})^{2}}{p{\tilde \psi}(V_{\varepsilon})+V_{\varepsilon}{\tilde\psi}^{\prime}(V_{\varepsilon})}=\frac{(V_{\varepsilon}-\delta-k)_{+}^{3}}{p(V_{\varepsilon}-\delta-k)_{+}+2V_{\varepsilon}}\le \frac{(2\nu\mu)^{3}}{2(\delta+k)}\le 8\nu^{3}\mu^{2},\\ 
    &{\tilde{\psi}}(V_{\varepsilon})+V_{\varepsilon}{\tilde \psi}^{\prime}(V_{\varepsilon})=(V_{\varepsilon}-\delta-k)_{+}(3V_{\varepsilon}-\delta-k)\le 2\nu\mu\cdot 2(\delta+\mu+\nu\mu).
   \end{align*}
   Combining these inequalities with (\ref{Eq (Section 2): Delta vs Mu}), and \({\tilde\psi}(V_{\varepsilon})\ge (\nu\mu)^{2}\) on $S_{\sigma\rho,\,\mu,\,\nu}\subset Q_{\sigma\rho}$, we get 
   \[(\nu\mu)^{2}\iint_{S_{\sigma\rho,\,\mu,\,\nu}}\left\lvert \nabla\left[G_{p,\,\varepsilon}(\nabla u_{\varepsilon})\right]\right\rvert^{2}\,  {\mathrm d}X\le C\mu^{2p+2}\left[\frac{\nu^{3}\lvert Q_{\rho}\rvert}{[(1-\sigma)\rho]^{2}}+\nu F^{2}\lvert Q_{\rho}\rvert^{1-2/q}\right],\]
   from (\ref{Eq (Section 3): Claim on L-2-energy bounds}).   This completes the proof of (\ref{Eq (Section 3): Hessian L-2 energy on S}).
  \end{proof}
\section{Local gradient bounds}\label{Sect:Lipschitz Bounds} In Section \ref{Sect:Lipschitz Bounds}, we prove local uniform bounds of $\nabla u_{\varepsilon}$.
 \subsection{Preliminary}\label{Subsect: Wk}
 For $p$-harmonic flows with $p_{\mathrm c}<p<\infty$, local gradient bounds are proved in \cite{MR743967} by De Giorgi's truncation (see also \cite[Chapter VIII]{MR1230384}).
 Another proof that is fully based on Moser's iteration is given by \cite[\S 4]{BDLS}. 
 However, the method in \cite[\S 4]{BDLS} will not work in our problem, since the test functions therein may intersect with a facet of a solution.
 Here we note that our problem can be seen as uniformly parabolic when a spatial gradient does not vanish.
 With this in mind, we carefully choose test functions supported in the place 
 \(D_{k}\coloneqq \{(x,\,t)\in Q_{R}\mid \lvert \nabla u_{\varepsilon}(x,\,t)\rvert>k\}\)
 for some constant $k\ge 1$.
 This type of strategy can be found in various elliptic problems whose uniform ellipticity may break on some degenerate regions (see e.g.,  \cite{MR4078712}, \cite{MR2733227}, \cite{MR1925022}, \cite{MR4201656}).
 Among these papers, our proof, based on Moser's iteration, is substantially a modification of \cite{MR4201656}. 
  It is worth mentioning that local gradient bounds for a $p$-Poisson flow are discussed in \cite{MR2916967}, provided $f$ is in the Lorentz space $L(n+2,\,1)$, which is properly larger than $L^{q}$ with $q\in(n+2,\,\infty)$. 
 
 To clarify our strategy, we introduce another truncation of a partial derivative $\partial_{x_{j}}u_{\varepsilon}$. 
 The truncation function $g_{k}$ is given by $g_{k}(\sigma)\coloneqq (\sigma-k)_{+}-(-\sigma-k)_{+}$ for $\sigma\in{\mathbb R}$,
 and the corresponding truncated partial derivative is of the form
 \(v_{j,\,k}\coloneqq g_{k}(\partial_{x_{j}}u_{\varepsilon})\) for each \(j\in\{\,1,\,\dots\,,\,n\,\}\),
 where the constant $k\ge 1$ will be determined by $\lVert f_{\varepsilon}\rVert_{L^{q}(Q_{R})}$.
 We do not directly show reversed H\"{o}lder estimates for $V_{\varepsilon}$.
 Instead, we consider another scalar function 
  $W_{k}\coloneqq \sqrt{k^{2}+\sum_{j=1}^{n}v_{j,\,k}^{2}}$.
 In Section \ref{Sect:Lipschitz Bounds}, we mainly aim to deduce local $L^{\infty}$-bounds of $W_{k}$.
 Here we note
 \begin{equation}\label{Eq (Section 4): Compatibility}
  V_{\varepsilon} \le c_{n}W_{k} \quad \textrm{in} \quad Q=Q_{R},\quad \text{and}\quad W_{k}\le \sqrt{2} V_{\varepsilon} \quad \textrm{in}\quad D_{k}\subset Q_{R}.
\end{equation}
since we have let $k\ge 1$ (see \cite[\S 4.1]{MR4201656}).
In particular, we are allowed to use
\begin{equation}\label{Eq (Section 4): Uniform elliptic structure on D-k}
 {\tilde \lambda}W_{k}^{p-2}\mathrm{id}_{n} \leqslant \nabla^{2}E_{\varepsilon}(\nabla u_{\varepsilon}) \leqslant {\tilde \Lambda}W_{k}^{p-2}\mathrm{id}_{n} \quad \textrm{in } D_{k},
\end{equation}
where the constants $0<\tilde{\lambda}<\tilde{\Lambda}<\infty$ depend at most on $n$, $p$, $\lambda$, $\Lambda$, $K$.
 With this in mind, we can deduce a weak formulation of $W_{k}$, similarly to Lemma \ref{Lemma: Basic Weak Form}.
 We consider the composite functions $\psi$ and $\Psi$ given in Lemma \ref{Lemma: Basic Weak Form}, and fix an arbitrary non-negative Lipschitz function $\zeta=\zeta(x,\,t)$ that is compactly supported in the interior of $Q$.
 Then, $W_{k}$ satisfies 
 \begin{equation}\label{Eq (Section 4): Weak Form for Wk} 
  2{\tilde J}_{0}+2{\tilde J}_{1}+{\tilde J}_{2}+{\tilde J}_{3}\le {\tilde \lambda}^{-1}n{\tilde J}_{4}+2J_{5},
 \end{equation}
 where for each $l\in\{\,0,\,\dots\,,\,6\,\}$, ${\tilde J}_{l}$ is similarly defined as in Lemma \ref{Lemma: Basic Weak Form} with $V_{\varepsilon}$ and $\nabla\partial_{x_{j}}u_{\varepsilon}$ replaced by $W_{k}$ and $\nabla v_{j,\,k}$ respectively.
 The inequality (\ref{Eq (Section 4): Weak Form for Wk}) is deduced by testing $\phi\coloneqq \zeta \psi(W_{k})v_{j,\,k}$ into (\ref{Eq (Section 3): Weak formulation differentiated}).
 Here it should be mentioned that this test function is supported in $Q\cap \{\lvert \partial_{x_{j}}u_{\varepsilon}\rvert>k\}\subset D_{k}$.
 Therefore we may replace $\nabla\partial_{x_{j}}u_{\varepsilon}$ and $\partial_{t}\partial_{x_{j}}u_{\varepsilon}$ by $\nabla v_{j,\,k}$ and $\partial_{t}v_{j,\,k}$ respectively, and make use of (\ref{Eq (Section 4): Uniform elliptic structure on D-k}).
 Summing over $j\in\{\,1,\,\dots\,,\,n\,\}$, we can obtain (\ref{Eq (Section 4): Weak Form for Wk}), similarly to (\ref{Eq (Section 3): Weak form of V-epsilon}).
 \begin{lemma}\label{Lemma: W-k energy estimate}
   Let $u_{\varepsilon}$ be a weak solution to (\ref{Eq (Section 2): Approximation equation}) in $Q_{R}=Q_{R}(x_{\ast},\,t_{\ast})\Subset\Omega_{T}$ with $\varepsilon\in(0,\,1)$ and (\ref{Eq (Section 2): Improved reg}).
   Fix $\eta\in C_{\mathrm c}^{1}(B_{R};\,\lbrack 0,\, 1\rbrack)$, $\phi_{\mathrm c}\in C^{1}(\overline{I_{R}};\,\lbrack 0,\,1\rbrack)$ satisfying $\phi_{\mathrm c}(t_{0}-R^{2})=0$.
   Then, there exists a constant $C\in(0,\,\infty)$, depending at most on $n$, $p$, $\lambda$, $\Lambda$ and $K$, such that
   \begin{align}\label{Eq (Section 4): Energy Estimate for Wk}
    &  \esssup_{\tau\in \ring{I_{R}}} \,\int_{B_{R}\times \{\tau\}} \left(\eta W_{k}^{(1+\alpha)}\right)^{2}  \phi_{\mathrm c}\,{\mathrm d}x+\iint_{Q_{R}}  \left\lvert \nabla \left(\eta W_{k}^{p/2+\alpha}\right) \right\rvert^{2} \phi_{\mathrm c} \,{\mathrm d}X \nonumber \\
    & \le C(1+\alpha)^{2} \left[\iint_{Q_{R}}\left(W_{k}^{p+2\alpha}  \left(\eta^{2}+\lvert \nabla\eta\rvert^{2}\right) +W_{k}^{2(1+\alpha)}\lvert \partial_{t}\phi_{\mathrm c}\rvert\right)\,{\mathrm d}X+\iint_{Q_{R}}\lvert f_{\varepsilon}\rvert^{2}W_{k}^{2+2\alpha-p} \eta^{2}\phi_{\mathrm c}\,{\mathrm d}X\right]
   \end{align}
   for all $\alpha\in\lbrack 0,\,\infty)$.
 \end{lemma}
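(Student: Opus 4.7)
The plan is to apply the weak formulation (\ref{Eq (Section 4): Weak Form for Wk}) with the power function $\psi(\sigma) \coloneqq \sigma^{2\alpha}$, modified to the constant $k^{2\alpha}$ on $[0,k]$ so as to meet the Lipschitz hypothesis of Lemma \ref{Lemma: Basic Weak Form}; the modification is harmless since $W_k \geq k$ everywhere. A direct integration yields $\Psi(W_k) \asymp W_k^{2(1+\alpha)}/(1+\alpha)$. I would test with $\zeta = \eta^2 \phi_c \phi_h$, where $\phi_h$ is a Steklov-type cutoff in time equal to $1$ on $[t_\ast - r^2, \tau]$ and decreasing linearly to $0$ on $[\tau, \tau + \tilde\varepsilon]$, and then pass to the limit $\tilde\varepsilon \to 0$ exactly as in the proof of Lemma \ref{Lemma: De Giorgi truncation lemma}. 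In this limit $\tilde J_0$ contributes the boundary term $\int_{B_r \times \{\tau\}} \Psi(W_k) \eta^2 \phi_c \,{\mathrm d}x$ on the left and the tail $\iint_{Q_r} \Psi(W_k) \eta^2 \lvert\partial_t \phi_c\rvert \,{\mathrm d}X$ on the right.

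The coercive integrals $\tilde J_2$ and $\tilde J_3$ are controlled from below by exploiting the key fact that $\nabla W_k$ and each $\nabla v_{j,k}$ are supported in $\bigcup_j\{\lvert\partial_{x_j} u_\varepsilon\rvert>k\}\subset D_k$, so that the uniform ellipticity (\ref{Eq (Section 4): Uniform elliptic structure on D-k}) applies throughout their supports. Combined with the pointwise inequality $\lvert\nabla W_k\rvert^2 \leq \sum_{j=1}^n \lvert\nabla v_{j,k}\rvert^2$, which follows from $W_k \nabla W_k = \sum_j v_{j,k}\nabla v_{j,k}$ and Cauchy--Schwarz, this yields
\begin{equation*}
\tilde J_2 + \tilde J_3 \;\geq\; (1+2\alpha)\tilde\lambda \iint_{Q_r} W_k^{p-2+2\alpha}\lvert\nabla W_k\rvert^2 \eta^2\phi_c\phi_h \,{\mathrm d}X,
\end{equation*}
which is a constant multiple of $(p/2+\alpha)^{-2}\iint_{Q_r}\lvert\nabla W_k^{p/2+\alpha}\rvert^2 \eta^2\phi_c\phi_h\,{\mathrm d}X$ and thereby supplies the spatial gradient term on the left of (\ref{Eq (Section 4): Energy Estimate for Wk}). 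The cross term $\tilde J_1$ is expanded via $\nabla\Psi(W_k) = W_k^{1+2\alpha}\nabla W_k$, the quadratic-form Cauchy--Schwarz for $\nabla^2 E_\varepsilon$, and (\ref{Eq (Section 4): Uniform elliptic structure on D-k}); a Young inequality with a small parameter then absorbs a fraction of the $\iint W_k^{p-2+2\alpha}\lvert\nabla W_k\rvert^2 \eta^2\phi_c\phi_h\,{\mathrm d}X$ integral into $\tilde J_2+\tilde J_3$, leaving a residue bounded by $C\iint_{Q_r} W_k^{p+2\alpha}\lvert\nabla\eta\rvert^2\phi_c\phi_h\,{\mathrm d}X$. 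The right-hand-side integrals $\tilde J_4, \tilde J_5$ evaluate to at most $C(1+\alpha)\iint_{Q_r}\lvert f_\varepsilon\rvert^2 W_k^{2+2\alpha-p}\eta^2\phi_c\phi_h\,{\mathrm d}X$, while $\tilde J_6$ is split by another Young inequality into contributions of the $\lvert f_\varepsilon\rvert^2$-type and the $\lvert\nabla\eta\rvert^2$-type.

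The hard part will be the careful tracking of the $\alpha$-dependence of the resulting constants. The boundary contribution from $\tilde J_0$ appears on the left with weight $1/(1+\alpha)$, while the gradient contribution from $\tilde J_2+\tilde J_3$ appears with weight $(1+2\alpha)/(p/2+\alpha)^2 \asymp 1/(1+\alpha)$, so the resulting inequality must be multiplied by $(1+\alpha)^2$ to recover order-one coefficients on both pieces of the left-hand side of (\ref{Eq (Section 4): Energy Estimate for Wk}). A judicious choice of the Young parameters in the splits of $\tilde J_1$ and $\tilde J_6$, balancing the $\lvert\nabla\eta\rvert^2$- and $\lvert f_\varepsilon\rvert^2$-type residues, is then required so that the right-hand-side coefficient in front of the $\lvert f_\varepsilon\rvert^2$-integral grows at most as $(1+\alpha)^2$ and that the coefficients in front of the $\lvert\nabla\eta\rvert^2$- and $\lvert\partial_t\phi_c\rvert$-integrals grow only as $(1+\alpha)$, matching the statement.
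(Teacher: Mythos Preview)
Your approach is essentially the same as the paper's: test \eqref{Eq (Section 4): Weak Form for Wk} with $\psi(\sigma)=\sigma^{2\alpha}$ and $\zeta=\eta^{2}\phi_{\mathrm c}\phi_{\mathrm h}$, bound $\tilde J_{2}+\tilde J_{3}$ from below via \eqref{Eq (Section 4): Uniform elliptic structure on D-k} and $\lvert\nabla W_{k}\rvert^{2}\le\sum_{j}\lvert\nabla v_{j,k}\rvert^{2}$, absorb $\tilde J_{1}$ and $\tilde J_{6}$ by Young's inequality, and finally choose $\phi_{\mathrm h}$ as in Lemma~\ref{Lemma: De Giorgi truncation lemma}. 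One small slip: since both the boundary piece from $\tilde J_{0}$ and the gradient piece from $\tilde J_{2}+\tilde J_{3}$ carry weight $\asymp 1/(1+\alpha)$, you multiply the resulting inequality by $(1+\alpha)$, not $(1+\alpha)^{2}$; this is exactly what produces the factors $C(1+\alpha)$ and $C(1+\alpha)^{2}$ on the right in \eqref{Eq (Section 4): Energy Estimate for Wk} from the original coefficients $C$ (on the $\lvert\nabla\eta\rvert^{2}$ and $\lvert\partial_{t}\phi_{\mathrm c}\rvert$ terms) and $C(1+2\alpha)$ (on the $\lvert f_{\varepsilon}\rvert^{2}$ term via $\tilde J_{4}+\tilde J_{5}$).
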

 \begin{proof}
  Let $\phi_{\mathrm h}\colon I_{r}\to \lbrack 0,\,1\rbrack$ be an arbitrary Lipschitz function that is non-increasing and $\phi_{\mathrm h}(t_{0})=0$, and we set $\phi\coloneqq \phi_{\mathrm c}\phi_{\mathrm h}$.
  We test $\zeta\coloneqq \eta^{2}\phi$ into (\ref{Eq (Section 4): Weak Form for Wk}) with \(\psi(s)\coloneqq s^{2\alpha}\), and therefore \(\Psi(s)=[2(1+\alpha)]^{-1}s^{2(1+\alpha)}\).
  Then, we note that (\ref{Eq (Section 4): Uniform elliptic structure on D-k}) yields
  \[
    {\tilde J}_{2}+{\tilde J}_{3}
    \ge (1+2\alpha){\tilde \lambda}\iint_{Q_{R}}\lvert\nabla W_{k}\rvert^{2}W_{k}^{p+2\alpha-2}\eta^{2}\phi\,{\mathrm d}X=\frac{4(1+2\alpha)}{(p+2\alpha)^{2}} {\tilde \lambda}\iint_{Q_{R}}\left\lvert \nabla W_{k}^{p/2+\alpha} \right\rvert^{2}\eta^{2}\phi\,{\mathrm d}X,
  \]
  where we have used
  \(\lvert\nabla W_{k}\rvert^{2}\le \sum_{j=1}^{n} \lvert\nabla v_{j,\,k}\rvert^{2}.\)
  By (\ref{Eq (Section 4): Uniform elliptic structure on D-k}) and Young's inequality, we have
  \begin{align*}
    2\lvert {\tilde J}_{1}\rvert+n{\tilde\lambda}^{-1}\lvert{\tilde J}_{4}\rvert+2\lvert {\tilde J}_{5}\rvert
    &\le\frac{(1+2\alpha){\tilde \lambda}}{2}\iint_{Q_{R}}\lvert\nabla W_{k}\rvert^{2}W_{k}^{p+2\alpha-2}\eta^{2}\phi\,{\mathrm d}X+C\iint_{Q_{R}}W_{k}^{p+2\alpha}\lvert\nabla\eta\rvert^{2}\phi\,{\mathrm d}X\\
    &\quad +C(1+2\alpha)\iint_{Q_{R}}\lvert f_{\varepsilon}\rvert^{2}W_{k}^{2+2\alpha-p}\eta^{2}\phi\,{\mathrm d}X.
  \end{align*}
  Hence, we obtain
  \begin{align*}
    &(1+\alpha)^{-1}\left[-\iint_{Q_{R}}W_{k}^{2+2\alpha}\eta^{2}\phi_{\mathrm c}\partial_{t}\phi_{\mathrm h}\,\mathrm{d}X+\iint_{Q_{R}}\left\lvert \nabla W_{k}^{p/2+\alpha} \right\rvert^{2}\eta^{2}\phi_{\mathrm c}\phi_{\mathrm h}\,{\mathrm d}X\right] \\ 
    &\le C\left[\iint_{Q_{R}}\left(W_{k}^{p+2\alpha}\lvert \nabla\eta\rvert^{2}+ W_{k}^{2(1+\alpha)}\lvert \partial_{t}\phi_{\mathrm c}\rvert\right) \,{\mathrm d}X+(1+\alpha)\iint_{Q_{R}}\lvert f_{\varepsilon}\rvert^{2}W_{k}^{2+2\alpha-p}\eta^{2}\phi_{\mathrm c}\,{\mathrm d}X\right],\nonumber
  \end{align*}
  where $C\in(0,\,\infty)$ depends at most on $p$, $\tilde{\lambda}$, and $\tilde{\Lambda}$.
  From this estimate, we deduce (\ref{Eq (Section 4): Energy Estimate for Wk}) by suitably choosing  $\phi_{\mathrm h}$.
 \end{proof} 
 
 We infer an iteration lemma, often used implicitly in Moser's iteration.
 \begin{lemma}\label{Lemma: Geometric Convergence on Seq}
   Fix the constants $A,\,B,\,\kappa\in(1,\,\infty)$ and $\mu\in(0,\,\infty)$.
   Let the sequences $\{Y_{l}\}_{l=0}^{\infty}\subset \lbrack 0,\,\infty)$ and $\{p_{l}\}_{l=0}^{\infty}\subset (0,\,\infty)$ satisfy
   \(Y_{l+1}^{p_{l+1}}\le \left(AB^{l}Y_{l}^{p_{l}}\right)^{\kappa}\), and \(p_{l}\ge \mu\left(\kappa^{l}-1\right)\)
   for all $l\in{\mathbb Z}_{\ge 0}$, and $\kappa^{l}p_{l}^{-1}\to \mu^{-1}$ as $l\to \infty$. Then, we have 
   \(
     \limsup\limits_{l\to\infty} Y_{l}\le A^{\frac{\kappa^{\prime}}{\mu}}B^{\frac{(\kappa^{\prime})^{2}}{\mu}}Y_{0}^{\frac{p_{0}}{\mu}}.
   \)
 \end{lemma}
 \begin{proof}
  By induction, it is easy to check that
  \(Y_{l}^{p_{l}}\le \prod_{k=1}^{l}A^{\kappa^{l-k+1}} \prod_{k=1}^{l}B^{k\kappa^{l-k+1}} Y_{0}^{\kappa^{l}p_{0}}\)
  holds for all $l\in{\mathbb N}$. 
  By \cite[Lemma 2.3]{MR4389309} and the assumptions in Lemma \ref{Lemma: Geometric Convergence on Seq}, we obtain
  \[Y_{l}\le \prod_{k=1}^{l}A^{\frac{\kappa^{l-k+1}}{\mu\left(\kappa^{l}-1\right)}}\prod_{k=1}^{l}B^{\frac{k\kappa^{l-k+1}}{\mu(\kappa^{l}-1)}} Y_{0}^{\frac{\kappa^{l}p_{0}}{p_{l}}} \le A^{\frac{\kappa^{\prime}}{\mu}} B^{\frac{(\kappa^{\prime})^{2}}{\mu}}Y_{0}^{\frac{\kappa^{l}p_{0}}{p_{l}}}.\]
  Letting $l\to\infty$ completes the proof.  
  \end{proof}
   We also recall a well-known lemma without a proof (see  \cite[Chapter V, Lemma 3.1]{MR717034} for the proof).
  \begin{lemma}\label{Lemma (Section 4): ABSORB LEMMA}
  Fix $R_{1},\,R_{2}\in{\mathbb R}_{>0}$ with $R_{1}<R_{2}$.
Assume that a bounded function $F\colon \lbrack R_{1},\,R_{2} \rbrack\to{\mathbb R}_{\ge 0}$ admits the constants $\alpha\in {\mathbb R}_{>0}$, $A,\,B\in{\mathbb R}_{\ge 0}$, and $\theta\in (0,\,1)$, such that there holds
\[F(r_{1})\le \theta F(r_{2})+\frac{A}{(r_{2}-r_{1})^{\alpha}}+B\]
for any $r_{1},\,r_{2}\in\lbrack R_{1},\,R_{2}\rbrack$ with $r_{1}<r_{2}$.
Then, $F$ satisfies
\[F(R_{1})\le C(\alpha,\,\theta)\left[\frac{A}{(R_{2}-R_{1})^{\alpha}}+B \right].\]
  \end{lemma}
  
  \subsection{Moser's iteration}
  We give the proof of Theorem \ref{Proposition: Lipschitz} by Moser's iteration.
  \begin{proof}[Proof of Theorem \ref{Proposition: Lipschitz}]
    We define $\kappa \in(1,\,2)$ to be $\kappa\coloneqq 1+2/n$ for $n\ge 3$, and $\kappa\coloneqq \sigma^{\prime}$ for $n\ge 2$. 
    We also set ${\tilde\kappa}\coloneqq 1+2/n-\kappa \in\lbrack 0,\,1)$.
   By our choice of $\sigma$ and $q$, we have $q/(q-2)<\kappa$.
   We define $\pi\coloneqq \max\{\,p-1,\,p/2\,\}>0$, and $k\coloneqq 1+\lVert f_{\varepsilon}\rVert_{L^{q}(Q_{R})}^{1/\pi}\ge 1$. 
   Then, the non-negative function ${\tilde f}_{\varepsilon}\coloneqq W_{k}^{-2\pi}\lvert f_{\varepsilon}\rvert^{2}\in L^{q/2}(Q_{R})$ satisfies  $\lVert {\tilde f}_{\varepsilon} \rVert_{L^{q/2}(Q_{R})}\le k^{-2\pi}\lVert f_{\varepsilon} \rVert_{L^{q}(Q_{R})}^{2} \le 1$. 
   The proof is completed by showing 
   \begin{equation}\label{Eq (Section 4): Claim on W-k}
     \esssup_{Q_{\theta R}} \,W_{k}\le \frac{C}{(1-\theta)^{d_{2}}}\left[\fiint_{Q_{R}}W_{k}^{p}\,{\mathrm d}X \right]^{d/p}
   \end{equation}
   for some $C=C(n,\,p,\,q,\,\lambda,\,\Lambda,\,K)\in(1,\,\infty)$.
   Indeed, (\ref{Eq (Section 4): Compatibility}), (\ref{Eq (Section 4): Claim on W-k}) and $W_{k}\le k+V_{\varepsilon}$ yield
   \[\esssup_{Q_{\theta R}}\,V_{\varepsilon}\le C_{n}\esssup_{Q_{\theta R}}\,W_{k}\le \frac{C}{(1-\theta)^{d_{2}}}\left(k^{p}+\fiint_{Q_{R}}V_{\varepsilon}^{p}\,{\mathrm d}X \right)^{d/p}.\]
   Recalling the definition of $k$, we conclude (\ref{Eq (Section 2): Local Lipschitz bounds}).

   To prove (\ref{Eq (Section 4): Claim on W-k}), we firstly deduce the reversed H\"{o}lder inequality 
   \begin{equation}\label{Eq (Section 4): REV H}
   \iint_{Q_{R_{1}}} W_{k}^{2\kappa \alpha+2(\kappa-1)+p}\,{\mathrm d}X \le \left[\frac{C(n,\,p,\,q,\,\lambda,\,\Lambda,\,K)}{(R_{2}-R_{1})^{2}}(1+\alpha)^{\gamma} \iint_{Q_{R_{2}}}\left(W_{k}^{p+2\alpha}+W_{k}^{2+2\alpha}\right) \,{\mathrm d}X \right]^{\kappa},
   \end{equation}
   which hold for any $\alpha\in\lbrack 0,\,\infty)$ and $0<R_{1}<R_{2}\le R$.
   Here $\gamma=\gamma(n,\,q)\in\lbrack 2,\,\infty)$ is a constant.
   To prove (\ref{Eq (Section 4): REV H}), we let the functions $\eta=\eta(x)$ and $\phi_{\mathrm c}=\phi_{\mathrm c}(t)$ satisfy all the assumptions in Lemma \ref{Lemma: W-k energy estimate}.
    We fix $\alpha\ge 0$, and introduce the non-negative functions \(\varphi_{1}\coloneqq \eta W_{k}^{1+\alpha} \phi_{\mathrm c}^{1/2}\), \(\varphi_{2}\coloneqq \eta W_{k}^{p/2+\alpha}\phi_{\mathrm c}^{1/2}\), and \(\varphi_{3}\coloneqq \min\{\,\varphi_{1},\,\varphi_{2} \,\}\), where we note $\varphi_{3}=\varphi_{1}$ when $p\ge2$ and $\varphi_{3}=\varphi_{2}$ when $p_{\mathrm c}<p\le 2$. By the definition of $\pi$ and ${\tilde f}_{\varepsilon}$, we rewrite (\ref{Eq (Section 4): Energy Estimate for Wk}) as 
   \begin{align}\label{Eq (Section 4): Mid ENERGY}
   & \esssup_{\tau\in \ring{I_{R}}}\,\int_{B_{R}\times \{\tau\}}\varphi_{1}^{2}\,{\mathrm d}x+\iint_{Q_{R}}\lvert \nabla\varphi_{2} \rvert^{2}\,{\mathrm d}X\nonumber \\
   & \le C(1+\alpha)^{2}\left[\iint_{Q_{R}}\left(W_{k}^{p+2\alpha} \left(\eta^{2}+\lvert\nabla\eta\rvert^{2}\right)+W_{k}^{2+2\alpha}\lvert\partial_{t}\phi_{\mathrm c}\rvert \right) \,{\mathrm d}X\right] +C(1+\alpha)^{2}\iint_{Q_{r}}{\tilde f}_{\varepsilon}\varphi_{3}^{2}\,{\mathrm d}X.
   \end{align} 
   From (\ref{Eq (Section 4): Mid ENERGY}), we can find a constant $C\in(1,\,\infty)$, depending at most on $n$, $\kappa$, $q$, $\lambda$, $\Lambda$, and $K$, such that
   \begin{equation}\label{Eq (Section 4): Reversed Hoelder}
     \iint_{Q_{R}}\varphi_{1}^{2(\kappa-1)}\varphi_{2}^{2} \,  {\mathrm d}X  \le CR^{2{\tilde \kappa}}(1+\alpha)^{\kappa\gamma}\left(\iint_{Q_{R}}\left(W_{k}^{p+2\alpha}(\eta^{2}+\lvert \nabla \eta\rvert^{2})+W_{k}^{2+2\alpha}\lvert \partial_{t}\phi\rvert\right)\,{\mathrm d}X\right)^{\kappa}
   \end{equation}
   for some constant 
   $\gamma=  \gamma(n,\,q)  \in\lbrack 2,\,\infty)$.
   To obtain (\ref{Eq (Section 4): Reversed Hoelder}), we use the parabolic Poincar\'{e}--Sobolev embedding (see also \cite[Chapter I, Proposition 3.1]{MR1230384}), given by 
   \[
     \iint_{Q_{R}}\varphi_{1}^{2(\kappa-1)}\varphi_{2}^{2} \,{\mathrm d}X \le CR^{2{\tilde \kappa}}\left(\iint_{Q_{R}}\left\lvert \nabla \varphi_{2} \right\rvert^{2}\,{\mathrm d}X\right)\left(\sup_{\tau\in \ring{I_{R}}}\int_{B_{R}\times \{\tau\}} \varphi_{1}^{2}(x,\,t)\,{\mathrm d}x \right)^{\kappa-1} 
   \]
   for some $C=C(\kappa,\,q)\in(0,\,\infty)$, independent of $m$.
   This is easily deduced by applying H\"{o}lder's inequality to the two functions  $\varphi_{1}^{2}(\,\cdot\,,\,t)\in L^{\frac{1}{\kappa-1}}(B_{R})$ and $\varphi_{2}^{2}(\,\cdot\,,\,t)\in L^{\frac{1}{2-\kappa}}(B_{R})$, and the Sobolev embedding to $\varphi_{2}(\,\cdot\,,\,t)\in W_{0}^{1,\,2}(B_{R})\hookrightarrow L^{\frac{2}{2-\kappa}}(B_{R})$. 
   Combining with (\ref{Eq (Section 4): Mid ENERGY}), we have 
   \begin{align*}
      \iint_{Q_{R}}\varphi_{1}^{2(\kappa-1)}\varphi_{2}^{2}  \, {\mathrm d}X  &  \le C(1+\alpha)^{2\kappa}R^{2{\tilde \kappa}}\left(\iint_{Q_{R}}\left( W_{k}^{p+2\alpha}\left(\eta^{2}+\lvert\nabla \eta\rvert^{2}\right)+W_{k}^{2+2\alpha}\lvert\partial_{t}\phi_{\mathrm c}\rvert\right)\, {\mathrm d}X\right)^{\kappa}\\    &  \quad +C(1+\alpha)^{2\kappa}R^{2{\tilde \kappa}}\left(\iint_{Q_{R}} \varphi_{3}^{\frac{2q}{q-2}}\,{\mathrm d}X\right)^{\kappa(1-2/q)}. 
   \end{align*}
   Here we have used $\lVert {\tilde f}_{\varepsilon}\rVert_{L^{q/2}(Q_{R})}\le 1$.
   If $q=\infty$, then the last estimate obviously yields (\ref{Eq (Section 4): Reversed Hoelder}) with $\gamma\coloneqq 2$.
    For $q\in(n+2,\,\infty)$, we interpolate $\varphi_{3}$ among the Lebesgue spaces $L^{2}(Q_{R})\subset L^{\frac{2q}{q-2}}(Q_{R})\subset L^{2\kappa}(Q_{R})$. 
   Combining with Young's inequality  and recalling the definition of $\varphi_{3}$,  we obtain
   \begin{align*}
    \left(\iint_{Q_{R}} \varphi_{3}^{\frac{2q}{q-2}}\,{\mathrm d}X\right)^{\kappa\left(1-\frac{2}{q}\right)}& \le \left(\iint_{Q_{R}} \varphi_{3}^{2}\,{\mathrm d}X\right)^{\kappa^{\prime}\cdot\frac{\kappa q-q-2\kappa}{q}}\left(\iint_{Q_{R}}\varphi_{3}^{2\kappa}\,{\mathrm d}X \right)^{\kappa^{\prime}\cdot\frac{2}{q}}\\ & \le \left(\iint_{Q_{R}} \varphi_{1}^{2}\,{\mathrm d}X\right)^{\kappa^{\prime}\cdot\frac{\kappa q-q-2\kappa}{q}}\left(\iint_{Q_{R}}\varphi_{1}^{2(\kappa-1)}\varphi_{2}^{2} \,{\mathrm d}X \right)^{\kappa^{\prime}\cdot\frac{2}{q}}\\
     & \le \sigma^{-\frac{2\kappa^{\prime}}{q-2\kappa^{\prime}}}\left(\iint_{Q_{R}}\varphi_{1}^{2}\,{\mathrm d}X \right)^{\kappa}+\sigma\iint_{Q_{R}}\varphi_{1}^{2(\kappa-1)}\varphi_{2}^{2}  \, {\mathrm d}X
   \end{align*}
   for all $\sigma\in(0,\,\infty)$.
   Choosing sufficiently small $\sigma>0$,  we deduce  (\ref{Eq (Section 4): Reversed Hoelder}) with $\gamma\coloneqq \frac{2q}{q-2\kappa^{\prime}}\in(2,\,\infty)$.  The desired estimate (\ref{Eq (Section 4): REV H}) is clear by suitably choosing $\eta$ and $\phi_{\mathrm c}$ in (\ref{Eq (Section 4): Reversed Hoelder}). 
   
    We first consider $p\in\lbrack 2,\,\infty)$, where $W_{k}\ge 1$ and (\ref{Eq (Section 4): REV H}) yield
   \[   \iint_{Q_{R_{1}}} W_{k}^{\kappa\beta+(2-p)(\kappa-1)}\,{\mathrm d}X \le \left[\frac{C(n,\,p,\,q,\,\lambda,\,\Lambda,\,K)}{(R_{2}-R_{1})^{2}}\beta^{\gamma} \iint_{Q_{R_{2}}}W_{k}^{\beta} \,{\mathrm d}X \right]^{\kappa}\]
   for any $\beta\in\lbrack p,\,\infty)$ and $0<R_{1}<R_{2}\le R$. 
   For given $\theta\in(0,\,1)$, we set \(r_{l}\coloneqq \theta R+2^{-l}(1-\theta)R\), $B_{l}\coloneqq B_{r_{l}}(x_{0})$, $I_{l}\coloneqq I_{r_{l}}(t_{0})$ and $Q_{l}\coloneqq B_{l}\times I_{l}=Q_{r_{l}}(x_{0},\,t_{0})$ for every $l\in{\mathbb Z}_{\ge 0}$.    
   We define the sequence $\{p_{l}\}_{l=0}^{\infty}\subset\lbrack p,\,\infty)$ as $p_{l}\coloneqq 2\kappa^{l}+p-2$, which satisfies the recurrence formula $p_{l+1}=\kappa p_{l}+(2-p)(\kappa-1)$ with $p_{0}=p$.  Noting $1/2\le r_{l+1}/r_{l}\le 1$, we easily notice that the sequence \(Y_{l}\coloneqq \left(\fiint_{Q_{l}}W_{k}^{p_{l}}\,{\mathrm d}X \right)^{1/p_{l}}\) for $l\in{\mathbb Z}_{\ge 0}$ satisfies all of the assumptions in Lemma \ref{Lemma: Geometric Convergence on Seq}  with $\mu\coloneqq 2$,  $A=(1-\theta)^{-2}C$ and $B\coloneqq 4\kappa^{\kappa\gamma}$ for some sufficiently large $C=C(n,\,p,\,q,\,\lambda,\,\Lambda,\,K)\in (1,\,\infty)$. Lemma \ref{Lemma: Geometric Convergence on Seq} yields $\limsup\limits_{l\to\infty}Y_{l}\le A^{\frac{\kappa^{\prime}}{2}}B^{\frac{(\kappa^{\prime})^{2}}{2}}Y_{0}^{\frac{p}{2}}$, which implies (\ref{Eq (Section 4): Claim on W-k}).
   
    In the remaining case $p\in(p_{\mathrm c},\,2)$, which clearly yields $n\ge 3$ and $\kappa=1+2/n$, from (\ref{Eq (Section 4): REV H}) we have
   \[   \iint_{Q_{R_{1}}} W_{k}^{\kappa\beta+p-2}\,{\mathrm d}X \le \left[\frac{C(n,\,p,\,q,\,\lambda,\,\Lambda,\,K)}{(R_{2}-R_{1})^{2}}\beta^{\gamma} \iint_{Q_{R_{2}}}W_{k}^{\beta} \,{\mathrm d}X \right]^{\kappa}\]
   for any $\beta\in\lbrack 2,\,\infty)$ and $0<R_{1}<R_{2}\le R$. 
   For given $\theta\in(0,\,1)$, we arbitrarily take $\theta\le \theta_{1}<\theta_{2}\le 1$. For every $l\in{\mathbb Z}_{\ge 0}$, we choose \(r_{l}\coloneqq \theta_{1} R+2^{-l}(\theta_{2}-\theta_{1})R\). Corresponding to this $r_{l}$, we define $B_{l}$, $I_{l}$, $Q_{l}$ in the same manner. However, we choose $p_{l}\coloneqq n(1-p/2)+[2-n(1-p/2)]\kappa^{l}\in\lbrack 2,\,\infty)$ for $l\in{\mathbb Z}_{\ge 0}$, which satisfies the recurrence formula $p_{l+1}=\kappa p_{l}+p-2$ with $p_{0}=2$. 
   Then, the sequence $Y_{l}$, defined in the same manner, satisfies all of the assumptions in Lemma \ref{Lemma: Geometric Convergence on Seq} with $\mu\coloneqq 2-n(1-p/2)\in(0,\,\infty)$,  $A=(\theta_{2}-\theta_{1})^{-2}C$ and $B\coloneqq 4\kappa^{\kappa\gamma}$ for some sufficiently large $C=C(n,\,p,\,q,\,\lambda,\,\Lambda,\,K)\in (1,\,\infty)$. By the resulting inequality $\limsup\limits_{l\to\infty}Y_{l}\le A^{\frac{\kappa^{\prime}}{\mu}}B^{\frac{(\kappa^{\prime})^{2}}{\mu}}Y_{0}^{\frac{2}{\mu}}$ and Young's inequality, we have
   \[
     \esssup_{Q_{\theta_{1} R}}\,W_{k}
      \le \left[\frac{C}{(\theta_{2}-\theta_{1})^{2\kappa^{\prime}}}\fiint_{Q_{\theta_{1}R}}W_{k}^{2}\,{\mathrm d}X \right]^{\frac{1}{2-n(1-p/2)}}
       \le \frac{1}{2}\esssup_{Q_{\theta_{2}R}}\,W_{k}+ \left[\frac{C}{(\theta_{2}-\theta_{1})^{2\kappa^{\prime}}}\fiint_{Q_{\theta_{2}R}} W_{k}^{p}\,{\mathrm d}X\right]^{\frac{d}{p}}
   \]
   for any $\theta\le \theta_{1}<\theta_{2}\le 1$. The desired estimate (\ref{Eq (Section 4): Claim on W-k}) follows from Lemma \ref{Lemma (Section 4): ABSORB LEMMA}.
  \end{proof}
\section{Degenerate case}\label{Sect:Degenerate Region}
 In Section \ref{Sect:Degenerate Region}, we aim to prove Proposition \ref{Proposition: Parabolic De Giorgi} from the estimates (\ref{Eq (Section 3): De Giorgi Truncation 1})--(\ref{Eq (Section 3): De Giorgi Truncation 2}).
 The former (\ref{Eq (Section 3): De Giorgi Truncation 1}) is used to deduce oscillation lemmata (Lemmata \ref{Lemma: Superlevelset decay}--\ref{Lemma: Density Lemma}).
 There we make use of a result of the expansion of posivity (Lemma \ref{Lemma: Expansion of Positivity}), which is verified by the latter (\ref{Eq (Section 3): De Giorgi Truncation 2}).
 As related items, see \cite[Chapters II--III]{MR0241822}, \cite[Chapters 3--4]{MR2865434}.
 \subsection{Levelset estimates}
 Firstly, we prove various estimates for the level sets of $U_{\delta,\,\varepsilon}$.
 For given $\tau_{0}\in(t_{0}-4\rho^{2},\,t_{0})$ and $\gamma\in(0,\,1)$, we define
 \(I_{\frac{3}{2}\rho}(\gamma;\,\tau_{0})\coloneqq (\tau_{0},\,\tau_{0}+\gamma(\frac{3}{2}\rho)^{2}\rbrack\subset {\tilde I}_{\frac{3}{2}\rho}(\gamma;\,\tau_{0})\coloneqq (\tau_{0}-\gamma(\frac{3}{2}\rho)^{2},\,\tau_{0}+\gamma(\frac{3}{2}\rho)^{2}\rbrack,\)
 and $Q_{\frac{3}{2}\rho}(\gamma;\,\tau_{0})\coloneqq B_{\frac{3}{2}\rho}(x_{0})\times I_{\frac{3}{2}\rho}(\gamma;\,\tau_{0})$.
 \begin{lemma}\label{Lemma: Superlevelset decay}
  In addition to the assumptions of Proposition \ref{Proposition: Parabolic De Giorgi}, let
  \begin{align}
    \label{Eq (Section 5): Inclusion on intervalis}
    &I_{\frac{3}{2}\rho}(\gamma;\,\tau_{0})\subset {\tilde I}_{\frac{3}{2}\rho}(\gamma;\,\tau_{0}) \subset (t_{0}-4\rho^{2},\, t_{0}\rbrack,\\ 
    \label{Eq (Section 5): Positivity assumption}
    &\left\lvert\left\{x\in B_{\rho}\mathrel{}\middle|\mathrel{}U_{\delta,\,\varepsilon}(x,\,t)\le (1-{\hat\nu})\mu^{2} \right\}\right\rvert\ge \tilde{\nu}\lvert B_{\rho}\rvert \quad \textrm{for a.e.~}t\in I_{\frac{3}{2}\rho}(\gamma;\,\tau_{0}),
  \end{align}
  and $\rho^{\beta}\le 2^{-i_{\star}}\hat{\nu}$ hold for some $i_{\star}\in{\mathbb N}$.
  Then, we have
  \begin{equation}\label{Eq (Section 5): Measure result}
    \left\lvert Q_{\frac{3}{2}\rho}(\gamma;\,\tau_{0})\cap\left\{ U_{\delta,\,\varepsilon}\ge\left(1-2^{-i_{\star}}{\hat\nu}\right)\mu^{2} \right\}\right\rvert\le \frac{C_{\dagger}}{\tilde{\nu}\sqrt{\gamma i_{\star}}}\left\lvert Q_{\frac{3}{2}\rho}(\gamma;\,\tau_{0})\right\rvert,
  \end{equation}
  for some $C_{\dagger}\in(1,\,\infty)$ depending at most on $n$, $p$, $q$, $\lambda$, $\Lambda$, $K$, $F$, $\delta$, and $M$.
\end{lemma}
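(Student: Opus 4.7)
The statement is a parabolic propagation-of-measure result of De Giorgi second-lemma type (cf.\ DiBenedetto's treatment of degenerate parabolic equations). My plan is to derive it from a logarithmic energy inequality for the subsolution $U_{\delta,\varepsilon}=(V_\varepsilon-\delta)_+^2$, together with a slice-selection argument exploiting (\ref{Eq (Section 5): Positivity assumption}). The upper bound $U_{\delta,\varepsilon}\le\mu^2$ (from (\ref{Eq (Section 2): Bounds of V-epsilon})) and the dyadic structure of the target level $(1-2^{-i_\star}\hat\nu)\mu^2$ are the ingredients that, passed through a log-test, yield the claimed decay rate.

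I would introduce the logarithmic composite
\[
\Phi(s)\coloneqq\Bigl[\log\frac{\hat\nu\mu^2}{\hat\nu\mu^2-\bigl(s-(1-\hat\nu)\mu^2\bigr)_+ + 2^{-i_\star}\hat\nu\mu^2}\Bigr]_+,
\]
which vanishes on the ``good set'' $\{U_{\delta,\varepsilon}\le (1-\hat\nu)\mu^2\}$, is bounded above by $(\log 2)\,i_\star + O(1)$, and satisfies $\Phi \ge (\log 2)\,i_\star$ on the target superlevel set $\{U_{\delta,\varepsilon}\ge (1-2^{-i_\star}\hat\nu)\mu^2\}$. Specializing Lemma~\ref{Lemma: Basic Weak Form} with $\psi$ chosen so that the convex antiderivative $\Psi$ is compatible with $\Phi^2(U_{\delta,\varepsilon})$ (adapting the computation of Lemma~\ref{Lemma: De Giorgi truncation lemma} to this log-truncation), and testing against $\eta^2\phi_{\mathrm c}$ with $\eta\in C^1_{\mathrm c}(B_{\frac{3}{2}\rho};[0,1])$, $\eta\equiv 1$ on $B_\rho$, $|\nabla\eta|\lesssim\rho^{-1}$, will produce a time-uniform logarithmic estimate of the form
\[
\sup_{t\in\tilde I}\int_{B_{\frac{3}{2}\rho}\times\{t\}}\Phi^2(U_{\delta,\varepsilon})\,\eta^2\,dx \le \int_{B_{\frac{3}{2}\rho}\times\{T_0\}}\Phi^2(U_{\delta,\varepsilon})\,\eta^2\,dx + C\,i_\star\,\rho^{-2}|\tilde Q| + C\mu^{2-p}F^2\rho^{2\beta-2}|\tilde Q|,
\]
where $\tilde Q=B_{\frac{3}{2}\rho}\times\tilde I_{\frac{3}{2}\rho}(\gamma;\tau_0)$ and $T_0\in\tilde I$ is any admissible initial slice.

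Since (\ref{Eq (Section 5): Positivity assumption}) holds on \emph{every} slice of $\tilde I$, a slice-averaging argument on $\tilde I\setminus I$ (whose temporal measure is $\gamma(\tfrac{3}{2}\rho)^2$ by (\ref{Eq (Section 5): Inclusion on intervalis})) lets me pick $T_0\in\tilde I\setminus I$ at which the initial $\Phi^2$-integral is bounded by its time-average, hence by $C(1-\tilde\nu)\,i_\star^2\,|B_{\frac{3}{2}\rho}|$. Integrating the uniform bound over $t\in I_{\frac{3}{2}\rho}(\gamma;\tau_0)$, using $\Phi\ge (\log 2)\,i_\star$ on the target set, and extracting the $\sqrt{\,\cdot\,}$ that the logarithmic structure supplies through a Cauchy--Schwarz step on $\int \Phi\,|\nabla\Phi|\,dx$ (the classical source of the $1/\sqrt{i_\star}$ rate, cf.\ DiBenedetto, \textit{Degenerate Parabolic Equations}, Ch.~II, Lemma 3.1) will deliver (\ref{Eq (Section 5): Measure result}). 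The principal obstacle will be absorbing the forcing term: the logarithmic test produces an error of order $F^2\rho^{2\beta}$ which must be dominated by the gap $2^{-i_\star}\hat\nu\mu^2$ built into $\Phi$, and it is precisely here that the hypothesis $\rho^\beta\le 2^{-i_\star}\hat\nu$ is consumed. A secondary technicality is justifying the log-truncation as an admissible composite in Lemma~\ref{Lemma: Basic Weak Form}, which is handled by approximating $\Phi$ by smooth non-decreasing functions and passing to the limit via monotone convergence.
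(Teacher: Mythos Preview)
Your proposal conflates two distinct tools in the parabolic De Giorgi theory. The logarithmic energy estimate you sketch is the device used for \emph{expansion of positivity} (Lemma~\ref{Lemma: Expansion of Positivity} in the paper), not for the shrinking-of-measure lemma you are asked to prove. The $1/\sqrt{i_\star}$ rate in (\ref{Eq (Section 5): Measure result}) does not arise from a logarithmic test function; it comes from the De Giorgi isoperimetric inequality applied at the dyadic levels $k_i\coloneqq(1-2^{-i}\hat\nu)\mu^2$. Concretely, the paper applies (\ref{Eq (Section 5): Isoperimetric Ineq}) to each time slice $U_{\delta,\varepsilon}(\cdot,t)$ (using (\ref{Eq (Section 5): Positivity assumption}) to bound the denominator), integrates in $t$, and combines with the Caccioppoli estimate (\ref{Eq (Section 3): De Giorgi Truncation 1}) of Lemma~\ref{Lemma: De Giorgi truncation lemma} on the enlarged cylinder $B_{2\rho}\times\tilde I$ to obtain
\[
|A_{i+1}|^{2}\le \frac{C^{2}}{\tilde\nu^{2}\gamma}\,|Q|\,\bigl(|A_{i}|-|A_{i+1}|\bigr),\qquad A_i\coloneqq Q\cap\{U_{\delta,\varepsilon}>k_i\}.
\]
Summing over $i=0,\dots,i_\star-1$ telescopes the right-hand side and yields $i_\star|A_{i_\star}|^2\le C^2\tilde\nu^{-2}\gamma^{-1}|Q|^2$, which is exactly (\ref{Eq (Section 5): Measure result}). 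The smallness hypothesis $\rho^\beta\le 2^{-i_\star}\hat\nu$ is used only to absorb the forcing term $\mu^4F^2|A_{k_i}|^{1-2/q}$ into the main Caccioppoli term at every level $i\le i_\star$.

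Your logarithmic route does not reproduce this structure: the sup-in-time bound on $\int\Phi^2$ carries an initial term of size $i_\star^2|B|$ (either from the data at $T_0$ or from the $\partial_t\phi_{\mathrm c}$ contribution if you cut off in time), and dividing by the lower bound $\Phi^2\ge c\,i_\star^2$ on the target set leaves no decay in $i_\star$. Your appeal to ``Cauchy--Schwarz on $\int\Phi|\nabla\Phi|$'' is not a mechanism that produces $1/\sqrt{i_\star}$; the rate genuinely requires the telescoping sum over levels. You also misread (\ref{Eq (Section 5): Positivity assumption}): it is asserted only for $t\in I_{\frac{3}{2}\rho}(\gamma;\tau_0)$, not for $t\in\tilde I$, so your slice-selection on $\tilde I\setminus I$ cannot invoke it.
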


We invoke an isoperimetric inequality for the functions in $W^{1,\,1}(B_{\rho})$ (see e.g., \cite[Chapter 10, \S 5.1]{MR2566733}); 
for any fixed numbers $-\infty<k<l<\infty$ and $v\in W^{1,\,1}(B_{\rho})$, there holds
\begin{equation}\label{Eq (Section 5): Isoperimetric Ineq}
  (l-k)\lvert \{x\in B_{\rho}\mid v(x)>l \} \rvert\le \frac{C_{n}\rho^{n+1}}{\lvert\{x\in B_{\rho}\mid v(x)<k \}\rvert}\int_{B_{\rho}\cap \{k<v<l\}}v\,{\mathrm d}x.
\end{equation}
\begin{proof}
  We set $k_{i}\coloneqq (1-2^{-i}{\hat\nu})\mu^{2}$ and $A_{i}\coloneqq Q\cap \{U_{\delta,\,\varepsilon}>k_{i}\}$ for each $i\in{\mathbb Z}_{\ge 0}$, where $Q\coloneqq Q_{\frac{3}{2}\rho}(\gamma;\,\tau_{0})$.
  By the definition of $k_{i}$, $k_{i+1}-k_{i}=2^{-i-1}{\hat\nu}\mu^{2}$ is clear for every $i\in{\mathbb Z}_{\ge 0}$.
  Note that (\ref{Eq (Section 5): Positivity assumption}) enables us to apply (\ref{Eq (Section 5): Isoperimetric Ineq}) to the sliced function $U_{\delta,\,\varepsilon}(\,\cdot\,,\,t)$.
  Hence, we obtain
  \[
    \frac{{\hat \nu}\mu^{2}}{2^{i+1}}\left\lvert\left\{x\in B_{\frac{3}{2}\rho}\mathrel{} \middle|\mathrel{} U_{\delta,\,\varepsilon}(x,\,t)>k_{i+1} \right\}\right\rvert
    \le \frac{C(n)\rho}{\tilde{\nu}}\int_{B_{\frac{3}{2}\rho}\cap\{k_{i}<U_{\delta,\,\varepsilon}(\,\cdot\,,\,t) <k_{i+1}\}}\lvert \nabla (U_{\delta,\,\varepsilon}(x,\,t)-k_{i})_{+} \rvert\,{\mathrm d}x
  \]
  for  a.e.~$t\in I_{\frac{3}{2}\rho}(\gamma;\,\tau_{0})$, and $i\in{\mathbb Z}_{\ge 0}$.
  Integrating with respect to $t\in I_{\frac{3}{2}\rho}(\gamma;\,\tau_{0})$ and applying the Cauchy--Schwarz inequality, we get
  \[
    \frac{{\hat\nu}\mu^{2}}{2^{i+1}}\lvert A_{i+1}\rvert\le \frac{C(n)\rho}{\tilde\nu}\left(\iint_{Q}\lvert \nabla (U_{\delta,\,\varepsilon}-k_{i})_{+}\rvert^{2}\,{\mathrm d}X \right)^{1/2}\left(\lvert A_{i}\rvert-\lvert A_{i+1}\rvert \right)^{1/2}
  \]
  for every $i\in{\mathbb Z}_{\ge 0}$.
  To estimate the integration on the right-hand side,
  we make use of (\ref{Eq (Section 3): De Giorgi Truncation 1}) with $Q_{0}\coloneqq B_{2\rho}\times {\tilde I}_{\frac{3}{2}\rho}(\gamma;\,\tau_{0})$ and $k\coloneqq k_{i}$.
  Choosing suitable cut-off functions $\eta\in C_{\mathrm c}^{1}(B_{2\rho};\,\lbrack 0,\,1\rbrack)$ and $\phi_{\mathrm c}\in C^{1}([t_{0}-\gamma(3\rho/2)^{2},\,t_{0}+\gamma(3\rho/2)^{2}];\,\lbrack 0,\,1\rbrack)$, and 
  noting $(U_{\delta,\,\varepsilon}-k_{i})_{+}\le 2^{-i}{\hat \nu}\mu^{2}$, we compute
  \[
    \iint_{Q}\lvert \nabla (U_{\delta,\,\varepsilon}-k_{i})_{+} \rvert^{2}\,{\mathrm d}X\le C\left[\frac{4^{-i}\hat{\nu}\mu^{4}}{\gamma\rho^{2}}\lvert Q_{0}\rvert+\mu^{4}F^{2}\lvert Q_{0}\rvert^{1-2/q}\right]\le \frac{C({\hat \nu}\mu^{2})^{2}}{4^{i}\gamma \rho^{2}}\left[ 1+ \left(\frac{2^{i}\rho^{\beta}}{{\hat \nu}} \right)^{2}  \right]\lvert Q\rvert
  \]
  for some constant $C\in(1,\,\infty)$ depending at most on $n$, $p$, $q$, $\lambda$, $\Lambda$, $K$, $\delta$ and $M$.
  The proof is completed by showing (\ref{Eq (Section 5): Measure result}), provided $2^{i_{\star}}\rho^{\beta}\le {\hat\nu}$ and (\ref{Eq (Section 5): Positivity assumption}).
  If $2^{i_{\star}}\rho^{\beta}\le {\hat \nu}$ holds for some fixed $i_{\star}\in{\mathbb N}$, then we can find a constant $C_{\dagger}\in(1,\,\infty)$ satisfying 
  \[\lvert A_{i+1}\rvert^{2}\le \frac{C_{\dagger}^{2}}{{\tilde\nu}^{2}\gamma}\lvert Q\rvert(\lvert A_{i}\rvert-\lvert A_{i+1}\rvert)\]
  for each $i\in\{\,0,\,1,\,\dots\,,\,i_{\star}-1\}$. From this, we obtain
  \[i_{\star}\lvert A_{i_{\star}}\rvert^{2}\le \sum_{i=0}^{i_{\star}-1}\lvert A_{i+1}\rvert^{2}\le \frac{C_{\dagger}^{2}}{\tilde{\nu}^{2}\gamma}\lvert Q\rvert\cdot \lvert A_{0}\rvert\le \frac{C_{\dagger}^{2}}{\tilde{\nu}^{2}\gamma}\lvert Q\rvert^{2},\quad \text{whence}\quad \lvert A_{i_{\star}}\rvert\le \frac{C_{\dagger}}{{\tilde\nu}\sqrt{\gamma i_{\star}} }\lvert Q\rvert.\qedhere \]
\end{proof}
 \begin{lemma}\label{Lemma: Density Lemma}
 Under the assumptions of Proposition \ref{Proposition: Parabolic De Giorgi},  there exists $c_{\ast}\in(0,\,1)$, depending at most on $n$, $p$, $q$, $\lambda$, $\Lambda$, $K$, $F$, $\delta$, and $M$, such that
 \begin{equation}\label{Eq (Section 5): Density Assumption}
   \left\lvert Q_{\frac{3}{2}\rho}(\gamma;\,\tau_{0})\cap \left\{ U_{\delta,\,\varepsilon}\ge (1-\nu_{0})\mu^{2}\right\}\right\rvert\le \alpha_{\star}\left\lvert Q_{\frac{3}{2}\rho}(\gamma;\,\tau_{0}) \right\rvert \quad \text{with}\quad \alpha_{\star}(\gamma)\coloneqq c_{\ast}\gamma^{\frac{n+2}{2\beta}}
 \end{equation}
 and $ \rho^{\beta} <\nu_{0}$ imply
 \begin{equation}\label{Eq (Section 5): Density Result}
   \sup_{{\tilde Q}(\gamma;\,\tau_{0})}\,U_{\delta,\,\varepsilon}\le \left(1-\frac{\nu_{0}}{2}\right)\mu^{2},\quad\text{where}\quad  {\tilde Q}(\gamma;\,\tau_{0})\coloneqq B_{\rho}\times \left(\tau_{0}+\frac{5\gamma}{4}\rho^{2},\, \tau_{0}+\frac{9\gamma}{4}\rho^{2}\right\rbrack.
 \end{equation} 
 \end{lemma}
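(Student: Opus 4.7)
The proof will be a classical De~Giorgi iteration on a sequence of parabolic cylinders shrinking from $Q_{\frac{3}{2}\rho}(\gamma;\tau_0)$ down to $\tilde Q(\gamma;\tau_0)$, using the energy estimate (\ref{Eq (Section 3): De Giorgi Truncation 1}) in place of a Caccioppoli inequality. First I would introduce the levels
\[
k_j \coloneqq \bigl(1-\tfrac{\nu_0}{2}\bigr)\mu^2 - 2^{-j-1}\nu_0\mu^2,\qquad j\in{\mathbb Z}_{\ge 0},
\]
which increase from $(1-\nu_0)\mu^2$ to $(1-\nu_0/2)\mu^2$, and the shrinking cylinders $Q_j\coloneqq B_{\rho_j}\times(\tau_j^-,\tau_0+\tfrac{9\gamma}{4}\rho^2]$ with $\rho_j\coloneqq \rho(1+2^{-j-1})$ and $\tau_j^-\coloneqq \tau_0+\tfrac{5\gamma}{4}\rho^2(1-2^{-j})$, so that $Q_0=Q_{\frac{3}{2}\rho}(\gamma;\tau_0)$ and $\bigcap_j Q_j\supset \tilde Q(\gamma;\tau_0)$. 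Choosing cut-offs $\eta_j\in C^1_{\mathrm c}(B_{\rho_j};[0,1])$, $\phi_{j}\in C^1([\tau_j^-,\tau_0+\tfrac{9\gamma}{4}\rho^2];[0,1])$ with $\eta_j\equiv 1$ on $B_{\rho_{j+1}}$, $\phi_j\equiv 1$ on $(\tau_{j+1}^-,\tau_0+\tfrac{9\gamma}{4}\rho^2]$, $\phi_j(\tau_j^-)=0$, and $\lVert\nabla\eta_j\rVert_\infty^2+\lVert\partial_t\phi_j\rVert_\infty\le C4^j/(\gamma\rho^2)$, I would feed them into (\ref{Eq (Section 3): De Giorgi Truncation 1}) applied to $U_{\delta,\varepsilon}$ at level $k_j$, on $Q_j$.

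Setting $A_j\coloneqq Q_j\cap\{U_{\delta,\varepsilon}>k_j\}$ and $Y_j\coloneqq |A_j|/|Q_j|$, the sup-in-time plus gradient bound combined with the parabolic Sobolev embedding $L^\infty(L^2)\cap L^2(W^{1,2})\hookrightarrow L^{2(n+2)/n}$ yields
\[
\iint_{Q_{j+1}}(U_{\delta,\varepsilon}-k_j)_+^{2(n+2)/n}\,{\mathrm d}X \le C\!\left(\frac{4^j}{\gamma\rho^2}\iint_{Q_j}(U_{\delta,\varepsilon}-k_j)_+^{2}\,{\mathrm d}X+\mu^4F^2|A_j|^{1-2/q}\right)^{\!\!\!(n+2)/n}.
\]
Since $(U_{\delta,\varepsilon}-k_j)_+\ge 2^{-j-2}\nu_0\mu^2$ on $A_{j+1}$, and $(U_{\delta,\varepsilon}-k_j)_+\le \nu_0\mu^2$ throughout, the left-hand side bounds $(2^{-j-2}\nu_0\mu^2)^{2(n+2)/n}|A_{j+1}|$ while the right-hand side is controlled by a multiple of $\bigl((\nu_0\mu^2)^2 Y_j/(\gamma)+\mu^4F^2\rho^{2\beta}Y_j^{1-2/q}\bigr)^{(n+2)/n}$, after using $|Q_j|\le C|Q_{j+1}|$ and $\rho^{n+2-(n+2)\cdot 2/q}\le C\rho^{2\beta}|Q_j|^{1-2/q}$. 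Dividing and simplifying, the smallness hypothesis $\rho<\nu_0$ (together with $\nu_0\le 1$) absorbs the force term into a geometric factor, and one obtains a recursive inequality of the form
\[
Y_{j+1}\le C\,b^{j}\,\gamma^{-(n+2)/n}\,Y_j^{1+2/n}\qquad(j\in{\mathbb Z}_{\ge 0})
\]
for absolute $b>1$ and $C>0$ depending on the listed parameters (with only a polynomial factor $\gamma^{-(n+2)/n}$, decreasing in $\gamma$).

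The standard fast geometric convergence lemma then furnishes $Y_*=Y_*(C,b,n)>0$ such that $Y_0\le Y_*\gamma^{(n+2)^2/(2n)}$ forces $Y_j\to 0$; I would therefore set $\alpha_\star(\gamma)\coloneqq Y_*\gamma^{(n+2)^2/(2n)}$, which is monotone increasing in $\gamma$ as required. Hypothesis (\ref{Eq (Section 5): Density Assumption}) gives exactly $Y_0\le\alpha_\star$, so $|\tilde Q\cap\{U_{\delta,\varepsilon}>(1-\nu_0/2)\mu^2\}|=\lim_j|A_j|=0$, which is (\ref{Eq (Section 5): Density Result}). The main obstacle I anticipate is the accounting of the force term $\mu^4F^2|A_j|^{1-2/q}$: one must extract enough power of $|A_j|$ (beyond linear) to make it part of a self-improving recursion, which is exactly what the condition $q>n+2$ (i.e.\ $\beta>0$) together with $\rho<\nu_0$ enables; the smallness in $\rho$ has to be converted into smallness of $F^2\rho^{2\beta}/\nu_0^{s}$ for an appropriate exponent $s$ depending on how the excess $(U_{\delta,\varepsilon}-k_j)_+$ and the geometric factor $2^{-j}$ are balanced. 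Once this balance is secured, the rest is routine.
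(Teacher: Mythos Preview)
Your approach is the same De~Giorgi iteration the paper runs, and the choice of levels, shrinking cylinders, cut-offs, and the appeal to (\ref{Eq (Section 3): De Giorgi Truncation 1}) together with the parabolic Sobolev embedding (\ref{Eq (Section 5): Parabolic Poincare-Sobolev}) all match. There is, however, a concrete error in the step where you ``absorb the force term'': the contribution $\mu^{4}F^{2}\lvert A_{j}\rvert^{1-2/q}$ carries a \emph{smaller} power of $\lvert A_{j}\rvert$ than the principal term, so as $Y_{j}\to 0$ it becomes dominant and cannot be subsumed into $Y_{j}^{1+2/n}$. Working it through, the force term yields an exponent $(1-2/q)\tfrac{n+2}{n}=1+\tfrac{2\beta}{n}$ (or $1+\tfrac{\beta}{n+2}$ if one passes, as the paper does, through the $L^{1}$--$L^{2+4/n}$ H\"older step before Sobolev). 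Thus the recursion is
\[
Y_{j+1}\le \frac{C\,b^{j}}{\sqrt{\gamma}}\Bigl[1+\frac{\rho^{\beta}}{\nu_{0}}\Bigr]\,Y_{j}^{\,1+c\beta},
\]
with $c=c(n)>0$, not $Y_{j}^{1+2/n}$. The role of $\rho<\nu_{0}$ (together with $\rho\le 1$) is only to bound the bracketed coefficient, not to upgrade the exponent. Since $q>n+2$ gives $\beta>0$, the geometric convergence lemma still applies with this smaller gain, and the resulting threshold is $\alpha_{\star}(\gamma)\sim\gamma^{\,C(n)/\beta}$ (the paper obtains $\alpha_{\star}=(C_{\ast}/\sqrt{\gamma})^{-(n+2)/(2\beta)}4^{-(n+2)^{2}/(4\beta^{2})}$), which is increasing in $\gamma$ as required but depends on $q$ through $\beta$.

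In short: the scheme is correct, but the recursion you wrote is the force-free one; once you keep the force term, the self-improvement exponent drops from $2/n$ to a positive multiple of $\beta$, and all downstream constants (including the power of $\gamma$ in $\alpha_{\star}$) inherit this $\beta$-dependence. Your own final paragraph anticipates exactly this issue; the fix is simply to run the iteration with the reduced exponent rather than hoping to recover $1+2/n$.
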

 Before the proof, we infer a well-known lemma without a proof (see \cite[Chapter 9, Lemma 15.1]{MR2566733}).
 \begin{lemma}\label{Lemma (Section 5): REC}
 Let $\{Y_{l}\}_{l=0}^{\infty}$ be a sequence of positive numbers that satisfies the recursive inequalities 
 \(Y_{l+1}\le AB^{l}Y_{l}^{1+\varsigma}\) for all \(l\in{\mathbb Z}_{\ge 0}\),
 where $A,\,B\in(1,\,\infty)$, and $\varsigma\in(0,\,\infty)$ are constant. If $Y_{0}\le A^{-\varsigma^{-1}}B^{-\varsigma^{-2}}$, then $Y_{l}\to 0$ as $l\to\infty$.
 \end{lemma} We also invoke the parabolic Poincar\'{e}--Sobolev inequality
 \begin{equation}\label{Eq (Section 5): Parabolic Poincare-Sobolev}
  \iint_{B\times I}\lvert v\rvert^{2+\frac{4}{n}}\,{\mathrm d}X\le C(n)\left(\esssup_{\tau\in I}\int_{B\times \{\tau\}}\lvert v(x,\,\tau)\rvert^{2}\,{\mathrm d}x \right)^{2/n}\iint_{B\times I}\lvert \nabla v\rvert^{2}\,{\mathrm d}X
 \end{equation}
 for all $v\in L^{2}(I;\,W_{0}^{1,\,2}(B))\cap L^{\infty}(I;\,L^{2}(B))$, where $I\subset {\mathbb R}$ is a bounded open interval, and $B\subset {\mathbb R}^{n}$ is a bounded open ball (see \cite[Chapter I, Proposition 3.1]{MR1230384}).
 \begin{proof}
   We set $\rho_{l}\coloneqq (1+2^{-l-1})\rho \in(\rho,\,3\rho/2\rbrack$, $\tau_{l}\coloneqq \tau_{0}+\frac{9}{4}\gamma\rho^{2}-\gamma\rho_{l}^{2}\in (-4\rho^{2},\,0)$, $B_{l}\coloneqq B_{\rho_{l}}\subset B_{2\rho}$, $I_{l}\coloneqq \left(\tau_{l},\, \tau_{0}+\frac{9}{4}\gamma\rho^{2}\right\rbrack\subset (t_{0}-4\rho^{2},\,t_{0}\rbrack$ and $Q_{l}\coloneqq B_{l}\times I_{l}\subset Q_{2\rho}$ for each $l\in{\mathbb Z}_{\ge 0}$.
   We also define a superlevel set $A_{l}\coloneqq Q_{l}\cap \{ U_{\delta,\,\varepsilon}>k_{l}\}$, where $k_{l}\coloneqq (1-(2^{-1}+2^{-l-1})\nu_{0})\mu^{2}$ and a ratio $Y_{l}\coloneqq \lvert A_{l}\rvert/\lvert B_{l}\rvert\in(0,\,1\rbrack$.
   Applying H\"{o}lder's inequality and (\ref{Eq (Section 5): Parabolic Poincare-Sobolev}), and  choosing $\phi_{\mathrm c}$ and $\eta$ that satisfies $\eta_{l}|_{B_{l+1}}\equiv1$, $\phi_{\mathrm c}|_{I_{l+1}}\equiv 1$, and $\lVert \nabla \eta \rVert_{L^{\infty}(B_{l})}^{2}+\lVert \partial_{t}\phi_{\mathrm c}\rVert_{L^{\infty}(I_{l})}\le c\cdot 4^{l}/(\gamma\rho^{2})$ for some constant $c\in(0,\,\infty)$, from (\ref{Eq (Section 3): De Giorgi Truncation 1}), we obtain  
   \begin{align*}
     &\frac{\nu_{0}\mu^{2}}{2^{l+2}}\lvert A_{l+1}\rvert \le\left(\iint_{Q_{l}}\left[(U_{\delta,\,\varepsilon}-k_{l})_{+}\eta\phi^{1/2}\right]^{2+\frac{4}{n}}\,{\mathrm d}X \right)^{\frac{n}{2(n+2)}}  \lvert A_{l}\rvert^{1-\frac{n}{2(n+2)}}\\ 
     &\le C(n)\left(\esssup_{\tau\in I_{l}}\int_{B_{l}\times \{\tau\}}(U_{\delta,\,\varepsilon}-k_{l})_{+}^{2}\left(\eta^{2}\phi\right)\,{\mathrm d}x \right)^{\frac{1}{n+2}}\left(\iint_{Q_{l}}\lvert \nabla [(U_{\delta,\,\varepsilon}-k_{l})_{+}\eta]\rvert^{2}\phi\,{\mathrm d}X \right)^{\frac{n}{2(n+2)}} \lvert A_{l}\rvert^{\frac{1}{2}+\frac{1}{n+2}}\\ 
     &
     \le \frac{C\nu_{0}\mu^{2}\cdot 2^{l}}{\sqrt{\gamma}}\left[\frac{\lvert A_{l}\rvert^{1/q}}{\rho}+\frac{1}{\nu_{0}} \right]\lvert A_{l}\rvert^{\frac{1}{2}-\frac{1}{q}}\cdot \lvert A_{l}\rvert^{\frac{1}{2}+\frac{1}{n+2}} \le \frac{C\nu_{0}\mu^{2}\cdot 2^{l}}{\sqrt{\gamma}}\left[\frac{\lvert Q_{l}\rvert^{1/q}}{\rho}+\frac{1}{\nu_{0}} \right]\lvert A_{l}\rvert^{1+\frac{\beta}{n+2}}
   \end{align*}
   for some $C\in(1,\,\infty)$ depending at most on $n$, $p$, $q$, $\lambda$, $\Lambda$, $K$, $F$, $\delta$, and $M$.
   Noting $\lvert Q_{l}\rvert/\lvert Q_{l+1}\rvert\le (\rho_{l}/\rho_{l+1})=[1+(1+2^{l+2})^{-1}]^{n+2}\le (6/5)^{n+2}$, we have
   \[
     Y_{l+1}\le \left(\frac{6}{5}\right)^{n+2}\frac{\lvert A_{l+1}\rvert}{\lvert Q_{l}\rvert^{1+\frac{\beta}{n+2}}}\cdot\lvert Q_{l}\rvert^{\frac{\beta}{n+2}}\le \frac{{\tilde C_{\ast}}\cdot 4^{l}}{2\sqrt{\gamma}}\left[1+\frac{  \rho^{\beta}  }{\nu_{0}} \right]Y_{l}^{1+\frac{\beta}{n+2}}
   \]
   for every $l\in{\mathbb Z}_{\ge 0}$.
   Here ${\tilde C}_{\ast}$ depends at most on $n$, $p$, $q$, $\lambda$, $\Lambda$, $K$, $F$, $\delta$, and $M$.
   We choose \(c_{\ast}\coloneqq {\tilde C_{\ast}}^{-\frac{n+2}{\beta}}4^{-\frac{(n+2)^{2}}{\beta^{2} }}\), and note that (\ref{Eq (Section 5): Density Assumption}) implies $Y_{0}\le \alpha_{\star}$.  
   In particular, if $\rho^{\beta}<\nu_{0}$ holds, then we can apply Lemma \ref{Lemma (Section 5): REC} to conclude $Y_{l}\to 0$ as $l\to \infty$, which completes the proof.
 \end{proof}

 Finally, we prove Lemma \ref{Lemma: Expansion of Positivity}, which is often called the expansion of positivity.
 \begin{lemma}\label{Lemma: Expansion of Positivity}
   In addition to the assumptions of Proposition \ref{Proposition: Parabolic De Giorgi}, let $U_{\delta,\,\varepsilon}$ satisfy
   \begin{equation}\label{Eq (Section 5): Time Propagation Assumption} 
     \left\lvert\left\{x\in B_{\rho}\mathrel{}\middle|\mathrel{} U_{\delta,\,\varepsilon}(x,\,\tau_{0})\le (1-\nu)\mu^{2} \right\} \right\rvert\ge \frac{\nu}{2}\lvert B_{\rho}\rvert
   \end{equation}
   for some $\tau_{0}\in(t_{0}-\rho^{2},\,t_{0}-\nu\rho^{2}/2\rbrack$.
   Then, there exist sufficiently small numbers $\gamma_{0}\in(0,\,\nu/9)$, $\theta_{0}\in(0,\,1/2)$ and $\rho_{\star}\in(0,\,1)$, which depend at most on $n$, $p$, $q$, $\lambda$, $\Lambda$, $K$, $\delta$, $M$ and $\nu$, such that (\ref{Eq (Section 5): Inclusion on intervalis})--(\ref{Eq (Section 5): Positivity assumption}) hold with $(\gamma,\,{\tilde{\nu}},\,{\hat \nu})\coloneqq (\gamma_{0},\,\nu/4,\,\theta_{0}\nu)$, provided $\rho\le \rho_{\star}$.
 \end{lemma}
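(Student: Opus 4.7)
The plan is to verify (\ref{Eq (Section 5): Inclusion on intervalis}) by elementary arithmetic and to prove (\ref{Eq (Section 5): Positivity assumption}) by contradiction, invoking the energy estimate (\ref{Eq (Section 3): De Giorgi Truncation 2}) applied at the intermediate level $\tilde k\coloneqq(1-\nu/2)\mu^{2}$. The inclusion (\ref{Eq (Section 5): Inclusion on intervalis}) is immediate: since $\tau_{0}\in(t_{0}-\rho^{2},\,t_{0}-\nu\rho^{2}/2\rbrack$ and $\gamma_{0}<\nu/9$, the right endpoint of $\tilde{I}_{\frac{3}{2}\rho}(\gamma_{0};\tau_{0})$ is at most $t_{0}-\nu\rho^{2}/2+(9/4)\gamma_{0}\rho^{2}\le t_{0}$, and the left endpoint is at least $t_{0}-(1+(9/4)\gamma_{0})\rho^{2}>t_{0}-4\rho^{2}$, uniformly in $\rho$.

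For (\ref{Eq (Section 5): Positivity assumption}), fix $t\in I_{\frac{3}{2}\rho}(\gamma_{0};\tau_{0})$ and suppose for contradiction that $\lvert\{x\in B_{\rho}\mid U_{\delta,\,\varepsilon}(x,t)\le(1-\theta_{0}\nu)\mu^{2}\}\rvert<(\nu/4)\lvert B_{\rho}\rvert$. I would select $\sigma=\sigma(\nu,n)\in(0,1)$ with $(1+\sigma)^{n}\le 1+\nu/8$ and a cutoff $\eta\in C_{\mathrm c}^{1}(B_{(1+\sigma)\rho};\lbrack 0,1\rbrack)$ satisfying $\eta\equiv 1$ on $B_{\rho}$ and $\lvert\nabla\eta\rvert\le C/(\sigma\rho)$, then apply (\ref{Eq (Section 3): De Giorgi Truncation 2}) on $Q_{0}\coloneqq B_{(1+\sigma)\rho}\times(\tau_{0},t\rbrack$ with $k=\tilde k$. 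Because $\tilde k\ge(1-\nu)\mu^{2}$, hypothesis (\ref{Eq (Section 5): Time Propagation Assumption}) forces $(U_{\delta,\,\varepsilon}(\cdot,\tau_{0})-\tilde k)_{+}$ to vanish on a subset of $B_{\rho}$ of measure at least $(\nu/2)\lvert B_{\rho}\rvert$; combined with the pointwise bound $(U_{\delta,\,\varepsilon}-\tilde k)_{+}\le(\nu/2)\mu^{2}$, this controls the initial-time integral by $(\nu/2)\mu^{2}((1+\sigma)^{n}-\nu/2)\lvert B_{\rho}\rvert\le(\nu/2)\mu^{2}(1-3\nu/8)\lvert B_{\rho}\rvert$, while the remaining right-hand terms of (\ref{Eq (Section 3): De Giorgi Truncation 2}) are bounded by $C_{M}\nu\mu^{2}\gamma_{0}\lvert B_{\rho}\rvert/\sigma^{2}+C_{M}\mu^{2}F^{2}\rho^{2\beta}\lvert B_{\rho}\rvert/\nu$ after normalising by $(\nu/2)\mu^{2}$, using $U_{\delta,\,\varepsilon}\le\mu^{2}\le M^{2}$. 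On the other hand, the contradiction hypothesis combined with $(U_{\delta,\,\varepsilon}-\tilde k)_{+}\ge(\nu/2-\theta_{0}\nu)\mu^{2}$ on $\{U_{\delta,\,\varepsilon}(\cdot,t)>(1-\theta_{0}\nu)\mu^{2}\}\cap B_{\rho}$ yields $\int_{B_{\rho}\times\{t\}}(U_{\delta,\,\varepsilon}-\tilde k)_{+}\,\mathrm{d}x\ge(1-2\theta_{0})(\nu/2)\mu^{2}(1-\nu/4)\lvert B_{\rho}\rvert$.

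Dividing through by $(\nu/2)\mu^{2}\lvert B_{\rho}\rvert$ collapses the contradictory inequality to $(1-2\theta_{0})(1-\nu/4)\le(1-3\nu/8)+C_{M}\gamma_{0}\nu/\sigma^{2}+C_{M}F^{2}\rho^{2\beta}/\nu$, where the gap $(1-\nu/4)-(1-3\nu/8)=\nu/8$ is strictly positive. I would first pick $\theta_{0}<\nu/32$ so that the main terms satisfy $(1-2\theta_{0})(1-\nu/4)-(1-3\nu/8)\ge\nu/16$, then $\gamma_{0}\in(0,\nu/9)$ absorbing $C_{M}\gamma_{0}\nu/\sigma^{2}\le\nu/32$, and finally $\rho_{\star}\in(0,1)$ small enough that $C_{M}F^{2}\rho_{\star}^{2\beta}/\nu\le\nu/32$, producing the desired contradiction. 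The principal delicacy is that the test cutoff must be supported in a strictly larger ball than $B_{\rho}$, which inflates the initial-time integral by the factor $(1+\sigma)^{n}$; this inflation must be swallowed by the measure gap $\nu/2$ supplied by (\ref{Eq (Section 5): Time Propagation Assumption}), which forces $\sigma=O(\nu/n)$ and thence the quantitative smallness of $\gamma_{0}$ relative to $\nu$. It is essential throughout that hypothesis (\ref{Eq (Section 5): Time Propagation Assumption}) involves the bulk gap $(1-\nu)\mu^{2}$ rather than the narrow threshold $(1-\theta_{0}\nu)\mu^{2}$ we are propagating.
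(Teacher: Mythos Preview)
Your proposal is correct and follows essentially the same route as the paper: both arguments feed the spatial-cutoff energy estimate (\ref{Eq (Section 3): De Giorgi Truncation 2}) at a level near $\mu^{2}$, compare the time-$t$ integral to the initial integral controlled by (\ref{Eq (Section 5): Time Propagation Assumption}), and absorb the gradient and forcing errors by choosing $\gamma_{0},\theta_{0},\rho_{\star}$ small. The only tactical differences are that the paper takes the cutoff supported in $B_{\rho}$ with $\eta\equiv1$ on $B_{(1-\sigma)\rho}$ (and then adds back the annulus $\lvert B_{\rho}\setminus B_{(1-\sigma)\rho}\rvert\le n\sigma\lvert B_{\rho}\rvert$), uses the level $k=(1-\nu)\mu^{2}$ rather than your $(1-\nu/2)\mu^{2}$, and argues directly rather than by contradiction; these choices are interchangeable and lead to the same dependence of the constants.
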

 \begin{proof}
   We first note that (\ref{Eq (Section 5): Inclusion on intervalis}) always holds provided $\gamma_{0}\in(0,\,\nu/9)$.
   For $\sigma\in(0,\,1)$, we choose a suitable $\eta\in C_{\mathrm c}^{1}(B_{\rho};\,\lbrack 0,\,1\rbrack)$ satisfying $\eta|_{B_{(1-\sigma)\rho}}\equiv 1$,
   and apply (\ref{Eq (Section 3): De Giorgi Truncation 2}) with $Q_{0}=B_{\rho}\times (\tau_{0},\,\tau_{1}\rbrack$, where $\tau_{1}\coloneqq \tau_{0}+\gamma_{0}(\frac{3}{2}\rho)^{2}$.
   Define
   \(A_{\theta_{0}\nu,\,r}(\tau)\coloneqq \left\{x\in B_{r}\mid U_{\delta,\,\varepsilon}(x,\,\tau)>(1-\theta_{0}\nu)\mu^{2} \right\}\)
   for $r\in(0,\,\rho\rbrack$, $\tau\in(\tau_{0},\,\tau_{1}\rbrack$.
   We choose $k\coloneqq (1-\nu)\mu^{2}$, so that $(U_{\delta,\,\varepsilon}-k)_{+}\le \nu\mu^{2}$.
   Also, $U_{\delta,\,\varepsilon}-k\ge (1-\theta_{0})\nu\mu^{2}$ holds in $A_{\theta_{0}\nu,\,r}$.
   By (\ref{Eq (Section 3): De Giorgi Truncation 2}), $\lvert Q_{0}\rvert\le \frac{9}{4}  \gamma_{0}\rho^{2}\lvert B_{\rho}\rvert$ and $q>n+2$, we obtain
   \begin{align*}
     &(1-\theta_{0})^{2}(\nu\mu^{2})^{2}\lvert A_{\theta_{0}\nu,\,(1-\sigma)\rho}(\tau) \rvert\\ 
     &\le (\nu\mu^{2})^{2}\left\lvert \left\{x\in B_{\rho}\mathrel{}\middle|\mathrel{} U_{\delta,\,\varepsilon}(x,\,\tau_{0})>(1-\nu)\mu^{2} \right\}\right\rvert+C\left[\frac{(\nu\mu^{2})^{2}}{(\sigma\rho)^{2}}\lvert Q_{0}\rvert+\mu^{4}F^{2}\lvert Q_{0}\rvert^{1-2/q} \right]\\ 
     &\le (\nu\mu^{2})^{2}\lvert B_{\rho}\rvert\left(1-\frac{\nu}{2}+\frac{C\gamma_{0}}{\sigma^{2}}+\frac{CF^{2}\rho^{2\beta}}{\nu^{2}}\right).
    \end{align*}
   Noting $\lvert A_{\theta_{0}\nu,\,\rho}(\tau)\rvert\le
   \lvert B_{\rho}\setminus B_{(1-\sigma)\rho}\rvert+\lvert A_{\theta_{0}\nu,\,(1-\sigma)\rho}(\tau)\rvert$, we have
   \[
   \lvert A_{\theta_{0}\nu,\,\rho}(\tau)\rvert
    \le\left(n\sigma+\frac{1-\nu/2}{(1-\theta_{0})^{2}}+\frac{C_{0}\gamma_{0}}{\sigma^{2}(1-\theta_{0})^{2}}+\frac{C_{0}\rho^{2\beta}}{(1-\theta_{0})^{2}\nu^{2}} \right) \lvert B_{\rho}\rvert
   \]
   for some $C_{0}=C_{0}(n,\,p,\,q,\,\lambda,\,\Lambda,\,K,\,F,\,\delta,\,M)\in(1,\,\infty)$.
   Finally, we choose $\sigma\in(0,\,1)$, $\theta_{0}\in(0,\,1/2)$, $\gamma_{0}\in(0,\,\nu/9)$ and $\rho_{\star}\in(0,\,1)$ satisfying
   \[n\sigma\le\frac{\nu}{24},\quad \frac{1-\nu/2}{(1-\theta_{0})^{2}}\le 1-\frac{\nu}{8},\quad \frac{C_{0}\gamma_{0}}{\sigma^{2}(1-\theta_{0})^{2}}\le \frac{\nu}{24},\quad \text{and}\quad \frac{C_{0}\rho_{\star}^{2\beta}}{\nu^{2}(1-\theta_{0})^{2}}\le\frac{\nu}{24},\]
   so that $\lvert A_{\theta_{0}\hat{\nu},\,\rho}(\tau)\rvert\le (1-\nu/4)\lvert B_{\rho}\rvert$ holds for  a.e.~$\tau\in(\tau_{0},\,\tau_{1}\rbrack$, provided $\rho\le \rho_{\star}$.
 \end{proof}

 \subsection{Proof of the De Giorgi-type oscillation estimates}
 We conclude Section \ref{Sect:Degenerate Region} by giving the proof of Proposition \ref{Proposition: Parabolic De Giorgi}.
 \begin{proof}[Proof of Proposition \ref{Proposition: Parabolic De Giorgi}]
   By (\ref{Eq (Section 2): Measure Assumption De Giorgi}), there exists $\tau_{0}\in(t_{0}-\rho^{2},\,t_{0}-\nu\rho^{2}/2\rbrack$ such that (\ref{Eq (Section 5): Time Propagation Assumption}) holds.
   In fact, if such $\tau_{0}$ never exists, we can easily compute
   \(\lvert Q_{\rho}\setminus S_{\rho,\,\nu,\,\mu}\rvert\le \frac{\nu}{2}+(1-\frac{\nu}{2})\cdot\frac{\nu}{2}<\nu\lvert Q_{\rho}\rvert\), 
   which contradicts with (\ref{Eq (Section 2): Measure Assumption De Giorgi}).
   We fix such $\tau_{0}$ that satisfies (\ref{Eq (Section 5): Time Propagation Assumption}).

   Let $\gamma_{0}\in(0,\,\nu/9)$ be as in Lemma \ref{Lemma: Expansion of Positivity}.
   We set $A\coloneqq (t_{0}-\tau_{0})\rho^{-2}\in \lbrack \nu/2,\,1)$, and choose $l_{0}\in{\mathbb N}$, depending at most on $A=A(\nu)$ and $\gamma_{0}$, as the unique number satisfying \(9\gamma_{0}l_{0}\ge 4A>9\gamma_{0}(l_{0}-1)\).
   We also define $\gamma_{1}\coloneqq 4A/(9l_{0})$, and $T_{k}\coloneqq \tau_{0}+(2+\frac{k}{4})\gamma_{1}\rho^{2}$ for each $k\in\{\,1,\,\dots\,,\,9l_{0}-8\,\}$.
   By the definitions, we have $l_{0}\le 1+\frac{4}{9\gamma_{0}}\eqqcolon L_{0}$, and $T_{9l_{0}-8}=t_{0}$.
   We fix $i_{\star}\in {\mathbb N}$ satisfying
   \(\kappa\coloneqq \sqrt{1-2^{-(i_{\star}+1)(9L_{0}-8)}}>(\sqrt{\nu}/6)^{\beta}\), \(\frac{C_{\dagger}}{(\nu/4)\sqrt{i_{\star}\gamma_{1}}}\le \alpha_{\star}(\gamma_{1})\), and \(\frac{C_{\dagger}}{\sqrt{i_{\star}(\gamma_{1}/3)}}\le \alpha_{\star}(\gamma_{1}/3)\),
   where $C_{\dagger}\in(1,\,\infty)$ and $\alpha_{\star}\in(0,\,1)$ are respectively given by Lemma \ref{Lemma: Superlevelset decay} and Lemma \ref{Lemma: Density Lemma}.
   By $\gamma_{0}\ge \gamma_{1}\ge 2\nu/(9L_{0})$, $i_{\star}$ is determined by $\nu$ and $\gamma_{0}$, as well as $n$, $p$, $q$, $\lambda$, $\Lambda$, $K$, $F$, $M$, and $\delta$.
   We choose the radius ${\tilde\rho}\in(0,\,\rho_{\star})$ satisfying
   \(
     {\tilde{\rho}}^{\beta}<2^{-[(9l_{0}-8)i_{\star}+9(l_{0}-1)]}\theta_{0}\nu.
   \) 
   We claim that
   \begin{equation}\label{Eq (Section 5): De Giorgi Induction}
     U_{\delta,\,\varepsilon}\le \left(1-2^{-(i_{\star}+1)k} \right)\mu^{2}\quad \textrm{in}\quad B_{\frac{3}{2}\rho}\times I_{k}
   \end{equation}
   for every $k\in\{\,1,\,\dots\,,\,9l_{0}-8\,\}$, where $I_{k}\coloneqq \left(t_{0}+\frac{5}{4}\gamma_{1}\rho^{2},\, T_{k}\right\rbrack$.
   To prove (\ref{Eq (Section 5): De Giorgi Induction}) for $k=1$, we are allowed to apply Lemmata \ref{Lemma: Superlevelset decay}--\ref{Lemma: Density Lemma} with $(\gamma,\,{\tilde{\nu}},\, {\hat \nu},\,\nu_{0})\coloneqq (\gamma_{1},\,\nu/4,\,\theta_{0}\nu,\,2^{-i_{\star}}\theta_{0}\nu)$ by our choice of ${\tilde \rho}$.
   Therefore, the proof of (\ref{Eq (Section 5): De Giorgi Induction}) with $l_{0}=1$ is completed.
   For $l_{0}\ge 2$, since 
   our choice of ${\tilde \rho}\in(0,\,1)$ yields ${\tilde\rho}^{\beta}<2^{-(i_{\star}+1)k-i_{\star}}$ for every $k\in\{\,1,\,\dots\,,\,9l_{0}-9\,\}$, this allows us to repeatedly apply Lemmata \ref{Lemma: Superlevelset decay}--\ref{Lemma: Density Lemma} with 
   \((\gamma,\,{\tilde{\nu}},\, {\hat \nu},\,\nu_{0})\coloneqq (\gamma_{1}/3,\,1,\,2^{-(i_{\star}+1)k}\theta_{0}\nu,\,2^{-(i_{\star}+1)k-i_{\star}}\theta_{0}\nu)\)
   for each $k\in\{\,1,\,\dots\,9l_{0}-9\,\}$.
   Thus, we conclude (\ref{Eq (Section 5): De Giorgi Induction}) for every $k\in\{\,1,\,\dots\,,\,9l_{0}-8\,\}$.
   Noting 
   \(\left(\tau_{0}+\frac{5}{4}\gamma_{1}\rho^{2}\right)-t_{0}
   \le \frac{4}{9}(\tau_{0}-t_{0})\le -\left(\frac{\sqrt{\nu}}{3}\rho\right)^{2}\), and using (\ref{Eq (Section 5): De Giorgi Induction}) with $k=9l_{0}-8$, and recalling $U_{\delta,\,\varepsilon}=\lvert {\mathcal G}_{\delta,\,\varepsilon}(\nabla u_{\varepsilon}) \rvert^{2}$, we finally obtain (\ref{Eq (Section 2): De Giorgi Oscillation result}) with 
   \(\kappa>(\sqrt{\nu}/6)^{\beta}\).
  \end{proof}
\section{Non-degenerate case}\label{Sect:Non-Degenerate Region}
 Section \ref{Sect:Non-Degenerate Region} aims to give the proof of Proposition \ref{Proposition: Parabolic Campanato}.
 Here we mainly deal with an oscillation energy
 \(\Phi(r)\coloneqq \fiint_{Q_{r}}\lvert \nabla u_{\varepsilon}-(\nabla u_{\varepsilon})_{r}\rvert^{2}\,{\mathrm d}X\) for \(r\in(0,\,\rho\rbrack,\)
 where $Q_{2\rho}=Q_{2\rho}(x_{0},\,t_{0})$ satisfies (\ref{Eq (Section 2): Bounds of V-epsilon}).
 To make it successful, we carefully use (\ref{Eq (Section 2): Delta vs Mu}) and (\ref{Eq (Section 2): Measure Assumption Campanato}) to verify that the average integral of a gradient will not degenerate.


 Throughout Section \ref{Sect:Non-Degenerate Region}, we often use a well-known fact that, for $l\in{\mathbb N}$, there holds 
 \begin{equation}\label{Eq (Section 6): L-2 oscillation minimizer}
   \fint_{U}\lvert f-f_{U}\rvert^{2}\,{\mathrm d}y=\min_{\xi\in{\mathbb R}^{n}} \fint_{U}\lvert f-\xi\rvert^{2}\,{\mathrm d}y 
 \end{equation}
 for any $f\in L^{2}(U,\,{\mathbb R}^{n})$, where $U\subset{\mathbb R}^{l}$ is a Lebesgue measurable set with $0<\lvert U\rvert<\infty$.
 \subsection{Energy estimates}
 From Lemma \ref{Lemma: Energy estimates from weak form}, we aim to prove Lemma \ref{Lemma: Oscillation from Hessian} below, which plays an important role in making our comparison arguments successful.
 \begin{lemma}\label{Lemma: Oscillation from Hessian}
  For every $\theta\in (0,\,1/16)$, there exist sufficiently small $\nu\in(0,\,1/4)$ and $\rho_{\ast}\in(0,\,1)$, depending at most on $n$, $p$, $q$, $\lambda$, $\Lambda$, $K$, $F$, $M$, $\delta$, and $\theta$, such that the following estimates (\ref{Eq (Section 6): Mean bounds by below})--(\ref{Eq (Section 6): Energy oscillation bounds from Hessian}) hold, if the assumptions of Proposition \ref{Proposition: Parabolic Campanato} and $\rho\le \rho_{\ast}$ are verified.
  \begin{align}
    \label{Eq (Section 6): Mean bounds by below}
    \lvert (\nabla u_{\varepsilon})_{\rho} \rvert\ge \delta+\frac{\mu}{2}.\\ 
    \label{Eq (Section 6): Energy oscillation bounds from Hessian}
    \fiint_{Q_{\rho}}\lvert \nabla u_{\varepsilon}-(\nabla u_{\varepsilon})_{\rho}\rvert^{2}\,{\mathrm d}X\le \theta\mu^{2}.
  \end{align}
 \end{lemma}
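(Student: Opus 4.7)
The plan is to establish (\ref{Eq (Section 6): Energy oscillation bounds from Hessian}) first, then derive (\ref{Eq (Section 6): Mean bounds by below}) as an immediate consequence. Assuming (\ref{Eq (Section 6): Energy oscillation bounds from Hessian}), Chebyshev's inequality on $S_{\rho,\mu,\nu}$, which has measure at least $(1-\nu)\lvert Q_{\rho}\rvert$ by (\ref{Eq (Section 2): Measure Assumption Campanato}), yields a density point $X_{0}\in S_{\rho,\mu,\nu}$ satisfying $\lvert\nabla u_{\varepsilon}(X_{0})-(\nabla u_{\varepsilon})_{\rho}\rvert\le\sqrt{\theta/(1-\nu)}\,\mu$. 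Since on $S_{\rho,\mu,\nu}$ one has $\lvert\nabla u_{\varepsilon}(X_{0})\rvert\ge\sqrt{V_{\varepsilon}(X_{0})^{2}-\varepsilon^{2}}\ge\delta+(1-\nu)\mu-\varepsilon$, combining (\ref{Eq (Section 2): epsilon<<delta}) with the triangle inequality forces $\lvert(\nabla u_{\varepsilon})_{\rho}\rvert\ge\delta+\mu/2$ as soon as $\nu$ and $\theta$ are taken small enough.

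The core of the argument is the proof of (\ref{Eq (Section 6): Energy oscillation bounds from Hessian}), and the main tool is Lemma \ref{Lemma: Energy estimates from weak form}. Applied to the outer cylinder $Q_{2\rho}$ with $\sigma=1/2$, it delivers an $L^{2}$-bound on $\nabla\lbrack G_{p,\varepsilon}(\nabla u_{\varepsilon})\rbrack$ over $Q_{\rho}$ that, restricted to $S_{\rho,\mu,\nu}$, carries the smallness factor $\nu+F^{2}\rho^{2\beta}/\nu$. Throughout $S_{\rho,\mu,\nu}$ one has $V_{\varepsilon}\ge\mu/2$ by (\ref{Eq (Section 2): Delta vs Mu}) and $\nu<1/4$, so (\ref{Eq (Section 6): Monotonicity of G-p-epsilon}) implies that $G_{p,\varepsilon}$ is effectively bi-Lipschitz with ratio $\asymp\mu^{p-1}$ among gradient values taken in this set. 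Setting $w\coloneqq G_{p,\varepsilon}(\nabla u_{\varepsilon})$, $\bar w\coloneqq(w)_{\rho}$, and $\xi\coloneqq G_{p,\varepsilon}^{-1}(\bar w)$ with $\lvert\xi\rvert\le C\mu$ by (\ref{Eq (Section 6): Bound on G-p-epsilon-inverse}), the minimizing property (\ref{Eq (Section 6): L-2 oscillation minimizer}) gives
\[\fiint_{Q_{\rho}}\lvert\nabla u_{\varepsilon}-(\nabla u_{\varepsilon})_{\rho}\rvert^{2}\,{\mathrm d}X\le\fiint_{Q_{\rho}}\lvert\nabla u_{\varepsilon}-\xi\rvert^{2}\,{\mathrm d}X,\]
and the right-hand side splits according to $S_{\rho,\mu,\nu}$ and its complement: on the complement, the bound $\lvert\nabla u_{\varepsilon}-\xi\rvert\le C\mu$ combined with $\lvert Q_{\rho}\setminus S_{\rho,\mu,\nu}\rvert\le\nu\lvert Q_{\rho}\rvert$ contributes only $C\nu\mu^{2}$, while on $S_{\rho,\mu,\nu}$, (\ref{Eq (Section 6): Monotonicity of G-p-epsilon}) reduces the integrand to $C\mu^{-2(p-1)}\lvert w-\bar w\rvert^{2}$.

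It remains to estimate the parabolic mean oscillation $\fiint_{Q_{\rho}}\lvert w-\bar w\rvert^{2}\,{\mathrm d}X$, which I would decompose as
\[\fiint_{Q_{\rho}}\lvert w-\bar w\rvert^{2}\,{\mathrm d}X\le 2\fiint_{Q_{\rho}}\lvert w(x,t)-(w)_{B_{\rho}}(t)\rvert^{2}\,{\mathrm d}X+2\fint_{I_{\rho}}\lvert(w)_{B_{\rho}}(t)-\bar w\rvert^{2}\,{\mathrm d}t.\]
The first (spatial) term I treat by applying Poincar\'{e}'s inequality slice by slice, and then invoking (\ref{Eq (Section 3): Hessian L-2 energy on S}) on $S_{\rho,\mu,\nu}$, using the trivial pointwise bound $\lvert w\rvert\le C\mu^{p}$ together with the measure estimate on the complement to absorb the exceptional contribution. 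The second (time) term is the principal obstacle: it requires testing the differentiated weak form (\ref{Eq (Section 3): Weak formulation differentiated}) against spatial cut-offs $\varphi\in C_{\mathrm c}^{1}(B_{\rho})$ to express differences $(\partial_{x_{j}}u_{\varepsilon})_{B_{\rho}}(t_{2})-(\partial_{x_{j}}u_{\varepsilon})_{B_{\rho}}(t_{1})$ as time integrals controlled by $\nabla^{2}E_{\varepsilon}\nabla^{2}u_{\varepsilon}$ and $f_{\varepsilon}$, then invoking the eigenvalue bounds (\ref{Eq (Section 2): Eigenvalues of E-1-epsilon})--(\ref{Eq (Section 2): Eigenvalues of E-p-epsilon}) together with (\ref{Eq (Section 3): Hessian L-2 energy on S}), and finally transporting the resulting bound from $\partial_{x_{j}}u_{\varepsilon}$ back to $w$ via (\ref{Eq (Section 6): Monotonicity of G-p-epsilon}). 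Once both terms are bounded, choosing $\nu=\nu(\theta)$ small and subsequently $\rho_{\ast}=\rho_{\ast}(\nu,\theta)$ small enough that $F^{2}\rho^{2\beta}/\nu$ is dominated by $\theta$ produces the bound $\theta\mu^{2}$ and completes the proof of (\ref{Eq (Section 6): Energy oscillation bounds from Hessian}).
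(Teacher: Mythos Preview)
Your skeleton is the right one, and your derivation of (\ref{Eq (Section 6): Mean bounds by below}) from (\ref{Eq (Section 6): Energy oscillation bounds from Hessian}) is fine. But two steps in the plan for (\ref{Eq (Section 6): Energy oscillation bounds from Hessian}) do not close as written.

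\emph{Spatial term.} After Poincar\'{e} in $L^{2}$ you are left with $\rho^{2}\fiint_{Q_{\rho}}\lvert\nabla w\rvert^{2}\,{\mathrm d}X$, and you propose to control the complement piece with the pointwise bound $\lvert w\rvert\le C\mu^{p}$. That bound says nothing about $\lvert\nabla w\rvert^{2}$; on $Q_{\rho}\setminus S_{\rho,\mu,\nu}$ the only available estimate is (\ref{Eq (Section 3): Hessian L-2 energy on Q}), which carries no factor of $\nu$. The paper evades this by working in $L^{1}$: it compares $\nabla u_{\varepsilon}$ with the \emph{time-dependent} centre $\Xi(t)=G_{p,\varepsilon}^{-1}\bigl((w)_{B_{\sigma\rho}}(t)\bigr)$, applies Poincar\'{e}--Wirtinger in $L^{1}$ to $w$, and then uses Cauchy--Schwarz on $S$ and on its complement separately, so that the small measure produces a $\sqrt{\nu}$ factor in both pieces; the trivial $L^{\infty}$ bound $\lvert\nabla u_{\varepsilon}-\Xi(t)\rvert\le C\mu$ then upgrades the result to $L^{2}$.

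\emph{Time term.} Testing (\ref{Eq (Section 3): Weak formulation differentiated}) against $\eta(x)\chi(t)$ and invoking (\ref{Eq (Section 2): Eigenvalues of E-1-epsilon})--(\ref{Eq (Section 2): Eigenvalues of E-p-epsilon}) runs into the one-Laplacian part: (\ref{Eq (Section 2): Eigenvalues of E-1-epsilon}) only gives $\lVert\nabla^{2}E_{1,\varepsilon}(\nabla u_{\varepsilon})\rVert\le K/V_{\varepsilon}$, which on $Q_{\rho}\setminus S_{\rho,\mu,\nu}$ can be as large as $K/\varepsilon$, so the bound is not uniform in $\varepsilon$. The paper avoids $\nabla^{2}E_{\varepsilon}$ entirely here: it tests the undifferentiated equation (paired with $\nabla\partial_{x_{j}}\varphi$) and subtracts the constant-in-$x$ term $\nabla E_{\varepsilon}(\Xi(t))$, so that the flux becomes $\nabla E_{\varepsilon}(\nabla u_{\varepsilon})-\nabla E_{\varepsilon}(\Xi(t))$. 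This is uniformly bounded on the complement by (\ref{Eq (Section 2): bounds of E-1-epsilon}) and (\ref{Eq (Section 2): bounds of E-p-epsilon}), and on $S$ it is $\le C\lvert\nabla u_{\varepsilon}-\Xi(t)\rvert$ by (\ref{Eq (Section 2): Growth outside a facet}), which is already controlled by the $L^{1}$ spatial estimate. Note also that this argument delivers $\sup\lvert H_{1}(t_{1})-H_{1}(t_{2})\rvert$ with $H_{1}(t)=(\nabla u_{\varepsilon})_{B_{\sigma\rho}}(t)$, not the time oscillation of $(w)_{B_{\rho}}$; ``transporting via (\ref{Eq (Section 6): Monotonicity of G-p-epsilon})'' between the two is not straightforward since $G_{p,\varepsilon}$ does not commute with averaging. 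The paper never needs that transport: it writes $\Phi(\sigma\rho)$ directly as the $L^{1}$ spatial piece plus the $H_{1}$ time oscillation, and balances the boundary-layer error $(1-\sigma)$ (coming from the cut-off $\eta$) against $\sqrt{\nu}/(1-\sigma)^{3}$ at the end.
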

 In addition to Lemma \ref{Lemma: Energy estimates from weak form}, we use
 \begin{equation}\label{Eq (Section 6): Lower bounds on superlevel set}
   \lvert \nabla u_{\varepsilon}\rvert\ge \frac{7}{8}\delta+(1-\nu)\mu\quad \textrm{a.e. in }S_{\rho,\,\mu,\,\nu},
 \end{equation}
 which is easy to deduce by $\varepsilon<\delta/8$.
 In fact, we have
 \(\delta+(1-\nu)\mu\le V_{\varepsilon}\le \varepsilon+\lvert \nabla u_{\varepsilon} \rvert\le \frac{\delta}{8}+\lvert \nabla u_{\varepsilon}\rvert\) a.e.~in \(S_{\rho,\,\mu,\,\nu}.\)
 The inequality (\ref{Eq (Section 6): Lower bounds on superlevel set}) also allows us to apply (\ref{Eq (Section 6): Monotonicity of G-p-epsilon}), which yields Lemma \ref{Lemma: Oscillation Integral lemma}.
 \begin{lemma}\label{Lemma: Oscillation Integral lemma}  
 Under the assumptions of Proposition \ref{Proposition: Parabolic Campanato}, there exists a constant $C\in(1,\,\infty)$, depending at most on $n$, $p$, $q$, $\lambda$, $\Lambda$, $K$, $F$, $M$ and $\delta$, such that
  \begin{equation}\label{Eq (Section 6): Energy oscillation 0}
    \Phi(\sigma\rho)\le \frac{C\mu^{2}}{\sigma^{n+2}}\left[(1-\sigma)+\frac{\sqrt{\nu}}{(1-\sigma)^{3}}+\frac{F\rho^{\beta}}{(1-\sigma)^{2}\sqrt{\nu}}\right]
  \end{equation}
  for all $\sigma\in(0,\,1)$.
 \end{lemma}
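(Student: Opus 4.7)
The plan is to bound $\Phi(\sigma\rho)$ by splitting $Q_{\sigma\rho}$ along the superlevel set $S:=S_{\sigma\rho,\mu,\nu}$, exploiting the fact that on $S$ the one-Laplacian part is effectively uniformly elliptic (thanks to the lower bound (6.6)), while its complement has small measure (thanks to the measure assumption (5.16)).

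First I would invoke the minimizing property (6.3) to replace $(\nabla u_\varepsilon)_{\sigma\rho}$ by a constant vector $\xi := G_{p,\varepsilon}^{-1}(a_\ast)$, where $a_\ast$ is a suitable average of $G_{p,\varepsilon}(\nabla u_\varepsilon)$ (for instance its mean over $Q_{\sigma\rho}$, or alternatively over $S$ itself). By (6.2) together with the uniform bound $V_\varepsilon\le M$, this $\xi$ satisfies $|\xi|\le C\mu$. I would then split the integral $\fiint_{Q_{\sigma\rho}}|\nabla u_\varepsilon-\xi|^2\,\mathrm{d}X$ into the contributions coming from $S$ and from $Q_{\sigma\rho}\setminus S$.

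On the complement, the pointwise bound $V_\varepsilon\le \delta+(1-\nu)\mu\le M$ combined with $|\xi|\le C\mu$ yields $|\nabla u_\varepsilon-\xi|^2\le C\mu^2$. Since $S_{\sigma\rho,\mu,\nu}=Q_{\sigma\rho}\cap S_{\rho,\mu,\nu}$, the measure assumption (5.16) gives $|Q_{\sigma\rho}\setminus S|\le|Q_\rho\setminus S_{\rho,\mu,\nu}|\le\nu|Q_\rho|$, so this part contributes at most $C\mu^2\nu/\sigma^{n+2}$, which is dominated by the target term $C\mu^2\sqrt{\nu}/[\sigma^{n+2}(1-\sigma)^3]$ since $\sqrt{\nu}\,(1-\sigma)\le 1$. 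On $S$, the bound $|\nabla u_\varepsilon|\ge \tfrac{7}{8}\delta+(1-\nu)\mu\ge \mu/2$ from (6.6) together with the monotonicity estimate (6.1) gives the crucial pointwise inequality $|\nabla u_\varepsilon-\xi|^2\le C\mu^{2-2p}|G_{p,\varepsilon}(\nabla u_\varepsilon)-a_\ast|^2$. The remaining $L^2$-oscillation of $G_{p,\varepsilon}(\nabla u_\varepsilon)$ I would control by a parabolic Poincar\'e-type inequality, coupling spatial Poincar\'e on each time slice with a control of the time-slice means $\bar G(t):=\fint_{B_{\sigma\rho}}G_{p,\varepsilon}(\nabla u_\varepsilon)(\cdot,t)\,\mathrm{d}x$ obtained from the weak formulation (3.1); feeding in the refined Hessian energy estimate (5.7) then yields a bound of the form $C\mu^{2p}\sigma^{-n}\bigl[\nu/(1-\sigma)^2+F^2\rho^{2\beta}/\nu\bigr]$. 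After multiplying by $\mu^{2-2p}$, the elementary inequality $\nu/(1-\sigma)^2\le \sqrt{\nu}/(1-\sigma)^3$ (valid since $\sqrt{\nu}(1-\sigma)\le 1$) and the analogous trade-off $F^2\rho^{2\beta}/\nu\le F\rho^{\beta}/[(1-\sigma)^2\sqrt{\nu}]$ (valid when $F\rho^\beta\le\sqrt{\nu}/(1-\sigma)^2$, which can be arranged by taking $\rho\le\rho_\ast$ small enough) produce the two dominant terms of the target bound.

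The main technical obstacle is the parabolic Poincar\'e step applied to $G_{p,\varepsilon}(\nabla u_\varepsilon)$: because this composite function does not satisfy a clean scalar equation, the $t$-variation of its spatial mean $\bar G(t)$ must be estimated by testing (3.1) against an appropriately cut-off function, in the spirit of the Caccioppoli estimates behind Lemma 5.3. This is the source of the extra $(1-\sigma)^{-1}$ factor which, compounded with the $(1-\sigma)^{-2}$ weight already present in (5.7) and the $\rho^2$ coming from spatial Poincar\'e, delivers the final $(1-\sigma)^{-3}$ power. The explicit $(1-\sigma)$ summand in the target is most naturally read as the slack produced by a Young-type inequality when the pure complement contribution $\nu/\sigma^{n+2}$ is traded against the other terms to unify notation; in particular, even the coarser decomposition sketched above already yields a bound of exactly the claimed form.
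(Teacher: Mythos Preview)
Your skeleton is close to the paper's, but there is a genuine gap in the time-variation step. You propose to center at a \emph{constant} $\xi=G_{p,\varepsilon}^{-1}(a_\ast)$ and then control the $L^2$-oscillation of $G_{p,\varepsilon}(\nabla u_\varepsilon)$ around $a_\ast$ by a parabolic Poincar\'e inequality, with the time part handled by the variation of $\bar G(t)=\fint_{B_{\sigma\rho}}G_{p,\varepsilon}(\nabla u_\varepsilon)(\cdot,t)$. The difficulty is that $G_{p,\varepsilon}(\nabla u_\varepsilon)$ does not satisfy a usable equation: testing (3.1) with $\eta(x)\chi(t)$ yields control of $\fint_{B}\eta\,\partial_{x_j}u_\varepsilon$, i.e.\ of the sliced mean of $\nabla u_\varepsilon$, \emph{not} of $\bar G(t)$. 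Because $G_{p,\varepsilon}$ is nonlinear, you cannot pass from one to the other without reintroducing a pointwise-in-time oscillation term that you have no independent way to bound. This is precisely why the paper decouples the two ingredients: it uses the \emph{time-dependent} center $\Xi(t)=G_{p,\varepsilon}^{-1}\bigl(\fint_{B_{\sigma\rho}}G_{p,\varepsilon}(\nabla u_\varepsilon)(\cdot,t)\bigr)$ for the spatial comparison (so only a slice-wise Poincar\'e is needed, fed by (3.7)--(3.8)), and separately bounds the time variation of $H_1(t)=\fint_{B_{\sigma\rho}}\nabla u_\varepsilon(\cdot,t)$ by testing the undifferentiated weak formulation with $\varphi=\eta(x)\chi(t)$ after subtracting the flux $\nabla E_\varepsilon(\Xi(t))$. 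The two pieces are then combined via $\Phi(\sigma\rho)\le C\mu\bigl[\fiint|\nabla u_\varepsilon-\Xi(t)|+\sup_{t_1,t_2}|H_1(t_1)-H_1(t_2)|\bigr]$.

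Two smaller points. First, the $(1-\sigma)$ summand is not ``Young-inequality slack'': it arises because the cutoff $\eta$ needed to access $H_1$ is supported in $B_{\tilde\sigma\rho}$ with $\tilde\sigma=(1+\sigma)/2$, so one must pay the annulus $B_{\tilde\sigma\rho}\setminus B_{\sigma\rho}$, whose relative measure is $\sim(1-\sigma)$. Your sketch produces no such term. Second, your trade-off $F^2\rho^{2\beta}/\nu\le F\rho^\beta/[(1-\sigma)^2\sqrt\nu]$ requires $F\rho^\beta(1-\sigma)^2\le\sqrt\nu$, an extra smallness hypothesis that is not available at this stage (this lemma is what \emph{determines} $\rho_\ast$ in Lemma~\ref{Lemma: Oscillation from Hessian}). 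In the paper's route the $\sqrt\nu$ and $F\rho^\beta/\sqrt\nu$ scalings emerge directly from Cauchy--Schwarz applied with (3.7) on $Q_{\sigma\rho}\setminus S_{\sigma\rho}$ and (3.8) on $S_{\sigma\rho}$, without any such trade-off.
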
  
 \begin{proof}
   We fix $\sigma\in (0,\,1)$, and set ${\mathbb R}^{n}$-valued functions
   \[H_{1}(t)\coloneqq \fint_{B_{\sigma\rho}}\nabla u_{\varepsilon}(x,\,t)\,{\mathrm d}x,\quad H_{p}(t)\coloneqq\fint_{B_{\sigma\rho}}G_{p,\,\varepsilon}(\nabla u_{\varepsilon}(x,\,t))\,{\mathrm d}x,\quad \text{and} \quad \Xi(t)\coloneqq G_{p,\,\varepsilon}^{-1}(H_{p}(t)),\]
   defined for $t\in I_{\rho}$. By the definition of $G_{p,\,\varepsilon}$, (\ref{Eq (Section 2): Bounds of V-epsilon})--(\ref{Eq (Section 2): Delta vs Mu}) and (\ref{Eq (Section 6): Bound on G-p-epsilon-inverse}), it is easy to check $H_{1}(t)\le 2\mu$ and $\Xi(t)\le C_{p}\mu$.
   We apply (\ref{Eq (Section 6): L-2 oscillation minimizer}) to $\nabla u_{\varepsilon}(\,\cdot\,,\,t)\in L^{2}(B_{\sigma\rho};\,{\mathbb R}^{n})$ to deduce
   \begin{align*}
     \Phi(\sigma\rho)&\le 2\fiint_{Q_{\sigma\rho}}\lvert \nabla u_{\varepsilon}(x,\,t)-H_{1}(t) \rvert^{2}\,{\mathrm d}x{\mathrm d}t+2\fiint_{Q_{\sigma\rho}}\lvert H_{1}(t)-(\nabla u_{\varepsilon})_{\sigma\rho} \rvert^{2}\,{\mathrm d}x{\mathrm d}t\\ 
    &\le  2\fiint_{Q_{\sigma\rho}}\lvert \nabla u_{\varepsilon}-\Xi(t) \rvert^{2}\,{\mathrm d}x{\mathrm d}t+C\mu\fiint_{Q_{\sigma\rho}}\lvert H_{1}(t)-(\nabla u_{\varepsilon})_{\sigma\rho}\rvert\,{\mathrm d}x{\mathrm d}t\\ 
    & \le C\mu\left[\fiint_{Q_{\sigma\rho}}\lvert \nabla u_{\varepsilon}-\Xi(t) \rvert\,{\mathrm d}x{\mathrm d}t+ \esssup_{t_{1},\,t_{2}\in I_{\sigma\rho}} \,\lvert H_{1}(t_{1})-H_{1}(t_{2}) \rvert \right].
   \end{align*}
   Therefore, (\ref{Eq (Section 6): Energy oscillation 0}) is shown by proving following (\ref{Eq (Section 6): Energy oscillation 1})--(\ref{Eq (Section 6): Energy oscillation 2});
   \begin{align}
    &\fiint_{Q_{\sigma\rho}}\left\lvert \nabla u_{\varepsilon}(x,\,t)-\Xi(t) \right\rvert\,{\mathrm d}x{\mathrm d}t\le \frac{C\mu}{\sigma^{n+2}}\left[\frac{\sqrt{\nu}}{1-\sigma}+\frac{F\rho^{\beta}}{\sqrt{\nu}} \right].\label{Eq (Section 6): Energy oscillation 1} \\ 
    & \esssup_{t_{1},\,t_{2}\in I_{\sigma\rho}}  \lvert H(t_{1})-H(t_{2}) \rvert\le \frac{C\mu}{\sigma^{n}}\left[(1-\sigma)+\frac{\sqrt{\nu}}{(1-\sigma)^{3}}+\frac{F\rho^{\beta}}{(1-\sigma)^{2}\sqrt{\nu}}\right]. \label{Eq (Section 6): Energy oscillation 2}
   \end{align}
   
   To prove (\ref{Eq (Section 6): Energy oscillation 1}) from (\ref{Eq (Section 3): Hessian L-2 energy on Q})--(\ref{Eq (Section 3): Hessian L-2 energy on S}), we decompose $Q_{\sigma\rho}=(Q_{\sigma\rho}\setminus S_{\sigma\rho})\cup S_{\sigma\rho}$, and note
   \begin{equation}\label{Eq (Section 6): S-sigma-rho is small domain}
    \lvert Q_{\sigma\rho}\setminus S_{\sigma\rho}\rvert \le \lvert Q_{\rho}\setminus S_{\rho}\rvert\le \nu\lvert Q_{\rho}\rvert=\sigma^{-(n+2)}\nu\lvert Q_{\sigma\rho}\rvert,
   \end{equation}
    which is easily checked by replacing the sublevel set $Q_{\sigma\rho}\setminus S_{\sigma\rho}$ by a larger one $Q_{\rho}\setminus S_{\rho}$, and using (\ref{Eq (Section 2): Measure Assumption Campanato}). 
   We also recall (\ref{Eq (Section 6): Lower bounds on superlevel set}), so that (\ref{Eq (Section 6): Monotonicity of G-p-epsilon}) can be applied to $\lvert \nabla u_{\varepsilon}(x,\,t)-\Xi(t)\rvert$ for $(x,\,t)\in S_{\sigma\rho}$.
   Combining with (\ref{Eq (Section 6): S-sigma-rho is small domain}) and the Poincar\'{e}--Wirtinger inequality, we get
   \begin{align*}
     \fiint_{Q_{\sigma\rho}}\left\lvert \nabla u_{\varepsilon}(x,\,t)-\Xi_{1}(t) \right\rvert\,{\mathrm d}X&\le \frac{C\nu\mu}{\sigma^{n+2}}+\frac{C}{\mu^{p-1}}\fiint_{Q_{\sigma\rho}}\left\lvert G_{p,\,\varepsilon}(\nabla u_{\varepsilon}(x,\,t))-H_{p}(t) \right\rvert\,{\mathrm d}X\\
     &\le \frac{C\nu\mu}{\sigma^{n+2}}+\frac{C}{\mu^{p-1}}(\sigma\rho)\fiint_{Q_{\sigma\rho}}\left\lvert \nabla\left[G_{p,\,\varepsilon}(\nabla u_{\varepsilon}) \right] \right\rvert\,{\mathrm d}X.
   \end{align*}
   To estimate the last integral, we use the decomposition $Q_{\sigma\rho}=(Q_{\sigma\rho}\setminus S_{\sigma\rho})\cup S_{\sigma\rho}$ and (\ref{Eq (Section 6): S-sigma-rho is small domain}) again.
   By H\"{o}lder's inequality and (\ref{Eq (Section 3): Hessian L-2 energy on Q})--(\ref{Eq (Section 3): Hessian L-2 energy on S}), we have
   \[
      \fiint_{Q_{\sigma\rho}}\left\lvert \nabla u_{\varepsilon}(x,\,t)-\Xi_{1}(t) \right\rvert\,{\mathrm d}x{\mathrm d}t
        \le \frac{C\nu\mu}{\sigma^{n+2}}+\frac{C\mu}{\sigma^{n+1}}\left[\frac{\left(1+\sigma^{n/2+1}\right)\sqrt{\nu}}{1-\sigma}+\left(\sqrt{\nu}+\frac{\sigma^{n/2+1}}{\sqrt{\nu}}\right)F\rho^{\beta} \right].
   \]
   By $\nu\in(0,\,1/4)$ and $\sigma\in(0,\,1)$, it is easy to deduce (\ref{Eq (Section 6): Energy oscillation 1}).

   To prove (\ref{Eq (Section 6): Energy oscillation 2}), we set ${\tilde \sigma}\coloneqq (1+\sigma)/2\in(1/2,\,1)$, and choose $\eta\in C_{\mathrm c}^{2}(B_{{\tilde\sigma}\rho};\,\lbrack 0,\,1\rbrack)$ satisfying  $\eta|_{B_{\sigma\rho}}\equiv 1$, and $\lVert \nabla^{2}\eta \rVert_{L^{\infty}(B_{\tilde{\sigma}\rho})}+\lVert \nabla\eta \rVert_{L^{\infty}(B_{\tilde{\sigma}\rho})}^{2}\le c(1-\sigma)^{-2}\rho^{-2}$ for some constant $c\in(0,\,\infty)$. 
   Without loss of generality, we may let $t_{1}$, $t_{2}\in I_{\sigma\rho}$ satisfy $t_{1}<t_{2}$. 
   We choose sufficiently small $\tilde\varepsilon>0$, and define a piecewise linear function $\chi\colon \overline{I_{{\tilde\sigma}\rho}}\rightarrow \lbrack 0,\,1\rbrack$ such that $\chi\equiv 0$ on $\lbrack t_{0}-({\tilde \sigma}\rho)^{2},\,t_{1}\rbrack\cup \lbrack t_{2},\,t_{0}\rbrack$, $\chi\equiv 1$ on $\lbrack t_{1}+{\tilde\varepsilon},\,t_{2}-{\tilde\varepsilon}\rbrack$, and $\chi$ is linearly interpolated in $(t_{1},\,t_{1}+{\tilde \varepsilon})\cup (t_{2}-{\tilde\varepsilon},\,t_{2})$.
   We will later let ${\tilde\varepsilon}\to 0$. 
   We test $\varphi(x,\,t)\coloneqq \eta(x)\chi(t)$, which is compactly supported in $Q_{{\tilde\sigma}\rho}$, into the weak formulation
   \[-\iint_{Q_{{\tilde\sigma}\rho}}\partial_{x_{j}}u_{\varepsilon}\partial_{t}\varphi\,{\mathrm d}X+\iint_{Q_{{\tilde \sigma}\rho}}\left\langle\nabla E_{\varepsilon}(\nabla u_{\varepsilon})-\nabla E_{\varepsilon}(\Xi(t))\mathrel{}\middle|\mathrel{}\nabla\partial_{x_{j}}\varphi \right\rangle\,{\mathrm d}X=\iint_{Q_{{\tilde \sigma}\rho}}f_{\varepsilon}\partial_{x_{j}}\varphi\,{\mathrm d}X.\]
   Letting ${\tilde \varepsilon}\to 0$, 
   we have
   \[
    \left\lvert \int_{B_{{\tilde \sigma}\rho}}\eta\left(\partial_{x_{j}} u_{\varepsilon}(x,t_{1})-\partial_{x_{j}} u_{\varepsilon}(x,t_{2})\right){\mathrm d}x \right\rvert
    \le 
    \iint_{Q_{\tilde{\sigma}\rho}}\left[\left\lvert \nabla E_{\varepsilon}(\nabla u_{\varepsilon})-\nabla E_{\varepsilon}(\Xi(t))\right\rvert \left\lvert \nabla^{2}\eta \right\rvert
    +
    \lvert f_{\varepsilon}\rvert\lvert\nabla\eta\rvert\right]{\mathrm d}X,
    \]
    which holds for a.e.~$t_{1},\,t_{2}\in I_{\sigma\rho}$.
   To estimate the first integral on the right-hand side, we use $Q_{{\tilde \sigma}\rho}=(Q_{{\tilde\sigma}\rho}\setminus S_{\tilde{\sigma}\rho})\cup S_{{\tilde\sigma}\rho}$ and (\ref{Eq (Section 6): S-sigma-rho is small domain}) with $\sigma$ replaced by $\tilde\sigma$.
   By applying (\ref{Eq (Section 2): Growth outside a facet}) with $(z_{1},\,z_{2})=(\nabla u_{\varepsilon}(x,\,t),\,\Xi(t))$ for $(x,\,t)\in S_{\tilde{\sigma}\rho}$,  and noting $\lVert \nabla^{2}\eta\rVert_{L^{\infty}(B_{\tilde{\sigma}\rho})} \lvert B_{\sigma\rho}\rvert^{-1}\le C(n)(1-\sigma)^{-2}\sigma^{-n} \tilde{\sigma}^{n+2} \lvert Q_{\tilde{\sigma}\rho}\rvert^{-1}$,  and using (\ref{Eq (Section 6): Energy oscillation 1}) with $\sigma$ replaced by $\tilde\sigma$, we get
   \begin{align*}
    &\frac{1}{\lvert B_{\sigma\rho}\rvert}\left\lvert \int_{B_{{\tilde \sigma}\rho}}\eta\left( \partial_{x_{j}} u_{\varepsilon} (x,\,t_{1}) - \partial_{x_{j}} u_{\varepsilon} (x,\,t_{2})\right)\,{\mathrm d}x \right\rvert\\ 
    &\le  C(K,\,\Lambda,\,M)\lVert \nabla^{2}\eta\rVert_{L^{\infty}(B_{\tilde{\sigma}\rho})}\cdot \frac{\nu\lvert Q_{\tilde{\sigma}\rho}\rvert}{\lvert B_{\sigma\rho}\rvert}+\frac{\lVert \nabla^{2}\eta\rVert_{L^{\infty}(B_{\tilde{\sigma}\rho})}}{\lvert B_{\sigma\rho}\rvert}\cdot C_{2} \iint_{S_{\tilde{\sigma}\rho}}\lvert \nabla u_{\varepsilon}(x,\,t)-\Xi(t)\rvert\,{\mathrm d}x{\mathrm d}t \\&  \quad +\frac{\lVert \nabla\eta\rVert_{L^{\infty}(B_{\tilde{\sigma}\rho})}}{\lvert B_{\sigma\rho}\rvert} F\lvert Q_{\tilde{\sigma}\rho} \rvert^{1-1/q}  \\
    &\le \frac{C(n,\,K,\,\Lambda,\,\delta,\,M)\mu}{(1-\sigma)^{2}\rho^{2}}\cdot\frac{\nu\lvert Q_{\rho}\rvert}{\lvert B_{\sigma\rho}\rvert}+\frac{ C(n,\,\delta,\,M)  {\tilde \sigma}^{n+2}}{(1-\sigma)^{2}\sigma^{n}}\fiint_{Q_{\tilde{\sigma}\rho}}\lvert \nabla u_{\varepsilon}(x,\,t)-\Xi(t)\rvert\,{\mathrm d}x{\mathrm d}t+\frac{C(n)F\lvert Q_{\rho}\rvert^{1-1/q}}{(1-\sigma)\sigma^{n}\rho^{n+1}}\\ 
    &\le \frac{C(n,\,p,\,q,\,\delta,\,M)\mu}{\sigma^{n}}\left[\frac{\nu}{(1-\sigma)^{2}}+\frac{\sqrt{\nu}}{(1-\sigma)^{3}}+\frac{F\rho^{\beta}}{(1-\sigma)^{2}\sqrt{\nu}}+\frac{F\rho^{\beta}}{1-\sigma}\right].
   \end{align*}
    for a.e.~$t_{1},\,t_{2}\in I_{\sigma\rho}$. 
    Summing over $j\in \{\,1,\,\dots\,,\,n\,\}$, and recalling our choice of $\tilde\sigma$ and $\eta$,  we have 
   \begin{align*}
     & \lvert H_{1}(t_{1})-H_{1}(t_{2})\rvert\\ 
     & \le  \frac{1}{\lvert B_{\sigma\rho}\rvert}\left[ \left\lvert\int_{B_{{\tilde\sigma}\rho}\setminus B_{\sigma\rho} }\eta\left(\nabla u_{\varepsilon}(x,\,t_{1})-\nabla u_{\varepsilon}(x,\,t_{2}) \right)\,{\mathrm d}x  \right\rvert
     +\left\lvert\int_{B_{{\tilde\sigma}\rho}}\eta\left(\nabla u_{\varepsilon}(x,\,t_{1})-\nabla u_{\varepsilon}(x,\,t_{2}) \right)\,{\mathrm d}x  \right\rvert\right] \\ 
     & \le \frac{C\mu}{\sigma^{n}}\left[(1-\sigma)+\frac{\sqrt{\nu}}{(1-\sigma)^{3}}+\frac{F\rho^{\beta}}{(1-\sigma)^{2}\sqrt{\nu}}\right]
   \end{align*}
    for a.e.~$t_{1},\,t_{2}\in I_{\sigma\rho}$. 
 \end{proof}
 \begin{proof}[Proof of Lemma \ref{Lemma: Oscillation from Hessian}]
   For $\sigma\in (0,\,1)$, we apply (\ref{Eq (Section 6): L-2 oscillation minimizer}) to $\nabla u_{\varepsilon}\in L^{2}(Q_{\rho};\,{\mathbb R}^{n})$, and make use of (\ref{Eq (Section 6): Energy oscillation 0}). 
   Then, we have
   \[
     \Phi(\rho)
     \le \sigma^{n+2}\Phi(\sigma\rho)+\frac{\lvert Q_{\rho}\setminus Q_{\sigma\rho}\rvert}{\lvert Q_{\rho}\rvert}\cdot 4\mu^{2}
     \le C_{\dagger}\mu^{2}\left[(1-\sigma)+\frac{\sqrt{\nu}}{(1-\sigma)^{3}}+\frac{F\rho^{\beta}}{(1-\sigma)^{2}\sqrt{\nu}} \right]
   \]
   with $C_{\dagger}\in(0,\,\infty)$ independent of $\sigma$, $\nu$, and $\rho$. 
   Hence,  we conclude (\ref{Eq (Section 6): Energy oscillation bounds from Hessian}) by choosing $\sigma\in(0,\,1)$, $\nu\in(0,\,1/4)$, and $\rho_{\ast}\in(0,\,1)$ satisfying the following three inequalities; 
   \[C_{\dagger}(1-\sigma)\le \frac{\theta}{3},\quad \nu\le \min\left\{\,\frac{3-8\sqrt{\theta}}{23},\,\left(\frac{\theta(1-\sigma)^{3}}{3C_{\dagger}}\right)^{2}\,\right\},\quad \frac{C_{\dagger}F\rho_{\ast}^{\beta}}{(1-\sigma)^{2}\sqrt{\nu}}\le \frac{\theta}{3}.\]
   We use the triangle inequality, the Cauchy--Schwarz inequality, and (\ref{Eq (Section 6): Energy oscillation bounds from Hessian})--(\ref{Eq (Section 6): Lower bounds on superlevel set}) to get
   \begin{align*}
    \lvert (\nabla u_{\varepsilon})_{Q_{\rho}} \rvert
    &\ge \fiint_{Q_{\rho}}\lvert \nabla u_{\varepsilon}\rvert\,{\mathrm d}X -\left\lvert \fiint_{Q_{\rho}} \left[\lvert \nabla u_{\varepsilon}\rvert- \lvert (\nabla u_{\varepsilon})_{Q_{\rho}} \rvert \right]\,{\mathrm d}X \right\rvert\\ 
    &\ge \frac{\lvert S_{\rho}\rvert}{\lvert Q_{\rho}\rvert} \essinf_{S_{\rho}}\,\lvert \nabla u_{\varepsilon}\rvert -\sqrt{\Phi(\rho)}
    \ge (1-\nu)\left(\frac{7}{8}\delta+(1-\nu)\mu \right) -\sqrt{\theta}\mu.
   \end{align*}
   Recalling (\ref{Eq (Section 2): Delta vs Mu}) and our choice of $\nu$, we conclude (\ref{Eq (Section 6): Mean bounds by below}).
  \end{proof}

 \subsection{Higher integrability}
 Before comparing $u_{\varepsilon}$ with heat flows, we prove the higher integrability of $\lvert \nabla u_{\varepsilon}-(\nabla u_{\varepsilon})_{\rho}\rvert$ (Lemma \ref{Lemma: Higher integrability parabolic}), which can be verified when $(\nabla u_{\varepsilon})_{\rho}$ does not degenerate.
 \begin{lemma}\label{Lemma: Higher integrability parabolic}
 In addition to the assumptions of Theorem \ref{Thm: Key Hoelder Estimate}, let the positive numbers $\mu$, $M$, and a cylinder $Q_{2\rho}=Q_{2\rho}(x_{0},\,t_{0})\Subset Q_{R}(x_{\ast},\,t_{\ast})$ satisfy (\ref{Eq (Section 2): Bounds of V-epsilon})--(\ref{Eq (Section 2): Delta vs Mu}).
  If a vector $\xi\in{\mathbb R}^{n}$ satisfy 
   \begin{equation}\label{Eq (Section 6): Assumption of xi}
     \delta+\frac{\mu}{4} \le\lvert \xi\rvert\le \delta+\mu,
   \end{equation}
  then there exist $\vartheta\in(0,\,1)$ and $C\in(1,\,\infty)$, depending at most on $n$, $p$, $q$, $\lambda$, $\Lambda$, $K$, $\delta$ and $M$, such that $2(1+\vartheta)\le q$, and
   \[
     \left(\fiint_{Q_{\rho/2}}\lvert \nabla u_{\varepsilon}-\xi\rvert^{2(1+\vartheta)}\,{\mathrm d}X\right)^{\frac{1}{1+\vartheta}}\le C\left[\fiint_{Q_{\rho}}\lvert \nabla u_{\varepsilon}-\xi\rvert^{2}\,{\mathrm d}X+F^{2}\rho^{2\beta}\right].
   \] 
 \end{lemma}

 Without a proof, we infer Lemma \ref{Lemma: Parabolic Gehring lemma}, often called Gehring's lemma.
 \begin{lemma}\label{Lemma: Parabolic Gehring lemma}
  Fix a parabolic cylinder $Q_{2R}\subset{\mathbb R}^{n+1}$, and the exponents $1<s<{\tilde s}<\infty$.
  Assume that non-negative functions $g\in L^{s}(Q_{2R})$ and $h\in L^{\tilde s}(Q_{2R})$ satisfy
  \[\fiint_{Q_{r}(z)}g^{s} \,{\mathrm d}X \le B_{0}\left[\left(\fiint_{Q_{2r}(z)}g\,{\mathrm d}X\right)^{s}+\fiint_{Q_{2r}(z)}h^{s}\,{\mathrm d}X \right]+\theta\fiint_{Q_{2r}(z)}g^{s}\,{\mathrm d}X\]
  for any parabolic cylinder $Q_{2r}(z)\subset Q_{2R}$. If $\theta\le \theta_{0}$ holds for some sufficiently small $\theta_{0}=\theta_{0}(n)\in(0,\,1)$, then there exists a constant $\vartheta_{0}\in(0,\,{\tilde s}/s-1)$, depending at most on $n$, $s$, ${\tilde s}$, and $B_{0}$, such that there holds
  \[\fiint_{Q_{R}}g^{s(1+\vartheta)}\,{\mathrm d}X \le B\left[\left(\fiint_{Q_{2R}}g^{s}\,{\mathrm d}X\right)^{1+\vartheta}+\fiint_{Q_{2R}}h^{s(1+\vartheta)}\,{\mathrm d}X\right]\]
  for every $\vartheta\in(0,\,\vartheta_{0})$. 
  Here the constant $B$ depends at most on $n$, $s$, ${\tilde s}$, $B_{0}$, and $\vartheta$.
 \end{lemma}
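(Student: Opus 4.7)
The plan is to follow the standard Giaquinta--Modica self-improving strategy, adapted to parabolic cylinders and with care taken to handle the perturbation term $\theta\fiint_{Q_{2r}(z)}g^{s}\,{\mathrm d}X$. The core idea is to convert the hypothesis into a distribution-function inequality for $g^{s}$ via a parabolic Calder\'{o}n--Zygmund stopping-time argument, and then integrate in the level variable after multiplication by $\lambda^{\vartheta}$. A standard chaining on intermediate radii $R\le r<2R$ reduces the assertion to one in which the reverse-H\"{o}lder hypothesis, originally posed on $Q_{2r}(z)$, may be applied for all sufficiently small $r$ with $Q_{2r}(z)\subset Q_{2R}$; we will therefore fix intermediate cylinders $Q_{R_{1}}\subset Q_{R_{2}}\subset Q_{2R}$ with $R\le R_{1}<R_{2}\le 2R$ and pass to the stated bound via a standard covering/interpolation step.

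First, I introduce the truncated parabolic maximal function $M_{\ast}f(X)\coloneqq \sup_{Q_{r}(X)\subset Q_{R_{2}}}\fiint_{Q_{r}(X)}\lvert f\rvert\,{\mathrm d}X$ for $X\in Q_{R_{1}}$, and pick a threshold $\lambda_{0}\coloneqq A\fiint_{Q_{2R}}(g^{s}+h^{s})\,{\mathrm d}X$ with $A=A(n,R_{1}/R_{2})$ chosen so that $\{M_{\ast}(g^{s})>\lambda\}\Subset Q_{R_{1}}$ for all $\lambda>\lambda_{0}$. For such $\lambda$ and each $X\in\{M_{\ast}(g^{s})>\lambda\}$, I perform a Vitali-type stopping-time construction on parabolic cylinders to extract a disjoint family $\{Q_{r_{j}}(z_{j})\}$ of maximal cylinders satisfying
\[
\lambda<\fiint_{Q_{r_{j}}(z_{j})}g^{s}\,{\mathrm d}X\le 2^{n+2}\lambda,\qquad \fiint_{Q_{2r_{j}}(z_{j})}g^{s}\,{\mathrm d}X\le \lambda,
\]
and whose $5$-times enlargements cover $\{M_{\ast}(g^{s})>\lambda\}$ up to a null set. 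This is the main technical point where parabolic anisotropy matters: the Vitali lemma holds for the family of parabolic cylinders because the metric $d_{\mathrm{p}}$ is a genuine quasi-metric, so a constant depending only on $n$ is obtained.

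Next, on each stopping cylinder $Q_{r_{j}}(z_{j})$ I apply the hypothesis. Splitting $g=g\chi_{\{g>\eta\lambda^{1/s}\}}+g\chi_{\{g\le\eta\lambda^{1/s}\}}$ and $h$ analogously for a parameter $\eta=\eta(B_{0},n,s)\in(0,1)$ to be fixed, and using the elementary bound $\fiint_{Q}g\le \eta\lambda^{1/s}+\fiint_{Q}g\chi_{\{g>\eta\lambda^{1/s}\}}\,{\mathrm d}X$, H\"{o}lder's inequality in the truncated part, and the maximality bound $\fiint_{Q_{2r_{j}}}g^{s}\le\lambda$, I obtain
\[
\lambda\lvert Q_{r_{j}}(z_{j})\rvert\le C_{1}\int_{Q_{2r_{j}}(z_{j})\cap\{g>\eta\lambda^{1/s}\}}g^{s}\,{\mathrm d}X+C_{1}\int_{Q_{2r_{j}}(z_{j})\cap\{h>\eta\lambda^{1/s}\}}h^{s}\,{\mathrm d}X+(C_{1}\eta^{s}+\theta)\lambda\lvert Q_{r_{j}}(z_{j})\rvert.
\]
Choosing first $\eta$ so that $C_{1}\eta^{s}\le 1/4$ and then taking $\theta_{0}=\theta_{0}(n)\le 1/4$, the last term is absorbed. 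Summing over $j$ and using disjointness plus the covering bound gives the good-$\lambda$ inequality
\[
\int_{\{g^{s}>\lambda\}\cap Q_{R_{1}}}g^{s}\,{\mathrm d}X\le C_{2}\int_{\{g>\eta\lambda^{1/s}\}\cap Q_{R_{2}}}g^{s}\,{\mathrm d}X+C_{2}\int_{\{h>\eta\lambda^{1/s}\}\cap Q_{R_{2}}}h^{s}\,{\mathrm d}X
\]
for every $\lambda>\lambda_{0}$, with $C_{2}=C_{2}(n,B_{0})$.

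Finally, I multiply by $\lambda^{\vartheta-1}$ and integrate from $\lambda_{0}$ to $\infty$. A Fubini interchange converts the right-hand side into $\vartheta$-weighted integrals of $g^{s(1+\vartheta)}$ and $h^{s(1+\vartheta)}$, while the left-hand side produces $\frac{1}{\vartheta}\int_{Q_{R_{1}}}(g^{s}-\lambda_{0})_{+}^{1+\vartheta}$ plus lower-order terms controlled by $\lambda_{0}^{1+\vartheta}\lvert Q_{2R}\rvert$. The crucial smallness step is that the coefficient in front of $\int_{Q_{R_{2}}}g^{s(1+\vartheta)}$ arising on the right is of order $C_{2}\eta^{-s\vartheta}\vartheta/(1-s\vartheta\text{ terms})$, which becomes strictly smaller than the coefficient on the left once $\vartheta\le\vartheta_{0}$ for a suitable $\vartheta_{0}\in(0,\tilde{s}/s-1)$ depending on $n$, $s$, $\tilde{s}$, $B_{0}$. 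An absorbing argument, followed by a standard interpolation step to pass from the intermediate $Q_{R_{1}},Q_{R_{2}}$ to $Q_{R},Q_{2R}$ (via \cite[Chapter V, Lemma 3.1]{MR717034} or its parabolic analogue), yields the claimed estimate. The principal obstacle is the coupling between the absorption of the $\theta$-term and the absorption after integration in $\lambda$: both require smallness constants that must be tuned compatibly, and the control $\vartheta<\tilde{s}/s-1$ is needed precisely to keep the $h$-contribution finite after the good-$\lambda$ inequality is raised to the $1+\vartheta$ power.
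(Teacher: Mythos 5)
Your proposal follows the same route the paper takes: the paper establishes this lemma only through the remark that parabolic cubes are balls of the auxiliary metric $d^{\prime}$ centered at temporal midpoints, so the classical Calder\'on--Zygmund/Vitali good-$\lambda$ proof of Gehring's lemma carries over, and your outline is precisely that classical argument (stopping-time cylinders, splitting at the level $\eta\lambda^{1/s}$, absorption of the $\theta$-term for $\theta\le\theta_{0}(n)$, integration over levels and a final absorption for small $\vartheta$). One caveat on the step you single out as the main technical point: the backward cylinders $Q_{r}(z)$ are \emph{not} $d_{\mathrm{p}}$-balls centered at $z$, so a stopping cylinder intersecting $Q_{r_{j}}(z_{j})$ but with a later top time need not lie in $Q_{5r_{j}}(z_{j})$; the Vitali enlargement must therefore be taken two-sided in time (equivalently, recenter the cylinders at their temporal midpoints, exactly as in the paper's remark via $d^{\prime}$), after which your covering, good-$\lambda$ inequality, and absorption arguments are the standard ones and go through.
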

 \begin{remark}\upshape
 For elliptic cases, Gehring's lemma is shown by the Calder\'{o}n--Zygmund cube decomposition and Vitali's covering lemma (see e.g., \cite[\S 6.5]{MR3099262}, \cite[\S 6.4]{MR1962933}, \cite{MR2173373}).
 It is often mentioned without a proof that the elliptic arguments therein work even for parabolic cubes of the form $C_{r_{0}}(x_{0},\,t_{0})\coloneqq K_{r}(x_{0})\times (t_{0}-r^{2},\,t_{0}\rbrack$, where $K_{r}(x_{0})\coloneqq (x_{0,\,1}-r,\,x_{0,\,1}+r)\times \dots (x_{0,\,n}-r,\,x_{0,\,n}+r)\subset{\mathbb R}^{n}$ for $r_{0}\in(0,\,\infty)$, $x_{0}=(x_{0,\,1},\,\dots\,,\,x_{0,\,n})\in{\mathbb R}^{n}$.
 Indeed, the interior of $C_{2r_{0}}(x_{0},\,t_{0})$ coincides with the open set $\{(x,\,t)\in {\mathbb R}^{n+1}\mid d^{\prime}((x,\,t),\,(x_{0},\,t_{0}-2r_{0}^{2}))<2r_{0}\}$, where $d^{\prime}$ is another metric in ${\mathbb R}^{n+1}$, defined as
   \(d^{\prime}((x,\,t),\,(y,\,s))\coloneqq \max\left\{\,\lvert x_{j}-y_{j}\rvert,\,\sqrt{2\lvert t-s\rvert} \,\right\}\) for \((x,\,t),\,(y,\,s)\in{\mathbb R}^{n+1}\).
   Thanks to this fact, we can prove Lemma \ref{Lemma: Parabolic Gehring lemma} by carrying out the Calder\'{o}n--Zygmund cube decomposition, and applying Vitali's covering lemma for this metric $d^{\prime}$ (see also \cite[\S 5]{krylov2021diffusion}).
\end{remark}

We provide the proof of Lemma \ref{Lemma: Higher integrability parabolic} (see also \cite[\S 2]{MR0652852} as the classical result).
 \begin{proof}[Proof of Lemma \ref{Lemma: Higher integrability parabolic}]
   It suffices to find $C=C(n,\,p,\,q,\,\lambda,\,\Lambda,\,K,\,M,\,\delta)\in(0,\,\infty)$, such that 
   \[
     \fiint_{Q_{r}}\lvert \nabla u_{\varepsilon}-\xi\rvert^{2} \,{\mathrm d}X\le C\left[\left(\fiint_{Q_{2r}}\lvert \nabla u_{\varepsilon}-\xi \rvert^{\frac{2n}{n+2}}\,{\mathrm d}X \right)^{\frac{n+2}{n}}+\fiint_{Q_{2r}}\lvert \rho f_{\varepsilon}\rvert^{2}\,{\mathrm d}X\right]+\theta_{0}\fiint_{Q_{2r}}\lvert \nabla u_{\varepsilon}-\xi\rvert^{2}\,{\mathrm d}X
   \]
   holds for any $Q_{2r}=Q_{2r}(y_{0},\,s_{0})\subset Q_{\rho}=Q_{\rho}(x_{0},\,t_{0})$, where $\theta_{0}=\theta_{0}(n)$ is given in Lemma \ref{Lemma: Parabolic Gehring lemma}.
   Indeed, the assertion enables us to apply Lemma \ref{Lemma: Parabolic Gehring lemma} to the functions $g\coloneqq \lvert \nabla u_{\varepsilon}-\xi\rvert^{\frac{2n}{n+2}}\in L^{\frac{n+2}{n}}(Q_{\rho})$ and $h\coloneqq \lvert\rho f_{\varepsilon}\rvert^{\frac{2n}{n+2}}\in L^{\frac{q(n+2)}{2n}}(Q_{\rho})$.
   By H\"{o}lder's inequality and (\ref{Eq (Section 2): Bound of f-epsilon}), we conclude 
   the desired estimate. 

   Fix $Q_{2r}=Q_{2r}(y_{0},\,s_{0})\subset Q_{\rho}=Q_{\rho}(x_{0},\,t_{0})$ arbitrarily, and define
   \(w_{\varepsilon}(x,\,t)\coloneqq u_{\varepsilon}(x,\,t)-\langle \xi\mid x-y_{0} \rangle\)
   for $(x,\,t)\in Q_{2r}$.
   Set $r_{l}\coloneqq 2^{l/2}r\in\lbrack r,\,2r\rbrack$ for $l\in\{\,0,\,1,\,2\,\}$.
   For each $l\in\{\,1,\,2\,\}$, we choose $\eta_{l}\in C_{\mathrm c}^{1}(B_{r_{l}}(y_{0});\,\lbrack 0,\,1\rbrack)$ and $\phi_{l}\in C^{1}(\overline{I_{r_{l}}};\,\lbrack 0,\,1\rbrack)$ satisfying $\phi_{l}(s_{0}-r_{l}^{2})=0$, $\eta_{l}|_{B_{r_{l-1}(y_{0})}}\equiv 1$, and $\phi_{l}|_{I_{r_{l-1}(y_{0})}}\equiv 1$.
   For this $\eta_{l}$, we define
   \[\tilde{w}_{\varepsilon,\,l}(t)\coloneqq \left[\int_{B_{2r}}\eta_{l}^{2}\,{\mathrm d}x\right]^{-1}\int_{B_{2r}}w_{\varepsilon}(x,\,t)\eta_{l}^{2}\,{\mathrm d}x\] for \(t\in I_{2r}(s_{0}).\)
   Let $\phi_{\mathrm h}\colon \overline{I_{2r}(s_{0})}\rightarrow \lbrack 0,\,1\rbrack$ be an arbitrary Lipschitz function that is non-increasing and $\phi_{\mathrm h}(s_{0})=0$.
   We test $\varphi\coloneqq \left(w_{\varepsilon}-{\tilde w}_{\varepsilon,\,l}\right)\eta_{l}^{2}\phi$ with $\phi\coloneqq \phi_{l}\phi_{\mathrm h}$ into the weak formulation
   \[-\iint_{I_{2r}}\langle \partial_{t}\varphi,\, w_{\varepsilon}\rangle\,{\mathrm d}t+\iint_{Q_{2r}}\left\langle \nabla E_{\varepsilon}(\nabla u_{\varepsilon})-\nabla E_{\varepsilon}(\xi)\mathrel{}\middle|\mathrel{}\nabla\varphi\right\rangle\,{\mathrm d}X=\iint_{Q_{2r}}f_{\varepsilon}\varphi\,{\mathrm d}X,\]
   which holds for any $\varphi\in X_{0}^{2}(s_{0}-4r^{2},\,s_{0};\,B_{2r}(y_{0}))$.
   Noting that the integral of $\varphi(\,\cdot\,,\,t)$ over $B_{2r}$ vanishes for all $t\in I_{2r}(s_{0})$, we can compute
   \begin{align*}
     I_{1}+I_{2}
     &\coloneqq -\iint_{Q_{2r}}(w_{\varepsilon}-{\tilde w}_{\varepsilon,\,l})\eta_{l}^{2}\partial_{t}\phi\,{\mathrm d}X+\iint_{Q_{2r}}\left\langle \nabla E_{\varepsilon}(\nabla u_{\varepsilon})-\nabla E_{\varepsilon}(\xi)\mathrel{}\middle|\mathrel{}\nabla w_{\varepsilon} \right\rangle \eta_{l}^{2}\phi\,{\mathrm d}X\\
     & =-2\iint_{Q_{2r}}\left\langle \nabla E_{\varepsilon}(\nabla u_{\varepsilon})-\nabla E_{\varepsilon}(\xi)\mathrel{}\middle|\mathrel{}\nabla\eta_{l} \right\rangle \eta_{l} \left(w_{\varepsilon}-{\tilde w}_{\varepsilon,\,l}\right) \phi\,{\mathrm d}X+\iint_{Q_{2r}}f_{\varepsilon}\left(w_{\varepsilon}-{\tilde w}_{\varepsilon,\,l}\right)\varphi\,{\mathrm d}X\\ 
     &\eqqcolon -2I_{3}+I_{4}.
   \end{align*}
  By (\ref{Eq (Section 6): Assumption of xi}), we can apply (\ref{Eq (Section 2): Monotonicity outside a facet})--(\ref{Eq (Section 2): Growth outside a facet}) to estimate $I_{2}$ and $I_{3}$ by below and by above respectively.
  Young's inequality and standard absorbing arguments yield
  \begin{align*}
    &-\frac{1}{2}\iint_{Q_{2r}}\left(w_{\varepsilon}-{\tilde w}_{\varepsilon,\,l}\right)^{2}\eta_{l}^{2}\phi_{l}\partial_{t}\phi_{\mathrm h}\,{\mathrm d}X+\frac{C_{1}}{2}\iint_{Q_{2r}}\lvert \nabla w_{\varepsilon}\rvert^{2}\eta_{l}^{2}\phi_{l}\phi_{\mathrm h}\,{\mathrm d}X\\ 
    &\le 
    C\left[\iint_{Q_{2r}}\left\lvert w_{\varepsilon}-{\tilde w}_{\varepsilon,\,l}\right\rvert^{2}\left(\lvert\nabla\eta_{l}\rvert^{2}+\lvert \partial_{t}\phi_{l}\rvert+r_{j}^{-2} \right)\,{\mathrm d}X
    +\frac{r_{j}^{2}}{2}\iint_{Q_{2r}}\lvert f_{\varepsilon}\rvert^{2}\eta_{l}^{2}\phi_{l}\,{\mathrm d}X\right ].
  \end{align*}
  Choosing $\phi_{\mathrm h}$ suitably, and noting $\lvert Q_{r_{j}}\rvert=r_{j}^{2}\lvert B_{r_{j}}\rvert$, we easily deduce
  \begin{align*}
  &  \sup_{\tau\in \ring{I_{r_{l}}}}\fint_{B_{r_{l}}\times\{\tau\}}\frac{\left\lvert w_{\varepsilon}-{\tilde w}_{\varepsilon,\,l}\right\rvert^{2}}{r_{l}^{2}}\eta_{l}^{2}\phi_{l}\,{\mathrm d}x+\fiint_{Q_{r_{l}}}\lvert \nabla w_{\varepsilon}\rvert^{2}\eta_{l}^{2}\phi_{l}\,{\mathrm d}X\\ 
  &\le C\left[\fiint_{Q_{r_{l}}}\frac{\left\lvert w_{\varepsilon}-{\tilde w}_{\varepsilon,\,l}\right\rvert^{2}}{r_{l}^{2}}\,{\mathrm d}X+\fiint_{Q_{r_{l}}}\lvert rf_{\varepsilon}\rvert^{2}\,{\mathrm d}X\right].\nonumber  
  \end{align*}
  Repeatedly using this estimate 
  with $l=1$ and $l=2$, we deduce
  \begin{align*}
    &\fiint_{Q_{r_{0}}}\lvert \nabla w_{\varepsilon}\rvert^{2}\,{\mathrm d}X
    \le\fiint_{Q_{r_{1}}}\lvert \nabla w_{\varepsilon}\rvert^{2}\eta_{1}^{2}\phi_{1} \,{\mathrm d}X\le C\fiint_{Q_{r_{1}}}\frac{\left\lvert w_{\varepsilon}-{\tilde w}_{\varepsilon,\,1} \right\rvert^{2}}{r_{1}^{2}}\,{\mathrm d}X+C\fiint_{Q_{r_{1}}}\lvert rf_{\varepsilon}\rvert^{2}\,{\mathrm d}X, \text{ and } \\ 
    &\sup_{\tau\in \ring{I_{r_{1}}}}\fint_{B_{r_{1}}\times\{\tau\}}\frac{\left\lvert w_{\varepsilon}-{\tilde w}_{\varepsilon,\,1}\right\rvert^{2}}{r_{1}^{2}}\,{\mathrm d}x\\ 
    &\le 2\sup_{\tau\in \ring{I_{r_{1}}}}\left(\fint_{B_{r_{1}}\times \{\tau \}} \frac{\left\lvert w_{\varepsilon}(x,\,\tau)-{\tilde w}_{\varepsilon,\,2}(\tau)\right\rvert^{2}}{r_{1}^{2}}\,{\mathrm d}x+\frac{\left\lvert {\tilde w}_{\varepsilon,\,2}(\tau)-{\tilde w}_{\varepsilon,\,1}(\tau)\right\rvert^{2}}{r_{1}^{2}}\right)\\
    &\le C_{n}\sup_{\tau\in \ring{I_{r_{1}}}}\fint_{B_{r_{1}}\times\{\tau\}}\frac{\left\lvert w_{\varepsilon}-{\tilde w}_{\varepsilon,\,2}\right\rvert^{2}}{r_{1}^{2}}\,{\mathrm d}x\le C_{n}\sup_{\tau\in \ring{I_{r_{2}}}}\fint_{B_{r_{2}}\times\{\tau\}}\frac{\left\lvert w_{\varepsilon}-{\tilde w}_{\varepsilon,\,2}\right\rvert^{2}}{r_{2}^{2}}\eta_{2}^{2}\phi_{2}\,{\mathrm d}x\\ 
    &\le C\fiint_{Q_{r_{2}}}\frac{\left\lvert w_{\varepsilon}-{\tilde w}_{\varepsilon,\,2} \right\rvert^{2}}{r_{2}^{2}}\,{\mathrm d}X+C\fiint_{Q_{r_{2}}}\lvert rf_{\varepsilon}\rvert^{2}\,{\mathrm d}X\le C\fiint_{Q_{r_{2}}}\lvert \nabla w_{\varepsilon}\rvert^{2}\,{\mathrm d}X+C\fiint_{Q_{r_{2}}}\lvert rf_{\varepsilon}\rvert^{2}\,{\mathrm d}X,
  \end{align*}
  where we have used the Poincar\`{e} inequality for $(w_{\varepsilon}-{\tilde w}_{\varepsilon,\,2})(\,\cdot\,,\,t)$. 
  Let the exponent $\kappa$ be defined as $\kappa\coloneqq 2n/(n-2)$ for $n\ge 3$, and otherwise satisfy $\kappa\in(2,\,\infty)$.
  Interpolating the function $\lvert w_{\varepsilon}(\,\cdot\,,\,t)-{\tilde w}_{1,\,\varepsilon}(t)\rvert/r_{1}$ among the Lebesgue spaces $L^{\kappa}(B_{r_{1}})\subset L^{2}(B_{r_{1}})\subset L^{\frac{2n}{n+2}}(B_{r_{1}})$, applying the Poincar\'{e} inequality, and using H\"{o}lder's inequality for the time variable, we get
  \[
    \fint_{I_{r_{1}}}\left(\fint_{B_{r_{1}}}\frac{\left\lvert w_{\varepsilon}(x,\,t)-{\tilde w}_{\varepsilon,\,1}(t)\right\rvert^{2}}{r_{1}^{2}}\,{\mathrm d}x \right)^{1/2}\,{\mathrm d}t\le C\left(\displaystyle\fiint_{Q_{r_{1}}}\lvert \nabla w_{\varepsilon}\rvert^{2}\,{\mathrm d}X \right)^{\pi_{1}}\left(\displaystyle\fiint_{Q_{r_{1}}}\lvert \nabla w_{\varepsilon}\rvert^{\frac{2n}{n+2}}\,{\mathrm d}X \right)^{\pi_{2}}
  \]
  with $(\pi_{1},\,\pi_{2})\coloneqq (\frac{1}{4},\,\frac{n+2}{4n})$ for $n\ge 3$, and $(\pi_{1},\,\pi_{2})=(\frac{\kappa^{\prime}}{4},\,\frac{\kappa-2}{2(\kappa-1)})$ for $n=2$.
  The interpolation among $L^{\infty}(I_{r_{1}}) \subset L^{2}(I_{r_{1}}) \subset L^{1}(I_{r_{1}})$ and Young's inequality yield
  \begin{align*}
    &\fiint_{Q_{r_{1}}}\frac{\left\lvert w_{\varepsilon}-{\tilde w}_{\varepsilon,\,1} \right\rvert^{2}}{r_{1}^{2}}\,{\mathrm d}X 
    \le \left(\sup_{\tau\in \ring{I_{r_{1}}}}\fint_{B_{r_{1}}\times\{\tau\}}\frac{\left\lvert w_{\varepsilon}-{\tilde w}_{\varepsilon,\,1} \right\rvert^{2}}{r_{1}^{2}}\,{\mathrm d}x\right)^{1/2}\fint_{I_{r_{1}}}\left(\fint_{B_{r_{1}}}\frac{\left\lvert w_{\varepsilon}-{\tilde w}_{\varepsilon,\,1} \right\rvert^{2}}{r_{1}^{2}} \,{\mathrm d}x\right)^{1/2}{\mathrm d}t\\ 
     &\le \sigma\fiint_{Q_{r_{2}}}\lvert \nabla w_{\varepsilon}\rvert^{2}\,{\mathrm d}X+C(\sigma)\left[\left(\fiint_{Q_{r_{2}}}\lvert \nabla w_{\varepsilon}\rvert^{\frac{2n}{n+2}}\,{\mathrm d}X\right)^{\frac{n+2}{n}}+\fiint_{Q_{r_{2}}}\lvert rf_{\varepsilon}\rvert^{2}\,{\mathrm d}X \right]
  \end{align*}
  for every $\sigma\in(0,\,1)$.
  Noting $\nabla w_{\varepsilon}=\nabla u_{\varepsilon}-\xi$ and $2r\le \rho$, we complete the proof by choosing sufficiently small $\sigma\in(0,\,1)$. 
\end{proof}
 \subsection{Comparison estimates}
 We consider a heat flow $v_{\varepsilon}$, and compare $u_{\varepsilon}$ with $v_{\varepsilon}$.
 More precisely, we choose the function $v_{\varepsilon}\in u_{\varepsilon}+X_{0}^{2}(t_{0}-\rho^{2}/4,\,t_{0};\,B_{\rho/2}(x_{0}))$ satisfying 
 \begin{equation}\label{Eq (Section 6): Dirichlet heat flow, the weak form}
  \int_{t_{0}-\rho^{2}/4}^{t_{0}}\langle \partial_{t}v_{\varepsilon},\,\varphi\rangle \,{\mathrm d}t+\iint_{Q_{\rho/2}}\left\langle \nabla^{2}E_{\varepsilon}((\nabla u_{\varepsilon})_{\rho})\nabla v_{\varepsilon}\mathrel{}\middle|\mathrel{}\nabla\varphi\right\rangle\,{\mathrm d}X=0
 \end{equation}
 for all $\varphi\in X_{0}^{2}(t_{0}-\rho^{2}/4,\,t_{0};\,B_{\rho/2}(x_{0}))$, and $v_{\varepsilon}|_{t=t_{0}-\rho^{2}/4}=u_{\varepsilon}|_{t=t_{0}-\rho^{2}/4}$ in $L^{2}(B_{\rho/2}(x_{0}))$. In other words, $v_{\varepsilon}$ is the weak solution of the Dirichlet boundary problem
 \begin{equation}\label{Eq (Section 6): Dirichlet heat flow}
  \left\{\begin{array}{rclcl}
    \partial_{t}v_{\varepsilon}-\divx\left(\nabla^{2}E_{\varepsilon}((\nabla u_{\varepsilon})_{\rho})\nabla v_{\varepsilon} \right) &=& 0&\textrm{in}&Q_{\rho/2}(x_{0},\,t_{0}),\\ 
    v_{\varepsilon} & = & u_{\varepsilon} & \textrm{on}&\partial_{\textrm{p}}Q_{\rho/2}(x_{0},\,t_{0}).
  \end{array} \right.
 \end{equation}
 \begin{lemma}
  In addition to the assumptions of Theorem \ref{Thm: Key Hoelder Estimate}, let the positive numbers $\mu$, $M$, and a cylinder $Q_{2\rho}=Q_{2\rho}(x_{0},\,t_{0})\Subset Q_{R}(x_{\ast},\,t_{\ast})$ satisfy (\ref{Eq (Section 2): Bounds of V-epsilon})--(\ref{Eq (Section 2): Delta vs Mu}), and   
   \begin{equation}\label{Eq (Section 6): Assume that average is non-degenerate}
     \delta +\frac{\mu}{4}\le \lvert (\nabla u_{\varepsilon})_{\rho}\rvert.
   \end{equation}
   Let $v_{\varepsilon}$ be the weak solution of (\ref{Eq (Section 6): Dirichlet heat flow}). 
   Then, the following (\ref{Eq (Section 6): Comparison Estimate})--(\ref{Eq (Section 6): Heat flow decay}) hold.
   \begin{align}
    &\fiint_{Q_{\rho/2}}\lvert \nabla u_{\varepsilon}-(\nabla u_{\varepsilon})_{\rho/2} \rvert^{2}\,{\mathrm d}X\le C\left[\omega\left(\sqrt{\Phi(\rho)/\mu^{2}} \right)^{\frac{\vartheta}{1+\vartheta}} \Phi(\rho)+F^{2}\rho^{2\beta} \right].\label{Eq (Section 6): Comparison Estimate}\\ 
    &\fiint_{Q_{\sigma\rho}}\lvert \nabla v_{\varepsilon}-(\nabla v_{\varepsilon})_{\sigma\rho}\rvert^{2}\,{\mathrm d}X\le C\sigma^{2}\fiint_{Q_{\rho/2}}\lvert \nabla v_{\varepsilon}-(\nabla v_{\varepsilon})_{\rho/2} \rvert^{2}\,{\mathrm d}X\quad \text{for all }\sigma\in(0,\,1/2).\label{Eq (Section 6): Heat flow decay}
   \end{align}
   %
   %
   Here the exponent $\vartheta$ is given by Lemma \ref{Lemma: Higher integrability parabolic}, and the constant $C\in(0,\,\infty)$ depends at most on $n$, $p$, $q$, $\lambda$, $\Lambda$, $K$, $\delta$, $M$, and $\omega$.
 \end{lemma}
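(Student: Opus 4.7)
The plan is to prove the two estimates in turn, using complementary parabolic techniques that are standard once the uniform ellipticity of the reference matrix $\mathbf{A} := \nabla^2 E_\varepsilon((\nabla u_\varepsilon)_\rho)$ has been secured. Thanks to (\ref{Eq (Section 6): Assume that average is non-degenerate}) combined with (\ref{Eq (Section 2): Bounds of V-epsilon}), the condition (\ref{Eq (Section 2): Assumptions on continuity estimates}) is met with $z_1 = (\nabla u_\varepsilon)_\rho$, so by (\ref{Eq (Section 2): Monotonicity outside a facet})--(\ref{Eq (Section 2): Growth outside a facet}) the matrix $\mathbf{A}$ is symmetric, positive definite and bounded with constants depending only on structural data and $\delta$, $M$.

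For the decay estimate (\ref{Eq (Section 6): Heat flow decay}), I would invoke classical interior regularity for linear parabolic equations with constant coefficients. Since $\mathbf{A}$ is independent of $(x,t)$, each spatial derivative $\partial_{x_j} v_\varepsilon$ solves the same homogeneous equation, and a bootstrap produces interior $L^\infty$ bounds on $\nabla^2 v_\varepsilon$ of Caccioppoli type: $\sup_{Q_{\rho/4}} |\nabla^2 v_\varepsilon|^2 \leq C\rho^{-2}\fiint_{Q_{\rho/2}} |\nabla v_\varepsilon - (\nabla v_\varepsilon)_{\rho/2}|^2$. Combining this with the mean-value inequality $|\nabla v_\varepsilon(X) - (\nabla v_\varepsilon)_{\sigma\rho}| \leq C\sigma\rho\sup_{Q_{\sigma\rho}}|\nabla^2 v_\varepsilon|$ immediately yields (\ref{Eq (Section 6): Heat flow decay}), where the announced $\tau^2$ factor is really $\sigma^2$.

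For (\ref{Eq (Section 6): Comparison Estimate}), I would use $w_\varepsilon := u_\varepsilon - v_\varepsilon \in X_0^2(t_0 - \rho^2/4, t_0; B_{\rho/2}(x_0))$ as test function in the difference of the weak formulations (\ref{Eq (Section 2): Weak form for approximation problem}) and (\ref{Eq (Section 6): Dirichlet heat flow, the weak form}). Integrating the time derivative of $w_\varepsilon^2$ and discarding the favorable endpoint term, the energy identity reads
\[
\iint_{Q_{\rho/2}}\left\langle \nabla E_\varepsilon(\nabla u_\varepsilon) - \mathbf{A}\nabla v_\varepsilon\mathrel{}\middle|\mathrel{}\nabla w_\varepsilon\right\rangle\,{\mathrm d}X \leq \iint_{Q_{\rho/2}} f_\varepsilon w_\varepsilon\,{\mathrm d}X.
\]
Because $w_\varepsilon$ has zero lateral trace, the integrand can be rewritten as $\mathbf{A}\nabla w_\varepsilon + \mathcal{R}(\nabla u_\varepsilon)$, where $\mathcal{R}(z) := \nabla E_\varepsilon(z) - \nabla E_\varepsilon((\nabla u_\varepsilon)_\rho) - \mathbf{A}(z - (\nabla u_\varepsilon)_\rho)$. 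The first piece supplies coercivity $C_1\iint|\nabla w_\varepsilon|^2$ by (\ref{Eq (Section 2): Monotonicity outside a facet}), while $\mathcal{R}$ is precisely what (\ref{Eq (Section 2): Continuity estimate on Hessian}) controls: $|\mathcal{R}(\nabla u_\varepsilon)| \leq |\nabla u_\varepsilon - (\nabla u_\varepsilon)_\rho|\,\omega(|\nabla u_\varepsilon - (\nabla u_\varepsilon)_\rho|/\mu)$. Absorbing $\nabla w_\varepsilon$ via Young, and treating the forcing term by Poincar\'e in $x$ together with (\ref{Eq (Section 2): Bound of f-epsilon}) and $q > n+2$ (yielding $F^2 \rho^{2\beta}$), reduces the problem to estimating $\fiint_{Q_{\rho/2}}|\nabla u_\varepsilon - (\nabla u_\varepsilon)_\rho|^2 \omega(\cdot)^2\,{\mathrm d}X$.

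The main obstacle is the last integral. I would apply H\"older with the conjugate pair $(1+\vartheta, (1+\vartheta)/\vartheta)$: the first factor, $\bigl(\fiint |\nabla u_\varepsilon - (\nabla u_\varepsilon)_\rho|^{2(1+\vartheta)}\bigr)^{1/(1+\vartheta)}$, is handled by the higher integrability Lemma \ref{Lemma: Higher integrability parabolic} (applicable since (\ref{Eq (Section 6): Assume that average is non-degenerate}) yields (\ref{Eq (Section 6): Assumption of xi}) for $\xi = (\nabla u_\varepsilon)_\rho$), giving $\leq C(\Phi(\rho) + F^2\rho^{2\beta})$. The $\omega$ factor is controlled by using its boundedness, $\omega \leq \omega(\infty) < \infty$, to dominate the excess power $2(1+\vartheta)/\vartheta - 1$, and then exploiting the concavity of $\omega$ via Jensen's inequality, followed by Cauchy--Schwarz, to get $\fiint \omega(|\nabla u_\varepsilon - (\nabla u_\varepsilon)_\rho|/\mu)\,{\mathrm d}X \leq \omega\bigl(\sqrt{\Phi(\rho)/\mu^2}\bigr)$. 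Raising to the required power produces the factor $\omega(\sqrt{\Phi(\rho)/\mu^2})^{\vartheta/(1+\vartheta)}$ in (\ref{Eq (Section 6): Comparison Estimate}); the $L^2$-mean optimality (\ref{Eq (Section 6): L-2 oscillation minimizer}) with $c = (\nabla v_\varepsilon)_{\rho/2}$ and the triangle inequality then transfer this bound from $\fiint |\nabla w_\varepsilon|^2$ to the left-hand side of (\ref{Eq (Section 6): Comparison Estimate}). The delicate point throughout is tracking the dependences of all constants on $\delta$ and $\mu$, and ensuring that the manipulation of $\omega$ under H\"older/Jensen preserves the exponent $\vartheta/(1+\vartheta)$ appearing in the statement.
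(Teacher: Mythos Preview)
Your proposal follows essentially the same route as the paper: establish uniform ellipticity of $\mathbf A=\nabla^2 E_\varepsilon((\nabla u_\varepsilon)_\rho)$ from the non-degeneracy assumption, cite classical constant-coefficient regularity for (\ref{Eq (Section 6): Heat flow decay}), then test the difference of the two weak formulations with $w_\varepsilon=u_\varepsilon-v_\varepsilon$, control the remainder via (\ref{Eq (Section 2): Continuity estimate on Hessian}), split by H\"older with exponents $(1+\vartheta,(1+\vartheta)/\vartheta)$, invoke Lemma~\ref{Lemma: Higher integrability parabolic} for the first factor, and handle the $\omega$-factor by boundedness plus Jensen for the concave function. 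This is exactly what the paper does.

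Two small remarks. First, the ellipticity of $\mathbf A$ and the coercivity $\langle\mathbf A\nabla w_\varepsilon,\nabla w_\varepsilon\rangle\ge m|\nabla w_\varepsilon|^2$ come directly from the Hessian bounds (\ref{Eq (Section 2): Eigenvalues of E-1-epsilon}), (\ref{Eq (Section 2): Eigenvalues of E-p-epsilon}) evaluated at $\xi$, not from (\ref{Eq (Section 2): Monotonicity outside a facet}); the latter is a statement about $\nabla E_\varepsilon$, not $\nabla^2 E_\varepsilon$. Second, and more importantly, your final ``transfer'' step via (\ref{Eq (Section 6): L-2 oscillation minimizer}) with $c=(\nabla v_\varepsilon)_{\rho/2}$ does not appear in the paper: the paper's proof stops at the bound for $\fiint_{Q_{\rho/2}}|\nabla u_\varepsilon-\nabla v_\varepsilon|^2\,\mathrm dX$ and declares (\ref{Eq (Section 6): Comparison Estimate}) proved. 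In fact the left-hand side of (\ref{Eq (Section 6): Comparison Estimate}) as printed is a typo---it should read $\nabla v_\varepsilon$ rather than $(\nabla u_\varepsilon)_{\rho/2}$, as confirmed by how the estimate is actually used in the proof of Lemma~\ref{Lemma: Key lemma on Parabolic oscillation}. With the typo corrected your transfer step is unnecessary; with the typo taken literally, your transfer would produce an extra term $\fiint|\nabla v_\varepsilon-(\nabla v_\varepsilon)_{\rho/2}|^2$ that you cannot bound by the right-hand side of (\ref{Eq (Section 6): Comparison Estimate}) without picking up an unwanted $\Phi(\rho)$ without the $\omega$-prefactor.
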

 \begin{proof}
   For notational simplicity, we abbreviate $\xi\coloneqq (\nabla u_{\varepsilon})_{\rho}$, which satisfies (\ref{Eq (Section 6): Assumption of xi}) by (\ref{Eq (Section 2): Bounds of V-epsilon}) and (\ref{Eq (Section 6): Assume that average is non-degenerate}).
   Hence, it is easy to check
   \(\delta\le\left(\varepsilon^{2}+\lvert \xi\rvert^{2}\right)^{1/2}\le \frac{\delta}{8}+M.\)
   Combining with (\ref{Eq (Section 2): bounds of E-1-epsilon})--(\ref{Eq (Section 2): bounds of E-p-epsilon}), we can easily find a sufficiently small constant $m\in(0,\,1)$, depending at most on $p$, $\lambda$, $\Lambda$, $K$, $\delta$ and $M$, such that \(m\mathrm{id}_{n} \leqslant \nabla^{2}E_{\varepsilon}(\xi) \leqslant m^{-1}\mathrm{id}_{n}\) holds.
   In other words, the problem (\ref{Eq (Section 6): Dirichlet heat flow}) is uniformly parabolic in the classical sense, and therefore it is solvable.
   Moreover, since the coefficient matrix is constant, we can find a constant $C=C(n,\,m)\in(1,\,\infty)$ satisfying (\ref{Eq (Section 6): Heat flow decay}).
   For the unique existence and regularity properties of $v_{\varepsilon}$, we refer the reader to \cite[Chapter IV]{MR0241822}, \cite[Chapter IV]{MR1465184} (see also \cite{MR0213737}).

   We are left to prove (\ref{Eq (Section 6): Comparison Estimate}).
   By (\ref{Eq (Section 2): Approximation equation}) and (\ref{Eq (Section 6): Dirichlet heat flow, the weak form}), we can deduce
   \begin{align}\label{Eq (Section 6): Comparison Weak form}
     &\int_{t_{0}-\rho^{2}/4}^{t_{0}} \langle\partial_{t}(u_{\varepsilon}-v_{\varepsilon}),\,\varphi\rangle\,{\mathrm d}t+\iint_{Q_{\rho/2}}\left\langle \nabla^{2} E_{\varepsilon}(\xi) (\nabla u_{\varepsilon}-\nabla v_{\varepsilon})\mathrel{}\middle|\mathrel{}\nabla\varphi\right\rangle\,{\mathrm d}X\\ 
     &=\iint_{Q_{\rho/2}}\left\langle \nabla^{2}E_{\varepsilon}(\xi)(\nabla u_{\varepsilon}-\xi)- \left(\nabla E_{\varepsilon}(\nabla u_{\varepsilon})-\nabla E_{\varepsilon}(\xi)\right)\mathrel{}\middle|\mathrel{}\nabla\varphi \right\rangle\,{\mathrm d}X
     +\iint_{Q_{\rho/2}}f_{\varepsilon}\varphi\,{\mathrm d}X\nonumber
    \end{align}
   for all $\varphi\in X_{0}^{2}(t_{0}-\rho^{2}/4,\,t_{0};\,B_{\rho/2}(x_{0}))$.
   For sufficiently small ${\tilde\varepsilon}>0$, which tends to $0$ later,
   we choose $\phi_{\mathrm h}\colon \overline{I_{\rho/2}}\to \lbrack 0,\,1\rbrack$ satisfying $\phi_{\mathrm h}\equiv 1$ on $\lbrack t_{0}-\rho^{2}/4,\,t_{0}-{\tilde \varepsilon}\rbrack$, and linearly interpolated on the other domain with $\phi_{\mathrm h}(t_{0})=0$.
   We test $\varphi\coloneqq (u_{\varepsilon}-v_{\varepsilon})\phi_{\mathrm h}$ into (\ref{Eq (Section 6): Comparison Weak form}).
   Note that we may discard the first integral in (\ref{Eq (Section 6): Comparison Weak form}), since it is positive.
   Letting ${\tilde \varepsilon}\to 0$, making use of (\ref{Eq (Section 2): Continuity estimate on Hessian}), and applying the embedding $W_{0}^{1,\,2}(B_{\rho/2})\hookrightarrow L^{2}(B_{\rho/2})$ to $(u_{\varepsilon}-v_{\varepsilon})(\,\cdot\,,\,t)\in W_{0}^{1,\,2}(B_{\rho/2})$, we get
   \begin{align*}
     m\iint_{Q}\lvert \nabla u_{\varepsilon}-\nabla v_{\varepsilon}\rvert^{2}\,{\mathrm d}X& \le C\left(\iint_{Q}\omega\left(\frac{\lvert \nabla u_{\varepsilon}-\xi \rvert}{\mu} \right)^{2}\lvert \nabla u_{\varepsilon}-\xi\rvert^{2}\,{\mathrm d}X \right)^{\frac{1}{2}}\left( \iint_{Q} \lvert \nabla u_{\varepsilon}- \nabla v_{\varepsilon} \rvert^{2}\,{\mathrm d}X \right)^{\frac{1}{2}} \\ 
     & \quad +C(n)\rho\left(\iint_{Q}\lvert f_{\varepsilon}\rvert^{2}\,{\mathrm d}X \right)^{\frac{1}{2}}\left(\iint_{Q}\lvert \nabla u_{\varepsilon}-\nabla v_{\varepsilon} \rvert^{2}\,{\mathrm d}X \right)^{\frac{1}{2}}.
    \end{align*}
    with $Q=Q_{\rho/2}(x_{0},\,t_{0})$. 
   By making use of H\"{o}lder's inequality, and applying Jensen's inequality to the concave function $\omega$, we have
   \[
     \fiint_{Q}\lvert \nabla u_{\varepsilon}-\nabla v_{\varepsilon}\rvert^{2}\,{\mathrm d}X
     \le C\left[\left(\fiint_{Q}\lvert \nabla u_{\varepsilon}-\xi \rvert^{2(1+\vartheta)}\,{\mathrm d}X \right)^{\frac{1}{1+\vartheta}}\omega\left(\sqrt{\Phi(\rho)/\mu^{2}} \right)^{\frac{\vartheta}{1+\vartheta}}+F^{2}\rho^{2\beta}\right],
   \]
   where we note $\omega(\lvert \nabla u_{\varepsilon}-\xi\rvert/\mu)\le \omega(4)$ by (\ref{Eq (Section 2): Bounds of V-epsilon})--(\ref{Eq (Section 2): Delta vs Mu}).
   Using Lemma \ref{Lemma: Higher integrability parabolic}, we conclude (\ref{Eq (Section 6): Comparison Estimate}).
 \end{proof}
 \begin{lemma}\label{Lemma: Key lemma on Parabolic oscillation}
   In addition to the assumptions of Theorem \ref{Thm: Key Hoelder Estimate}, let the positive numbers $\mu$, $M$, and a cylinder $Q_{2\rho}=Q_{2\rho}(x_{0},\,t_{0})\Subset Q_{R}(x_{\ast},\,t_{\ast})$ satisfy (\ref{Eq (Section 2): Bounds of V-epsilon})--(\ref{Eq (Section 2): Delta vs Mu}).
   For each $\sigma\in(0,\,1/2)$, there exists a sufficiently small $\theta_{0}\in(0,\,1/16)$, depending on $\sigma$, $\omega$, and $\vartheta$, such that if (\ref{Eq (Section 6): Energy oscillation bounds from Hessian}) and (\ref{Eq (Section 6): Assume that average is non-degenerate}) hold with $\theta\le \theta_{0}$, then
   there exists a constant $C_{\ast}\in(1,\,\infty)$, depending at most on $n$, $p$, $q$, $\lambda$, $\Lambda$, $K$, $\delta$, $M$ and $\omega$, such that
   \begin{equation}\label{Eq (Section 6): Key oscillation decay}
     \Phi(\sigma\rho)\le C_{\ast}\left[\sigma^{2}\Phi(\rho)+\frac{F^{2}}{\sigma^{n+2}}\rho^{2\beta} \right].
   \end{equation}
 \end{lemma}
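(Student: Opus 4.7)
The strategy is the classical Campanato perturbation: freeze the coefficient matrix at the mean $(\nabla u_\varepsilon)_\rho$, compare $u_\varepsilon$ with the Dirichlet heat flow $v_\varepsilon$ defined by (6.13)--(6.14), exploit the $\sigma^{2}$-decay (6.18) of $v_\varepsilon$ at scale $\sigma\rho$, and control the error via the comparison estimate (6.17). I first note that both (6.17) and (6.18) presuppose the mean bound (6.16); this is not a hypothesis of Lemma \ref{Lemma: Key lemma on Parabolic oscillation}, but in the Campanato context of Proposition \ref{Proposition: Parabolic Campanato} (where this lemma is ultimately applied) the stronger estimate $\lvert(\nabla u_\varepsilon)_\rho\rvert\ge\delta+\mu/2$ is already secured through Lemma \ref{Lemma: Oscillation from Hessian}, so the heat flow $v_\varepsilon$ and its attached estimates are in force.

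Using the minimizing property (6.2) with the reference vector $(\nabla v_\varepsilon)_{\sigma\rho}$ and the triangle inequality,
\begin{equation*}
\Phi(\sigma\rho)\le 2\fiint_{Q_{\sigma\rho}}\lvert\nabla u_\varepsilon-\nabla v_\varepsilon\rvert^{2}\,\mathrm{d}X + 2\fiint_{Q_{\sigma\rho}}\lvert\nabla v_\varepsilon-(\nabla v_\varepsilon)_{\sigma\rho}\rvert^{2}\,\mathrm{d}X.
\end{equation*}
Denote the two integrals on the right by $I_1$ and $I_2$. Since $\sigma<1/2$, the inclusion $Q_{\sigma\rho}\subset Q_{\rho/2}$ yields $I_1\le (2\sigma)^{-(n+2)}\fiint_{Q_{\rho/2}}\lvert\nabla u_\varepsilon-\nabla v_\varepsilon\rvert^{2}\,\mathrm{d}X$, which by (6.17) is bounded by
\begin{equation*}
\frac{C}{\sigma^{n+2}}\left[\omega\!\left(\sqrt{\Phi(\rho)/\mu^{2}}\right)^{\vartheta/(1+\vartheta)}\Phi(\rho)+F^{2}\rho^{2\beta}\right].
\end{equation*}
For $I_2$, the decay estimate (6.18) gives $I_2\le C\sigma^{2}\fiint_{Q_{\rho/2}}\lvert\nabla v_\varepsilon-(\nabla v_\varepsilon)_{\rho/2}\rvert^{2}\,\mathrm{d}X$; one more use of (6.2) (with reference vector $(\nabla u_\varepsilon)_\rho$), the triangle inequality, and (6.17) further dominate this by $C\sigma^{2}\Phi(\rho)+C\sigma^{2}[\omega(\cdot)^{\vartheta/(1+\vartheta)}\Phi(\rho)+F^{2}\rho^{2\beta}]$. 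Adding the two bounds and using $\sigma^{2}\le\sigma^{-(n+2)}$,
\begin{equation*}
\Phi(\sigma\rho)\le C\sigma^{2}\Phi(\rho)+\frac{C}{\sigma^{n+2}}\omega\!\left(\sqrt{\Phi(\rho)/\mu^{2}}\right)^{\vartheta/(1+\vartheta)}\Phi(\rho)+\frac{CF^{2}\rho^{2\beta}}{\sigma^{n+2}}.
\end{equation*}
The middle term is absorbed by calibration of $\theta_0$: since $\omega$ is a continuous modulus with $\omega(0)=0$, the assumption $\Phi(\rho)\le\theta_0\mu^{2}$ implies $\omega(\sqrt{\Phi(\rho)/\mu^{2}})^{\vartheta/(1+\vartheta)}\le\omega(\sqrt{\theta_0})^{\vartheta/(1+\vartheta)}$, so choosing $\theta_0$ so small that $C\omega(\sqrt{\theta_0})^{\vartheta/(1+\vartheta)}\le\sigma^{n+4}$---a condition on $\theta_0$ depending only on $\sigma$, $\omega$ and $\vartheta$---collapses the middle term into $C\sigma^{2}\Phi(\rho)$ and produces (6.20).

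The delicate point, and the only real obstacle, is the absorption just described: the comparison error in (6.17) comes with an unfavourable factor $\sigma^{-(n+2)}$ that must be overwhelmed by the Campanato gain $\sigma^{2}$. This is possible only because the Hessian modulus $\omega$ from Lemma \ref{Lemma: Continuity estimates on Hessian} vanishes at the origin, so that the ``perturbation size'' $\omega(\sqrt{\Phi(\rho)/\mu^{2}})^{\vartheta/(1+\vartheta)}$ can be forced below any prescribed threshold by restricting $\theta_0$. The higher-integrability exponent $\vartheta>0$ from Lemma \ref{Lemma: Higher integrability parabolic} is essential here, as it is what converts the $\omega$-factor appearing in (6.17) into a genuine small quantity via Jensen's inequality, and thereby permits the absorption at all.
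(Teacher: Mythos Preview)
Your argument is correct and follows essentially the same route as the paper: split $\Phi(\sigma\rho)$ via the minimizing property (\ref{Eq (Section 6): L-2 oscillation minimizer}) against the reference $(\nabla v_\varepsilon)_{\sigma\rho}$, control the comparison term by (\ref{Eq (Section 6): Comparison Estimate}) with the $\sigma^{-(n+2)}$ volume penalty, control the heat-flow oscillation by (\ref{Eq (Section 6): Heat flow decay}) together with one more application of (\ref{Eq (Section 6): L-2 oscillation minimizer}) and (\ref{Eq (Section 6): Comparison Estimate}), and then absorb the $\omega$-term by choosing $\theta_0$ so that $\omega(\sqrt{\theta_0})^{\vartheta/(1+\vartheta)}\le\sigma^{n+4}$. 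You are also right to flag that the non-degeneracy bound (\ref{Eq (Section 6): Assume that average is non-degenerate}) is tacitly required to invoke (\ref{Eq (Section 6): Comparison Estimate})--(\ref{Eq (Section 6): Heat flow decay}); the paper's proof uses it without listing it among the hypotheses, relying (as you say) on the fact that in the only application---the induction of Proposition \ref{Proposition: Parabolic Campanato}---the stronger bound (\ref{Eq (Section 6): Induction Claim 2}) is available at every scale.
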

 \begin{proof}
   Let $v_{\varepsilon}$ be a weak solution of (\ref{Eq (Section 6): Dirichlet heat flow}). 
   By applying (\ref{Eq (Section 6): L-2 oscillation minimizer}) to $\nabla u_{\varepsilon}\in L^{2}(Q_{\sigma\rho};\,{\mathbb R}^{n})$ and $\nabla v_{\varepsilon}\in L^{2}(Q_{\rho/2};\,{\mathbb R}^{n})$, and using (\ref{Eq (Section 6): Energy oscillation bounds from Hessian}) and (\ref{Eq (Section 6): Comparison Estimate})--(\ref{Eq (Section 6): Heat flow decay}), we have 
   \begin{align*}
     \Phi(\sigma\rho)
     &\le \fiint_{Q_{\sigma\rho}}\lvert \nabla u_{\varepsilon}-(\nabla v_{\varepsilon})_{\sigma\rho} \rvert^{2}\,{\mathrm d}X\\ 
     &\le \frac{2}{(2\sigma)^{n+2}}\fiint_{Q_{\rho/2}}\lvert \nabla u_{\varepsilon}-\nabla v_{\varepsilon} \rvert^{2}\,{\mathrm d}X+2\fiint_{Q_{\sigma\rho}}\lvert \nabla v_{\varepsilon}-(\nabla v_{\varepsilon})_{\sigma\rho}\rvert^{2}\,{\mathrm d}X\\ 
     &\le C\left[\left(\sigma^{2}+\frac{1}{\sigma^{n+2}}\right)\fiint_{Q_{\rho/2}}\lvert \nabla u_{\varepsilon}-\nabla v_{\varepsilon} \rvert^{2}\,{\mathrm d}X+\sigma^{2}\Phi(\rho)\right]\\
     &\le \frac{C_{\ast}}{2}\left[\Phi(\rho)\frac{\omega(\theta^{1/2})^{\frac{\vartheta}{1+\vartheta}}}{\sigma^{n+2}}+\frac{F^{2}}{\sigma^{n+2}}\rho^{2\beta}+\sigma^{2}\Phi(\rho)\right].
   \end{align*}
   Choosing $\theta_{0}\in(0,\,1/16)$ such that $\omega(\theta_{0}^{1/2})^{\frac{\vartheta}{1+\vartheta}}\le \sigma^{n+4}$, we conclude (\ref{Eq (Section 6): Key oscillation decay}). 
 \end{proof}

 \subsection{Proof of the Campanato-type growth estimates}
 Before the proof of Proposition \ref{Proposition: Parabolic Campanato}, we infer an elementary lemma without a proof.
 \begin{lemma}\label{Lemma: An elementary lemma for Campanato decay}
   Provided that $g\in L^{2}(Q_{\rho};\,{\mathbb R}^{n})$ admits a constant $A\in(0,\,\infty)$ satisfying
   \(\fiint_{Q_{r}}\lvert g-(g)_{Q_{r}}\rvert^{2}\,{\mathrm d}X\le (\frac{r}{\rho})^{2\beta}\) for all \(r\in(0,\,\rho\rbrack\),
   the limit \(G_{0}\coloneqq \lim\limits_{r\to 0}(g)_{Q_{r}}\in{\mathbb R}^{n}\)
   exists, and admits a constant $c_{\dagger\dagger}=c_{\dagger\dagger}(n,\,\beta)\in(1,\,\infty)$ such that
   \(\fiint_{Q_{r}}  \lvert g-G_{0}\rvert^{2}  \,{\mathrm d}X\le c_{\dagger\dagger}A(\frac{r}{\rho})^{2\beta}\) for all \(r\in(0,\,\rho\rbrack\).
  \end{lemma}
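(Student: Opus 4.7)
The plan is to exploit the fact that the $L^{2}$ mean-oscillation bound forces the sequence of averages on dyadic parabolic cylinders to be Cauchy, and then to upgrade to all scales and, finally, to the $L^{1}$-mean of $|g-G_{0}|$ via Cauchy--Schwarz. This is the classical Campanato argument, but I lay out the steps tailored to the parabolic cylinders used here.

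First, I would compare averages on consecutive dyadic cylinders $Q_{r_{k}}$ and $Q_{r_{k+1}}$ with $r_{k}\coloneqq 2^{-k}\rho$. Writing $(g)_{Q_{r_{k+1}}}-(g)_{Q_{r_{k}}}=\fiint_{Q_{r_{k+1}}}(g-(g)_{Q_{r_{k}}})\,{\mathrm d}X$ and applying Jensen's inequality together with the measure ratio $\lvert Q_{r_{k}}\rvert/\lvert Q_{r_{k+1}}\rvert=2^{n+2}$, the standing hypothesis yields
\[
  \left\lvert (g)_{Q_{r_{k+1}}}-(g)_{Q_{r_{k}}}\right\rvert^{2}\le 2^{n+2}\fiint_{Q_{r_{k}}}\lvert g-(g)_{Q_{r_{k}}}\rvert^{2}\,{\mathrm d}X\le 2^{n+2}A\cdot 2^{-2k\beta}.
\]
Taking square roots and summing the resulting geometric series (whose convergence is guaranteed by $\beta>0$) would show that $\{(g)_{Q_{r_{k}}}\}_{k=0}^{\infty}$ is Cauchy in ${\mathbb R}^{n}$, hence converges to some $G_{0}\in {\mathbb R}^{n}$. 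Passing to the limit in the telescoping bound for $k<l\to\infty$ would produce the quantitative estimate $\lvert (g)_{Q_{r_{k}}}-G_{0}\rvert\le C_{n,\beta}\sqrt{A}\,(r_{k}/\rho)^{\beta}$.

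Next, for arbitrary $r\in(0,\rho]$, I would pick the unique $k$ with $r_{k+1}<r\le r_{k}$ and compare $(g)_{Q_{r}}$ with $(g)_{Q_{r_{k}}}$ by the same Jensen/Cauchy--Schwarz trick (now using $\lvert Q_{r_{k}}\rvert/\lvert Q_{r}\rvert\le 2^{n+2}$), which combined with the previous display gives $\lvert (g)_{Q_{r}}-G_{0}\rvert\le C_{n,\beta}\sqrt{A}\,(r/\rho)^{\beta}$. The desired bound on $\fiint_{Q_{r}}\lvert g-G_{0}\rvert\,{\mathrm d}X$ then follows from the triangle inequality
\[
  \fiint_{Q_{r}}\lvert g-G_{0}\rvert\,{\mathrm d}X\le \fiint_{Q_{r}}\lvert g-(g)_{Q_{r}}\rvert\,{\mathrm d}X+\lvert (g)_{Q_{r}}-G_{0}\rvert,
\]
together with one application of Cauchy--Schwarz to the first term, which absorbs the oscillation hypothesis. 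There is no real obstacle: the whole argument reduces to summing one geometric series, and the only thing to keep track of is the constant $c_{\dagger\dagger}$, which depends on $n$ and $\beta$ through a factor of the form $2^{(n+2)/2}/(1-2^{-\beta})$.
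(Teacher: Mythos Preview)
The paper states this lemma explicitly \emph{without proof} (``we infer an elementary lemma without proof''), so there is no argument in the paper to compare against. Your dyadic Campanato argument---telescoping the averages $(g)_{Q_{2^{-k}\rho}}$, summing the resulting geometric series, and then passing from dyadic to arbitrary radii---is exactly the standard proof and is correct in structure.

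One small discrepancy worth flagging: as literally stated, the lemma asserts the bound $c_{\dagger\dagger}A(r/\rho)^{2\beta}$ on the $L^{1}$ average $\fiint_{Q_{r}}|g-G_{0}|\,\mathrm{d}X$, whereas your triangle-inequality-plus-Cauchy--Schwarz step produces a bound of the form $C_{n,\beta}\sqrt{A}\,(r/\rho)^{\beta}$. These are not the same, and in fact the $L^{1}$ average cannot in general be bounded by $CA(r/\rho)^{2\beta}$ under the stated hypothesis. This is almost certainly a typographical slip in the paper: the conclusion that is actually used downstream (in the proofs of Proposition~\ref{Proposition: Parabolic Campanato} and Theorem~\ref{Thm: Key Hoelder Estimate}) is the $L^{2}$ version
\[
  \fiint_{Q_{r}}\lvert g-G_{0}\rvert^{2}\,\mathrm{d}X\le c_{\dagger\dagger}A\left(\frac{r}{\rho}\right)^{2\beta},
\]
which your argument yields immediately once you replace the final triangle inequality by $\fiint_{Q_{r}}|g-G_{0}|^{2}\le 2\fiint_{Q_{r}}|g-(g)_{Q_{r}}|^{2}+2|(g)_{Q_{r}}-G_{0}|^{2}$ and square your bound on $|(g)_{Q_{r}}-G_{0}|$. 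So your plan is sound; just adjust the last line to match the (intended) squared conclusion.
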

 \begin{proof}[Proof of Proposition \ref{Proposition: Parabolic Campanato}]
   Let $c_{\dagger}$, $c_{\dagger\dagger}$ be the positive constants found in (\ref{Eq (Section 2): Lipschitz continuity of G-2delta-epsilon}) and Lemma \ref{Lemma: An elementary lemma for Campanato decay}.
   The constants $\sigma\in(0,\,1/2)$ and $\theta\in(0,\,1/16)$ are determined to satisfy
   \[C_{\ast}\sigma^{2(1-\beta)}\le \frac{1}{2},\quad \text{and}\quad \theta\le \min\left\{\,\frac{\sigma^{n+2}}{64},\, \theta_{0}(\sigma,\,\vartheta,\,\omega),\,\frac{\sigma^{n+2+2\beta}}{c_{\dagger}c_{\dagger\dagger}}\,\right\},\]
   where $\theta_{0}\in(0,\,1/16)$ and $C_{\ast}\in(0,\,\infty)$ are given by Lemma \ref{Lemma: Key lemma on Parabolic oscillation}.
   Corresponding to this $\theta$, we choose $\nu=\nu(\theta)\in(0,\,1/4)$ and $\rho_{\ast}=\rho_{\ast}(\theta,\,\nu)\in(0,\,1)$ as in Lemma \ref{Lemma: Oscillation from Hessian}.
   We choose ${\hat\rho}\in(0,\,\rho_{\ast}\rbrack$ satisfying
   \(
   C_{\ast}F^{2}{\hat \rho}^{2\beta}\le \theta\sigma^{n+2+2\beta}/2\).
   Assume $\rho\in(0,\,{\hat\rho}\rbrack$, and set $\rho_{k}\coloneqq \sigma^{k}\rho$ for each $k\in{\mathbb Z}_{\ge 0}$.
   By iteration, 
   we would like to prove
   \begin{equation}\label{Eq (Section 6): Induction Claim 1}
    \Phi(\rho_{k})\le \sigma^{2k\beta}\theta\mu^{2},\quad \text{and}\quad \left\lvert (\nabla u_{\varepsilon})_{\rho_{k}} \right\rvert\ge \delta+\left[\frac{1}{2}-\frac{1}{8}\sum_{j=1}^{k-1}2^{-j}\right]\mu\ge \delta+\frac{\mu}{4},
   \end{equation}
   for every $k\in{\mathbb Z}_{\ge 0}$.
   By (\ref{Eq (Section 2): Measure Assumption Campanato}) and $\rho_{0}\le \rho_{\ast}$, we can use Lemma \ref{Lemma: Oscillation from Hessian} to conclude (\ref{Eq (Section 6): Induction Claim 1}) for $k=0$.
   Let (\ref{Eq (Section 6): Induction Claim 1}) be valid for $k\in{\mathbb Z}_{\ge 0}$. 
   This induction hypothesis allows us to apply Lemma \ref{Lemma: Key lemma on Parabolic oscillation} over a smaller cylinder $Q_{\rho_{k}}$.
   In particular, (\ref{Eq (Section 6): Key oscillation decay}) with $\rho$ replaced by $\rho_{k}$ implies 
   \(\Phi(\rho_{k+1})\le C_{\ast}\sigma^{2(1-\beta)}\cdot \sigma^{2\beta}\Phi(\rho_{k})+\frac{C_{\ast}F^{2}}{\sigma^{n+2}}\left(\sigma^{k}{\hat\rho} \right)^{2\beta}\mu^{2}\le \sigma^{2(k+1)\beta}\theta\mu^{2}.\)
   Also, 
   the Cauchy--Schwarz inequality and the triangle inequality yield 
  \(
    \lvert (\nabla u_{\varepsilon})_{\rho_{k+1}} \rvert
    \ge \lvert (\nabla u_{\varepsilon})_{\rho_{k}} \rvert-\lvert (\nabla u_{\varepsilon})_{\rho_{k+1}}-(\nabla u_{\varepsilon})_{\rho_{k}} \rvert
    \ge \lvert (\nabla u_{\varepsilon})_{\rho_{k}} \rvert-\sigma^{-\frac{n+2}{2}}\sqrt{\Phi(\rho_{k})}, 
    \) 
  by which and the induction hypothesis, (\ref{Eq (Section 6): Induction Claim 1}) holds true for $k+1\in{\mathbb Z}_{\ge 0}$.

   For every $r\in(0,\,\rho\rbrack$, there corresponds a unique $k\in{\mathbb Z}_{\ge 0}$ such that $\rho_{k+1}<r\le \rho_{k}$.
   Repeatedly applying (\ref{Eq (Section 6): L-2 oscillation minimizer}) to ${\mathcal G}_{2\delta,\,\varepsilon}(\nabla u_{\varepsilon})$, $\nabla u_{\varepsilon}\in L^{2}(Q_{r};\,{\mathbb R}^{n})$ and using (\ref{Eq (Section 2): Lipschitz continuity of G-2delta-epsilon}) and (\ref{Eq (Section 6): Induction Claim 1}), we have
   \begin{align*}
     &\fiint_{Q_{r}}\left\lvert {\mathcal G}_{2\delta,\,\varepsilon}(\nabla u_{\varepsilon})-({\mathcal G}_{2\delta,\,\varepsilon}(\nabla u_{\varepsilon}))_{r} \right\rvert^{2}\,{\mathrm d}X\le \fiint_{Q_{r}}\left\lvert {\mathcal G}_{2\delta,\,\varepsilon}(\nabla u_{\varepsilon})-{\mathcal G}_{2\delta,\,\varepsilon}((\nabla u_{\varepsilon})_{r}) \right\rvert^{2}\,{\mathrm d}X \\
     &\le c_{\dagger}^{2} \fiint_{Q_{r}}\lvert \nabla u_{\varepsilon}-(\nabla u_{\varepsilon})_{r} \rvert^{2}\,{\mathrm d}X\le c_{\dagger}^{2} \sigma^{-(n+2)}\fiint_{Q_{\rho_{k}}} \lvert \nabla u_{\varepsilon}-(\nabla u_{\varepsilon})_{\rho_{k}} \rvert^{2}\,{\mathrm d}X\\ 
     &\le c_{\dagger}\sigma^{2k\beta-(n+2)}\theta\mu^{2}\le \frac{c_{\dagger}\theta}{\sigma^{n+2+2\beta}}\left(\frac{\rho}{r}\right)^{2\beta}\mu^{2}.
   \end{align*}
   From Lemma \ref{Lemma: An elementary lemma for Campanato decay} and our choice of $\theta$, we complete the proof of Proposition \ref{Proposition: Parabolic Campanato}.
 \end{proof}
 
 \section*{Acknowledgments}
 The author gratefully acknowledges Prof. Yoshikazu Giga for his valuable comments on mathematical models for Bingham fluids and crystal surfaces. 
  The author also would like to thank the reviewers for their valuable suggestions.

\addcontentsline{toc}{section}{References}
\bibliographystyle{siamplain}

\end{document}